\tikzset{negated/.style={
        decoration={markings,
            mark= at position 0.5 with {
                \node[transform shape] (tempnode) {$\backslash$};
            }
        },
        postaction={decorate}
    }
}
\newtheorem{theorem}{Theorem}[section]
\newtheorem{lemma}[theorem]{Lemma}
\newtheorem{corollary}[theorem]{Corollary} 
\newtheorem{proposition}[theorem]{Proposition} 
\theoremstyle{definition}
\newtheorem{assumption}{Assumption} 
\newtheorem{definition}[theorem]{Definition}
\theoremstyle{remark}
\newtheorem{remark}[theorem]{Remark}
\newtheorem{example}[theorem]{Example}
\numberwithin{equation}{section}
\newcommand*{\RR}{\mathbb{R}}
\newcommand*{\NN}{\mathbb{N}}
\newcommand{\calX}{\mathcal{X}}
\newcommand*{\calA}{\mathcal{A}}
\newcommand{\calB}{\mathcal{B}}
\newcommand*{\ds}{D_{\ref{prop:symmetric-group-moments}}}
\newcommand{\fs}{D_{\ref{prop:moments-Poisson}}}
\def\ii{{\bf i}}
\newcommand*{\opt}{\text{opt}}
\newcommand*{\HS}{\text{HS}}
\DeclareMathOperator{\EE}{\mathbb{E}} 
\newcommand*{\PP}{\mathbb{P}} 
\DeclareMathOperator{\Dom}{Dom} 
\newcommand*{\calE}{\mathcal{E}}
\DeclareMathOperator{\Ent}{Ent}	
\DeclareMathOperator{\Var}{Var}	
\DeclareMathOperator{\Cov}{Cov}	
\DeclareMathOperator{\Med}{Med}
\DeclareMathOperator{\supp}{supp}
\newcommand*{\ind}[1]{\mathbf{1}_{#1}}
\newcommand*{\abs}[1]{\left\vert #1 \right\vert}
\newcommand*{\norm}[1]{\left\Vert #1 \right\Vert}
\newcommand{\addresseshere}{%
  \enddoc@text\let\enddoc@text\relax
}
\title[Modified log-Sobolev and Beckner inequalities]{Modified log-Sobolev inequalities, Beckner inequalities and moment estimates}
\author[R. Adamczak]{Rados{\l}aw Adamczak} %
\address{Institute of Mathematics, University of Warsaw, Banacha 2, 02--097 Warsaw, Poland.}
\email{R.Adamczak@mimuw.edu.pl}
\author[B. Polaczyk]{Bart{\l}omiej Polaczyk}
\address{Institute of Mathematics, University of Warsaw, Banacha 2, 02--097 Warsaw, Poland.}
\email{B.Polaczyk@mimuw.edu.pl}
\author[M. Strzelecki]{Micha{\l} Strzelecki}
\address{Institute of Mathematics, University of Warsaw, Banacha 2, 02--097 Warsaw, Poland.}
\email{M.Strzelecki@mimuw.edu.pl}
\thanks{Research partially supported by the National Science Centre, Poland, via the Sonata Bis grant no.\ 2015/18/E/ST1/00214 (RA, BP) and the Preludium grant no.\ 2015/19/N/ST1/00891 (MS)}
\date{October 25, 2020}
\subjclass[2020]{Primary 60E15; 
Secondary 60J25, 
60J27, 
46E35, 
  46E39. 
  }
\keywords{Log-Sobolev inequality, Beckner's inequality, Poincar\'{e} inequality, moment estimates, concentration of measure, Dirichlet form, Markov semigroup}
\begin{document}

\begin{abstract}
We prove that in the context of general Markov semigroups Beckner inequalities with constants separated from zero as $p\to 1^+$ are equivalent to the modified log Sobolev inequality (previously only one implication was known to hold in this generality). Further, by adapting an argument by Boucheron et al. we derive Sobolev type moment estimates which hold under these functional inequalities.

We illustrate our results with applications to concentration of measure estimates (also of higher order, beyond the case of Lipschitz functions) for various stochastic models, including random permutations, zero-range processes, strong Rayleigh measures, exponential random graphs, and geometric functionals on the Poisson path space.

\end{abstract}

\maketitle

\tableofcontents


\section{Introduction}

\subsection{Motivation and informal presentation}
Functional inequalities are among the most popular tools for studying concentration of measure phenomena, with the Poincar\'e and log-Sobolev inequalities being arguably the most important examples.
The former implies subexponential concentration for Lipschitz functions (as shown by Gromov and Milman~\cite{MR708367}), the latter by the well known Herbst's argument (see, e.g., \cite{MR1849347,MR3185193}) implies subgaussian bounds.
In the work~\cite{MR954373} Beckner proposed a family of inequalities interpolating between the Poincar\'e and log-Sobolev inequalities and showed that they held true in the case of standard Gaussian measures.
Their form was subsequently generalized by Lata{\l}a and Oleszkiewicz~\cite{MR1796718} who used them to obtain intermediate concentration estimates between subexponential and subgaussian.
See~\cite{MR3185193} for further developments.

While initially studied mostly in the analytic setting, for diffusions on $\RR^n$ or on Riemannian manifolds, all the aforementioned inequalities have their counterparts for general Markov semigroups, including those of Markov chains on discrete spaces.
They are however not unique, since due to the lack of the chain rule, two forms of a single inequality, which are equivalent in the continuous framework, may differ significantly in the general case.

In particular one distinguishes between the log-Sobolev inequality and a weaker modified log-Sobolev inequality.
Also Beckner's inequalities have two formulations, one of them stronger than the other one.
Before stating our results let us briefly recall some of those inequalities.
We will introduce the remaining ones in Section~\ref{sec:state-of-art}.
For now we will be working in the setting of Dirichlet forms and keep the presentation slightly informal.
Our general setting will be described more precisely in Section~\ref{sec:setting}.
Below $(\mathcal{X},\mathcal{B},\mu)$ is a probability space and $\Dom(\calE) \subseteq L_2(\mathcal{X},\mu)$ is a linear subspace on which a Dirichlet form $\calE$ is defined.

Recall that one says that $\mu$ and $\calE$ satisfy the Poincar\'{e} inequality if there exists a constant $\lambda>0$ such that
\begin{equation}
\label{eq:Poinc-intro}
	\lambda\Var_\mu(f) \le \calE(f,f)	
\end{equation}
for any $f\in \Dom(\calE)$, where $\Var_\mu (f) = \mu(f^2) - \mu(f)^2$ is the variance of $f$ treated as a random variable on the probability space $(\calX,\calB,\mu)$ (we use the common notation $\mu(f)=\int_\calX f\,d\mu$).

Define the entropy of a nonnegative function $f\colon \calX\to \RR$ as
\begin{displaymath}
\Ent_\mu(f) = \mu(f\log(f)) - \mu(f)\log(\mu(f)).
\end{displaymath}
One says that the  modified log-Sobolev inequality is satisfied if there exists a constant $\rho_0>0$ such that
\begin{equation}
 \label{eq:mlS-log-intro}
 \rho_0 \Ent_\mu(f)\leq \calE(f,\log f)
\end{equation}
for any  nonnegative $f\in \Dom(\calE)$ such that $\log f\in \Dom(\calE)$.

Finally we say that the Beckner inequality
with parameter $p\in(1,2]$ holds if there exists a constant $\alpha_p>0$ such that
\begin{equation}
 \label{eq:Beckner-GJ-intro}
 \alpha_p( \mu(f^p) - \mu(f)^p) \leq \frac{p}{2} \calE(f,f^{p-1})
\end{equation}
for any nonnegative $f\in \Dom(\calE)$ such that $f^{p-1}\in \Dom(\calE)$.

While for each individual $p$ the Beckner inequality~\eqref{eq:Beckner-GJ-intro} is equivalent to the Poincar\'e inequality, from the point of view of the concentration of measure theory the full strength of~\eqref{eq:Beckner-GJ-intro} is captured in the behaviour of the constants $\alpha_p$ as $p\to 1^+$.

In particular it is a well known observation made by many authors that the Beckner inequality~\eqref{eq:Beckner-GJ-intro} with $\alpha_p$ separated from zero on the interval $(1,2]$ implies the modified log-Sobolev inequality~\eqref{eq:mlS-log-intro}.
Indeed, it is enough to divide both sides of~\eqref{eq:Beckner-GJ-intro} by $p-1$ and take $\liminf$ as $p \to 1^+$ (see, e.g., \cite{MR2283379,MR3078024,MR3693525}).
However, somewhat surprisingly, the reverse implication is not present in the literature and in fact there are quite a few works where Beckner's inequalities are proved or discussed separately from the modified log-Sobolev inequality for the same models (see, e.g., \cite{MR1989444,MR2123200} and~\cite{MR2283379,MR3078024,MR3693525,conforti2020probabilistic,weber2020entropy}).
This is in contrast with the continuous case in which the equivalence has been obtained in~\cite{MR1796718} (for the Reader's convenience we describe all the connections between various inequalities in Section~\ref{sec:SOA} below).

Our main result (the proof of which is presented in Section~\ref{sec:mLStoBec}) can be summarized as follows.
\begin{theorem}\label{thm:main-intro}
  The modified log-Sobolev inequality~\eqref{eq:mlS-log-intro} holds with some constant $\rho_0>0$ if and only if the Beckner inequality~\eqref{eq:Beckner-GJ-intro} holds for every $p\in(1,2]$ with some $\alpha_p$ bounded away from zero.
Moreover,  the optimal constants with which they  hold satisfy $\rho^\opt_0(\mu) =2 \lim_{p\to 1^+}\alpha_p^\opt(\mu)$.
\end{theorem}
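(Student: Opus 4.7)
The easy implication — Beckner with $\alpha_p$ bounded below as $p\to 1^+$ implying mLSI — is the one already in the literature, so I would dispatch it quickly. Dividing~\eqref{eq:Beckner-GJ-intro} by $p-1$ and using
\begin{equation*}
	\frac{\mu(f^p)-\mu(f)^p}{p-1}\xrightarrow[p\to 1^+]{}\Ent_\mu(f),\qquad
	\frac{\calE(f,f^{p-1})}{p-1}\xrightarrow[p\to 1^+]{}\calE(f,\log f),
\end{equation*}
which come from the Taylor expansions $f^p = f+(p-1)f\log f+o(p-1)$ and $f^{p-1}=1+(p-1)\log f+o(p-1)$, then passing to $\liminf$ in $p$ and minimizing over $f$, yields $\rho_0^\opt\ge 2\liminf_{p\to 1^+}\alpha_p^\opt$.

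For the new, converse direction I would work through the semigroup $(P_t)_{t\ge 0}$ associated with $\calE$. By Gr\"onwall, mLSI with constant $\rho_0$ is equivalent to $\Ent_\mu(P_tf)\le e^{-\rho_0 t}\Ent_\mu(f)$ for all admissible $f\ge 0$ (from $\tfrac{d}{dt}\Ent_\mu(P_tf)=-\calE(P_tf,\log P_tf)$), while Beckner with $\alpha_p$ is equivalent to $\mu((P_tf)^p)-\mu(f)^p\le e^{-2\alpha_pt}\bigl(\mu(f^p)-\mu(f)^p\bigr)$ (from $\tfrac{d}{dt}\mu((P_tf)^p)=-p\calE(P_tf,(P_tf)^{p-1})$). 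Starting from mLSI I would write the $p$-deficit as
\begin{equation*}
	\mu(f^p)-\mu(f)^p = \int_{1}^{p}\bigl(\mu(f^q\log f) - \mu(f)^q\log\mu(f)\bigr)\,dq,
\end{equation*}
and bound each integrand by applying mLSI to $f^q$, exploiting the identities $\Ent_\mu(f^q)=q\mu(f^q\log f)-\mu(f^q)\log\mu(f^q)$ and $\calE(f^q,\log f^q)=q\calE(f^q,\log f)$ to recast the bound in terms of $\calE(f^q,\log f)$.

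The main obstacle is the absence of a chain rule in the general Dirichlet-form setting: one cannot rewrite $\calE(f^q,\log f)$ as a constant multiple of $\calE(f,f^{p-1})$, so the integration above does not immediately produce Beckner's inequality. Bridging the two Dirichlet forms should require a pointwise Stroock--Varopoulos type convexity estimate, coupled with uniform-in-$f$ control of the convergence of the Rayleigh quotient $R_p(f):=\calE(f,f^{p-1})/(\mu(f^p)-\mu(f)^p)$ to $\calE(f,\log f)/\Ent_\mu(f)$ as $p\to 1^+$. Such uniformity is precisely what is needed to interchange $\lim_{p\to 1^+}$ with $\inf_f$ in the definition of the optimal constants and thereby establish $\rho_0^\opt = 2\lim_{p\to 1^+}\alpha_p^\opt$; securing this uniform estimate is the heart of the argument and is exactly what is missing in previous one-sided treatments in the discrete/general-semigroup setting.
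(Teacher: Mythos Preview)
Your treatment of the easy direction (Beckner $\Rightarrow$ mLSI) is fine and matches the paper. For the converse, however, what you have written is an outline that correctly locates the obstruction but does not surmount it: you yourself say that ``securing this uniform estimate is the heart of the argument and is exactly what is missing.'' In other words, you have not proposed a proof, only a diagnosis.

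The paper's actual argument does share your starting point of applying mLSI to $f^s$ and integrating in $s$, but the missing ingredient you allude to is supplied concretely, and it is not a Stroock--Varopoulos inequality in the usual sense. The key is a pointwise estimate (Lemma~\ref{L:u'v'comp}): for $a,b\ge e$ and $p\ge 1$,
\[
(a^p-b^p)(\log a-\log b)\le (a-b)(a^{p-1}\log a - b^{p-1}\log b),
\]
which under Assumption~\ref{a:I} gives $\calE(f^s,\log f)\le \calE(f,f^{s-1}\log f)=\tfrac{d}{ds}\calE(f,f^{s-1})$ once $f$ is normalized so that $\inf f=e$. Combined with the variational formula for entropy, this turns mLSI into a differential inequality for $u(s)=\mu(f^s)-\mu(f)^s$ and $v(s)=\calE(f,f^{s-1})$ that integrates to
\[
\rho_0\bigl(\mu(f^p)-\mu(f)^p\bigr)\le \Bigl(e\,\frac{\mu(f)}{\inf f}\Bigr)^{p-1}\calE(f,f^{p-1}).
\]
This is not yet Beckner because of the prefactor depending on $\mu(f)/\inf f$; the paper removes it by a truncation $g=\max(f,\theta\mu(f))$, which bounds the ratio by $(1+\theta)/\theta$, and then controls the truncation error $\mu(f)^p\bigl((1+\theta)^p-1\bigr)\PP(f<\theta\mu(f))$ via a second-order Taylor bound on $\mu(f^p)-\mu(f)^p$ and Chebyshev's inequality. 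Optimizing in $\theta$ yields $\alpha_p\ge K_p\rho_0$ with $K_p\to \tfrac12$ as $p\to 1^+$, which is exactly the sharp endpoint behaviour needed for $\rho_0^\opt=2\lim_{p\to 1^+}\alpha_p^\opt$.

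Your semigroup/Gr\"onwall reformulation is also problematic in this paper's setting, since the result is stated for abstract bilinear forms satisfying Assumption~\ref{a:I}, which need not arise from a Markov semigroup; but this is secondary to the fact that the decisive comparison lemma and the truncation step are absent from your proposal.
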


Apart from being important in its own right from the point of view of the abstract theory of functional inequalities for Markov semigroups, the above result is motivated by applications to the theory of concentration of measure.
The classical Herbst's argument allows for deducing deviation estimates for Lipschitz (in an appropriate sense) functions from log-Sobolev inequalities.
As proven by Aida and Stroock~\cite{MR1258492} (see also~\cite{Bobkov-growth} for the discrete case), the usual log-Sobolev inequality due to Gross~\cite{MR420249} (see Definition \ref{defi:inequalities} below) implies certain moment estimates, which can be a starting point for obtaining concentration for more general functions, in particular polynomials or more generally functions with bounded derivatives of higher order.
Such concentration results were obtained in \cite{MR3383337,MR3670792,MR3743923} in the continuous setting and subsequently in \cite{gtze2018higher,MR3949267} in the discrete one (in particular for the Ising model). They have found numerous applications in signal processing, statistics and computer science, where they proved an important tool for deriving theoretical guarantees in particular for compressed sensing type algorithms or for learning Ising models (see, e.g., \cite{ISI:000435979500008,dagan2020estimating,MR3914933,MR4073556}). They are also useful in random graph theory, allowing to obtain concentration inequalities for the subgraph count beyond the large deviation regime or for models with dependencies \cite{MR3383337,gtze2018concentration}.

In the discrete case however, the usual log-Sobolev inequality is much more restrictive than its modified version~\eqref{eq:mlS-log-intro} -- on infinite spaces it is strictly stronger, while on finite spaces it often holds with much worse constants, which affects the concentration estimates.
At the same time Beckner inequalities for product distributions (treated as a special case of modified $\phi$-Sobolev inequalities) were used by Boucheron, Bousquet, Lugosi, and Massart~\cite{MR2123200} in order to obtain moment estimates for functions of independent random variables which generalize the classical Efron--Stein inequality for the variance.
It turns out that their argument can be adapted to the setting of general semigroups and beyond, providing moment estimates of the same nature as those by Aida--Stroock but under a weaker assumption of modified log-Sobolev inequality~\eqref{eq:mlS-log-intro}.
This allows to treat a variety of models and obtain Sobolev type inequalities with various types of gradients.

Since the precise formulation of the general moment inequalities requires an introduction of some additional notation, we postpone it to Section~\ref{sec:Beckner-Aida-Stroock} (see Propositions \ref{prop:V^2} and \ref{prop:moments-abstract}). Here let us just mention some of their applications, which we present in Sections \ref{sec:applications} and \ref{sec:higher-order}. In the continuous case we derive $L_r$-Poincar\'e inequalities with optimal growth of constants as $r\to \infty$  for measures satisfying the Beckner--Lata{\l}a--Oleszkiewicz inequality, as well as new inequalities for Cauchy-type measures (Section \ref{sec:continuous}). In the case of jump processes, we obtain moment bounds with discrete gradients. In particular we obtain estimates for stationary measures of Glauber dynamics, including the Ising model, exponential random graphs or hardcore model (Section \ref{sec:Glauber}). They can be used to derive higher order concentration inequalities, which when specialized to polynomials improve the results from \cite{MR3949267,gtze2018concentration} (Section \ref{sec:higher-order}). For the symmetric group we generalize moment estimates obtained by Chatterjee for Hoeffding statistics \cite{MR2288072} to general functions (Section \ref{sec:S_n}). We apply them to empirical processes of sampling without replacement, improving recent results due to  Tolstikhin, Zhivotovskiy, and Blanchard \cite{MR3480745}. Building on recent work of Hermon and Salez \cite{hermon2019entropy,hermon2019modified} we also obtain Beckner inequalities and moment estimates for measures satisfying the stochastic covering property and zero-range processes (Sections \ref{sec:SCP}, \ref{sec:zero}). In Section \ref{sec:Poisson} we obtain moment estimates for the Poisson path space. We remark that even though concentration of measure and functional inequalities for the Poisson space have been an object of intensive studies (to mention \cite{MR1757600,MR1800540,MR2081075,MR3473096,MR3485348,MR3849811,MR3151752,gozlan2020transport}), to the best of our knowledge these are the first moment estimates for the Poisson space beyond the classical Poincar\'e inequality in $L_2$.

\subsubsection*{Acknowledgements}
We would like to thank Franck Barthe, Sergey Bobkov, and Pawe{\l} Wolff for discussions concerning the equivalence between various functional inequalities and encouragement to pursue the topics presented in this article.

\subsection{General setting}
\label{sec:setting}

Let $(\calX,\calB,\mu)$ be a probability space and consider a symmetric non-negative definite bilinear form $\mathcal{E}\colon \Dom(\calE)\times \Dom(\calE) \to \RR$, where  $\Dom(\calE)$ is a linear subspace of the space of $\mathcal{B}$-measurable functions $L_0(\calX,\mu)$.

We will consider the following abstract assumption on $\mathcal{E}$.
\begin{assumption}
\label{a:I}
\leavevmode
If $\varphi \colon \RR \to \RR$ is a contraction and $f \in \Dom(\calE)$, then also $\varphi\circ f \in \Dom(\calE)$. Moreover, for any $f_1,f_2,g_1,g_2 \in \Dom(\calE)$ if we have a pointwise inequality
\begin{align}\label{eq:assumption-1-2}
(f_1(x) - f_1(y))(f_2(x) - f_2(y)) \le (g_1(x) - g_1(y))(g_2(x) - g_2(y))
\end{align}
for all $x,y \in \calX$, then
\begin{align}\label{eq:assumption-1-3}
  \calE(f_1,f_2) \le \calE(g_1,g_2).
\end{align}
\end{assumption}

\begin{remark} Let us provide some basic consequences of Assumption~\ref{a:I}, which we are going to use.
First, if $f\in \Dom(\calE)$ and $\varphi\colon \RR\to \RR$ is a contraction, then
\begin{align}\label{eq:assumption-1-1}
 \calE(\varphi(f),\varphi(f)) \le \calE(f,f).
\end{align}
Another consequence is the equality
\begin{displaymath}
  \calE(f,c) = 0
\end{displaymath}
for any constant $c \in \RR$. Finally, if $\varphi\colon \RR\to \RR$ is non-decreasing and $f,\varphi\circ f \in \Dom(\calE)$, then
\begin{displaymath}
  \calE(f,\varphi\circ f) \ge 0.
\end{displaymath}
\end{remark}

Assumption~\ref{a:I} is satisfied in particular if $\mathcal{E}$ is a Dirichlet form of a reversible Markov semigroup (for completeness of the exposition we recall basic properties of Dirichlet forms in the Appendix~\ref{app-Markov}). This is one of the main motivations for our investigations, however we prefer not to restrict to this specific setting, since in applications to concentration of measure and moment inequalities one often encounters quadratic forms which do not correspond to Markov semigroups. For instance, it may happen that the functional inequalities of interest are in fact valid for a larger class of functions than the domain of the Dirichlet form associated with some $\mu$-reversible Markov process or that the quadratic form appearing on the right-hand side does not correspond to a Dirichlet form, while it still satisfies  Assumption~\ref{a:I} and the available functional inequalities are meaningful from the concentration of measure point of view. In addition, Assumption~\ref{a:I} will allow us to avoid unnecessary discussion of domains and help us state our main results in a more concise way.

In our examples with $\mathcal{E}$ one will often associate a subspace $\mathcal{A} \subseteq L_0(\calX,\mu)$ and a symmetric bilinear function $\Gamma \colon \calA\times \calA \to L_0(\mathcal{X},\mu)$ such that $\Gamma(f,f) \ge 0$ and for $f,g \in \calA \cap \Dom(\calE)$,
\begin{align}\label{eq:champ-form}
  \calE(f,g) = \int_\mathcal{X}\Gamma(f,g)d\mu.
\end{align}
In what follows we will write $\Gamma(f)$ for $\Gamma(f,f)$.

In the Markovian setting $\Gamma$ will be the \emph{carr\'e du champ} operator defined as
\begin{align}\label{eq:carre-du-champ}
  \Gamma(f,g) = \frac{1}{2}\Big(L(fg) - gLf - fLg\Big),
\end{align}
where $L$ is an infinitesimal operator of a reversible Markov semigroup on $L_2(\calX,\mu)$ with domain $\Dom(L)$. In this case  $\Gamma$ is first defined on a suitable algebra of functions $\calA_0 \subseteq \Dom(L)$ and then extended to a larger algebra $\calA$. We refer to the monograph~\cite{MR3155209} for a very detailed description of the relations between the domain of the infinitesimal generator, the domain of the Dirichlet form, and the algebra $\mathcal{A}$.

\subsection{Examples}

We will now provide several concrete examples covered by the setting described above.
We remark that even though our setting is not the same as in~\cite{MR3078024} the exposition below parallels to some extent the one from this article.

As a first example let us take a diffusion $(X_t)_{t\ge 0}$ on $\mathcal{X} = \RR^n$, with the infinitesimal generator $L$ given by
\begin{displaymath}
  Lf (x) = \sum_{i,j=1}^n a_{ij}(x)\frac{\partial^2 f(x)}{\partial x_i\partial x_j} + \sum_{i=1}^n b_i(x)\frac{\partial f(x)}{\partial x_i},
\end{displaymath}
$a = \sigma\sigma^T$, where $\sigma$ is a smooth, locally bounded function from $\RR^n$ to the space of $n\times d$ matrices and $b\colon \RR^n\to \RR^n$ is a smooth function.
In this case $\calA = C^\infty(\RR^d)$ is the set of all smooth functions and
\begin{displaymath}
  \Gamma(f,g) = \sum_{i,j=1}^n a_{ij} \frac{\partial f}{\partial x_i}\frac{\partial g}{\partial x_j}.
\end{displaymath}
In order to make this class of processes fit into our setting, we need to assume that $(X_t)_{t\ge 0}$ has an invariant probability measure $\mu$, in which case one defines
\begin{displaymath}
  \calE(f,g) = \int_{\mathcal{X}} \sum_{i,j=1}^n a_{ij} \frac{\partial f}{\partial x_i}\frac{\partial g}{\partial x_j} \mu(dx)
\end{displaymath}
for $f,g \in \calA_0$ -- the space of smooth compactly supported functions, and then extends this to an appropriate domain, which is the completion of $\calA_0$ with respect to the norm $\|f\| = \sqrt{\mu(f^2) + \calE(f,f)}$.
The assumption concerning the existence of $\mu$ is satisfied, e.g., if $a$ is the identity matrix and $b = - \nabla V$ for some function $V \colon \RR \to \RR$ such that $e^{-V}$ is integrable. One can then show that  the normalized measure $\mu(dx) = \frac{1}{Z}e^{-V(x)}dx$ is an invariant measure of the process. One can also consider more general diffusions on Riemannian manifolds.
At this point we should stress that this class of examples satisfies the chain rule and as a consequence many functional inequalities become equivalent, even though in the general situation they are not. For this reason, this class will not be in our focus in the subsequent part of the article, even though we will state some Sobolev type estimates which to our best knowledge are new also in this setting (see Section~\ref{sec:continuous}).

\medskip

Another particular case of the operator $\Gamma$, which will become for us an important source of examples, is given by
\begin{align}\label{eq:Gamma-kernel}
\Gamma(f,g)(x) = \frac{1}{2}\int_\mathcal{X}(f(y) - f(x))(g(y) - g(x))Q_x(dy),
\end{align}
where $x\mapsto Q_x$ is a map from $\mathcal{X}$ to the set of positive measures on $\calX$ such that for all $A \in \calB$, $x\mapsto Q_x(A)$ is measurable and $Q_x$, $\mu$ satisfy the following detailed balance condition:
\begin{equation}\label{eq:detailed-balance}
  Q_x(dy)\mu(dx) = Q_y(dx)\mu(dy).
\end{equation}
The bilinear form is well-defined on $\calA\times \calA$, where
\begin{displaymath}
\calA = \{f\in L_0(\calX,\mu)\colon \int_\mathcal{X}(f(y) - f(x))^2Q_x(dy) < \infty \; \textrm{$\mu$-a.s.}\}.
\end{displaymath}
In this case
\begin{align}\label{eq:kernel-to-form}
  \calE(f,g) = \frac{1}{2}\int_\calX \int_\mathcal{X}(f(y) - f(x))(g(y) - g(x))Q_x(dy)\mu(dx)
\end{align}
with $\Dom(\calE) = \{f\in L_0(\mathcal{X},\mu)\colon \int_\calX\int_\calX (f(y)-f(x))^2Q_x(dy)\mu(dx) < \infty\}$.
It is straightforward to check that in this case Assumption~\ref{a:I}  is satisfied.
Moreover, by the detailed balance condition~\eqref{eq:detailed-balance}
we can further write
\begin{align}\label{eq:bilinear-form-reversibility}
	  \calE(f,g) = \int_\calX \int_\mathcal{X}(f(x) - f(y))_+(g(x) - g(y))Q_x(dy)\mu(dx)
\end{align}
and
\begin{align}\label{eq:form-reversibility}
  \calE(f,f) = \int_\calX \int_\mathcal{X}(f(x) - f(y))_+^2Q_x(dy)\mu(dx) = \int_\calX \Gamma_+(f) d\mu,
\end{align}
where
\begin{align}\label{eq:Gamma+}
  \Gamma_+(f)(x) = \int_\mathcal{X}(f(x) - f(y))_+^2Q_x(dy).
\end{align}
We remark that in many applications to concentration of measure, passing from $\Gamma$ to $\Gamma_+$ is essential, since the latter can be often effectively bounded, especially under certain convexity or monotonicity assumptions on the function $f$.

The case when $\mathcal{X}$ is countable and $Q_x(\calX) < \infty$ for all $x \in \calX$, corresponds to the Markov jump process with generator
\begin{displaymath}
  L f(x) = \int_\calX (f(y) - f(x))Q_x(dy).
\end{displaymath}
We will however see that examples of this nature appear also in spaces which are not necessarily discrete, e.g., on the Poisson space and for general product spaces endowed with Glauber type dynamics.

Let us also note that in some applications one considers $\Gamma$ which does not correspond to a Markov process for which $\mu$ is the invariant measure, but for instance to a Markov process reversible with respect to some other measure of reference. Examples of this kind can be found, e.g., in~\cite{MR3926125}, where the authors consider log-Sobolev inequalities for the Ising model as well as quenched log-Sobolev inequalities for the Sherrington--Kirkpatrick model with $\Gamma$ being the carr\'e du champ operator for the Glauber dynamics induced by the product measure on the cube. The Assumption~\ref{a:I} is also satisfied in this situation.

\subsection{Functional inequalities} \label{sec:state-of-art}\label{sec:SOA}

Let us now introduce more precisely the functional inequalities we will investigate. In addition to restating the definitions of Poincar\'e, modified log-Sobolev, and the Beckner inequalities in the abstract setting described in Section~\ref{sec:setting}, we will introduce the usual log-Sobolev inequality and Beckner inequality in its original version from~\cite{MR954373}.

\begin{definition}\label{defi:inequalities} Let $\calE$ be a symmetric, nonnegative definite bilinear form on $\Dom(\calE)\times \Dom(\calE)$, where $\Dom(\calE)$ is a linear subspace of $L_0(\calX,\mu)$.
We will say that:

 (i) the Poincar\'{e} inequality is satisfied if there exists a constant $\lambda>0$ such that
\begin{equation}
\label{eq:Poinc}
	\lambda\Var_\mu(f) \le \calE(f,f)
	\tag{\text{P}}
\end{equation}
for any $f\in \Dom(\calE)$;

 (ii)  the  modified log-Sobolev inequality is satisfied if there exists a constant $\rho_0>0$ such that
\begin{equation}
 \label{eq:mlS-log}
 \rho_0 \Ent_\mu(f)\leq \calE(f,\log f)
 \tag{\text{mLSI}}
\end{equation}
for any  nonnegative $f\in \Dom(\calE)$ such that $\log f\in \Dom(\calE)$;

 (iii)  Beckner's inequality~\eqref{eq:Beckner-GJ} with parameter $p\in(1,2]$ holds if there exists a constant $\alpha_p>0$ such that
\begin{equation}
 \label{eq:Beckner-GJ}
 \alpha_p( \mu(f^p) - \mu(f)^p) \leq \frac{p}{2} \calE(f,f^{p-1})
 \tag{\text{Bec-p}}
\end{equation}
for any nonnegative $f\in \Dom(\calE)$ such that $f^{p-1}\in \Dom(\calE)$; 

  (iv) the log-Sobolev inequality is satisfied if there exists a constant $\rho_1>0$ such that
\begin{equation}
 \label{eq:mlS-sqrts}
 \rho_1 \Ent_\mu(g^2)\leq \calE(g,g)
 \tag{\text{LSI}}
\end{equation}
for any $g \in \Dom(\calE)$;

(v)  Beckner's inequality~\eqref{eq:Beckner-LO} with parameter $q\in[1,2)$ holds if there exists a constant $\beta_q>0$ such that
\begin{equation}
 \label{eq:Beckner-LO}
 \beta_q (\mu(g^2) - \mu(g^q)^{2/q}) \leq (2-q) \calE(g,g)
 \tag{\text{Bec-q}}
\end{equation}
for any nonnegative $g\in \Dom(\calE)$.
\end{definition}

\begin{remark} Since we only assume that $\Dom(\calE) \subseteq L_0(\calX,\mu)$, the inequalities introduced above assert in particular that the left-hand sides are well-defined. In general estimates of the form $A \le B$ in this article should be understood as: \emph{if $B < \infty$, then $A$ is well-defined and the inequality holds}.
\end{remark}
If $\calE$ is a Dirichlet form corresponding to a diffusion, then by a substitution $f = g^2$ and by the chain rule one can easily see that the modified log-Sobolev inequality~\eqref{eq:mlS-log} and the log-Sobolev inequality~\eqref{eq:mlS-sqrts} are equivalent. Similarly Beckner's inequality~\eqref{eq:Beckner-GJ} for given $p$ is equivalent to Beckner's inequality~\eqref{eq:Beckner-LO} for $q=2/p$ (one substitutes $f^p=g^2$).

In general however there is no such equivalence. It remains true that the Poincar\'e inequality is implied by each of the other inequalities. Other known relations between them are presented in Figure~\ref{fig:diagram}.
Below we briefly comment on each of the implications.
Since usually they are proved in the literature in a particular context, not necessarily agreeing with our setting, in Appendix~\ref{app-implications} we also provide their proofs (being simple adjustments of the arguments known from the literature).
In the next section we will prove the remaining implication, between~\eqref{eq:mlS-log} and~\eqref{eq:Beckner-GJ}, in particular proving Theorem~\ref{thm:main-intro}.

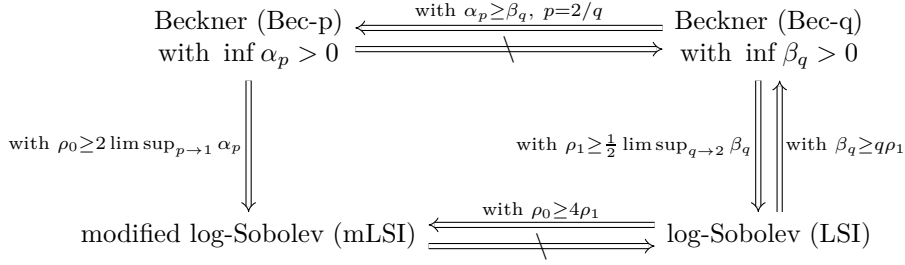
\begin{figure}[h] 
\begin{tikzcd}[arrows=Rightarrow, row sep=5em , column sep=8.5em]
\begin{matrix}
 \text{Beckner}\ \eqref{eq:Beckner-GJ}\\
 \text{with}\ \inf \alpha_p>0
\end{matrix}
     \arrow{d}[swap]{\text{with } \rho_0 \geq 2 \limsup_{p\to 1} \alpha_p}
     \arrow[negated, shift right=0.4em]{r}
&
\begin{matrix}
 \text{Beckner}\ \eqref{eq:Beckner-LO}\\
 \text{with}\ \inf \beta_q>0
\end{matrix}
     \arrow[shift right=0.4em]{l}[swap]{\text{with } \alpha_p\geq \beta_q,\ p=2/q}
     \arrow[shift right=0.4em]{d}[swap]{\text{with } \rho_1 \geq \frac{1}{2} \limsup_{q\to 2} \beta_q}
\\
\text{modified log-Sobolev}\ \eqref{eq:mlS-log}
     \arrow[negated, shift right=0.4em]{r}
&
\text{log-Sobolev}\ \eqref{eq:mlS-sqrts}
     \arrow[shift right=0.4em]{l}[swap]{\text{with } \rho_0 \geq 4\rho_1}
     \arrow[shift right=0.4em]{u}[swap]{\text{with } \beta_q \geq q \rho_1}
\end{tikzcd}
\caption{Arrows denote known implications.}
\label{fig:diagram} 
\end{figure}

The implication $\eqref{eq:mlS-sqrts} \implies~\eqref{eq:mlS-log}$ with $\rho_0\geq 4\rho_1$ was obtained by Bobkov and Tetali in~\cite{MR2283379}. The reverse implication is not true in general: if $\mu = \text{Poiss}(\lambda)$, and one considers $$\mathcal{E}(f,g) = \sum_{n \geq 0} (f(n+1)-f(n))^2\mu(\{n\})$$ corresponding to the birth and death Markov process with generator $Lf(n) = f(n+1)-f(n)+\lambda^{-1} n (f(n-1)-f(n))$, then the log-Sobolev inequality ~\eqref{eq:mlS-sqrts} does not hold (see~\cite{MR1636948}), while the modified log-Sobolev inequality~\eqref{eq:mlS-log} is satisfied (see~\cite{MR1944012}).

As observed in the original article~\cite{MR954373} by Beckner, if~\eqref{eq:Beckner-LO} holds for every $q\in[1,2)$ with $\beta_q$ bounded away from zero, then the log-Sobolev inequality~\eqref{eq:mlS-sqrts} holds as well with $\rho_1 \geq \frac{1}{2}\limsup_{q\to 2^-}\beta_q$. The reverse implication can be found in~\cite{MR1796718}.

The implication $\eqref{eq:Beckner-LO} \implies~\eqref{eq:Beckner-GJ}$ with  $\alpha_p\ge \beta_q$ (where $p = 2/q$) seems to be a part of folklore (we have not been able to find an explicit statement in the literature). It can be easily proved using arguments used to the best of our knowledge for the first time in~\cite{MR1410112} (see Appendix~\ref{app-implications} for details). The reverse implication also holds, but in this case one gets $\beta_q \ge q(2-q)\alpha_p$, so the dependence on constants degenerates when $q\to 2$. Such a degeneration indeed takes place, as the Poisson measure satisfies~\eqref{eq:Beckner-GJ} with $\alpha_p$ separated from zero (which can be easily proved by known results on the two point space~\cite{MR2283379} together with tensorization and Poisson limit theorem, similarly as it was done in~\cite{MR1944012} for the modified log-Sobolev inequality), whereas it cannot satisfy~\eqref{eq:Beckner-LO} with $\beta_q$ separated from zero, since this would imply~\eqref{eq:mlS-sqrts}, which as already mentioned fails for the Poisson measure.

The observation that if~\eqref{eq:Beckner-GJ} holds for every $p\in(1,2]$ with $\alpha_p$ bounded away from zero, then the modified log-Sobolev inequality ~\eqref{eq:mlS-log} holds with $\rho_0 \geq 2\limsup_{p\to 1^+}\alpha_p$ can be found, e.g., in~\cite{MR2283379} or~\cite{MR3078024}  (to see this divide both sides of~\eqref{eq:Beckner-GJ}  by $p-1$ and take $p\to 1^+$).

One can thus see that to complete the above diagram one should verify whether $\eqref{eq:mlS-log} \implies~\eqref{eq:Beckner-GJ}$ with $\inf_{p\in (1,2]} \alpha_p > 0$. We will establish this implication in the following section.

\section{From modified log-Sobolev to Beckner's inequalities}\label{sec:mLStoBec}

\subsection{Main result}
Throughout this section we assume that we are in the setting described in Section~\ref{sec:setting}, in particular that Assumption~\ref{a:I} holds.
The next theorem contains the precise statement of the result announced above, in Theorem~\ref{thm:main-intro}.

\begin{theorem}\label{thm:main:mLS_to_Bec}
 Let $\mu$ be a probability measure which satisfies the modified log-Sobolev inequality~\eqref{eq:mlS-log} with constant $\rho_0>0$.
 For  $p\in(1,2]$ and $\theta\in(0,1)$ denote
 \begin{align*}
  k(p,\theta) &\coloneqq \Bigl(1 - \frac{2\bigl((1+ \theta)^{p}-1 \bigr)}{p(p-1)(1-\theta)^2} \Bigr)
  \cdot		\frac{\theta^{p-1}}{e^{p-1}(1+\theta)^{p-1}},\\
  K_p &\coloneqq
  \max\Bigl\{
  	(1-1/p);\
  	\frac{p}{2}\cdot\!\sup_{\theta\in(0,1)} k(p,\theta) \Bigr\}.  		
 \end{align*}
  Then, for  any $p\in (1,2]$, $\mu$ satisfies the Beckner inequality~\eqref{eq:Beckner-GJ}  with constant $\alpha_p \ge K_p \rho_0$.

 Moreover, $\lim_{p\to 1^+} K_p = \lim_{p\to 2^-} K_p = 1/2$ and $\inf_{p\in(1,2]} K_p \geq 0.17$. In particular $\alpha_p \ge \rho_0/6$.
\end{theorem}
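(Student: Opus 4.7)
The plan is to obtain $\alpha_p \ge K_p \rho_0$ by proving the two candidate lower bounds appearing in the definition of $K_p$ separately, then taking their maximum and verifying the asymptotics and the numerical lower bound.

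For the first candidate, $\alpha_p \ge (1-1/p)\rho_0$, I would proceed as follows. Linearizing the modified log-Sobolev inequality~\eqref{eq:mlS-log} around the constant function (plugging in $f=1+\varepsilon g$, expanding to second order in $\varepsilon$, and letting $\varepsilon\to 0$) yields the Poincar\'e inequality~\eqref{eq:Poinc} with constant $\lambda\ge \rho_0/2$. Next, one verifies the elementary pointwise inequality
\[
 (a^{p/2}-b^{p/2})^2 \le \frac{p^2}{4(p-1)}\,(a-b)(a^{p-1}-b^{p-1}),\qquad a,b>0,\ p\in(1,2],
\]
which reduces by homogeneity to a one-variable inequality in $t=a/b$, the extremal ratio being attained as $t\to 1$. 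By Assumption~\ref{a:I} this lifts to $\calE(f^{p/2},f^{p/2})\le \frac{p^2}{4(p-1)}\calE(f,f^{p-1})$. Applying Poincar\'e to $f^{p/2}$ together with Jensen's inequality $\mu(f^{p/2})^2\le \mu(f)^p$ (valid since $p/2\le 1$) gives
\[
 \frac{\rho_0}{2}\bigl(\mu(f^p)-\mu(f)^p\bigr) \le \calE(f^{p/2},f^{p/2}) \le \frac{p^2}{4(p-1)}\calE(f,f^{p-1}),
\]
which rearranges to the claimed $\alpha_p \ge (1-1/p)\rho_0$.

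For the second, more delicate candidate $\alpha_p \ge (p/2) k(p,\theta)\rho_0$, the structure of $k(p,\theta)$ strongly suggests a threshold argument with scale $\theta$: after normalizing $\mu(f)=1$, one splits the analysis around the mean at the levels $1\pm\theta$. The factor $(1-\theta)^2$ in the denominator of the first bracket of $k(p,\theta)$ originates from a second-order Taylor expansion of $x\mapsto x^p$ on $[1-\theta,1+\theta]$, using the monotonicity $t^{p-2}\le (1-\theta)^{p-2}$ for $p\le 2$ to majorize the Taylor remainder, while the subtracted term $\tfrac{2((1+\theta)^p-1)}{p(p-1)(1-\theta)^2}$ corresponds to the loss in bounding the bulk contribution $\{f\in[1-\theta,1+\theta]\}$. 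The outer factor $[\theta/(e(1+\theta))]^{p-1}$ should arise from estimating the tail contribution: there one needs a pointwise inequality comparing $(a^s-b^s)(\log a-\log b)$ to $(a-b)(a^{p-1}-b^{p-1})$ in a regime where the arguments are restricted to a band of relative width $\theta$, and the prefactor $e^{-(p-1)}$ plausibly enters through the bound $1+\theta\le e^\theta$ used to trade a logarithm for the threshold. Concretely, one applies~\eqref{eq:mlS-log} to a truncation or smoothed level-set version of $f$ at height $(1+\theta)\mu(f)$, and invokes Assumption~\ref{a:I} to pass from $\calE(\cdot,\log\cdot)$ to $\calE(f,f^{p-1})$ on both the bulk and the tail, so that the two contributions combine into the stated constant.

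Once both bounds are in place, $K_p$ is just their maximum. The limit $K_p\to 1/2$ as $p\to 2^-$ is immediate from $1-1/p\to 1/2$, while $K_p\to 1/2$ as $p\to 1^+$ follows by taking $\theta=\theta(p)\to 0$ slowly and expanding $(1+\theta)^p-1$ to first order in $p-1$, which drives $k(p,\theta)\to 1$ and hence $(p/2)k(p,\theta)\to 1/2$. The uniform bound $\inf_{p\in(1,2]} K_p \ge 0.17$, which implies $\alpha_p \ge \rho_0/6$ since $1/6<0.17$, is a finite numerical computation: on $[p_0,2]$ with $1-1/p_0=0.17$ (so $p_0\approx 1.205$) one uses the first candidate directly, while on $(1,p_0]$ one optimizes $\theta$ in $k(p,\theta)$ and checks the resulting univariate inequality. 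The main obstacle is the second paragraph: identifying the precise truncation fed into the mLSI and carrying out the bookkeeping so that the constant matches $k(p,\theta)$ exactly—in particular, producing the factor $e^{p-1}$ and cleanly decoupling the bulk and tail via pointwise comparison lemmas of the form $(a^s-b^s)(\log a-\log b)\lesssim (a-b)(a^{p-1}-b^{p-1})$ subject to $a/b$ lying in the range prescribed by $\theta$.
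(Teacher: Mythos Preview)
Your first paragraph contains a sign error: for $p\in(1,2]$ the map $x\mapsto x^{p/2}$ is concave, so Jensen gives $\mu(f^{p/2})\ge \mu(f)^{p/2}$, i.e.\ $\mu(f^{p/2})^2\ge \mu(f)^p$, which is the reverse of what you claim. The paper bypasses this by inserting the covariance: $\mu(f^p)-\mu(f)^p\le \Cov_\mu(f,f^{p-1})\le \Var_\mu(f^{p/2})$, using the \emph{lower} bound $(a-b)(a^{p-1}-b^{p-1})\le(a^{p/2}-b^{p/2})^2$ pointwise to pass from the covariance to the variance. After this repair your route to $\alpha_p\ge(1-1/p)\rho_0$ coincides with the paper's Proposition~\ref{prop:Poinc-to-Bec}.

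The second paragraph, however, misses the actual mechanism and your admitted ``main obstacle'' is not a detail---it is the heart of the proof. The paper does \emph{not} apply~\eqref{eq:mlS-log} to a single truncated function and compare $\calE(\cdot,\log\cdot)$ to $\calE(f,f^{p-1})$ directly. Instead it first proves an intermediate estimate (Proposition~\ref{P:intf/minf}): for bounded $f$ with $\inf f>0$,
\[
 \rho_0\bigl(\mu(f^p)-\mu(f)^p\bigr)\le\Bigl(e\,\frac{\mu(f)}{\inf f}\Bigr)^{p-1}\calE(f,f^{p-1}).
\]
This is obtained by an ODE-type argument in the exponent $s\in[1,p]$: set $u(s)=\mu(f^s)-\mu(f)^s$ and $v(s)=\calE(f,f^{s-1})$, use the variational formula for entropy to get $u'(s)-\log(\mu f)\,u(s)\le \Ent_\mu(f^s)$, then apply~\eqref{eq:mlS-log} to $f^s$ for each $s$. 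The crucial pointwise lemma is not the one you guess but rather
\[
 (a^s-b^s)(\log a-\log b)\le (a-b)(a^{s-1}\log a - b^{s-1}\log b),\qquad a,b\ge e,
\]
which converts $s\,\calE(f^s,\log f)$ into $s\,v'(s)$; integrating $\rho_0(u(s)\mu(f)^{1-s})'\le v'(s)$ over $[1,p]$ gives the proposition. The factor $e^{p-1}$ comes from the normalization $\inf f=e$ needed for this pointwise lemma to apply, not from $1+\theta\le e^\theta$.

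Only then does the threshold $\theta$ enter, and via a \emph{lower} truncation $g=\max(f,\theta\mu(f))$ (not an upper truncation at $(1+\theta)\mu(f)$ as you suggest), so that $\mu(g)/\inf g\le(1+\theta)/\theta$ and Proposition~\ref{P:intf/minf} applies to $g$ with the right constant. The defect $\mu(f^p)-\mu(f)^p-(\mu(g^p)-\mu(g)^p)$ is controlled by $\mu(f)^p P_\theta((1+\theta)^p-1)$ with $P_\theta=\PP(f<\theta\mu(f))$, and then Chebyshev together with a second-order Taylor lower bound $\mu(f^p)-\mu(f)^p\ge \tfrac{p(p-1)}{2}\mu(f)^{p-2}\mu((\mu(f)-f)_+^2)$ gives $P_\theta\mu(f)^p\le \tfrac{2(\mu(f^p)-\mu(f)^p)}{p(p-1)(1-\theta)^2}$. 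Reabsorbing this defect into the left-hand side produces exactly $k(p,\theta)$. Your attempt to split at $1\pm\theta$ and compare $(a^s-b^s)(\log a-\log b)$ to $(a-b)(a^{p-1}-b^{p-1})$ does not lead to this structure.
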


Note that this result is sharp in the most interesting regime, $p\to 1^+$, since if the Beckner inequality~\eqref{eq:Beckner-GJ} holds with some  constants $\alpha_p$, then the modified log-Sobolev inequality~\eqref{eq:mlS-log} holds with $\rho_0 \ge 2 \limsup_{p\to 1^+} \alpha_p$.
Combining this observation and the above theorem yields immediately the following corollary, which in particular implies Theorem \ref{thm:main-intro} from the Introduction.

\begin{corollary}
\label{cor:constants_in_main_thm}
The modified log-Sobolev inequality~\eqref{eq:mlS-log} holds with some constant $\rho_0>0$ if and only if the Beckner inequality~\eqref{eq:Beckner-GJ} holds for every $p\in(1,2]$ with some $\alpha_p$ bounded away from zero.
Moreover,  the optimal constants with which they  hold satisfy $\rho_0^\opt(\mu) = 2\lim_{p\to 1^+}\alpha_p^\opt(\mu)$.
\end{corollary}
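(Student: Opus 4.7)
The plan is to derive Corollary~\ref{cor:constants_in_main_thm} by pairing the quantitative bound of Theorem~\ref{thm:main:mLS_to_Bec} with the classical folklore implication from~\eqref{eq:Beckner-GJ} back to~\eqref{eq:mlS-log} discussed in Section~\ref{sec:SOA}. The qualitative equivalence is then immediate; the substance of the corollary is the quantitative match of optimal constants, which I would obtain by showing that $\liminf_{p\to 1^+}\alpha_p^{\opt}(\mu)$ and $\limsup_{p\to 1^+}\alpha_p^{\opt}(\mu)$ coincide and both equal $\rho_0^{\opt}(\mu)/2$.

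For the lower bound on the limiting Beckner constant I would plug $\rho_0=\rho_0^{\opt}(\mu)$ into Theorem~\ref{thm:main:mLS_to_Bec} to get $\alpha_p^{\opt}(\mu)\ge K_p\rho_0^{\opt}(\mu)$ for every $p\in(1,2]$. Since $\inf_{p\in(1,2]}K_p\ge 0.17>0$, this already gives the ``bounded away from zero'' half of the equivalence, and $\lim_{p\to 1^+}K_p=\tfrac12$ together with passage to $\liminf$ yields
\[
\liminf_{p\to 1^+}\alpha_p^{\opt}(\mu)\ge \tfrac12\,\rho_0^{\opt}(\mu).
\]

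For the matching upper bound I would run the standard $\limsup$ argument. Fix a nonnegative $f\in\Dom(\calE)$ with $\log f\in\Dom(\calE)$ and $\Ent_\mu(f)>0$. Using $\calE(f,1)=0$ (a consequence of Assumption~\ref{a:I}) together with bilinearity, rewrite~\eqref{eq:Beckner-GJ} as
\[
\alpha_p^{\opt}(\mu)\cdot\frac{\mu(f^p)-\mu(f)^p}{p-1}\le \frac{p}{2}\,\calE\!\left(f,\frac{f^{p-1}-1}{p-1}\right).
\]
As $p\to 1^+$ one has $(f^{p-1}-1)/(p-1)\to\log f$ and $(\mu(f^p)-\mu(f)^p)/(p-1)\to\Ent_\mu(f)$, so (modulo the passage to the limit inside $\calE$) dividing by $\Ent_\mu(f)$, taking $\limsup$, and finally taking infimum over admissible $f$ yields
\[
\limsup_{p\to 1^+}\alpha_p^{\opt}(\mu)\le \tfrac12\,\rho_0^{\opt}(\mu).
\]
Squeezing the two bounds forces $\lim_{p\to 1^+}\alpha_p^{\opt}(\mu)$ to exist and equal $\rho_0^{\opt}(\mu)/2$, which is precisely the statement of the corollary.

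The only delicate point I expect, rather than a genuine obstacle, is justifying $\calE(f,(f^{p-1}-1)/(p-1))\to\calE(f,\log f)$ inside the abstract form $\calE$ of Assumption~\ref{a:I}. I would handle this by first restricting to $f$ bounded above and below away from zero, so that the approximants $(f^{p-1}-1)/(p-1)$ are uniformly bounded and converge uniformly to $\log f$, and then arguing by density; since optimal constants in both~\eqref{eq:Beckner-GJ} and~\eqref{eq:mlS-log} are already determined by such regular test functions (as invoked also in the folklore references cited in Section~\ref{sec:SOA}), this suffices.
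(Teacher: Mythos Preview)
Your proposal is correct and follows essentially the same route as the paper: the paper states that the corollary follows immediately by combining Theorem~\ref{thm:main:mLS_to_Bec} (giving $\alpha_p^{\opt}\ge K_p\rho_0^{\opt}$ and hence $\liminf_{p\to1^+}\alpha_p^{\opt}\ge\tfrac12\rho_0^{\opt}$) with the folklore implication $\rho_0^{\opt}\ge 2\limsup_{p\to1^+}\alpha_p^{\opt}$ from Section~\ref{sec:SOA}, and then squeezing. Your discussion of the technical point---reducing to bounded $f$ separated from zero before passing to the limit inside $\calE$---also matches what the paper does in the appendix proof of the Beckner-to-mLSI implication (via Lemma~\ref{L:v'formula} and truncation).
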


\subsection{Auxiliary lemmas}
In this section we gather technical lemmas to be used in the proof of Theorem~\ref{thm:main:mLS_to_Bec}.

Since we work in the abstract setting described in Section~\ref{sec:setting}, we need the following lemma which asserts that it suffices to check the validity of the inequality~\eqref{eq:Beckner-GJ} for bounded functions only. Its proof, as well as proofs of some other auxiliary lemmas, is deferred to Appendix~\ref{app-auxiliary-lemmas}.
\begin{lemma}\label{L:Bec=>finite-mean}
	If for some $p\in(1,2]$ the the Beckner inequality~\eqref{eq:Beckner-GJ} is satisfied (with some constant $\alpha_p>0$) for all bounded nonnegative functions $f$ such that $f,f^{p-1}\in Dom(\calE)$, then it is satisfied with the same constant for all nonnegative functions $f$ such that $f,f^{p-1}\in Dom(\calE)$.
	In particular, for all such functions $\mu(f^p)<\infty$.
\end{lemma}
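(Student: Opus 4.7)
I would prove the lemma by truncation. Fix a nonnegative $f$ with $f,f^{p-1}\in\Dom(\calE)$ and, for $n\in\NN$, set $f_n\coloneqq f\wedge n$, which is bounded. Since $x\mapsto x\wedge n$ is a contraction, Assumption~\ref{a:I} gives $f_n\in\Dom(\calE)$. Because $p-1\in(0,1]$, the map $x\mapsto x^{p-1}$ is non-decreasing on $[0,\infty)$, so $f_n^{p-1}=f^{p-1}\wedge n^{p-1}$; the same contraction argument applied to $f^{p-1}\in\Dom(\calE)$ yields $f_n^{p-1}\in\Dom(\calE)$.

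The hypothesis applied to $f_n$ gives $\alpha_p(\mu(f_n^p)-\mu(f_n)^p)\le\tfrac{p}{2}\calE(f_n,f_n^{p-1})$. To control the right-hand side uniformly in $n$ I would invoke Assumption~\ref{a:I} a second time, via the pointwise comparison
\[
\bigl(f_n(x)-f_n(y)\bigr)\bigl(f_n^{p-1}(x)-f_n^{p-1}(y)\bigr)\le \bigl(f(x)-f(y)\bigr)\bigl(f^{p-1}(x)-f^{p-1}(y)\bigr),\qquad x,y\in\calX.
\]
Assuming without loss of generality $f(x)\ge f(y)$, both products are nonnegative (both truncations preserve order on $[0,\infty)$); a split into the three cases $f(x),f(y)\le n$, $f(y)\le n\le f(x)$, and $n\le f(y)$ shows that each factor on the left is dominated in absolute value by the corresponding factor on the right. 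Assumption~\ref{a:I} then gives $\calE(f_n,f_n^{p-1})\le\calE(f,f^{p-1})$, hence
\[
\mu(f_n^p)\le\mu(f_n)^p+K,\qquad K\coloneqq\tfrac{p}{2\alpha_p}\calE(f,f^{p-1})<\infty,
\]
uniformly in $n$.

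I would finish by letting $n\to\infty$: monotone convergence yields $\mu(f_n)\uparrow\mu(f)$ and $\mu(f_n^p)\uparrow\mu(f^p)$ in $[0,\infty]$. When $\mu(f)<\infty$, the uniform bound passes to the limit, giving simultaneously $\mu(f^p)<\infty$ and Beckner's inequality for $f$. The main obstacle I anticipate is excluding the degenerate possibility $\mu(f)=\infty$, where the Beckner left-hand side is a priori of indeterminate form $\infty-\infty$. To handle it, I would combine the subadditivity estimate $\mu(f_n^p)^{1/p}\le\mu(f_n)+K^{1/p}$ (from $(a+b)^{1/p}\le a^{1/p}+b^{1/p}$, valid for $p\ge 1$) with Jensen's inequality $\mu(f_n)\le\mu(f_n^p)^{1/p}$, which shows that $\mu(f_n)$ and $\|f_n\|_p$ stay within $K^{1/p}$ of one another, reducing the integrability question to bounding either sequence; an auxiliary argument, likely via a Poincar\'e-type consequence of the Beckner hypothesis obtained by linearization, should then preclude both from tending to infinity.
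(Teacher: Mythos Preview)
Your truncation argument and the pointwise comparison
\[
\bigl(f_n(x)-f_n(y)\bigr)\bigl(f_n^{p-1}(x)-f_n^{p-1}(y)\bigr)\le \bigl(f(x)-f(y)\bigr)\bigl(f^{p-1}(x)-f^{p-1}(y)\bigr)
\]
are correct and match the paper exactly. The reduction to showing $\mu(f)<\infty$ is also the right one.

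The gap is that you have not actually proved $\mu(f)<\infty$; you correctly flag it as the main obstacle and then defer to ``an auxiliary argument, likely via a Poincar\'e-type consequence\ldots''. Your observation that $\mu(f_n)$ and $\|f_n\|_p$ stay within $K^{1/p}$ of each other does not by itself preclude both from diverging, as you note, so the work remains to be done. This is precisely the substantive step of the lemma.

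The paper handles it directly, without passing through Poincar\'e. From Beckner applied to $f_t$ one gets $\mu(f_t^p)\le\frac{1+\alpha_p}{\alpha_p}\mu(f_t)^p$ whenever $\mu(f_t)^p>\calE(f,f^{p-1})$, and then the Paley--Zygmund inequality yields
\[
\mu\bigl(f>\tfrac12\mu(f_t)\bigr)\ge\mu\bigl(f_t>\tfrac12\mu(f_t)\bigr)\ge 2^{-\frac{p}{p-1}}\Bigl(\tfrac{\alpha_p}{1+\alpha_p}\Bigr)^{\frac{1}{p-1}},
\]
a positive lower bound independent of $t$. Hence $\tfrac12\mu(f_t)$ is bounded by a fixed quantile of $f$, so $\sup_t\mu(f_t)<\infty$. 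Your suggested route via Poincar\'e can be made to work (derive Poincar\'e for bounded functions from Beckner by linearization, get $\Var(f_n)\le C$ uniformly, then use Chebyshev to show $\mu(f\ge\mu(f_n)/2)\to 1$ if $\mu(f_n)\to\infty$, contradicting $f<\infty$ a.s.), but this is essentially the same quantile idea with an extra detour, and you would still need to write it out.
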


We will also need the following two well-known lemmas.
\begin{lemma}[{\cite[Lemma 14.4]{MR3185193}}]\label{L:BLM-entropic-inequality}
	For any nonnegative $f\in L_p(\calX,\mu)$ and $p\in (1,2]$
 \[
		\mu (f^p) - \mu(f)^p \le \Cov_\mu(f,f^{p-1}).
 \]
\end{lemma}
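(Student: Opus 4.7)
The plan is to reduce the inequality to a one-line application of Jensen's inequality after expanding the covariance.

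First I would rewrite the right-hand side explicitly:
\[
\Cov_\mu(f,f^{p-1}) = \mu(f\cdot f^{p-1}) - \mu(f)\mu(f^{p-1}) = \mu(f^p) - \mu(f)\mu(f^{p-1}).
\]
Subtracting $\mu(f^p)$ from both sides of the desired inequality and rearranging, the statement is equivalent to
\[
\mu(f^{p-1}) \le \mu(f)^{p-1}.
\]

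Next I would verify this by Jensen's inequality. Since $p\in(1,2]$, the exponent $p-1$ lies in $(0,1]$, so the map $x\mapsto x^{p-1}$ is concave on $[0,\infty)$. Applying Jensen's inequality to $f\ge 0$ on the probability space $(\calX,\calB,\mu)$ yields the desired bound. The hypothesis $f\in L_p(\calX,\mu)$ together with $\mu$ being a probability measure guarantees that $\mu(f)<\infty$ (since $L_p\subseteq L_1$) and that $\mu(f^{p-1})<\infty$ (since $f^{p-1}\le 1+f$ pointwise for $f\ge 0$ and $p-1\in(0,1]$), so all quantities in sight are well-defined.

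There is essentially no obstacle here; the lemma is a direct consequence of concavity of $x\mapsto x^{p-1}$, and the only mild care required is in checking integrability so that the covariance actually makes sense under the stated hypothesis $f\in L_p(\calX,\mu)$.
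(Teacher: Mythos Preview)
Your proof is correct and is the standard argument: the paper itself does not prove this lemma but cites it from \cite{MR3185193}, where the same Jensen-inequality reduction (concavity of $x\mapsto x^{p-1}$ for $p-1\in(0,1]$) is used. One tiny cosmetic point: your ``equivalent to $\mu(f^{p-1})\le\mu(f)^{p-1}$'' assumes $\mu(f)>0$, but the case $\mu(f)=0$ is trivial since then $f=0$ $\mu$-a.e.
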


\begin{lemma}[{\cite[Lemma 2.6]{MR1410112}}]\label{L:ab-inequality}
 For $p\in(1,2]$, $a,b,>0$,
 \[
  (a-b)(a^{p-1}-b^{p-1}) \leq (a^{p/2}-b^{p/2})^2 \leq \frac{p^2}{4(p-1)} (a-b)(a^{p-1}-b^{p-1}).
 \]
\end{lemma}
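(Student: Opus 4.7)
The plan is to reduce the inequality, which is symmetric in $(a,b)$ and homogeneous of degree $p$, to a one-variable statement, and then prove the two sides by (i) Cauchy--Schwarz and (ii) log-concavity.

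First, since both sides are symmetric in $a,b$, I would assume $a>b>0$ (the case $a=b$ is trivial). Both $(a-b)(a^{p-1}-b^{p-1})$, $(a^{p/2}-b^{p/2})^2$ scale as $\lambda^p$ under $(a,b)\mapsto(\lambda a,\lambda b)$, so setting $\lambda=1/b$ and $t:=a/b>1$ reduces the claim to proving, for all $t>1$ and $p\in(1,2]$,
\[
(t-1)(t^{p-1}-1)\ \le\ (t^{p/2}-1)^2\ \le\ \frac{p^2}{4(p-1)}(t-1)(t^{p-1}-1).
\]

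For the upper bound, I would use the integral representation $t^r-1=r\int_1^t s^{r-1}\,ds$ (valid for any $r>0$) and apply the Cauchy--Schwarz inequality to
\[
t^{p/2}-1=\frac{p}{2}\int_1^t 1\cdot s^{p/2-1}\,ds,
\]
obtaining $\bigl(\int_1^t s^{p/2-1}\,ds\bigr)^2\le(t-1)\int_1^t s^{p-2}\,ds=\tfrac{1}{p-1}(t-1)(t^{p-1}-1)$. Multiplying by $p^2/4$ yields the right-hand inequality.

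For the lower bound, set $h(x):=t^x-1$ for $x>0$; then
\[
(t-1)(t^{p-1}-1)=h(1)\,h(p-1),\qquad (t^{p/2}-1)^2=h(p/2)^2,
\]
and $p/2=\tfrac{1}{2}\bigl(1+(p-1)\bigr)$. A direct computation gives
\[
\bigl(\log h\bigr)''(x)=\frac{-t^x(\log t)^2}{(t^x-1)^2}\le 0,
\]
so $\log h$ is concave on $(0,\infty)$, whence $\tfrac{1}{2}\bigl(\log h(1)+\log h(p-1)\bigr)\le\log h(p/2)$, which is exactly the desired lower bound.

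The main obstacle is recognising the correct structural argument for the lower inequality; the upper one is a textbook Cauchy--Schwarz, but the lower one does not come out of a direct Cauchy--Schwarz, and spotting that it is just log-concavity of $h(x)=t^x-1$ evaluated at the midpoint of $1$ and $p-1$ is the crucial step. Everything else is routine calculus and scaling.
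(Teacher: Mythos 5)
Your proof is correct: the reduction by symmetry and $p$-homogeneity to the one-variable inequality for $t=a/b>1$ is legitimate, the Cauchy--Schwarz step gives exactly the factor $\frac{p^2}{4(p-1)}$ for the upper bound, and the computation $(\log h)''(x)=-t^x(\log t)^2/(t^x-1)^2\le 0$ for $h(x)=t^x-1$ is right, so midpoint concavity of $\log h$ at $\frac{1}{2}\bigl(1+(p-1)\bigr)=p/2$ does yield $(t-1)(t^{p-1}-1)\le (t^{p/2}-1)^2$. Note, however, that the paper does not prove this lemma at all: it is quoted verbatim from \cite[Lemma 2.6]{MR1410112}, so there is no internal argument to compare against; what you have produced is a self-contained substitute for the citation. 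For context, an alternative classical route (in the spirit of what the paper does in its appendix for the related Lemma~\ref{L:ab-inequality_for_alpha_p}) is to write the difference quotients as averages, $\frac{t^{r}-1}{t-1}=\frac{r}{t-1}\int_1^t s^{r-1}\,ds$, and invoke Chebyshev's integral inequality for the pair $s^{p/2-1}$, $s^{p/2-1}$ (equivalently Cauchy--Schwarz) to get the upper bound, while the lower bound is precisely the log-concavity statement you identified, sometimes phrased as $(A^{\alpha}-B^{\alpha})(A^{\beta}-B^{\beta})\le (A-B)^2$ for $\alpha+\beta=2$ after substituting $A=a^{p/2}$, $B=b^{p/2}$. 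So your argument is both valid and essentially the standard one, organized slightly differently.
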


Another point-wise inequality to be used in the proof of Theorem~\ref{thm:main:mLS_to_Bec} is given in the next lemma.

\begin{lemma}
\label{L:u'v'comp}
	If $a,b\ge e$, then for all $p\ge 1$,
	\[
		(a^p-b^p)(\log a - \log b) \le (a-b)(a^{p-1}\log a-b^{p-1}\log b)\,.
	\]
\end{lemma}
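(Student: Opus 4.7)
The plan is to prove this by a direct algebraic manipulation after using symmetry. Both sides of the inequality are symmetric under swapping $a$ and $b$, so I may assume without loss of generality that $a \ge b \ge e$.

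The key step is to expand the difference $\text{RHS} - \text{LHS}$ and collect terms. Computing
\begin{align*}
\text{RHS} - \text{LHS}
&= \bigl(a^p\log a - ab^{p-1}\log b - ba^{p-1}\log a + b^p \log b\bigr) \\
&\quad- \bigl(a^p\log a - a^p\log b - b^p \log a + b^p\log b\bigr) \\
&= a\log b\,(a^{p-1} - b^{p-1}) - b\log a\,(a^{p-1}-b^{p-1}) \\
&= (a^{p-1}-b^{p-1})(a\log b - b\log a).
\end{align*}
So the inequality is equivalent to the nonnegativity of this last product.

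For $p \ge 1$ and $a \ge b$ the factor $a^{p-1}-b^{p-1}$ is nonnegative. The remaining factor $a\log b - b\log a$ equals $ab\bigl(\tfrac{\log b}{b} - \tfrac{\log a}{a}\bigr)$, so it suffices to show that the function $x \mapsto \frac{\log x}{x}$ is nonincreasing on $[e,\infty)$. This is immediate: its derivative $(1-\log x)/x^2$ is nonpositive for $x \ge e$. Hence for $a \ge b \ge e$ we get $\frac{\log a}{a} \le \frac{\log b}{b}$, which gives $a\log b - b\log a \ge 0$ and completes the proof.

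I do not expect any real obstacle: the proof is a short computation plus a one-line monotonicity argument. The only delicate point is recognizing that the threshold $e$ enters exactly because it is where $x \mapsto (\log x)/x$ switches from increasing to decreasing; this is why the hypothesis $a,b \ge e$ is the natural one for the stated identity.
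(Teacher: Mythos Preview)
Your proof is correct and is essentially the same as the paper's: both reduce the inequality to the identity $\text{RHS}-\text{LHS}=(a^{p-1}-b^{p-1})(a\log b - b\log a)=-ab(a^{p-1}-b^{p-1})\bigl(\tfrac{\log a}{a}-\tfrac{\log b}{b}\bigr)$ and then invoke the monotonicity of $x\mapsto(\log x)/x$ on $[e,\infty)$. The only cosmetic difference is that you first impose $a\ge b$ by symmetry, whereas the paper observes directly that the two factors always have opposite signs.
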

\begin{proof}
	The inequality is equivalent to $ab(a^{p-1}-b^{p-1})(\frac{\log a}{a}- \frac{\log b}{b})\le 0$, which follows since the function
	$\frac{\log x}{x}$ is decreasing for $x\ge e$.
\end{proof}

The last lemma we need  is a simple fact concerning differentiability of the bilinear form. To verify that it holds just under Assumption~\ref{a:I} we provide its complete proof in Appendix~\ref{app-auxiliary-lemmas}.

\begin{lemma}\label{L:v'formula}
	Let $s \ge 1$.
	Assume that $f\in Dom(\calE)$ is bounded and satisfies $0<\inf f$.
	Then $v(s) = \calE(f,f^{s-1})$ is well-defined, differentiable for $s\in(1,\infty)$, right-differentiable at $s=1$ and its derivative is given by the (also well-defined) formula $v'(s) = \calE(f,f^{s-1}\log f)$ for $s\in[1,\infty)$.
\end{lemma}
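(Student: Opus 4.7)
The plan is to combine the contraction principle from Assumption~\ref{a:I} with the Cauchy--Schwarz inequality so as to reduce the differentiation of $v$ to a Lipschitz estimate for a single auxiliary function. To set things up, since $f$ is bounded with $0<\inf f$, there exist $0<m\le M$ with $m\le f\le M$ $\mu$-a.s. For each $s\ge 1$ the map $\varphi_s(x):=x^{s-1}$ restricted to $[m,M]$ is Lipschitz; I would extend it to a Lipschitz function on all of $\RR$ (say, constant outside $[m,M]$) and invoke the contraction statement in Assumption~\ref{a:I} to conclude $f^{s-1}=\varphi_s\circ f\in\Dom(\calE)$. The same reasoning applied to $x\mapsto x^{s-1}\log x$ shows $f^{s-1}\log f\in\Dom(\calE)$. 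Hence $v(s)$ and the candidate derivative $\calE(f,f^{s-1}\log f)$ are both well-defined for every $s\ge 1$.

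Next, for $s\ge 1$ fixed and $|\epsilon|$ small enough (with $\epsilon>0$ when $s=1$), set
\[
  g_\epsilon(x):=x^{s+\epsilon-1}-x^{s-1}-\epsilon\,x^{s-1}\log x.
\]
Using bilinearity of $\calE$ and the fact that $\calE$ vanishes on constants,
\[
  \frac{v(s+\epsilon)-v(s)}{\epsilon}-\calE(f,f^{s-1}\log f)=\frac{1}{\epsilon}\calE(f,g_\epsilon\circ f).
\]
It thus suffices to show that the right-hand side tends to $0$ as $\epsilon\to 0$ (or $\epsilon\to 0^+$ when $s=1$). The Cauchy--Schwarz inequality (which holds for any symmetric nonnegative definite bilinear form) gives $|\calE(f,g_\epsilon\circ f)|^2\le\calE(f,f)\,\calE(g_\epsilon\circ f,g_\epsilon\circ f)$. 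Letting $L_\epsilon:=\sup_{x\in[m,M]}|g_\epsilon'(x)|$ denote the Lipschitz constant of $g_\epsilon$ on $[m,M]$ and extending $g_\epsilon$ to an $L_\epsilon$-Lipschitz map on $\RR$, Assumption~\ref{a:I} yields $\calE(g_\epsilon\circ f,g_\epsilon\circ f)\le L_\epsilon^2\,\calE(f,f)$, and hence $|\calE(f,g_\epsilon\circ f)|\le L_\epsilon\,\calE(f,f)$.

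It remains to control $L_\epsilon$. A direct computation gives
\[
  g_\epsilon'(x)=(s-1)\,x^{s-2}(x^\epsilon-1-\epsilon\log x)+\epsilon\,x^{s-2}(x^\epsilon-1),
\]
and applying Taylor's formula to $\epsilon\mapsto x^\epsilon=e^{\epsilon\log x}$ around $0$ gives $x^\epsilon-1-\epsilon\log x=O(\epsilon^2)$ and $x^\epsilon-1=O(\epsilon)$, uniformly in $x\in[m,M]$. Therefore $L_\epsilon=O(\epsilon^2)$, which combined with the estimate above yields $\tfrac{1}{\epsilon}|\calE(f,g_\epsilon\circ f)|=O(\epsilon)\to 0$. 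This establishes differentiability of $v$ on $(1,\infty)$ and right-differentiability at $s=1$, with derivative given by the claimed formula.

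The main (routine) step is the uniform $O(\epsilon^2)$ bound on $L_\epsilon$; the factorization $g_\epsilon(x)=x^{s-1}(x^\epsilon-1-\epsilon\log x)$ isolates the $s$-dependence and makes this bound clean once one has uniform control of $x^{s-2}$ on $[m,M]$. Beyond symmetry, non-negativity of $\calE$, and the contraction property of Assumption~\ref{a:I}, no further structural hypothesis on $\calE$ enters the argument.
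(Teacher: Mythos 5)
Your proof is correct and follows essentially the same route as the paper: establish well-definedness via Lipschitz estimates on $[\inf f,\sup f]$ and Assumption~\ref{a:I}, split the difference quotient by bilinearity, and kill the remainder term through a uniform $O(\varepsilon^2)$ Lipschitz bound on the Taylor remainder $x^{s+\varepsilon-1}-x^{s-1}-\varepsilon x^{s-1}\log x$, which is exactly the paper's $\varepsilon\,f^{s-1}h_\varepsilon(f)$ term. The only cosmetic difference is that you transfer the Lipschitz bound to the form via Cauchy--Schwarz plus the quadratic contraction $\calE(g_\varepsilon\circ f,g_\varepsilon\circ f)\le L_\varepsilon^2\,\calE(f,f)$, whereas the paper applies the bilinear comparison property of Assumption~\ref{a:I} directly to get $\abs{\calE(f,g_\varepsilon\circ f)}\le L_\varepsilon\,\calE(f,f)$; both yield the same $O(\varepsilon)$ conclusion.
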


\subsection{Proof of Theorem~\ref{thm:main:mLS_to_Bec}}

Let us start with a simple proposition, which allows to deduce Beckner's inequality~\eqref{eq:Beckner-GJ} from the modified log-Sobolev inequality~\eqref{eq:mlS-log} with constant $\alpha_p$ degenerating as $p\to 1^+$.
\begin{proposition}
\label{prop:Poinc-to-Bec}
If $\mu$ satisfies satisfies the Poincar\'{e}~\eqref{eq:Poinc} inequality, then for all $p\in(1,2]$ it satisfies Beckner's inequality~\eqref{eq:Beckner-GJ} with constants satisfying the relation
 \[
 	\alpha_p \ge 2\frac{p-1}{p}\lambda.
 \]
 In particular, if the modified log-Sobolev inequality~\eqref{eq:mlS-log} holds, then $\alpha_p \ge \frac{p-1}{p}\rho_0$.
\end{proposition}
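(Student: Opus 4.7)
The plan is to deduce Beckner's inequality from the Poincaré inequality by applying the latter to $f^{p/2}$ and then translating the resulting Dirichlet form back into the ``Beckner form'' $\calE(f,f^{p-1})$ via the pointwise comparison of Lemma~\ref{L:ab-inequality}. By Lemma~\ref{L:Bec=>finite-mean} it suffices to consider bounded $f$, and we may further assume $f$ is bounded away from $0$ (extending to the general bounded case by replacing $f$ with $f+\varepsilon$ and letting $\varepsilon\to 0^+$, together with a contraction/truncation argument that places $f^{p/2}$ into $\Dom(\calE)$).

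First, I would apply the Poincar\'e inequality~\eqref{eq:Poinc} to $g=f^{p/2}$:
\[
  \lambda\bigl(\mu(f^p)-\mu(f^{p/2})^2\bigr)\le \calE(f^{p/2},f^{p/2}).
\]
Since $p\in(1,2]$ the map $x\mapsto x^{p/2}$ is concave on $[0,\infty)$, so Jensen's inequality gives $\mu(f^{p/2})^2\le \mu(f)^p$, and therefore
\[
  \lambda\bigl(\mu(f^p)-\mu(f)^p\bigr)\le \calE(f^{p/2},f^{p/2}).
\]

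Next, I would invoke Lemma~\ref{L:ab-inequality} pointwise with $a=f(x)$, $b=f(y)$:
\[
  \bigl(f^{p/2}(x)-f^{p/2}(y)\bigr)^2 \le \frac{p^2}{4(p-1)}\bigl(f(x)-f(y)\bigr)\bigl(f^{p-1}(x)-f^{p-1}(y)\bigr).
\]
Applying Assumption~\ref{a:I} with $f_1=f_2=f^{p/2}$ and $g_1=\frac{p^2}{4(p-1)}f$, $g_2=f^{p-1}$ yields
\[
  \calE(f^{p/2},f^{p/2})\le \frac{p^2}{4(p-1)}\calE(f,f^{p-1}).
\]
Combining the two displays and rearranging gives
\[
  \frac{2(p-1)\lambda}{p}\bigl(\mu(f^p)-\mu(f)^p\bigr)\le \frac{p}{2}\calE(f,f^{p-1}),
\]
which is exactly~\eqref{eq:Beckner-GJ} with $\alpha_p\ge 2(p-1)\lambda/p$, proving the first claim.

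For the ``in particular'' part, I would use the standard fact that~\eqref{eq:mlS-log} with constant $\rho_0$ implies~\eqref{eq:Poinc} with $\lambda = \rho_0/2$: linearizing at $f\equiv 1$ via $f=1+\varepsilon h$, one has $\Ent_\mu(f)=\tfrac{\varepsilon^2}{2}\Var_\mu(h)+o(\varepsilon^2)$ and $\calE(f,\log f)=\varepsilon^2\calE(h,h)+o(\varepsilon^2)$ (using $\calE(1,\cdot)=0$ and bilinearity), so dividing by $\varepsilon^2$ and letting $\varepsilon\to 0$ yields Poincar\'e with $\lambda=\rho_0/2$. Substituting into the previous bound gives $\alpha_p\ge (p-1)\rho_0/p$. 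The main technical point — essentially the only non-routine step — is verifying that $f^{p/2}\in\Dom(\calE)$ under Assumption~\ref{a:I}, which is why one first reduces to $f$ bounded and bounded below by a positive constant; on such functions $x\mapsto x^{p/2}$ can be modified outside the range of $f$ to a contraction and Assumption~\ref{a:I} then places $f^{p/2}$ in the domain.
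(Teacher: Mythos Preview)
Your proof is correct and follows the same route as the paper: apply Poincar\'e to $f^{p/2}$ and use Lemma~\ref{L:ab-inequality} with Assumption~\ref{a:I} to compare $\calE(f^{p/2},f^{p/2})$ with $\calE(f,f^{p-1})$. The only cosmetic differences are that the paper obtains $\mu(f^p)-\mu(f)^p\le\Var_\mu(f^{p/2})$ via Lemmas~\ref{L:BLM-entropic-inequality} and~\ref{L:ab-inequality} rather than Jensen, and it avoids your $\varepsilon$-shift by noting that the map $x^{p-1}\mapsto x^{p/2}$ is Lipschitz on $[0,\sup f]$ (since $p/(2(p-1))\ge 1$), so $f^{p/2}\in\Dom(\calE)$ follows directly from $f^{p-1}\in\Dom(\calE)$.
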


\begin{proof}[Proof of Proposition~\ref{prop:Poinc-to-Bec}]
	Fix $p\in(1,2]$ and take any nonnegative $f$ such that $f,f^{p-1}\in Dom(\calE)$.
  By Lemma~\ref{L:Bec=>finite-mean} we may and do assume that $f$ is bounded so that all the expressions below are well-defined.
	By Assumption~\ref{a:I}, the Lipschitz property of the mapping $x^{p-1}\mapsto x^{p/2}$ on the set $[0,\sup f]$ implies that $f^{p/2}\in \Dom(\calE)$.
	We have $\Cov_\mu(f,g) = \frac{1}{2} \int_\calX\int_\calX (f(x)- f(y))(g(x)-g(y))\mu(dx)\mu(dy)$.
	Using Lemma~\ref{L:BLM-entropic-inequality}, Lemma~\ref{L:ab-inequality} and the Poincar\'e inequality~\eqref{eq:Poinc} we see that
	\begin{align*}
		\lambda(\mu (f^p) - \mu(f)^p )
		\le
		\lambda\Cov_\mu(f,f^{p-1})
		\le
		\lambda\Cov_\mu (f^{p/2},f^{p/2})
		\le
		\calE(f^{p/2},f^{p/2}).
	\end{align*}	
	By Assumption~\ref{a:I} and another application of Lemma~\ref{L:ab-inequality} we conclude that
	\[
		\lambda(\mu (f^p) - \mu(f)^p )
		\le
		\frac{p^2}{4(p-1)}\calE(f,f^{p-1}),
	\]
	which ends the proof.
	The second part follows from the fact that the modified log-Sobolev inequality~\eqref{eq:mlS-log} implies the Poincar\'e inequality~\eqref{eq:Poinc}, see Proposition~\ref{prop:mLSI-to-Poinc} in the Appendix~\ref{app-implications}.
\end{proof}

To handle the case of $p \to 1^+$, we will need the following proposition.

\begin{proposition}
\label{P:intf/minf}
Suppose that the modified log-Sobolev inequality~\eqref{eq:mlS-log} holds with some constant $\rho_0>0$.
	Then for any $p\in(1,2]$ and any bounded $f\in \Dom(\calE)$ such that $\inf f>0$,
	\[
		\rho_0(\mu (f^p) - \mu(f)^p)
		\le
		\left(e\frac{\mu(f)}{\inf f}\right)^{p-1}
		\calE(f,f^{p-1}).
	\]	
\end{proposition}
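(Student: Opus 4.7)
My approach combines a rescaling step with the pointwise Lemma~\ref{L:u'v'comp}, Assumption~\ref{a:I}, the mLSI applied to powers of the rescaled function, and an integration argument. Since both sides of the target inequality are homogeneous of degree $p$ under $f \mapsto \lambda f$ (and $\mu(f)/\inf f$ is invariant under scaling), I may assume $\inf f = 1$. Setting $g := e f$ one has $g \ge e$ pointwise and $\mu(g) = e\mu(f)$, so the claim reduces to showing $\rho_0(\mu(g^p) - \mu(g)^p) \le \mu(g)^{p-1}\calE(g, g^{p-1})$.

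For each $s \in [1,p]$, I would apply Lemma~\ref{L:u'v'comp} pointwise with $a = g(x), b = g(y) \ge e$ to obtain
\[
(g^s(x) - g^s(y))(\log g(x) - \log g(y)) \le (g(x) - g(y))\bigl(g^{s-1}(x)\log g(x) - g^{s-1}(y)\log g(y)\bigr).
\]
The right-hand side has the product form required by Assumption~\ref{a:I}, so $\calE(g^s, \log g) \le \calE(g, g^{s-1}\log g) = v'(s)$, where $v(s) := \calE(g, g^{s-1})$ is differentiable on $[1,\infty)$ by Lemma~\ref{L:v'formula}. Since $g^s$ is bounded and bounded away from zero, the mLSI applies to $g^s$ and yields $\rho_0\Ent_\mu(g^s) \le \calE(g^s, \log g^s) = s\calE(g^s, \log g) \le s v'(s)$.

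Setting $\Phi(s) := \mu(g^s) - \mu(g)^s$, a direct expansion gives the identity $s\Phi'(s) = \Ent_\mu(g^s) + \chi(s)$, where $\chi(s) := \mu(g^s)\log\mu(g^s) - \mu(g)^s\log\mu(g)^s$ is nonnegative (since $\mu(g^s) \ge \mu(g)^s \ge e^s$ and $t\log t$ is increasing on $[1/e,\infty)$). Combining with the preceding mLSI bound one obtains the differential inequality $\rho_0\Phi'(s) \le v'(s) + \rho_0\chi(s)/s$, which I would then integrate from $s = 1$ to $s = p$, using $\Phi(1) = v(1) = 0$. A Taylor expansion of $t\log t$ around $\mu(g)^s$ gives $\chi(s) \le (1 + \log\mu(g^s))\Phi(s)$, and a Grönwall-type estimate with a suitable integrating factor — exploiting $\mu(g) \ge e$ — should produce the factor $\mu(g)^{p-1}$ in front of $v(p) = \calE(g, g^{p-1})$.

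The delicate step is the integration: a naive Grönwall argument using $\chi(s) \le (1+\log\mu(g^s))\Phi(s)$ together with the Hölder bound $\log\mu(g^s) \le (s/p)\log\mu(g^p)$ only produces the weaker factor $p\,\mu(g^p)^{(p-1)/p}$. Sharpening this to $\mu(g)^{p-1} = (e\mu(f)/\inf f)^{p-1}$ likely requires either a more refined analysis of the integral representation $\chi(s) = \int_{\mu(g)^s}^{\mu(g^s)}(\log t + 1)\,dt$ together with a self-bounding argument on $\Phi$, or a direct proof of the stronger differential inequality $\rho_0\Phi'(s) \le \mu(g)^{s-1} v'(s)$ — which reduces to mLSI at $s = 1$ but, for $s > 1$, would require applying mLSI to a tilted power of $g$ rather than to $g^s$ itself.
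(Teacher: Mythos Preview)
Your setup is correct up through the mLSI bound $\rho_0\Ent_\mu(g^s)\le sv'(s)$, and you have the right ingredients, but the integration step has a genuine gap that you yourself flag: the decomposition $s\Phi'(s)=\Ent_\mu(g^s)+\chi(s)$ leaves you trying to control $\chi(s)$ by Gr\"onwall, which only yields the weaker factor. The ``stronger differential inequality'' you speculate about at the end is essentially what is needed, but it does not require a tilted mLSI.

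The missing observation is elementary: compute
\[
\Phi'(s)-\log(\mu(g))\,\Phi(s)
=\mu\bigl(g^s\log g\bigr)-\log(\mu(g))\,\mu(g^s)
=\mu\Bigl(g^s\log\tfrac{g}{\mu(g)}\Bigr)
\le \Ent_\mu(g^s),
\]
the last step being the variational formula for entropy with test function $h=\log(g/\mu(g))$ (since $\mu(e^h)=1$). Combining with your bound $\rho_0\Ent_\mu(g^s)\le sv'(s)$ gives
\[
\rho_0\bigl(\Phi'(s)-\log(\mu(g))\,\Phi(s)\bigr)\le sv'(s),
\]
i.e.\ $\rho_0\bigl(\Phi(s)\mu(g)^{1-s}\bigr)'\le s\mu(g)^{1-s}v'(s)\le v'(s)$, the last inequality because $\mu(g)\ge e$ forces $s\mu(g)^{1-s}\le se^{1-s}\le 1$. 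Integrating from $1$ to $p$ gives $\rho_0\Phi(p)\mu(g)^{1-p}\le v(p)$, which is exactly the claim. So the correct integrating factor is $\mu(g)^{1-s}$, and it emerges naturally once you bound $\Phi'(s)-\log(\mu(g))\Phi(s)$ rather than $\Phi'(s)$ itself; this is precisely the paper's route.
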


\begin{proof}
Fix any $p\in(1,2]$ and any bounded $f\in \Dom(\calE)$ satisfying $\inf f>0$.
By homogeneity, we may and do assume that $\inf f =e$.

For $s\in[1,p]$, let $u(s)\coloneqq\mu(f^s) - \mu(f)^s$ and $v(s)\coloneqq\calE(f,f^{s-1})$.
For $s\in(1,p)$, Lemma~\ref{L:v'formula} implies that $v(s)$ and $v'(s)= \calE(f,f^{s-1}\log f)$ are well-defined. Recall the variational formula for the entropy
\begin{displaymath}
\Ent_\mu(g) = \sup_{h \in U} \mu(gh),
\end{displaymath}
where $U$ is the family of all measurable functions $h\colon \calX \to \overline{\RR}$, such that $\mu(e^h) = 1$ (see, e.g., \cite[Theorem 4.13]{MR3185193}).

Using this formula with $g = f^s$ and $h = \log \frac{f}{\mu(f)}$, we obtain
	\[
		u'(s) = \mu (f^s\log \tfrac{f}{\mu (f)}) + \log( \mu(f) )u(s)
		\le
		\Ent(f^s) + \log (\mu(f))u(s).
	\]
	Hence, by the modified log-Sobolev inequality~\eqref{eq:mlS-log} and Lemma~\ref{L:u'v'comp} combined with Assumption~\ref{a:I},
	\begin{align*}
		\rho_0 \bigl(u'(s) - \log(\mu(f)) u(s)\bigr)	
		\le \rho_0  \Ent(f^s)	
		&\le s \calE(f^s, \log f)\\
		&\le
		s\calE(f,f^{s-1}\log f) =s v'(s).
	\end{align*}
	Consequently, since $s\mu(f)^{1-s} \leq se^{1-s} \leq 1$, we arrive at
	\begin{equation*}
		\rho_0 \bigl(u(s)\mu(f)^{1-s}\bigr)'
		= \rho_0 \mu(f)^{1-s} \bigl(u'(s) - \log(\mu(f)) u(s)\bigr)	
		\leq 	s\mu(f)^{1-s} v'(s) \leq v'(s).
	\end{equation*}
	Integrating both sides over the interval $[1,p]$ yields the result (recall that $\inf f = e$).
\end{proof}

Having Propositions~\ref{prop:Poinc-to-Bec} and~\ref{P:intf/minf} we can turn to the proof of the main result.

\begin{proof}[Proof of Theorem~\ref{thm:main:mLS_to_Bec}]
Fix any $p\in(1,2]$ and take any bounded nonnegative function $f$ such that $f,f^{p-1}\in Dom(\calE)$.
For $\theta\in (0,1)$ denote $g = \max(f,\theta \mu(f))$ and $P_\theta = \mathbb{P}(f < \theta\mu(f))$.
	Then $\mu(f^p)\le \mu(g^p)$ and, since $\mu(g) \leq (1+\theta P_\theta)\mu(f)$ and $x\mapsto (1+\theta x)^p$ is convex for $x\in [0,1]$,

\begin{equation}\label{eq:change_f_to_g_on_LHS}
\begin{split}		\mu (f^p) -  \mu(f)^p
		&\le
		\mu (g^p) -  \mu(g)^p +
		\mu(f)^{p}\bigl((1+\theta P_{\theta})^p-1\bigr)\\
&\le
		\mu (g^p) -  \mu(g)^p +	
		\mu(f)^{p}P_{\theta}\bigl((1+\theta )^p-1\bigr).
		\end{split}
\end{equation}
	Clearly $\mu(g) / \inf g \le (1+\theta)/\theta$, therefore Proposition~\ref{P:intf/minf} implies that
        \begin{equation}
        \label{eq:mLS_to_Bec_with_defect}
         		\rho_0 (\mu (g^p) -  \mu(g)^p)
		\le
		\left(e\tfrac{1+\theta}{\theta}\right)^{p-1}\calE(g,g^{p-1})
		\le		\left(e\tfrac{1+\theta}{\theta}\right)^{p-1}\calE(f,f^{p-1}),
        \end{equation}
 where we also used the fact that $x\mapsto \max(x,a)$ is a contraction and Assumption~\ref{a:I}.
 Combining~\eqref{eq:change_f_to_g_on_LHS} and~\eqref{eq:mLS_to_Bec_with_defect} yields a defective Beckner inequality:
 \begin{equation}
 \label{eq:mLS_to_Bec_with_defect-2}
  \rho_0(\mu (f^p) -  \mu(f)^p)
		\le
	\rho_0\mu(f)^{p}P_{\theta}\bigl((1+\theta )^p-1\bigr)
+ \left(e\tfrac{1+\theta}{\theta}\right)^{p-1}\calE(f,f^{p-1}).
 \end{equation}
 It remains to deal with the first summand on the right-hand side

	By Taylor's expansion with the integral form of the remainder (and since $p\leq 2$)
	\begin{align*}
		\mu (f^p) - \mu (f)^p	
		&=
		p(p-1)\mu\Bigl( \int_{\mu(f)}^{f}u^{p-2}(f-u)\,du \Bigr)\\
		&\ge
		p(p-1)\mu\Bigl( \ind{\lbrace f<\mu(f) \rbrace} \int_{f}^{\mu (f)}u^{p-2}(u-f)\,du \Bigr)\\
		&\ge
		\frac{p(p-1)}{2} \mu (f)^{p-2} \mu \bigl((\mu(f) - f)_{+}^2\bigr)\,,
	\end{align*}
	while by Chebyshev's inequality
	\[
		P_\theta = \mathbb{P}\bigl((1-\theta)\mu (f) \le (\mu (f) - f)_+ \bigr)
		\le
		\frac{\mu \bigl( (\mu (f)- f)_+^2 \bigr)}{(1-\theta)^2(\mu (f))^2}\,,
	\]
	whence
	\[
		P_\theta \mu(f)^{p} \le \frac{2(\mu (f^p) - \mu(f)^p)}{p(p-1)(1-\theta)^2}\,.
	\]
	Plugging the above estimate into~\eqref{eq:mLS_to_Bec_with_defect-2} and optimizing over $\theta\in(0,1)$ yields
	Beckner's inequality~\eqref{eq:Beckner-GJ} with
	\[
	\alpha_p \geq \rho_0 \cdot \frac{p}{2}\cdot\!\sup_{\theta\in(0,1)} k(p,\theta),
  		\]
  		where we recall that
  		\[
  		 k(p,\theta)  =
  		 \Bigl(1 - \frac{2\bigl((1+ \theta)^{p}-1 \bigr)}{p(p-1)(1-\theta)^2} \Bigr) \cdot
  		\frac{\theta^{p-1}}{e^{p-1}(1+\theta)^{p-1}}.
  		\]
    The extension to not necessarily bounded functions follows by Lemma~\ref{L:Bec=>finite-mean}.
	
	Of course, for some values of $p\in(1,2]$ the bound $\alpha_p\geq (1-1/p)\rho_0$ provided by Proposition~\ref{prop:Poinc-to-Bec} may be better. We shall now compare both expressions to get some more explicit estimates on the multiplicative factor
	\[
	K_p = \max\{ 1-1/p;\ p/2 \cdot\! \sup_{\theta\in(0,1)} k(p,\theta)\}.	
	\]
	
	It is easy to see that $\lim_{p\to 2^-} K_p =1/2$. Since $\lim_{p\to 1^+} k(p,(p-1)^2) = 1$ and obviously $k(p,\theta)\leq 1$, we  conclude that $\lim_{p\to 1^+} K_p =1/2$.
	
	Moreover one can check that
	\begin{align}\label{eq:num-bound}
	K_p\geq \max\{1-1/p;\ p/2 \cdot k(p,0.25 \cdot (1-p)^2) \} \geq 0.17.
	\end{align}
	
	This ends the proof of the theorem.
	\end{proof}
\begin{remark}
Note that the numerical bound~\eqref{eq:num-bound} cannot be substantially improved if we want it to hold for \emph{all} $p\in(1,2]$. Indeed, we have
	\[
	 K_{6/5}
	 \leq 0.18.
	\]
Identification of the best constants $K^\opt_p$ such that $\alpha_p \ge K^\opt_p\rho_0$ seems to be an interesting open question.	
\end{remark}

\section{Moment estimates derived from Beckner's inequalities}

\label{sec:Beckner-Aida-Stroock}
In this section we revisit the arguments by Boucheron et al. \cite[Theorem 15.5]{MR3185193} and present them in the context of general Beckner inequalities. We derive moment inequalities, which are valid in particular under the assumption of modified log-Sobolev inequality. These moment estimates will lie at the core of the applications presented in subsequent sections. In this section we still  work with a probability space $(\calX,\calB,\mu)$. Accordingly all the moments of functions/random variables are calculated with respect to the measure $\mu$, i.e., for $g\colon \calX \to \RR$, we set
$\|g\|_r = (\mu(|g|^r))^{1/r}$.

\begin{proposition}
\label{prop:V^2}
 Assume that $\Gamma_+$ is defined, as in~\eqref{eq:Gamma+}, via some kernel $Q_x$ satisfying the detailed-balance condition~\eqref{eq:detailed-balance}.
 Let $\calE$ be given by~\eqref{eq:kernel-to-form} and assume that for all $p \in (1,2]$ the Beckner inequality~\eqref{eq:Beckner-GJ} is satisfied with constant $\alpha_p \ge a(p-1)^s$ for some $a> 0$, $s\ge0$.
 Then for every measurable $f \colon \calX \to \RR$ and $r\ge 2$,
	\begin{align}
		\norm{(f-\mu(f))_+}_r^2 &\le (1-2^{-(s+1)})\frac{r^{s+1}}{a}\kappa(s) \norm{\Gamma_+(f)}_{r/2},\label{eq:moments-1}\\
		\norm{(\mu(f)-f)_+}_r^2 &\le (1-2^{-(s+1)})\frac{r^{s+1}}{a}\kappa(s) \norm{\Gamma_+(-f)}_{r/2}\label{eq:moments-2},
	\end{align}
	where $\kappa(s)= (1-e^{-(s+1)/2})^{-1}$.
\end{proposition}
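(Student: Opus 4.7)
My plan is to adapt the $\phi$-Sobolev moment argument of Boucheron, Bousquet, Lugosi, and Massart to the abstract framework of this paper. The first step is to reduce to a bounded nonnegative function. Since $t\mapsto (t-\mu(f))_+$ is a $1$-Lipschitz map, Assumption~\ref{a:I} ensures that $g:=(f-\mu(f))_+$ lies in $\Dom(\calE)$, and the pointwise inequality $((f-\mu(f))_+(x)-(f-\mu(f))_+(y))_+\le (f(x)-f(y))_+$ yields $\Gamma_+(g)(x)\le \Gamma_+(f)(x)$. A standard truncation argument further reduces to bounded $g$. Applying Beckner's inequality~\eqref{eq:Beckner-GJ} with parameter $p\in(1,2]$ to the function $g^{r/p}$ (which lies in $\Dom(\calE)$ by Assumption~\ref{a:I} and the local Lipschitz property of $t\mapsto t^{r/p}$), I obtain
\[
  \alpha_p\bigl(\|g\|_r^r-\|g\|_{r/p}^r\bigr)\le \frac{p}{2}\calE\bigl(g^{r/p},g^{r(p-1)/p}\bigr).
\]

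Next I would bound the Dirichlet form on the right. Applying Lemma~\ref{L:ab-inequality} pointwise (to $a=g^{r/p}(x)$, $b=g^{r/p}(y)$) together with the kernel form~\eqref{eq:bilinear-form-reversibility} gives $\calE(g^{r/p},g^{r(p-1)/p})\le \calE(g^{r/2},g^{r/2})$. The pointwise chain rule $(a^{r/2}-b^{r/2})^2\le (r/2)^2 a^{r-2}(a-b)^2$ (valid for $a\ge b\ge 0$ and $r\ge 2$) combined with H\"older's inequality and the comparison $\Gamma_+(g)\le \Gamma_+(f)$ then yields $\calE(g^{r/2},g^{r/2})\le (r/2)^2\|g\|_r^{r-2}\|\Gamma_+(f)\|_{r/2}$. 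Dividing by $\|g\|_r^{r-2}$ and using the elementary bound $\|g\|_{r/p}^r/\|g\|_r^{r-2}\le \|g\|_{r/p}^2$ (valid since $\|g\|_{r/p}\le \|g\|_r$ on the probability space and $r\ge 2$), I deduce the master recursion
\[
  \|g\|_r^2-\|g\|_{r/p}^2\le \frac{pr^2}{8\alpha_p}\|\Gamma_+(f)\|_{r/2}\le \frac{pr^2}{8a(p-1)^s}\|\Gamma_+(f)\|_{r/2}.
\]

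The final step is to iterate this recursion along the dyadic sequence $r_k:=r/2^k$ ($k\ge 0$), with $p_k$ chosen at each level to balance the step cost $p_k r_k^2/(8a(p_k-1)^s)$ against the reduction $r_k\to r_{k+1}$. A case dichotomy according to whether $\|g\|_{r_{k+1}}/\|g\|_{r_k}$ lies above or below an appropriate threshold produces, after summing two geometric series---one in the levels $k$ with ratio $2^{-(s+1)}$ (giving the factor $1-2^{-(s+1)}$), and one in the threshold analysis with ratio $e^{-(s+1)/2}$ (giving $\kappa(s)$)---the claimed estimate~\eqref{eq:moments-1}. Inequality~\eqref{eq:moments-2} then follows by applying~\eqref{eq:moments-1} to $-f$, using that $\Gamma_+(-f)(x)=\int(f(y)-f(x))_+^2 Q_x(dy)$. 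The hardest point is this final iteration: a naive application of the master recursion with $p_k=2$ yields only an $r^2/a$ bound, whereas the sharp $r^{s+1}/a$ dependence requires using the full strength of $\alpha_{p_k}\ge a(p_k-1)^s$ for $p_k$ close to $1$ at each level in tandem with the dichotomy above.
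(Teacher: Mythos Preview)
Your master recursion is too weak to yield the claimed $r^{s+1}$ dependence for $s<1$. The problem is the detour through $\calE(g^{r/2},g^{r/2})$: using Lemma~\ref{L:ab-inequality} and the pointwise bound $(a^{r/2}-b^{r/2})^2\le (r/2)^2 a^{r-2}(a-b)^2$ you pick up a factor $\tfrac{p}{2}(r/2)^2=\tfrac{pr^2}{8}$ in front of $\|g\|_r^{r-2}\|\Gamma_+(f)\|_{r/2}$. For any admissible $p\in(1,2]$ this is at least of order $r^2$, and after dividing by $\alpha_p\ge a(p-1)^s$ the per-step cost is $\frac{pr^2}{8a(p-1)^s}$. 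This quantity cannot be made of order $r^{s+1}$: for $p$ bounded away from $1$ it is $\asymp r^2$, while for $p\to 1^+$ the denominator $(p-1)^s$ kills any gain and the step size $r\to r/p$ vanishes. Consequently, no iteration scheme built on this recursion (dyadic, telescoping, or with a threshold dichotomy) can beat $O(r^2/a)$; your final paragraph is not just vague but is attempting to extract information that the recursion does not contain.

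The paper fixes this by choosing the specific $p=r/(r-1)$ (equivalently applying~\eqref{eq:Beckner-GJ} to $g^{\gamma}$ with $\gamma=r-1$) and then bounding the Dirichlet form \emph{directly} via the convexity estimate
\[
(g^{\gamma}(x)-g^{\gamma}(y))_+(g(x)-g(y))_+\le \gamma\, g(x)^{\gamma-1}(g(x)-g(y))_+^2,
\]
without passing through $\calE(g^{r/2},g^{r/2})$. This produces a factor $\tfrac{\gamma p}{2}=\tfrac{r}{2}$ instead of your $\tfrac{pr^2}{8}$, and after dividing by $\alpha_p\ge a/r^s$ one obtains the recursion
\[
l_r^r\le l_{r-1}^r + l_r^{r-2}\,\frac{r^{s+1}}{2a}\,\|\Gamma_+(f)\|_{r/2}.
\]
Even then the conclusion does not follow by telescoping: the paper uses the induction hypothesis $l_{r-1}^2\le c_{r-1}\|\Gamma_+(f)\|_{\max((r-1)/2,1)}$ and a concavity argument (the function $h(x)=((r-1)/r)^{(s+1)r/2}+\kappa_r^{-1}x^{1-2/r}-x$ satisfies $h(0)>0$, $h(1)=0$, so $h(x)\ge 0$ forces $x\le 1$) to close the induction with $c_r\sim \kappa_r r^{s+1}/a$. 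This is where the constants $\kappa(s)=(1-e^{-(s+1)/2})^{-1}$ and $1-2^{-(s+1)}=\kappa_2(s)^{-1}$ actually arise, not from a geometric series over iteration levels.
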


The case of $s=0$ corresponds via Theorem~\ref{thm:main:mLS_to_Bec} to the modified log-Sobolev inequality~\eqref{eq:mlS-log}, while the case $s=1$ via Proposition~\ref{prop:Poinc-to-Bec} to the Poincar\'e inequality. In fact, if the inequality~\eqref{eq:Beckner-GJ} holds for \emph{some} $p\in (1,2]$, then also the Poincar\'e inequality holds (see Proposition~\ref{prop:Bec-to-Poinc}) and so one can find $a>0$, such that~\eqref{eq:Beckner-GJ} holds for \emph{all} $p \in (1,2]$ with $\alpha_p \ge a(p-1)$. Thus the interesting range of the parameter $s$ in the above proposition is $[0,1]$.

In most applications that we have in mind, $\Gamma$ will be indeed defined by some kernel. However, similar estimates may be derived also in a more abstract setting, encompassing in particular general reversible Markov semigroups. In Section~\ref{sec:continuous} we will use such a statement to present certain weighted $L_p$ Poincar\'e inequalities. In line with our general approach of writing the inequalities in an abstract form, under structural assumptions, we will formulate the next result in terms of the following additional assumption.

\begin{assumption}\label{a:II}
\leavevmode
\begin{itemize}
\item For any bounded $f \in \calA$, any $c \in \RR$, and any $\gamma > 1$, $t \ge 1$,
\begin{align}\label{eq:assumption-2-1}
  \calE( |f+c|^\gamma, |f+c|) \le 2 \gamma \bigl\| |f+c|^{\gamma-1}\bigr\|_{\frac{t}{t-1}} \bigl\| \Gamma(f) \bigr\|_{t}.
\end{align}
\item For any $f \in \calA$ there exists a sequence $f_n$ of bounded elements of $\calA$, such that $f_n \to f$ and $\Gamma(f_n) \le \Gamma(f)$ $\mu$-a.s.
\end{itemize}
\end{assumption}

\begin{remark}
The first part of the above assumption is satisfied in particular if $\calA$ is any algebra contained in the domain of the infinitesimal operator $L$ of a Markov semigroup reversible with respect to $\mu$. The second part may depend on the choice of $\mathcal{A}$, however in most cases in the theory of Dirichlet forms one chooses $\calA$ which is stable under composition with smooth functions, which allows for appropriate truncations, implying the second part (see, e.g., Definition 3.3.1. of the extended algebra $\calA$ in the monograph~\cite{MR3155209}). We provide derivation of both parts of Assumption~\ref{a:II}  in this standard Markovian setting in Proposition~\ref{prop:assumption-2} in the Appendix~\ref{app-Markov}.
\end{remark}

\begin{proposition}\label{prop:moments-abstract}
Let $\calE\colon \Dom(\calE)\times \Dom(\calE) \to \RR$ be a nonnegative definite symmetric bilinear form and let $\Gamma\colon \calA\times \calA \to L_0(\calX,\mu)$, where $\calA \subseteq \Dom(\calE)$ is a linear subspace, be a bilinear form related to $\calE$ by~\eqref{eq:champ-form}.
If Assumptions~\ref{a:I}  and~\ref{a:II} are satisfied and for all $p \in (1,2]$ the Beckner inequality~\eqref{eq:Beckner-GJ} holds with $\alpha_p \ge a(p-1)^s$ for some $a>0$, $s\ge 0$, then for all $f\in \mathcal{A}$ and $r\ge 2$,	
	\begin{align}\label{eq:moments-4}
		\norm{f-\mu(f)}_r^2 &\le \frac{r^{s+1}\kappa(s)}{a} \norm{\Gamma(f)}_{r/2},
	\end{align}
where $\kappa(s)$ is as in Proposition~\ref{prop:V^2}.
\end{proposition}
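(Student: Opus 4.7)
The plan is to mimic the proof of Proposition~\ref{prop:V^2}, replacing the kernel-level manipulations of $\calE(h, h^{p-1})$ by appeals to Assumption~\ref{a:II}. The underlying scheme is the moment estimate of Boucheron et al.~\cite[Theorem 15.5]{MR3185193}, now obtained from Beckner's inequality~\eqref{eq:Beckner-GJ}: one combines Beckner at a well-chosen parameter with a Hölder-type chain-rule bound on the Dirichlet form, and iterates.

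First, by the second part of Assumption~\ref{a:II}, it suffices to prove the inequality for bounded $f \in \calA$: given a general $f \in \calA$, one approximates $f$ by bounded $f_n \in \calA$ with $\Gamma(f_n) \le \Gamma(f)$ $\mu$-a.s.\ and passes to the limit, using Fatou's lemma on the left-hand side and the finiteness ensured by Lemma~\ref{L:Bec=>finite-mean} on the right.

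For a bounded $f$, fix a shift $c$ so that $h := f + c$ is bounded away from zero, and apply Beckner's inequality~\eqref{eq:Beckner-GJ} to $h$ at parameter $p \in (1,2]$. By symmetry of $\calE$ and the first part of Assumption~\ref{a:II} applied with $\gamma = p - 1$ and $t = r/2$ (so that $t/(t-1) = r/(r-2)$),
\begin{equation*}
\calE(h, h^{p-1}) = \calE(h^{p-1}, h) \le 2(p-1)\, \|h^{p-2}\|_{r/(r-2)}\, \|\Gamma(f)\|_{r/2}.
\end{equation*}
Combined with Beckner and the hypothesis $\alpha_p \ge a(p-1)^s$, this yields
\begin{equation*}
a(p-1)^{s-1}\bigl(\mu(h^p) - \mu(h)^p\bigr) \le p\,\|h^{p-2}\|_{r/(r-2)}\, \|\Gamma(f)\|_{r/2},
\end{equation*}
a recursive control relating a moment of $h$ to $\|\Gamma(f)\|_{r/2}$ and a lower moment of $h$.

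Next, I would iterate this estimate over a carefully chosen sequence of parameters $p_k$ in $(1,2]$, interpolating via Hölder's inequality to relate the $L^{r/(r-2)}$-norm of $h^{p_k - 2}$ back to the moment being controlled. This is the analogue, in the abstract setting, of the iterative step in the proof of Proposition~\ref{prop:V^2}: the factor $r^{s+1}$ accumulates from the weights $(p_k - 1)^{-s}$ appearing in $1/\alpha_{p_k}$, while the factor $\kappa(s) = (1 - e^{-(s+1)/2})^{-1}$ arises as the closed form of a geometric series summing the contributions of successive scales. Since $h - \mu(h) = f - \mu(f)$, the shift $c$ disappears after iteration and one obtains the announced bound.

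The main obstacle is bookkeeping: matching the accumulated constants to the precise prefactor $r^{s+1}\kappa(s)/a$ requires the sequence of exponents to be chosen so that the Hölder interpolations telescope cleanly and the lower moments of $h$ cancel, leaving only $\|f-\mu(f)\|_r^2$ on one side and $\|\Gamma(f)\|_{r/2}$ on the other. Because Assumption~\ref{a:II} is a one-sided Hölder bound (rather than the pointwise splitting available for the kernel form $\Gamma_+$), the improved factor $1 - 2^{-(s+1)}$ appearing in Proposition~\ref{prop:V^2} is lost, which is consistent with the slightly larger constant in the statement.
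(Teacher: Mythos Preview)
Your proposal has a genuine gap in the application of Assumption~\ref{a:II}. You apply its first part with $\gamma = p-1$, but Assumption~\ref{a:II} requires $\gamma > 1$, while $p \in (1,2]$ gives $\gamma = p-1 \in (0,1]$. So the bound $\calE(h^{p-1},h) \le 2(p-1)\|h^{p-2}\|_{r/(r-2)}\|\Gamma(f)\|_{r/2}$ is not available. This is not a technicality one can patch: the inequality~\eqref{eq:assumption-2-1} for $\gamma < 1$ would have the opposite direction (think of the diffusive case where $\calE(h^{\gamma},h) = \gamma\int h^{\gamma-1}\Gamma(f)\,d\mu$ exactly, but the H\"older step you then need goes the wrong way because $h^{\gamma-1}$ is a \emph{decreasing} function of $h$).

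The paper's route avoids this by applying Beckner not to $h=f+c$ but to $|g|^{\gamma}$ with $g=f-\mu(f)$ and $\gamma = 1/(p-1)$, choosing $p$ so that $r = \gamma p = p/(p-1)$. For $r>2$ this gives $\gamma = r-1 > 1$, which is exactly the regime where Assumption~\ref{a:II} applies, yielding
\[
\alpha_p\bigl(\mu(|g|^{r}) - \mu(|g|^{r-1})^{p}\bigr) \le \frac{p}{2}\,\calE(|g|^{\gamma},|g|) \le \gamma p\,\||g|^{\gamma-1}\|_{r/(r-2)}\,\|\Gamma(f)\|_{r/2}.
\]
This cleanly relates $\|g\|_r$ to $\|g\|_{r-1}$ and sets up an induction on the integer part of $r$ (base case $r\in(1,2]$ by Poincar\'e), with the constants $c_r = \kappa_r(s)r^{s+1}/a$ tracked exactly as in the proof of Proposition~\ref{prop:V^2}. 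Your shift-by-$c$ idea is unnecessary here: working with $|g|$ directly is what makes both the centering and the exponent bookkeeping come out right, and the loss of the factor $1-2^{-(s+1)}$ relative to Proposition~\ref{prop:V^2} comes from the missing $1/2$ in the passage through Assumption~\ref{a:II} (compare with~\eqref{eq:po-Holderze}), not from any coarser iteration.
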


\begin{remark}We remark that the inequalities of Propositions~\ref{prop:V^2} and~\ref{prop:moments-abstract} should be again understood in the following sense: \emph{if the right-hand side is finite, then the left-hand side is well-defined and the inequality holds}. Let us also mention that the inequalities of Proposition~\ref{prop:moments-abstract} can be extended beyond the space $\calA$, if one replaces the right-hand side via a family of norms extending the moments of $\sqrt{\Gamma(f)}$ and defined by appropriate duality. We will not pursue this direction and refer to the article~\cite{MR1258492} by Aida--Stroock where similar moment estimates were proved under the stronger assumption of the log-Sobolev inequality~\eqref{eq:mlS-sqrts}. The inequalities derived by Aida--Strock from~\eqref{eq:mlS-sqrts}, in our setting read as
\begin{align}\label{eq:Aida-Stroock}
||f - \mu(f)\|_r^2 \le \rho_1^{-1} (r-3/2) \|\Gamma(f)\|_{r/2}.
\end{align}

We remark that the derivation of moment estimates from the log-Sobolev inequality by Aida and Stroock is based on computing the derivative of $\|f\|_t^2$ with respect to $t\in [2,r]$, and identification of a part corresponding to $\Ent |f|^t$, which can be estimated via~\eqref{eq:mlS-sqrts}. Such an estimation allows for convenient cancelations and yields a uniform bound on the derivative on the interval $[2,r]$. This approach has been subsequently used, e.g., in \cite{MR2510011,MR3383337,MR3743923} in the context of weighted log-Sobolev inequalities or various modified log-Sobolev inequalities on $\RR^n$ (of different nature than~\eqref{eq:mlS-log}). It does not seem however that this approach can work with~\eqref{eq:mlS-log}. Theorem~\ref{thm:main:mLS_to_Bec} allows to pass  from~\eqref{eq:mlS-log} to~\eqref{eq:Beckner-GJ} and use the argument introduced by Boucheron, Bousquet, Lugosi, and Massart for product measures.

It is also known (see, e.g., Proposition 2.5. in~\cite{MR2507637} for the proof in the case $\Gamma(f) = |\nabla f|^2$), that the Poincar\'e inequality implies moment estimates of the form
\begin{align}\label{eq:Poincare-moments}
\|f - \mu(f)\|_{r}^2 \le \frac{C}{\lambda} r^2 \|\Gamma(f)\|_{r/2}
\end{align}
for $r \ge 2$. This corresponds to the case $s=1$ in Proposition~\ref{prop:moments-abstract}.
\end{remark}

\begin{remark} It is easy to check that Assumption~\ref{a:II}  is verified in the setting of Proposition~\ref{prop:V^2}. In fact the moment inequality of Proposition~\ref{prop:moments-abstract} provides better constants than one would obtain by combining the two estimates of Proposition~\ref{prop:V^2} and pointwise estimates $\Gamma_+(f),\Gamma_+(-f)\le 2 \Gamma(f)$.
\end{remark}

\begin{remark} \label{re:uwaga-o-momentach}
An inspection of the proofs of Propositions~\ref{prop:V^2} and~\ref{prop:moments-abstract} shows that if one assumes that the inequality~\eqref{eq:Beckner-GJ} holds just for $p \in [p_0,2]$ for some $p_0 > 1$, then the moment estimates will still hold, but for $2 \le r \le r_0 = \frac{p_0}{p_0-1}$. We will use this observation in Section~\ref{sec:continuous}.
\end{remark}

\begin{proof}[Proof of Proposition~\ref{prop:V^2}]

Let us start with the inequality~\eqref{eq:moments-1} and consider the case of bounded functions $f \in \mathcal{A}$.

We will show by induction a slightly stronger statement, namely that for all positive integers $k$ and $r\in (k,k+1]$
\begin{equation*}\label{eq:AS_pf_induction}
	\norm{(f-\mu (f))_+}_{r}^2 \le c_r \norm{\Gamma_+(f)}_{\max(r/2,1)},
\end{equation*}
where
\begin{align}\label{eq:kappa-and-c}
	c_r = \frac{1}{a}\max\big(\frac{\kappa_r(s)r^{s+1}}{\kappa_2(s)};1\big),
	\quad
	\kappa_r(s) = \Big( 1 - \left( \frac{r-1}{r} \right)^{(s+1)r/2}\Big)^{-1} \nearrow \kappa(s)
\end{align}
as $r\to\infty$.

In what follows the parameter $r$ will change while $s$ will remains fixed, so to simplify the notation we will suppress the dependence of $\kappa_r(s)$ on $s$ and write simply $\kappa_r$.

For $k=1$ and any $r\in (1,2]$,
\begin{align}\label{eq:application-of-Poincare}
	\norm{(f-\mu (f))_+}_r^2
	\le
	\norm{f-\mu (f)}_{r}^2
	\le
	\norm{f-\mu (f)}_{2}^2
	\le
	\frac{1}{a}\calE(f,f)
	\le
	c_r\norm{\Gamma_+(f)}_{1},
\end{align}
where in the second step we used Jensen's inequality, in the third one the Poincar\'{e} inequality (which holds if~\eqref{eq:Beckner-GJ} holds, see Proposition~\ref{prop:Bec-to-Poinc} in the Appendix), while the last one follows from~\eqref{eq:form-reversibility} and $c_r \ge c_1 = 1/a$. This yields the induction basis.

Assume that the induction hypothesis holds for all integers smaller than some $k>1$. Consider any $r\in(k,k+1]$ and a bounded function $f\colon \mathcal{X} \to \RR$.
Choose $p$ such that $r=\frac{p}{p-1}$ and denote $\gamma=\frac{1}{p-1}$, $g=(f-\mu(f))_+$.
Applying the Beckner inequality~\eqref{eq:Beckner-GJ} to the function $g^{\gamma}$ and using the form~\eqref{eq:bilinear-form-reversibility} of $\calE$ (which is a consequence of the detailed balance condition~\eqref{eq:detailed-balance}), together with the convexity of $x\mapsto x^\gamma$, we get
\begin{align}\label{eq:convexity}
	\alpha_p(\mu(g^{\gamma p})	 - \mu(g^\gamma)^p) &\le
	\frac{p}{2}\int (g^\gamma(x)-g^\gamma(y))_+(g(x)-g(y))_+\,Q_x(dy)\mu(dx) \nonumber \\
	&\le \frac{\gamma p}{2}\int g^{\gamma - 1}(x)\Gamma_+(g)\mu(dx).
\end{align}

 Since $(g(x)-g(y))_+\le (f(x)-f(y))_+$, we have $\Gamma_+(g)\le\Gamma_+(f)$, and so by H\"{o}lder's inequality with exponents $\frac{\gamma p}{\gamma p -2}$ and $\frac{\gamma p}{2}$ (recall that $\gamma + 1 = \gamma p=r$), we obtain
\begin{align}\label{eq:po-Holderze}
	\alpha_p (\mu (g^{\gamma p}) - \mu (g^{\gamma})^{p})
	&\le
	\frac{\gamma p}{2} ( \mu(g^{\gamma p}))^{\frac{\gamma p - 2}{\gamma p}}\norm{\Gamma_+(f)}_{\gamma p /2}.
\end{align}
 Denoting $l_r=\norm{g}_r$ and observing that $\alpha_p \ge a/r^s$, the above inequality divided by $\alpha_p$ and restated in terms of $r$ gives
\begin{align*}
	l_{r}^r &\le
	l_{r-1}^{r} +
	l_{r}^{r-2} \frac{r^{s+1}}{2a}\norm{\Gamma_+(f)}_{r/2}.
\end{align*}
 The induction hypothesis allows us to estimate $l_{r-1}$:
\begin{equation}\label{eq:lq-induction-step}
	l_{r}^r \le
	\left(c_{r-1}\norm{\Gamma_+(f)}_{\max((r-1)/2,1)}\right)^{r/2}
	+
	l_{r}^{r-2}\frac{r^{s+1}}{2a}\norm{\Gamma_+(f)}_{r/2}.
\end{equation}

Note that we can assume that $\|\Gamma_+(f)\|_{r/2} > 0$, since otherwise (as $r> 2$) we obtain $\|\Gamma_+(f)\|_{1} = 0$ and by the induction assumption $(f - \mu(f))_+ = 0$.
Recall that $c_r = \max\big(\kappa_rr^{s+1}/\kappa_2;1\big)/a$ and thus, by the monotonicity in $u$ of $\kappa_u$ and $\norm{\Gamma_+(f)}_{\max(u/2,1)}$, and since $r>2$,
\[
	\frac{c_{r-1}\norm{\Gamma_+(f)}_{\max((r-1)/2,1)}}{c_r\norm{\Gamma_+(f)}_{r/2}} \le
	\frac{c_{r-1}}{c_{r}} \le
	\left(\frac{r-1}{r}\right)^{s+1}.
\]
Consequently, dividing~\eqref{eq:lq-induction-step} by $(c_r\norm{\Gamma_+(f)}_{r/2})^{r/2}$, leads to
\begin{align}\label{eq:division}
	\left(\frac{l_r^2}{c_r\norm{\Gamma_+(f)}_{r/2}}\right)^{r/2}
	\le
	\left(\frac{r-1}{r}\right)^{(s+1)r/2}
	+
	\frac{\kappa_2}{2\kappa_r}\left(\frac{l_r^2}{c_r\norm{\Gamma_+(f)}_{r/2}}\right)^{(r-2)/2}.
\end{align}
The function
\[
	h(x) =
	\left(\frac{r-1}{r}\right)^{(1+s)r/2}
	+
	\frac{1}{\kappa_r} x^{1-2/r} - x
\]
is strictly concave on $[0,\infty)$, positive at $x=0$ and $h(1)=0$ (by the definition of $\kappa_r$).
As a consequence, $h(x)\geq 0$ implies $x\le 1$, whence (note that $\kappa_2/2\le 1$)
\[
	l_r^2
	\le
	c_r\norm{\Gamma_+(f)}_{r/2}
\]
which proves the induction step and demonstrates~\eqref{eq:moments-1} for bounded functions $f$.

Let us now remove the boundedness assumption. If $f \colon \mathcal{X}\to \RR$ is an arbitrary function with $\|\Gamma_+ (f)\|_r < \infty$ , then $\mathcal{E}(f,f) = \|\Gamma_+(f)\|_1 < \infty$ and as a consequence by the Poincar\'e inequality, we obtain $\mu(|f|) < \infty$ and
\begin{displaymath}
  \Var_\mu(f) \le \frac{1}{a} \mathcal{E}(f,f).
\end{displaymath}
In particular, defining $f_M = \max (\min(f,M),-M)$ we obtain $f_M \to f$ pointwise and $\mu( f_M) \to \mu (f)$ as $M\to \infty$. Applying~\eqref{eq:moments-1} to $f_M$ we obtain
\begin{align}\label{eq:bounded-case}
  \|(f_M - \mu(f_M))_+\|_r^2 \le (1-2^{-(s+1)})\frac{r^{s+1}}{a}\kappa(s)\|\Gamma_+(f_M)\|_{r/2}.
\end{align}
However
\begin{align*}
  \Gamma_+(f_M)(x) &= \int_\mathcal{X}(f_M(x) - f_M(y))_+^2Q_x(dy) \\
 &\le \int_\mathcal{X}(f(x) - f(y))_+^2Q_x(dy) = \Gamma_+(f)(x).
\end{align*}
Therefore Fatou's lemma implies that~\eqref{eq:moments-1} for $f$ follows from~\eqref{eq:bounded-case} by letting $M \to \infty$.

\medskip

The  inequality~\eqref{eq:moments-2} follows by~\eqref{eq:moments-1} applied to $-f$.
\end{proof}

\begin{proof}[Proof of Proposition~\ref{prop:moments-abstract}]
The general scheme of the proof is analogous as in the case of Proposition~\ref{prop:V^2}, one just needs to appropriately replace the pointwise estimates with the kernel $Q_x$ by the abstract assumptions. Therefore instead of writing the complete proof we will just explain how to modify the arguments leading to~\eqref{eq:moments-1}.

We again prove by induction that for all positive integers $k$, $r \in (k,k+1]$, and bounded $f \in \mathcal{A}$,
\begin{displaymath}
  \|f - \mu(f)\|_r^2 \le c_r \|\Gamma(f)\|_{\max(r/2,1)}
\end{displaymath}
with $c_r = \kappa_r(s)r^{s+1}/a\ge 1/a$. The quantity $\kappa_r(s)$ is defined as in~\eqref{eq:kappa-and-c}, note however the difference between the definition of $c_r$ in this proof and therein.

For $k = 1$, this follows analogously as in~\eqref{eq:application-of-Poincare}, by ignoring the first inequality and using $\calE(f,f) = \|\Gamma(f)\|_1$ in the last estimate (note that finiteness of $\|\Gamma(f)\|_1$ implies that $\mu(f)$ is well-defined).

As for the induction step, we consider $g = f - \mu(f)$ and $\gamma = \frac{1}{p-1}$ where $r = \frac{p}{p-1}$. By Assumption~\ref{a:I}, $g, |g| \in \Dom(\calE)$.

Assume that $\|g\|_\infty = M$ and observe that for $a,b \in [-M,M]$
\begin{align*}
\bigl| |a|^\gamma - |b|^\gamma\bigr| \le \gamma M^{\gamma-1}|a-b|.
\end{align*}
Therefore, again by Assumption~\ref{a:I}, $|g|^\gamma \in \Dom(\calE)$.

Applying thus~\eqref{eq:Beckner-GJ} with parameter $p$ to $|g|^\gamma$ we obtain
\begin{align}\label{eq:beckner-application}
	\alpha_p\bigl(\mu(|g|^{\gamma p})	 - \mu(|g|^\gamma)^p\bigr)\le \frac{p}{2}\mathcal{E}(|g|^\gamma,|g|).
\end{align}

Now, by the first part of Assumption~\ref{a:II} applied with $t = \gamma p/2 = r/2$ together with the equality $\frac{t}{t-1} = \gamma p/(\gamma p - 2) = \gamma p/(\gamma -1)$, we get
\begin{displaymath}
  \alpha_p\bigl(\mu(|g|^{\gamma p})	 - \mu(|g|^\gamma)^p\bigr)\le \gamma p ( \mu(|g|^{\gamma p}))^{\frac{\gamma p -2}{\gamma p}}\|\Gamma(f)\|_{\gamma p/2}.
\end{displaymath}
The last inequality is a direct analogon of~\eqref{eq:po-Holderze}, the difference being just the lack of the factor $1/2$ on the right-hand side.

The rest of the induction step is the same as in the proof of Proposition~\ref{prop:V^2}, leading to
\begin{displaymath}
  \|f - \mu(f)\|_r \le c'_r\|\Gamma(f)\|_{r/2}
\end{displaymath}
for bounded $f \in \mathcal{A}$, the only difference being the lack of the factor $\kappa_2/2$ in the counterpart of~\eqref{eq:division}, which in the proof of Proposition~\ref{prop:V^2} was estimated from above by one.

The extension to general $f \in \calA$ follows easily by approximation from the second part of Assumption~\ref{a:II}.
\end{proof}

\section{Applications}\label{sec:applications}

We will now present applications of our results to various stochastic models in which modified log-Sobolev inequalities or Beckner inequalities are proven. Our main goal is to obtain new moment inequalities and derive from them concentration etimates.

\subsection{The continuous setting}\label{sec:continuous}

As already mentioned in the Introduction, in the diffusive case, when the chain rule is satisfied, there is equivalence between the modified log-Sobolev inequality~\eqref{eq:mlS-log} and the usual log-Sobolev inequality~\eqref{eq:mlS-sqrts} as well as between the two forms~\eqref{eq:Beckner-GJ} and~\eqref{eq:Beckner-LO} of Beckner's inequality.
Therefore, as explained in Section~\ref{sec:state-of-art} the equivalence between the log-Sobolev inequalities and Beckner inequalities has been known in this case.
Nevertheless, the equivalence of~\eqref{eq:Beckner-GJ} and~\eqref{eq:Beckner-LO} as well as some known examples of measures satisfying~\eqref{eq:Beckner-LO} allow us to obtain moment estimates in $L_r$ with optimal rate of dependence on $r$ as $r\to \infty$ in several situations of interest.

We will start with a result proved by Wang \cite[Corollary 1.3]{MR2127729} in the setting of Riemannian manifolds.

\begin{proposition}
\label{prop:Wang}
Let $E$ be a $d$-dimensional non-compact connected complete
Riemannian manifold with Ricci curvature bounded from below. Let $\rho(x)$ be the Riemannian distance between $x$ and a fixed point $o$. Consider $\mu(dx) := Ze^{V} dx$,
where $V$ is a continuous function on  $E$ such that $V + \theta \rho^\gamma$
is bounded for some $\gamma \in (1, 2]$ and $\theta >0$, $dx$ stands for the Riemannian volume measure, and $Z$ is the normalization. Let
$\mathcal{E}(f,f) := \mu(|\nabla f|^2)$ with $D(\mathcal{E}) = H^{1,2}(\mu)$. Then there exists $\beta > 0$ such that~\eqref{eq:Beckner-LO} holds for all $q \in [1,2)$ with
$\beta_q \ge \beta(2-q)^{2/\gamma - 1}$.
\end{proposition}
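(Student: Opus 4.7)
The plan is to combine a Holley--Stroock bounded-density perturbation with a super-Poincar\'e inequality for the model measure $\tilde\mu(dx) \propto e^{-\theta\rho^\gamma(x)}\,dx$, and then convert the super-Poincar\'e inequality into the Beckner form~\eqref{eq:Beckner-LO} using Wang's general conversion. For the reduction, since $\|V + \theta\rho^\gamma\|_\infty =: M < \infty$, the densities of $\mu$ and $\tilde\mu$ with respect to the Riemannian volume are comparable with ratio in $[e^{-2M}, e^{2M}]$. Such bounded multiplicative perturbations preserve the whole scale of Beckner, super-Poincar\'e, and log-Sobolev inequalities in play; one can either verify this directly for~\eqref{eq:Beckner-LO} by estimating both sides up to factors $e^{\pm 2M}$, or pass to the super-Poincar\'e formulation (which admits the textbook Holley--Stroock statement) and back. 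It therefore suffices to prove the claim for $\tilde\mu$, at the cost of a change of $\beta$ depending only on $M$.

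The core analytic step is to show that $\tilde\mu$ satisfies the super-Poincar\'e inequality
\[
\tilde\mu(f^2) \le s \int |\nabla f|^2\,d\tilde\mu + \beta(s)\,\tilde\mu(|f|)^2, \qquad s > 0,
\]
with $\beta(s) = \exp\!\bigl(c_2\, s^{-\gamma/(2-\gamma)}\bigr)$ for small $s$ (polynomial in $1/s$ when $\gamma=2$). I would establish this by a truncation argument: cut $f$ against a Lipschitz bump supported on a geodesic ball $B(o,R)$ and combine (i) the exponential concentration $\tilde\mu(\rho \ge R) \le e^{-c R^\gamma}$, which is inherited from the shape of the potential, with (ii) a local Poincar\'e inequality on $B(o,R)$ whose constant is polynomial in $R$. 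The local Poincar\'e constant is available because the lower Ricci bound yields Bishop--Gromov volume comparison and, through Buser's inequality, a quantitative spectral gap on balls. Optimizing $R$ as a function of $s$ then produces the stated rate $\beta(s)$.

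The final step is Wang's general conversion (see his monograph \emph{Functional Inequalities, Markov Semigroups and Spectral Theory}): a super-Poincar\'e inequality with $\beta(s) = \exp(c s^{-\alpha})$ implies~\eqref{eq:Beckner-LO} with $\beta_q \ge c'(2-q)^{1/\alpha}$; applied with $\alpha = \gamma/(2-\gamma)$, this gives exactly the exponent $2/\gamma - 1$ claimed. This conversion uses the equivalence between super-Poincar\'e inequalities and a scale of $F$-Sobolev/Orlicz inequalities, relating the behaviour of $\beta(s)$ as $s\to 0^+$ to that of $\beta_q$ as $q\to 2^-$. The delicate point, and the main obstacle, is the sharp dependence in the super-Poincar\'e step: a suboptimal truncation radius $R(s)$ or a lossy local Poincar\'e constant on $B(o,R)$ would propagate directly into a weaker exponent in $q$. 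Once the correct rate $\exp(cs^{-\gamma/(2-\gamma)})$ is in hand, the conversion to~\eqref{eq:Beckner-LO} is mechanical, and the resulting constants depend only on the Ricci lower bound, on $\theta,\gamma,d$, and on $\|V+\theta\rho^\gamma\|_\infty$.
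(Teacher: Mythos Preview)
The paper does not prove this proposition; it is quoted as a known result, attributed to Wang \cite[Corollary~1.3]{MR2127729}, and then used as input to derive Corollary~\ref{cor:moments-Wang}. So there is no ``paper's own proof'' to compare against.

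That said, your sketch is a faithful outline of how results of this type are obtained in Wang's framework: the bounded-perturbation reduction to the model measure, the super-Poincar\'e inequality with rate $\beta(s)=\exp(c\,s^{-\gamma/(2-\gamma)})$ for the model measure, and the conversion of super-Poincar\'e to~\eqref{eq:Beckner-LO} are exactly the ingredients in Wang's approach. The only caveat is that your argument remains a sketch at the most delicate point: obtaining the precise rate in the super-Poincar\'e inequality requires more than a generic truncation-plus-local-Poincar\'e argument; one typically needs the sharper isoperimetric/capacity estimates that Wang develops, and a loose bound on the local Poincar\'e constant on $B(o,R)$ (e.g.\ exponential in $R$ from Buser under a lower Ricci bound) would spoil the exponent. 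You correctly flag this as the main obstacle, but do not resolve it. If you want a self-contained proof rather than a citation, that is the step to write out carefully.
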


As a consequence, by Proposition~\ref{prop:moments-abstract} applied to $\calA$ being the class of  smooth compactly supported functions, followed by standard approximation techniques, we obtain

\begin{corollary}
\label{cor:moments-Wang}
In the setting of Proposition~\ref{prop:Wang}, there exists a constant $C$, depending only on $\beta$, such that for any smooth function $f\colon M\to \RR$ and all $r \ge 2$,
\begin{align}\label{eq:moments-Wang}
  \|f - \mu (f)\|_r \le Cr^{1/\gamma}\|\nabla f\|_r,
\end{align}
where the norms are taken in $L_r(E,\mu)$.
\end{corollary}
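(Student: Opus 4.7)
The plan is to combine the Wang--Beckner inequality of Proposition~\ref{prop:Wang} with the abstract moment bound of Proposition~\ref{prop:moments-abstract}, via the equivalence between the two forms~\eqref{eq:Beckner-GJ} and~\eqref{eq:Beckner-LO} of Beckner's inequality valid in the diffusive setting.

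First I would convert the hypothesis $\beta_q \ge \beta(2-q)^{2/\gamma-1}$ into the form~\eqref{eq:Beckner-GJ}. By the chain rule, substituting $f^p = g^2$ (i.e.\ $q = 2/p$) turns~\eqref{eq:Beckner-LO} into~\eqref{eq:Beckner-GJ} with $\alpha_p \ge \beta_q$ (this is the folklore implication recorded in the diagram of Section~\ref{sec:SOA}). Setting $s \coloneqq 2/\gamma - 1 \in [0,1)$ and using $2-q = 2(p-1)/p \ge p-1$ for $p\in(1,2]$, we obtain
\[
	\alpha_p \;\ge\; \beta\bigl(2(p-1)/p\bigr)^{s} \;\ge\; \beta (p-1)^s
\]
for all $p \in (1,2]$. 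Thus the Beckner inequality holds in the form required by Proposition~\ref{prop:moments-abstract} with $a = \beta$ and this specific $s$.

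Next I would verify Assumptions~\ref{a:I} and~\ref{a:II} for $\calE(f,g)=\int |\nabla f\cdot\nabla g|\,d\mu$ on $\calA$ equal to the algebra of smooth compactly supported functions (or the standard extended algebra of the Dirichlet form), using Proposition~\ref{prop:assumption-2} from the appendix, which handles this in the standard Markovian framework. Applying Proposition~\ref{prop:moments-abstract} then yields, for every $f\in\calA$ and every $r\ge 2$,
\[
	\|f-\mu(f)\|_r^2 \;\le\; \frac{\kappa(s)\,r^{s+1}}{\beta}\,\|\Gamma(f)\|_{r/2}.
\]
Since $\Gamma(f) = |\nabla f|^2$, we have $\|\Gamma(f)\|_{r/2} = \|\nabla f\|_r^2$, and because $s+1 = 2/\gamma$, taking square roots gives exactly~\eqref{eq:moments-Wang} with $C = \sqrt{\kappa(2/\gamma-1)/\beta}$.

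Finally, the passage from compactly supported smooth functions to arbitrary smooth $f$ with $\|\nabla f\|_r<\infty$ is done by a standard truncation/cutoff argument: multiply $f$ by a sequence of smooth cutoffs $\chi_n$ with $|\nabla\chi_n|\to 0$ and $\chi_n\to 1$, and pass to the limit using Fatou's lemma on the left-hand side and dominated convergence on the right (noting that the completeness of the manifold and the curvature lower bound ensure the existence of such cutoffs). I expect the only genuinely delicate step to be this density/approximation argument; the algebraic manipulations with $p$, $q$, and $s$, and the plug-in into Proposition~\ref{prop:moments-abstract}, are essentially bookkeeping once the correct value $s = 2/\gamma - 1$ is identified.
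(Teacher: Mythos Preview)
Your proposal is correct and follows essentially the same route as the paper, which simply says the corollary follows from Proposition~\ref{prop:moments-abstract} applied with $\calA$ the class of smooth compactly supported functions, together with standard approximation. Two small cosmetic remarks: the Dirichlet form should be written $\calE(f,g)=\int_E \langle\nabla f,\nabla g\rangle\,d\mu$ (no absolute value, since it is bilinear); and your constant $C=\sqrt{\kappa(2/\gamma-1)/\beta}$ formally depends on $\gamma$, but since $\kappa(s)$ is bounded for $s\in[0,1]$ you can absorb this to get a $C$ depending only on $\beta$, matching the statement.
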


We remark that the example of measures $\mu_\gamma$ ($\gamma\in [1,2]$) on $\RR^d$ with density $c_\gamma^d\exp(-\sum_{i=1}^d |x_i|^\gamma)$ (as investigated by Lata{\l}a and Oleszkiewicz in~\cite{MR1796718}, who proved that in this case $\beta>0 $ can be taken to be a universal constant) shows that the exponent $1/\gamma$ in the above corollary is optimal. We also note that in the case of $\RR^d$, moment estimates of the form~\eqref{eq:moments-Wang} for $\gamma \in (1,2)$ can be derived from a combination of recent result~\cite{barthe2019functional} and~\cite{MR3383337} (the case $\gamma=2$ corresponds to results by Aida--Stroock, the case $\gamma=1$ can be found in~\cite{MR2507637}).
The former article establishes an implication between Beckner inequalities~\eqref{eq:Beckner-LO} and certain log-Sobolev inequalities with modified energy form (introduced by Gentil et al. in~\cite{MR2198019}), which are shown in~\cite{MR3383337} to imply~\eqref{eq:moments-Wang}.
However in both of the said implications, additional dependence on $\gamma$ is introduced and the constants explode for $\gamma\to 1$.
To the best of our knowledge, the above corollary is new even in the case of measures $\mu_\gamma$, $\gamma\in (1,2)$.

Another example we would like to discuss concerns weighted inequalities for heavy tailed measures. We will focus on the Cauchy measure, defined on $\RR^n$ as
\begin{displaymath}
  \nu_{n,b}(dx) = \frac{1}{Z(1+|x|^2)^b}
\end{displaymath}
for $b > n/2$ (where $Z$ is a normalizing constant).

Being heavy-tailed, the measure $\nu_{n,b}$ cannot satisfy the usual functional inequalities of Definition~\ref{defi:inequalities} with $\calE(f,g) = \mu(\langle \nabla f,\nabla g\rangle)$. Nevertheless as shown in \cite{SchefferPhD,MR2320246,MR2726546,MR3132733}, for $b \ge n+1$, $\nu_{n,b}$ satisfies the following weighted Poincar\'e inequality for smooth functions:
\begin{displaymath}
\Var_{\nu_{n,b}}(f) \le \frac{1}{2(b-1)}\int_{\RR^n} |\nabla f(x)|^2(1+|x|^2)\nu_{n,b}(dx).
\end{displaymath}
Moreover the weight $1+|x|^2$ is of optimal growth at infinity and the constant $\frac{1}{2(b-1)}$ is optimal.
We remark that the weighted Poincar\'e inequality (without optimal constant) is known in a larger range of parameters (see, e.g.,~\cite{MR2510011}), in what follows we however restrict to $b \ge n+1$, as we are going to use recent Beckner inequalities obtained under this assumption.

In~\cite{MR3008255} the above inequality has been complemented by a weighted log-Sobolev inequality
\begin{displaymath}
  \Ent_{\nu_{n,b}} (f^2) \le C_{n,b}\int_{\RR^n} |\nabla f(x)|^2(1+|x|^2)\log(e+|x|^2)\nu_{n,b}(dx),
\end{displaymath}
where $C_{n,b}$ is a constant, depending only on $n,b$. Again, the growth of the weight is optimal at infinity (the result was earlier proved with a weight of faster growth in~\cite{MR2510011}).

By known approaches to moment estimates, related to~\eqref{eq:Poincare-moments} and~\eqref{eq:Aida-Stroock} the above results provide for $r \ge 2$ bounds of the form
\begin{align}\label{eq:moments-Cauchy-1}
\|f - \nu_{n,b}(f)\|_{L_r(\nu_{n,b})} \le \frac{C}{\sqrt{b-1}}r\| \sqrt{\omega_1}\nabla f\|_{L_r(\nu_{n,b})},
\end{align}
where $\omega_1(x) = 1+|x|^2$, and $C$ is a universal constant, and
\begin{align}\label{eq:moments-Cauchy-2}
  \|f - \nu_{n,b}(f)\|_{L_r(\nu_{n,b})} \le \sqrt{C_{n,b}(r-3/2)}\|\sqrt{\omega_2}\nabla f\|_{L_r(\nu_{n,b})},
\end{align}
where $\omega_2(x) = (1+|x|^2)\log(e+|x|^2)$. See \cite{MR2510011,MR3008255} where similar moment inequalities were considered for Lipschitz functions. It is easy to see that~\eqref{eq:moments-Cauchy-1} and~\eqref{eq:moments-Cauchy-2} are not comparable. The latter has better dependence on $r$, the former may perform better if the function is supported far from the origin.

Recently, Bakry, Gentil, and Scheffer~\cite{MR4048609} proved that for $q \in [1,2- \frac{2}{b-n+1}]$, the measure $\nu_{n,b}$ satisfies a weighted Beckner inequality
\begin{align}\label{eq:weighted-Beckner}
2(b-1)\Big(\nu_{n,b}(f^2) - \nu_{n,b}(f^q)^{2/q}\Big) \le (2-q)\int_{\RR^n} |\nabla f(x)|^2(1+|x|^2) \nu_{n,b}(dx).
\end{align}
Interpreting $\calE(f,g) = \int_{\RR^n} \langle \nabla f,\nabla g\rangle \omega_1d\nu_{n,b}$ as a Dirichlet form related to the diffusion with generator $L f = \omega_1 \Delta f +\langle \nabla \omega_1 - \omega_1\nabla V,\nabla f\rangle$, where $V = - \log(\frac{\nu_{n,b}(dx)}{dx})$, and using the relation between the inequalities~\eqref{eq:Beckner-LO} and~\eqref{eq:Beckner-GJ} discussed in Section~\ref{sec:state-of-art} we see that for all $p \in [1+\frac{1}{b-n},2]$,
\begin{displaymath}
  2(b-1)(\nu_{n,b}(f^p) - \nu_{n,b}(f)^p) \le \calE(f,f^{p-1}).
\end{displaymath}

Note that this inequality cannot be satisfied for all $p \in (1,2]$ with a uniform constant, since this would contradict the optimal growth of weight $w_2$ for the log-Sobolev inequality. Using thus Remark~\ref{re:uwaga-o-momentach} with $\Gamma(f,g) = \langle \nabla f,\nabla g\rangle \omega_1$, we obtain the following
\begin{corollary}
 If $b \ge n+1$ then for any smooth function $f\colon \RR^n\to \RR$, and $r \in [2,b-n+1]$,
\begin{align}\label{eq:moments-Cauchy-3}
\|f - \nu_{n,b}(f)\|_{L_r(\nu_{n,b})} \le \frac{C}{\sqrt{(b-1)}}\sqrt{r}\|\sqrt{\omega_1} \nabla f\|_{L_r(\nu_{n,b})},
\end{align}
where $C$ is a universal constant.
\end{corollary}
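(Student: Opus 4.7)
The plan is to apply Proposition~\ref{prop:moments-abstract} (in the form given by Remark~\ref{re:uwaga-o-momentach}) to the bilinear form $\calE(f,g) = \int_{\RR^n}\langle\nabla f,\nabla g\rangle\,\omega_1\,d\nu_{n,b}$ and the corresponding carr\'e du champ $\Gamma(f,g) = \omega_1\langle\nabla f,\nabla g\rangle$, with $\calA$ taken to be the algebra of smooth compactly supported functions (later extended by approximation). As observed in the discussion preceding the statement, this $\calE$ is the Dirichlet form of the reversible diffusion with generator $Lf = \omega_1\Delta f + \langle\nabla\omega_1 - \omega_1\nabla V,\nabla f\rangle$, which places us in the standard Markovian setting; consequently Assumption~\ref{a:I} is immediate and Assumption~\ref{a:II} follows from Proposition~\ref{prop:assumption-2} in the Appendix.

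Next I would read off the Beckner constant. Rewriting~\eqref{eq:weighted-Beckner} in the $q$-form gives $\beta_q \ge 2(b-1)$ for all $q\in[1,\,2-\tfrac{2}{b-n+1}]$; equivalently, by the substitution $p=2/q$ discussed in Section~\ref{sec:state-of-art} (and made explicit by the displayed inequality just before the corollary), one obtains
\[
\alpha_p \ge 2(b-1)\cdot\frac{2}{p} \ge 2(b-1) \qquad \text{for all } p\in\Bigl[1+\tfrac{1}{b-n},\,2\Bigr].
\]
Hence one can take $a = 2(b-1)$ and $s=0$ in the hypotheses of Proposition~\ref{prop:moments-abstract}, with the caveat that~\eqref{eq:Beckner-GJ} holds only on $[p_0,2]$ where $p_0 = 1+\tfrac{1}{b-n}$.

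Invoking Remark~\ref{re:uwaga-o-momentach}, the moment bound~\eqref{eq:moments-4} therefore applies in the restricted range $r\in[2,r_0]$ with $r_0 = \tfrac{p_0}{p_0-1} = b-n+1$, yielding
\[
\|f-\nu_{n,b}(f)\|_r^2 \le \frac{r\,\kappa(0)}{2(b-1)}\,\|\Gamma(f)\|_{r/2}
\]
for bounded $f\in\calA$. Since $\|\Gamma(f)\|_{r/2} = \|\omega_1|\nabla f|^2\|_{r/2} = \|\sqrt{\omega_1}\,|\nabla f|\|_r^2$, taking square roots gives the stated inequality with the universal constant $C = \sqrt{\kappa(0)/2}$.

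The main (and only nontrivial) remaining point is removing the compact support restriction, so that the bound holds for all smooth $f$ as claimed. This is handled by standard truncation: given smooth $f$ with finite right-hand side, one multiplies by a sequence of smooth cutoffs $\chi_M$ equal to $1$ on $B(0,M)$ and supported in $B(0,2M)$, uses Minkowski on the gradient product rule to control $\|\sqrt{\omega_1}\nabla(\chi_M f)\|_r$ in terms of $\|\sqrt{\omega_1}\nabla f\|_r$ and $\|f\|_r$ (which is finite by the weighted Poincar\'e inequality together with interpolation, since $r\le b-n+1$ is in the integrability range of $\nu_{n,b}$), applies the bound to $\chi_M f\in\calA$, and passes to the limit via dominated convergence. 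I expect this approximation bookkeeping to be the only technical obstacle, and it is routine.
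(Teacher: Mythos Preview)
Your proposal is correct and follows essentially the same route as the paper: identify $\calE$ with the weighted Dirichlet form, read off $\alpha_p \ge 2(b-1)$ on the interval $[1+\tfrac{1}{b-n},2]$ from the Bakry--Gentil--Scheffer inequality via the $\eqref{eq:Beckner-LO}\Rightarrow\eqref{eq:Beckner-GJ}$ implication, and invoke Proposition~\ref{prop:moments-abstract} through Remark~\ref{re:uwaga-o-momentach} with $a=2(b-1)$, $s=0$, $r_0=b-n+1$. One small slip: the intermediate bound $\alpha_p \ge 2(b-1)\cdot\tfrac{2}{p}$ is not what the implication in Section~\ref{sec:state-of-art} gives (it gives only $\alpha_p\ge\beta_q=2(b-1)$), but since you then use $a=2(b-1)$ this does not affect the argument.
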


The above corollary complements the inequalities~\eqref{eq:moments-Cauchy-1} and~\eqref{eq:moments-Cauchy-2}, improving on some of their aspects in the situation when $b$ is substantially larger than $n$ and $r$ is large, as it provides  better dependence on $r$ than~\eqref{eq:moments-Cauchy-1} and at the same is based on the weight $\omega_1$ which is smaller than $\omega_2$ used in~\eqref{eq:moments-Cauchy-2}.
However, in the case of fixed $b$ the range of $r$ for which the estimate holds is restricted.  We remark that weighted Beckner inequalities for more general heavy tailed convex measures have been recently obtained in~\cite{MR4003575}. They have been also generalized to the manifold setting in~\cite{gentil2019family}. In all these cases one can derive similar weighted moment inequalities, we chose the case of the Cauchy measure to simplify the exposition.

\subsection{Product spaces} \label{sec:product-spaces}

The Herbst argument, which is now the main tool for deriving concentration estimates from log-Sobolev type inequalities, appeared for the first time in the 1970s in an unpublished letter from I. Herbst to L. Gross. In the mid 1990s in the seminal  paper~\cite{MR1399224} Ledoux demonstrated the strength of this argument in the context of concentration inequalities in product spaces, recovering many inequalities by Talagrand, obtained by a more difficult inductive approach based on appropriate notions of isoperimetry \cite{MR1419006,MR1361756}. Since then the method was further developed by many authors, most notably by Boucheron, Bousquet, Lugosi, and Massart.
 Massart~\cite{MR1782276} and  Boucheron, Lugosi, and Massart~\cite{MR1749290,MR1989444} developed many modified log-Sobolev inequalities for product spaces, which were applied to a variety of problems, ranging from information theory through combinatorics to statistics and probability in Banach spaces. In a subsequent paper with Bousquet~\cite{MR2123200} they also established moment estimates, which are a direct inspiration for our Proposition~\ref{prop:V^2}. For this purpose they developed  Beckner inequalities of the form~\eqref{eq:Beckner-GJ} in product spaces, by deriving first tensorization properties for $\phi$-entropies (present also in the work~\cite{MR1796718} by Lata{\l}a and Oleszkiewicz) and then establishing one dimensional cases of~\eqref{eq:Beckner-GJ} (thus proceeding in a manner parallel to the derivation of the modified log-Sobolev inequality in product spaces, based on tensorization properties of the usual entropy functional).

Our contribution in the context of product measures is an observation that thanks to the equivalence of~\eqref{eq:Beckner-GJ} and~\eqref{eq:mlS-log} with a mild change in constants, the Beckner's inequalities obtained in~\cite{MR2123200} can be derived directly from the most basic modified log-Sobolev inequality for product distributions. We would like to stress again that the subsequent derivation of moment inequalities that we present in Proposition~\ref{prop:V^2} relies very heavily on the approach from~\cite{MR2123200}.

For reader's information and for comparison with the more general case of Glauber dynamics, discussed in the next section, we will now state some of the  moment inequalities presented in~\cite{MR2123200} (we remark that this paper provides also other moment inequalities obtained under additional assumptions on the random variables in question).

\begin{theorem}[{\cite[Theorem 2]{MR2123200}}]
\label{thm:BBLM}
Let $(X_1,\ldots,X_n)$ be independent random variables with values in a measurable space $E$ and let $Z = f(X_1,\ldots,X_n)$ for some measurable function $f$. Let moreover $X_1',\ldots,X_n'$ be independent copies of $X_1,\ldots,X_n$
and set $Z_i = f(X_1,\ldots,X_{i-1},X_i',X_{i+1},\ldots,X_n)$.

Then for $r \ge 2$,
\begin{align}\label{eq:BBLM}
  \|(Z - \EE Z)_+\|_r \le \sqrt{\kappa r}\|\sqrt{V_+}\|_r,
\end{align}
where
\begin{displaymath}
  V_+ = \EE\Big(\sum_{i=1}^n (Z - Z_i)_+^2\Big|X_1,\ldots,X_n\Big)
\end{displaymath}
and $\kappa = \frac{\sqrt{e}}{\sqrt{e} - 1}$.
\end{theorem}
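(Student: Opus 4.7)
The plan is to recognize the setting of Theorem~\ref{thm:BBLM} as a special case of the framework of Section~\ref{sec:Beckner-Aida-Stroock} and then apply Proposition~\ref{prop:V^2} with $s=0$. I would take $\calX = E^n$ endowed with $\mu = \bigotimes_{i=1}^n \mu_i$ (where $\mu_i$ is the law of $X_i$) and with the Glauber-type jump kernel
\[
Q_x(dy) \;=\; \sum_{i=1}^n \mu_i(dy_i)\prod_{j\ne i}\delta_{x_j}(dy_j),
\]
which clearly satisfies the detailed-balance condition~\eqref{eq:detailed-balance} thanks to the product structure of $\mu$. A direct inspection of~\eqref{eq:Gamma+} identifies $\Gamma_+(f)$ with $V_+$, so that $\|\sqrt{V_+}\|_r^2 = \|\Gamma_+(f)\|_{r/2}$; consequently, once the Beckner inequality~\eqref{eq:Beckner-GJ} is verified with a uniform lower bound $\alpha_p \ge 1/2$ on $(1,2]$, Proposition~\ref{prop:V^2} will immediately yield the desired moment estimate.

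The key ingredient is a one-dimensional observation. For any probability measure $\nu$ on $E$, a direct expansion shows that the Dirichlet form $\calE_\nu(f,g) \coloneqq \tfrac12 \int\int (f(x)-f(y))(g(x)-g(y))\,d\nu(x)\,d\nu(y)$ coincides with $\Cov_\nu(f,g)$; combined with Lemma~\ref{L:BLM-entropic-inequality} this yields the one-dimensional Beckner inequality $\nu(f^p)-\nu(f)^p \le \calE_\nu(f,f^{p-1})$, i.e.\ a version of~\eqref{eq:Beckner-GJ} with $\alpha_p = p/2 \ge 1/2$. Since the full Dirichlet form corresponding to $Q_x$ splits as $\calE(f,g) = \sum_{i=1}^n \EE_\mu[\calE_{\mu_i}(f,g)]$ with $\calE_{\mu_i}$ acting on the $i$-th coordinate conditionally on the others, the standard subadditivity of the $\phi$-entropy functional for $\phi(x)=x^p$ (as used in~\cite{MR1796718,MR2123200}) transfers this one-dimensional inequality to the product measure $\mu$ with the same constant.

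Plugging the Beckner inequality into Proposition~\ref{prop:V^2} with $s=0$ and $a=1/2$, and using $\kappa(0) = (1-e^{-1/2})^{-1} = \sqrt{e}/(\sqrt{e}-1) = \kappa$, one obtains
\[
\|(Z-\EE Z)_+\|_r^2 \;\le\; (1-2^{-1})\cdot \frac{r}{1/2}\cdot \kappa(0)\cdot \|\Gamma_+(f)\|_{r/2} \;=\; \kappa\, r\, \|\sqrt{V_+}\|_r^2,
\]
which is exactly the claimed bound. An alternative route, more in the spirit of the main result of the paper, would be to first establish the modified log-Sobolev inequality~\eqref{eq:mlS-log} with $\rho_0=1$ for the product Glauber dynamics---its one-dimensional version reduces via the variational formula for the entropy to Jensen's inequality applied to $\log$, and tensorization of ordinary entropy concludes the product case---and then to invoke Theorem~\ref{thm:main:mLS_to_Bec} to pass to~\eqref{eq:Beckner-GJ}, at the cost of a slightly weaker numerical constant. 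I do not anticipate any real obstacle: both the identification $\Gamma_+(f)=V_+$ and the $\phi$-entropy tensorization are by now standard, and the constants chain through cleanly.
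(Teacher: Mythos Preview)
Your proposal is correct, and it actually recovers the sharp constant $\kappa = \sqrt{e}/(\sqrt{e}-1)$ stated in the theorem. Note, however, that the paper does not give its own proof of Theorem~\ref{thm:BBLM}; the result is quoted from~\cite{MR2123200}. What the paper does instead, in the paragraph immediately following the theorem, is your ``alternative route'': it establishes~\eqref{eq:mlS-log} with $\rho_0=1$ for the product Glauber dynamics via Jensen and entropy tensorization, then invokes Theorem~\ref{thm:main:mLS_to_Bec} to obtain~\eqref{eq:Beckner-GJ} with $\alpha_p\ge 1/6$, and finally applies Proposition~\ref{prop:V^2} with $a=1/6$, $s=0$. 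This yields~\eqref{eq:BBLM} with $3\kappa$ in place of $\kappa$, i.e.\ a loss of $\sqrt{3}$ in the moment bound, as the paper explicitly notes.

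Your main route---the one-dimensional identity $\calE_\nu=\Cov_\nu$ combined with Lemma~\ref{L:BLM-entropic-inequality} and $\phi$-entropy subadditivity for $\phi(x)=x^p$---is essentially the original argument of~\cite{MR2123200} repackaged in the paper's kernel language, and that is precisely why it hits the sharp constant. The trade-off is clear: your approach needs the $\phi$-entropy tensorization machinery as an external input, whereas the paper's route only uses ordinary entropy tensorization plus its own Theorem~\ref{thm:main:mLS_to_Bec}, illustrating the paper's thesis that Beckner-type bounds can be obtained from~\eqref{eq:mlS-log} alone. Both derivations share the identification $\Gamma_+(f)=V_+$ and the final application of Proposition~\ref{prop:V^2}.
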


We remark that for $p = 2$ this result recovers (up to constants) the Efron--Stein inequality for the variance.

Let us now relate the above theorem to our Theorem~\ref{thm:main:mLS_to_Bec} and Proposition~\ref{prop:V^2} and explain how they imply a version of the estimate~\eqref{eq:BBLM}. Denote by $\mu$  the distribution of the sequence $(X_1,\ldots,X_n)$ and observe that the quantity $V_+$ coincides with our $\Gamma_+(f)$ (recall~\eqref{eq:Gamma+}) for $\Gamma$ given by
\begin{displaymath}
  \Gamma(f,g) = \frac{1}{2}\int_{E^n} (f(y) - f(x))(g(y) - g(x))Q_x(dy),
\end{displaymath}
for the kernel
\begin{displaymath}
Q_x(A) = \int_{E^n}\sum_{i=1}^n \ind{A}(x_1,\ldots,x_{i-1},y_i,x_{i+1},\ldots,x_n) \mu(dy),
\end{displaymath}
reversible with respect to $\mu$ (as already mentioned this can be seen as a special case of Glauber dynamics).

The modified log-Sobolev inequality~\eqref{eq:mlS-log} holds in this case simply due to  Jensen's inequality and tensorization (the idea present already in the paper~\cite{MR1399224} by Ledoux).
Indeed, for any random variable $X$, denoting by $X'$ its independent copy, we have
\begin{align*}
  \Ent(f(X)) & \le \EE f(X)\log f(X) - \EE f(X)\EE\log f(X) \\
  & = \EE f(X)(\log f(X) - \log f(X')) \\
  &= \frac{1}{2} \EE (f(X) - f(X'))(\log f(X) - \log f(X')),
\end{align*}
which, when combined with the well known tensorization property of entropy (see, e.g. \cite[Theorem 4.10]{MR3185193})
\begin{displaymath}
  \Ent_{\mu}(f) \le \int  \sum_{i=1}^n \Ent_{\mu_i}(f) d\mu,
\end{displaymath}

for $\mu = \mu_1\otimes\cdots\otimes \mu_n$ gives~\eqref{eq:mlS-log} with $\rho_0 = 1$ (here $\Ent_{\mu_i}$ denotes the entropy computed on a product space just with respect to the $i$-th coordinate and the measure $\mu_i$, with the other coordinates fixed). By Theorem~\ref{thm:main:mLS_to_Bec} this gives Beckner's inequality~\eqref{eq:Beckner-GJ} with $\alpha_p \ge \frac{1}{6}$.
Now, Proposition~\ref{prop:V^2} applied with $a = 1/6$ and $s = 0$ gives~\eqref{eq:BBLM} with $\kappa = 3\frac{\sqrt{e}}{\sqrt{e}-1}$, which is worse than Theorem~\ref{thm:BBLM} just by a factor~$\sqrt{3}$.

\subsection{Glauber dynamics} \label{sec:Glauber}

Let us now consider $\mathcal{X} = E^I$, where $I$ is a finite set and $E$ is a Polish space endowed with the Borel $\sigma$-field. Let $\mu$ be  a probability measure on $\mathcal{X}$. For $x \in \mathcal{X}$ and $J \subseteq I$, let $x_J = (x_i)_{i\in J}$. Let also $X=(X_i)_{i\in I}$ be an $\mathcal{X}$-valued random variable distributed according to $\mu$. Finally for $i\in I$ let $X_i'$ be an $E$-valued random variable such that its (regular) conditional distribution given $X$ satisfies
\begin{displaymath}
  \mu_i(\cdot|x) := \PP(X_i' \in \cdot \ |X = x) = \PP(X_i\in \cdot \ |X_{\{i\}^c} = x_{\{i\}^c}).
\end{displaymath}
In other words, $X_i'$ and $X_i$ are conditionally i.i.d. given $X_1,\ldots,X_{i-1},X_{i+1},\ldots,X_n$.

Denote $X^i = (Y_j)_{j\in I}$ where $Y_j = X_j$ for $j\neq i$ and $Y_i= X_i'$ (i.e., $X^i$ is obtained from $X$ by replacing $X_i$ with $X_i'$). The Glauber dynamics (known also as the Gibbs sampler or heath bath) is given by a generator of the form
\begin{displaymath}
  L f(x) = \sum_{i \in I} \int_{E} (f(x_1,\ldots,x_{i-1},y,x_{i+1},\ldots,x_n) - f(x)) \mu_i(dy|x)
\end{displaymath}
and corresponds to a c\`adl\`ag Markov process $(X(t))_{t\ge 0}$ in which at rate $|I|$ a coordinate $i\in I$ is chosen uniformly and  $X_i(t-)$ is replaced with a value drawn from the distribution  $\mu_i(\cdot|X(t-))$, while the remaining coordinates are kept intact.

Let us note that if $\mu$ is a product measure, then $\mu_i(\cdot|x)$ equals to the $i$-th marginal of $\mu$ (in particular is independent of $x$) and the situation reduces to the case described in the previous section with $I=[n]:=\{1,\ldots,n\}$. In the general case the generator and the carr\'e du champ operator are given by the kernel
\begin{align}\label{eq:Glauber-Q}
Q_x(A) = \int_{\mathcal{X}}\sum_{i\in I} \ind{A}(x_1,\ldots,x_{i-1},y_i,x_{i+1},\ldots,x_n)\mu_i(dy|x).
\end{align}

Plugging this kernel into formulas~\eqref{eq:Gamma-kernel} and~\eqref{eq:Gamma+}, and using the properties of the variables $X^i$, we obtain
\begin{align*}
  \Gamma(f) &= \frac{1}{2}\sum_{i\in I} \EE((f(X) - f(X^i))^2|X),\\
\Gamma_+(f) &= \sum_{i\in I} \EE((f(X) - f(X^i))^2_+|X).
\end{align*}
Therefore, a combination of Theorem~\ref{thm:main:mLS_to_Bec}, Proposition~\ref{prop:V^2} and Proposition~\ref{prop:moments-abstract} gives in this setting the following corollary.

\begin{corollary}\label{cor:Glauber}
In the setting described above, if the Glauber dynamics satisfies the modified log-Sobolev inequality~\eqref{eq:mlS-log} with constant $\rho_0$, then for $p \in (1,2]$ it satisfies~\eqref{eq:Beckner-GJ} with $\alpha_p \ge \rho_0/6$. Moreover for every function $f\colon \mathcal{X}\to \RR$ and $r\ge 2$,
\begin{align*}
  \|(f(X) - \EE f(X))_+\|_r  &\le K\sqrt{r}\Big \| \Big(\sum_{i\in I} \EE((f(X) - f(X^i))^2_+|X)\Big)^{1/2}\Big\|_r\\
 &\le K\sqrt{r}\Big \| \Big( \sum_{i\in I} (f(X) - f(X^i))^2_+\Big)^{1/2}\Big\|_r
\end{align*}
and
\begin{align*}
\|f(X) - \EE f(X)\|_r
&\le K\sqrt{r}\Big \| \Big(\sum_{i\in I} \EE((f(X) - f(X^i))^2|X)\Big)^{1/2}\Big\|_r\\
&\le K\sqrt{r}\Big \| \Big(\sum_{i\in I} (f(X) - f(X^i))^2\Big)^{1/2}\Big\|_r,
\end{align*}
where $K = \sqrt{\frac{3\sqrt{e}}{\rho_0(\sqrt{e}-1)}}$.
\end{corollary}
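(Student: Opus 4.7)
The proof plan is to verify that Glauber dynamics fits the kernel framework of Section~\ref{sec:Beckner-Aida-Stroock} and then invoke the previously established results. First I would check that the kernel $Q_x$ defined in \eqref{eq:Glauber-Q} satisfies the detailed balance condition \eqref{eq:detailed-balance}. This reduces to verifying, coordinate by coordinate, that the measure $\mu_i(dy_i\,|\,x)\,\mu(dx)$ is invariant under the swap $x_i \leftrightarrow y_i$ (with the remaining coordinates held fixed), which follows directly from the characterization of $\mu_i(\cdot\,|\,x)$ as the regular conditional distribution of $X_i$ given $X_{\{i\}^c}$. Once this is done, plugging $Q_x$ into \eqref{eq:Gamma-kernel} and \eqref{eq:Gamma+} and using the identity in distribution $X^i \stackrel{d}{=} (x_1,\dots,x_{i-1},Y,x_{i+1},\dots,x_n)$ with $Y \sim \mu_i(\cdot\,|\,x)$ (conditionally on $X = x$) yields the announced formulas for $\Gamma$ and $\Gamma_+$.

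The Beckner bound $\alpha_p \ge \rho_0/6$ is then immediate from Theorem~\ref{thm:main:mLS_to_Bec}, since the numerical bound $\inf_{p\in(1,2]} K_p \ge 0.17 > 1/6$ is in force. For the moment estimates, I apply Proposition~\ref{prop:V^2} with $s=0$ and $a = \rho_0/6$; this gives the first inequality with the explicit constant
\[
K^2 = (1-2^{-1})\cdot \frac{6}{\rho_0}\cdot \kappa(0) = \frac{3}{\rho_0}\cdot \frac{1}{1-e^{-1/2}} = \frac{3\sqrt{e}}{\rho_0(\sqrt{e}-1)}.
\]
For the two-sided version I invoke Proposition~\ref{prop:moments-abstract}; this requires Assumption~\ref{a:II}, which holds in the Markovian setting of Glauber dynamics by Proposition~\ref{prop:assumption-2} in Appendix~\ref{app-Markov} (the algebra $\calA$ being, e.g., the bounded measurable functions on $\calX$, which is stable under truncation and composition with Lipschitz maps).

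Finally, to pass from the conditional-expectation form to the pointwise form of each estimate, I would use conditional Jensen's inequality applied to the convex function $u \mapsto u^{r/2}$ (valid since $r\ge 2$). Writing $B = \sum_{i\in I}(f(X)-f(X^i))_+^2$ and $A = \EE[B\,|\,X]$, we have
\[
\|\sqrt{A}\|_r^r = \EE\bigl(\EE[B\,|\,X]\bigr)^{r/2} \le \EE\,\EE[B^{r/2}\,|\,X] = \|\sqrt{B}\|_r^r,
\]
and analogously for the symmetric version without the positive part. The main (minor) obstacle is the bookkeeping in the detailed balance verification and making sure that the squared-difference kernel formulas for $\Gamma$ and $\Gamma_+$ agree, after integrating the Dirac-type kernel $Q_x$, with the conditional-expectation expressions stated in the corollary; everything else is a direct citation of the abstract machinery already developed.
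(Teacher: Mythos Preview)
Your proposal is correct and follows exactly the route the paper indicates: the corollary is stated as a direct combination of Theorem~\ref{thm:main:mLS_to_Bec}, Proposition~\ref{prop:V^2}, and Proposition~\ref{prop:moments-abstract}, and you have filled in the natural details (detailed balance for the Glauber kernel, the constant computation, conditional Jensen for the second inequalities). One small simplification: rather than invoking Proposition~\ref{prop:assumption-2} to justify Assumption~\ref{a:II}, the paper notes in the remark preceding Remark~\ref{re:uwaga-o-momentach} that Assumption~\ref{a:II} is easily checked directly in the kernel setting of Proposition~\ref{prop:V^2}, which is the cleaner route here; also keep track of the factor $\tfrac{1}{2}$ in $\Gamma(f)$ when matching the constant $K$ in the two-sided bound.
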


Inequalities of this type have been recently derived for measures on finite product spaces~\cite{gtze2018higher} using the Aida--Stroock approach, based on the usual log-Sobolev inequality~\eqref{eq:mlS-sqrts}. This results in the constant $K$ being a multiple of $\rho_1^{-1/2}$. However in many cases (even if $\mu$ is a product measure on a finite set) the constant $\rho_1$ is much smaller than $\rho_0$. Moreover,  as shown in~\cite{gtze2018concentration} in the general case $\rho_1 > 0$ for the Glauber dynamics only if $\mu$ is finitely supported, which is in contrast to $\rho_0$ which, as stated in the previous section, is positive, e.g., for all product measures. Using a Holley--Stroock type perturbation argument (cf.~\cite{MR893137,MR1845806}) one can also easily produce examples of non-product measures with infinite support and $\rho_0 > 0$.

Several examples satisfying the modified log-Sobolev inequality~\eqref{eq:mlS-log} and the log-Sobolev inequality~\eqref{eq:mlS-sqrts} have been recently presented by Sambale and Sinulis in~\cite{Sambale_2020}. They are based on a general theorem concerning approximate tensorization of entropy under a Dobrushin type condition due to Marton~\cite{MR4015662} (see also~\cite{gtze2018higher}).
Let us now state their general result.

Assume that $E$ is finite and define the Dobrushin matrix  $A= (A_{ij})_{i,j\in I}$ as
\begin{displaymath}
  A_{ij} = \sup_{x,y\in \mathcal{X}\colon x_{\{j\}^c} = y_{\{j\}^c}} \|\mathcal{L}(X_i|X_{\{i\}^c} = x_{\{i\}^c}) - \mathcal{L}(X_i|X_{\{i\}^c} = y_{\{i\}^c})\|_{TV}
\end{displaymath}
for $i\neq j$ and $A_{ii} = 0$  (where $\|\cdot\|_{TV}$ denotes the total variation norm). Let  $\alpha = 1 - \|A\|_{\ell_2\to \ell_2}$, where $\|A\|_{\ell_2\to \ell_2}$ is the operator norm of the matrix $A$.
Define also for $J \subsetneq I$ and $i \notin J$
\begin{displaymath}
  \beta_{i,J} = \inf_{\stackrel{x_J \in E^J, y_{J^c} \in E^{J^c}}{(x_{J},y_{J^c}) \in \supp(\mu)}} \PP(X_i = (y_{J^c})_i|X_J = x_J)
\end{displaymath}
(for $J = \emptyset$ we understand the above simply as $\inf_{y \in \supp(\mu)} \mu(\{y\})$).

Finally set
\begin{displaymath}
  \beta = \inf_{J\subsetneq I}\inf_{i\notin J}\beta_{i,J}.
\end{displaymath}

\begin{theorem}[{\cite[Theorem 4.1]{Sambale_2020}}] \label{thm:Sambale}
If $\alpha,\beta > 0$, then
\begin{displaymath}
  \rho_0 \ge \alpha^2\beta,\quad \rho_1 \ge  \frac{\log(2)\alpha^2\beta}{2\log(\beta^{-1})}.
\end{displaymath}
\end{theorem}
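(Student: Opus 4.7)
The plan is to combine a Marton-type approximate tensorization of entropy (which is what the Dobrushin contraction $\|A\|_{\ell_2\to\ell_2}<1$ was designed for) with an elementary single-site entropy bound driven by the uniform lower mass $\beta$. The whole argument reduces the global inequality to a family of one-coordinate calculations on the measures $\mu_i(\cdot|x_{\{i\}^c})$.

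First, I would invoke the Marton approximate tensorization of entropy cited in the paper: under the hypothesis $\|A\|_{\ell_2\to\ell_2}<1$, for every nonnegative $f\colon \calX\to\RR$,
\[
\Ent_\mu(f) \le \frac{1}{\alpha^2}\sum_{i\in I} \EE\bigl[\Ent_{\mu_i(\cdot|X_{\{i\}^c})}(f)\bigr].
\]
The factor $\alpha^{-2}$ is precisely what makes this estimate match the target $\rho_0\ge \alpha^2\beta$. This step is the heart of the argument: in the independent case the right-hand side is already the standard tensorization of entropy, while under the Dobrushin contraction one pays at most $\alpha^{-2}$ for the dependencies between coordinates.

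Second, for each fixed realization of $X_{\{i\}^c}$, the conditional distribution $\mu_i(\cdot|X_{\{i\}^c})$ is supported in $E$ with every atom of mass at least $\beta$. The corresponding single-site heat-bath chain (resampling coordinate $i$ from its conditional) is simply a complete-type chain on a finite set, and a direct computation (or the known modified log-Sobolev constant for this class of chains) gives
\[
\beta\,\Ent_{\mu_i(\cdot|X_{\{i\}^c})}(f) \le \calE_i\bigl(f,\log f\,\big|\,X_{\{i\}^c}\bigr),
\]
where $\calE_i(\cdot,\cdot\mid X_{\{i\}^c})$ is the Dirichlet form of the single-site chain on coordinate $i$. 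Substituting into the tensorization inequality, summing over $i$, and recognizing $\sum_{i\in I}\EE[\calE_i(f,\log f\mid X_{\{i\}^c})] = \calE(f,\log f)$ (the full Glauber generator being $L=\sum_i L_i$) yields the announced bound $\rho_0\ge \alpha^2\beta$.

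For the log-Sobolev version the scheme is identical, but one must replace the single-site modified log-Sobolev constant by the single-site log-Sobolev constant. By the classical Diaconis--Saloff-Coste comparison between the log-Sobolev and Poincar\'e constants on a finite reversible chain with minimum stationary mass $\beta$, the single-site Poincar\'e constant $\ge \beta$ translates into a single-site log-Sobolev constant of order $\beta/\log(\beta^{-1})$, with a computable numerical factor $\log(2)/2$. Combined with the same Marton tensorization (applicable since $\Ent_\mu(g^2)$ is again an entropy of a nonnegative function), this produces $\rho_1\ge \tfrac{\log(2)\,\alpha^2\beta}{2\log(\beta^{-1})}$.

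The main obstacle is the first step: the approximate tensorization of entropy with the sharp constant $\alpha^{-2}$. Naive iteration of the two-coordinate subadditivity of entropy does not directly see the Dobrushin structure, and one has to either follow Marton's transportation-information route or use the variational representation of entropy together with a spectral estimate on the iterated Gibbs sampler acting on conditional expectations; either way, converting $\ell_2$-contractivity of $A$ into the right quantitative entropy bound is the only nontrivial ingredient. Once this is in place, the remaining single-site steps are essentially explicit computations and well-known comparison inequalities for finite Markov chains.
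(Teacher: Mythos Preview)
This theorem is not proved in the paper under review---it is quoted from \cite{Sambale_2020}. The paper only indicates the strategy, noting that the result is ``based on a general theorem concerning approximate tensorization of entropy under a Dobrushin type condition due to Marton~\cite{MR4015662}''. Your sketch follows exactly this two-step route (approximate tensorization, then single-site inequality), so at the level of architecture it matches what the cited source does.

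One point of detail is off, though. For the single-site heat-bath chain with conditional law $\nu=\mu_i(\cdot\mid x_{\{i\}^c})$, the Dirichlet form is $\calE_i(f,g)=\Cov_\nu(f,g)$, and Jensen's inequality ($\log \EE_\nu f \ge \EE_\nu \log f$) gives $\Ent_\nu(f)\le \Cov_\nu(f,\log f)=\calE_i(f,\log f)$ directly---the single-site modified log-Sobolev constant is $1$, not $\beta$. Your inequality $\beta\,\Ent_\nu(f)\le \calE_i(f,\log f)$ is therefore true but wasteful by a factor $\beta$. The parameter $\beta$ is not generated at the single-site step; it appears in Marton's tensorization constant itself, where the transportation/coupling argument requires the uniform lower bound $\beta$ on conditional probabilities and produces a constant of order $(\alpha^2\beta)^{-1}$ rather than $\alpha^{-2}$. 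With this bookkeeping corrected the rest of your outline (including the Diaconis--Saloff-Coste comparison that supplies the extra $\log(1/\beta)$ in the LSI bound) goes through.
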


(Recall that here $\rho_0$ is the constant on the left-hand side in the modified log-Sobolev~\eqref{eq:mlS-log} and note that due to a different normalization of the Dirichlet form and a different convention concerning constants in~\cite{Sambale_2020} our parameter $\rho_0$ corresponds to $2|I|/\rho_0$ therein).
Using Theorem~\ref{thm:main:mLS_to_Bec} we immediately obtain

\begin{corollary} If $\alpha, \beta > 0$, then for $p \in (1,2]$ the inequality~\eqref{eq:Beckner-GJ} holds with $\alpha_p \ge \frac{\alpha^2\beta}{6}$.
\end{corollary}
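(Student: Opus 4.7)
The proof is essentially immediate given the tools developed earlier in the paper, so my plan is simply to chain two results together. First, I would invoke Theorem~\ref{thm:Sambale} of Sambale--Sinulis, which under the assumptions $\alpha,\beta > 0$ guarantees that the Glauber dynamics on $\mathcal{X} = E^I$ satisfies the modified log-Sobolev inequality~\eqref{eq:mlS-log} with constant $\rho_0 \geq \alpha^2\beta$.

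Second, I would apply our main Theorem~\ref{thm:main:mLS_to_Bec} (or equivalently Corollary~\ref{cor:constants_in_main_thm}), which transfers a modified log-Sobolev inequality with constant $\rho_0$ into a Beckner inequality~\eqref{eq:Beckner-GJ} valid for every $p \in (1,2]$ with constant $\alpha_p \geq K_p \rho_0$. Using the uniform numerical lower bound $\inf_{p \in (1,2]} K_p \geq 1/6$ established at the end of the proof of Theorem~\ref{thm:main:mLS_to_Bec}, we immediately deduce
\[
  \alpha_p \geq \frac{\rho_0}{6} \geq \frac{\alpha^2\beta}{6}
\]
for all $p \in (1,2]$, which is exactly the claim.

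There is really no obstacle here: the corollary is just the composition of Sambale--Sinulis's lower bound on $\rho_0$ with our passage from $\rho_0$ to $\alpha_p$. The only (minor) thing to keep in mind is that the normalization conventions in~\cite{Sambale_2020} differ from ours by a factor of $2|I|$, but this has already been accounted for in the statement of Theorem~\ref{thm:Sambale} as presented in the excerpt, so no further adjustment is needed.
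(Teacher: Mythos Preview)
Your proposal is correct and matches the paper's approach exactly: the paper states that the corollary follows ``immediately'' from Theorem~\ref{thm:main:mLS_to_Bec}, which is precisely the composition of Theorem~\ref{thm:Sambale} (giving $\rho_0 \ge \alpha^2\beta$) with the bound $\alpha_p \ge \rho_0/6$ from Theorem~\ref{thm:main:mLS_to_Bec}.
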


Sambale and Sinulis apply Theorem~\ref{thm:Sambale} to several stochastic models, including exponential random graphs, random graph colorings, hardcore model.
In an earlier paper~\cite{gtze2018concentration} with G\"otze they also treat the Ising model.
They are primarily interested in situations when for a family of models on sets $I_n$ with $|I_n|\to \infty$ the constants $\rho_0,\rho_1$ are uniformly separated from zero.
From this point of view the sufficient conditions that can be obtained from Theorem~\ref{thm:Sambale} are the same for both constants.
If one is however interested in a more quantitative analysis, and looks at the dependence of the constants on the parameters of the model, then typically $\rho_0$ is of smaller order than $\rho_1$ (as $\beta$ becomes small).
In particular, a combination of Corollary~\ref{cor:Glauber} with estimates on $\rho_0$ given in Theorem~\ref{thm:Sambale} indeed gives better dependence of constants in moment inequalities than those derived from the Aida--Stroock approach based on $\rho_1$.

Below we discuss this in more detail for selected classical stochastic models.

\medskip

\subsubsection{Exponential random graphs} Let $I_n = \{(i,j)\in [n]^2\colon i < j\}$ and identify elements of $\mathcal{G}_n = \{0,1\}^{I_n}$ with simple graphs on $n$-vertices in a natural way. For $\gamma = (\gamma_1,\ldots,\gamma_s) \in \RR^s$ and simple connected graphs $G_i = (V_i,E_i)$, $i=1,\ldots,s$, let $\mu_\gamma$ be a probability measure on $\mathcal{G}_n$ given by the weight of the form
    \begin{displaymath}
      \exp(-H_\gamma(x)), \quad x\in \mathcal{G}_n,
    \end{displaymath}
    with
    \begin{displaymath}
    H_\gamma(x) = n^2 \sum_{i=1}^s \gamma_i \frac{N_{G_i}(x)}{n^{|V_i|}},
    \end{displaymath}
    where for simple graphs $H = (V_H,E_H)$ and $G= (V_G,E_G)$, $N_{H}(G)$ is the number of homomorphisms from $H$ to $G$, i.e., injective maps $i\colon V_H\to V_G$, which preserve edges. We assume (by convention) that $G_1$ is the complete graph on two vertices and that $|E_i| > 1$ for $i > 1$. Then, for $s=1$, the measure $\mu_\gamma$ corresponds to the distribution of the Erd\H{o}s--R\'enyi random graph $G(n,p)$ with $p = e^{\gamma_1}(1+e^{\gamma_1})^{-1}$. The general model for arbitrary $s$ and graphs $G_i$ is known as the exponential random graph model and it has been an object of intensive studies in recent years, both from the theoretical point of view and in connection to applications, e.g., to modeling of social networks. We refer to~\cite{MR3544262} and~\cite{MR3127871} for a detailed presentation. Sambale and Sinulis provide a sufficient condition for the constants $\rho_0$ and $\rho_1$ to be separated from zero independently of the size $n$ of the model. An inspection of their proof
 reveals that if

    \begin{displaymath}
    \delta := \frac{1}{2}\sum_{i=2}^s |\gamma_i||E_i|(|E_i|-1) < 1,
    \end{displaymath}
    then one can apply Theorem~\ref{thm:Sambale} with some $\alpha \ge 1 - \delta$ and $\beta \ge ce^{-2|\gamma_1|}$ for a universal constant $c>0$. Thus in this case the bounds on $\rho_0$ and $\rho_1$ differ by a factor of order $|\gamma_1|$  when $|\gamma_1| \to \infty$. It is an interesting question to verify if the constant $\rho_1$ indeed depends on $\gamma_1$, which corresponds to the Erd\H{o}s--R\'enyi product-type behaviour of the graph.

\subsubsection{Ising model on finite sets} Let $I = \{1,\ldots,n\}$ and consider the measure on $\mathcal{X} = \{+1,-1\}^n$ given by
\begin{displaymath}
  \mu(\{\varepsilon\}) = \frac{1}{Z}\exp\Big(\frac{1}{2}\sum_{i,j=1}^n J_{ij}\varepsilon_i\varepsilon_j - \sum_{i=1}^n h_i\varepsilon_i\Big),
\end{displaymath}
where $J = (J_{ij})_{i,j=1}^n$ is a symmetric matrix with vanishing diagonal and $h \in \RR^n$. From the statistical physics point of view the matrix $J$ correspond to interaction between spins, while $h$ describes the external field. Concentration inequalities for the Ising model have been considered by many authors starting from the 1990s~\cite{MR2002993,MR3711609}, as it is arguably the most basic discrete model with dependencies. The interest in them has been recently  revived in relation to algorithmic applications \cite{MR3775918,2017arXiv171004170D,MR3873783}. Estimates on $\rho_0$ given by Theorem~\ref{thm:Sambale} have been a starting point for inequalities obtained in~\cite{gtze2018higher,MR3949267}, with the Aida--Stroock approach playing a crucial role. Since each function of the discrete cube can be regarded as a polynomial, by considering its Fourier--Walsh expansion, it is natural to investigate concentration of measure in terms of characteristics related to the polynomial representation. In this case  the dependence of the estimates from the said papers on the constant $\rho_1$ increases with the degree of the polynomial. Therefore an application of Corollary~\ref{cor:Glauber} again allows to improve the behaviour of inequalities in the asymptotic case. As for the parameters $\alpha$, $\beta$ of Theorem~\ref{thm:Sambale}, an inspection of the calculations from~\cite{gtze2018higher} (cf.\ Lemma 3.1. therein) reveals that in this case they can be taken as
\begin{align}\label{eq:Ising-constants}
\alpha \ge  1 - \max_{i\le n}\sum_{j\le n}|J_{ij}|,\quad \beta \ge ce^{-\|h\|_\infty}.
\end{align}
Since the constants in the modified log-Sobolev inequalities do not depend on $h$ in the product case $J= 0$, the same question as in the case of exponential random graphs seems natural also in this setting.

We note that in both cases it is not clear to us whether the above estimates on $\rho_0$ and $\rho_1$ can be improved in a general situation and what the true gap between the two constants is.  Let us also point out that the gap in the estimates of Theorem~\ref{thm:Sambale} appears in the regime $\beta \to 0$ and is only logarithmic in $1/\beta$, while the dependence of the bounds for $\rho_0$ and $\rho_1$ on $\beta$ is polynomial.

\medskip

\subsubsection{Hardcore model}
We will conclude this section with another example of a classical stochastic  model. In this case the model does demonstrate a gap between $\rho_0$ and $\rho_1$ and not just their known lower bounds. More specifically we will show a family of hardcore models on a growing sequence of graphs, for which $\rho_0$ remains separated from zero, while $\rho_1 \to 0$.

Let $G = (V,E)$ be a finite simple graph with maximum degree $\Delta$ and let $\eta > 0$ be a parameter. A binary function $\varepsilon  = (\varepsilon_i)_{i\in V}\in  \mathcal{X} \coloneqq \{0,1\}^V$ will be called admissible if
$\varepsilon_i\varepsilon_j = 0$ whenever $\{i,j\} \in E$. Thus admissible functions describe allocations of particles on $V$ in which one can have at most one particle per vertex and no two adjacent vertices can be occupied simultaneously.  Let $\mu$ be a probability measure on $\mathcal{X}$ given by
\begin{displaymath}
  \mu(\{\varepsilon\}) = \frac{1}{Z}\prod_{i\in V} \eta^{\varepsilon_i}\ind{\{\textrm{$\varepsilon$ is admissible}\}},
\end{displaymath}
where $Z$ is the normalization constant.
Recently Conforti~\cite{conforti2020probabilistic} obtained modified log-Sobolev inequalities and Beckner inequalities for this model. In particular, improving earlier estimates from~\cite{MR3646066}, under the assumption $\eta \Delta < 1$ he proved that
\begin{align}\label{eq:conforti}
  \rho_0 := \rho_0(G,\eta) \ge \frac{1 - \eta(\Delta-1) + 2\min(\eta,1-\eta\Delta)}{1+\eta}.
\end{align}
(we remark that the Glauber dynamics considered by us is slowed down by a factor $1+\eta$ with respect to the one used in~\cite{conforti2020probabilistic}).
  He obtained also general $\Phi$-Sobolev inequalities, in particular~\eqref{eq:Beckner-GJ}. Estimates for $\rho_0$ and $\rho_1$ independent on $|V|$ have been also obtained in~\cite{Sambale_2020} by means of Theorem~\ref{thm:Sambale}, under the assumption $\eta(\Delta - 1) < 1$, however due to the dependence on the parameter $\beta$, they are of worse order.

Below we will provide a sequence of graphs $G_n = (V_n,E_n)$ with $|V_n| = n+1$,  maximum degree $\Delta_n = n$, and such that $\rho_0 = \rho_0(G_n,1/(2\Delta_n))$ is bounded away from zero, while $\rho_1 = \rho_1(G_n,1/(2\Delta_n)) = O(\frac{1}{\log n})$.
Let $G_n$ be a star with center $0$ and $n$ rays, i.e., $V_n = \{0\}\cup[n]$, $E_n = \{\{0,i\}\colon i \in [n]\}$.

Note that the set of  admissible $\varepsilon$'s is composed of $2^n + 1$ elements:  $\varepsilon^\ast$ placing a single particle at zero and $2^n$ configurations with $\varepsilon_0 = 0$. Among them let us distinguish $\varepsilon^\circ$ such that $\varepsilon^\circ(0) = 0$, $\varepsilon^\circ(i) = 1$ for $i \in [n]$.

In particular it follows from the above discussion that $Z = \eta + (1+\eta)^n$, $\mu(\{\varepsilon^\ast\}) = \frac{\eta}{Z}$, $\mu(\{\varepsilon^\circ\}) = \frac{\eta^n}{Z}$.

Let us test the inequality~\eqref{eq:mlS-sqrts} with  $f= \ind{\{\varepsilon^\circ\}}$. Denoting $p = \mu(\{\varepsilon^\circ\})$, we obtain
\begin{displaymath}
  \Ent f = p\log(p^{-1}).
\end{displaymath}
On the other hand
\begin{displaymath}
  \calE(\sqrt{f},\sqrt{f}) = \calE(f,f) = \EE \sum_{i=0}^n (f(X) - f(X^i))_+^2,
\end{displaymath}
where $X,X^i$ are defined at the beginning of this section.

Now, $f$ is nonnegative and equal $0$ on $\{\varepsilon^\circ\}^c$, therefore if $X \neq \varepsilon^\circ$, then
\begin{displaymath}
\sum_{i=0}^n (f(X) - f(X^i))_+^2=0.
\end{displaymath}
On the other hand, if $X = \varepsilon^\circ$ then
$X^0 = X$ and for $i\neq 0$, $X^i \neq X$ with conditional probability $\frac{1}{1+\eta}$. Thus
\begin{displaymath}
\calE(\sqrt{f},\sqrt{f}) = \frac{pn}{1+\eta}.
\end{displaymath}
This shows that $\rho_1 \le \frac{n}{(1+\eta)\log(p^{-1})}$. Since $p^{-1} = \frac{\eta+(1+\eta)^n}{\eta^n}  \ge \frac{1}{\eta^{n}}$, we obtain
\begin{displaymath}
  \rho_1 \le \frac{1}{1+\eta} \frac{1}{\log(\eta^{-1})}.
\end{displaymath}
In particular for $\eta = \frac{1}{2n}$, we get $\rho_1 = O\Big(\frac{1}{\log n}\Big)$, whereas by~\eqref{eq:conforti} $\rho_0 \ge c$ for some  $c>0$, independent of $n$.

\subsection{The symmetric group}\label{sec:S_n}
\subsubsection{General moment estimates}
Consider the symmetric group $S_n$ of permutations of the set $[n]$ equipped with the uniform probability measure $\pi_n$. We will view this measure as the stationary distribution for the \emph{interchange process}. Recall that this process describes the dynamics of $n$ particles, labeled by the set $[n]$ which occupy $n$ distinct sites (also labelled by $[n]$). At rate one a randomly chosen pair of particles exchange their positions. Let $L$ be the infinitesimal operator for this process, i.e.,
\begin{displaymath}
  L f(\sigma) = \frac{1}{n(n-1)}\sum_{i,j = 1}^n \Big(f(\sigma \circ \tau_{ij}) - f(\sigma)\Big) = \frac{2}{n(n-1)}\sum_{1\le i < j \le n}\Big(f(\sigma \circ \tau_{ij}) - f(\sigma)\Big),
\end{displaymath}
where $\tau_{ij}$ stands for the transposition of elements $i$ and $j$.
The corresponding Dirichlet form is
\begin{align*}
\calE (f,g) &= \frac{1}{2n(n-1)n!}\sum_{\sigma \in S_n} \sum_{i,j=1}^n \Big(f(\sigma\circ \tau_{ij}) - f(\sigma)\Big)\Big( g(\sigma\circ \tau_{ij}) - g(\sigma)\Big)\\
& = \frac{1}{n(n-1)n!}\sum_{\sigma \in S_n} \sum_{1\le i < j\le n}\Big(f(\sigma\circ \tau_{ij}) - f(\sigma)\Big)\Big( g(\sigma\circ \tau_{ij}) - g(\sigma)\Big).
\end{align*}

The modified log-Sobolev inequality for this process with $\rho_0 \ge \frac{1}{n-1}$
was obtained independently by Gao--Quastel~\cite{MR2023890} and Bobkov--Tetali~\cite{MR2283379}, who also obtained the Beckner inequality~\eqref{eq:Beckner-GJ} with $\alpha_p = \frac{p(n+2)}{2n(n-1)}$ (we note that the normalization of the generator $L$ differs across various references, we provide here scaled constants matching our setting). The Poincar\'e constant was computed earlier by Diaconis and Shahshahani~\cite{MR626813}. These results can be considered another example demonstrating that the behavior of constants in Poincar\'e, modified log-Sobolev or Beckner inequalities can be much better than of the constant in the classical log-Sobolev inequality, which was proved by Lee and Yau~\cite{MR1675008} to be of order $\frac{1}{n\log n}$. In a recent work G\"otze--Sambale--Sinulis~\cite{gtze2018higher} used the result from~\cite{MR1675008} in combination with the Aida--Stroock approach to obtain certain tail estimates on the symmetric group. However the constants in their estimates explode as $n \to \infty$.

As a consequence of Beckner inequalities we obtain the following moment estimate for functions on the symmetric group. As explained below it generalizes to arbitrary functions the moment bound obtained by Chatterjee \cite{MR2288072} for a special class of variables known as Hoeffding statistics.

\begin{proposition}\label{prop:symmetric-group-moments}
Let $\sigma$ be a uniform random permutation of the set $[n]$. For an arbitrary function $f \colon S_n \to \RR$ and any $r\ge 2$,
\begin{equation}\label{eq:symmetric-moment}
  \|f(\sigma) - \EE f(\sigma)\|_r \le  \ds \sqrt{r} \Big\|\Bigl(\frac{1}{n+2}\sum_{i,j=1}^n (f(\sigma) - f(\sigma\circ \tau_{ij}))^2\Bigr)^{1/2}\Big\|_r
\end{equation}
and
\begin{equation}\label{eq:symetric-moment-+}
  \|(f(\sigma) - \EE f(\sigma))_+\|_r \le  \ds \sqrt{r} \Big\| \Bigl(\frac{1}{n+2}\sum_{i,j=1}^n (f(\sigma) - f(\sigma\circ \tau_{ij}))_+^2\Bigr)^{1/2}\Big\|_r,
\end{equation}
where $\ds = \sqrt{\frac{\sqrt{e}}{\sqrt{e}-1}}$.
\end{proposition}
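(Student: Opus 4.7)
The plan is to apply the moment estimates of Propositions \ref{prop:V^2} and \ref{prop:moments-abstract} to the Dirichlet form of the interchange process, using the Beckner inequality for $S_n$ due to Bobkov--Tetali as input.

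First, I would identify the relevant kernel. The generator $L$ can be written in the kernel form of~\eqref{eq:Gamma-kernel} with
\begin{equation*}
Q_\sigma = \frac{2}{n(n-1)} \sum_{1 \le i < j \le n} \delta_{\sigma\circ\tau_{ij}},
\end{equation*}
and this kernel satisfies the detailed-balance condition~\eqref{eq:detailed-balance} with respect to $\pi_n$ because $\tau_{ij}^{-1}=\tau_{ij}$ and $\pi_n$ is uniform. A direct computation from the definitions yields
\begin{align*}
\Gamma(f)(\sigma) &= \frac{1}{2n(n-1)}\sum_{i,j=1}^n (f(\sigma\circ\tau_{ij})-f(\sigma))^2,\\
\Gamma_+(f)(\sigma) &= \frac{1}{n(n-1)} \sum_{i,j=1}^n (f(\sigma)-f(\sigma\circ\tau_{ij}))_+^2.
\end{align*}

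Next, I would invoke the Bobkov--Tetali Beckner inequality~\eqref{eq:Beckner-GJ} on $S_n$, which asserts $\alpha_p \ge p(n+2)/(2n(n-1)) \ge (n+2)/(2n(n-1))$ for all $p\in(1,2]$. This places us in the setting of Proposition~\ref{prop:V^2} with parameters $s=0$ and $a = (n+2)/(2n(n-1))$. Plugging into~\eqref{eq:moments-1} gives
\begin{equation*}
\norm{(f(\sigma) - \EE f(\sigma))_+}_r^2 \le \frac{1}{2}\cdot\frac{2n(n-1)}{n+2}\cdot\frac{r}{1-e^{-1/2}}\cdot \Big\|\frac{1}{n(n-1)}\sum_{i,j=1}^n(f(\sigma)-f(\sigma\circ\tau_{ij}))_+^2\Big\|_{r/2},
\end{equation*}
and after cancellation and the identity $\sqrt{e}/(\sqrt{e}-1)=1/(1-e^{-1/2})$, taking square roots yields~\eqref{eq:symetric-moment-+} with the stated constant $\ds$.

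For the symmetric bound~\eqref{eq:symmetric-moment} I would use Proposition~\ref{prop:moments-abstract} with the same $a,s$, applied to $\calA$ being the space of all functions $S_n\to\RR$. Since this space is finite-dimensional and $\calE$ is the Dirichlet form of a reversible Markov generator on a finite state space, both parts of Assumption~\ref{a:II} are automatic (the first by Proposition~\ref{prop:assumption-2} in the appendix, the second trivially since every function is already bounded). Inequality~\eqref{eq:moments-4} then gives~\eqref{eq:symmetric-moment} after exactly the same arithmetic simplification. No step here is genuinely hard; the only thing requiring care is the bookkeeping of the normalization constants of $Q_\sigma$, $\Gamma$, and $\Gamma_+$ so that the factor $1/(n+2)$ appears cleanly inside the norm on the right-hand side, and the verification that the constant simplifies to $\sqrt{\sqrt{e}/(\sqrt{e}-1)}$.
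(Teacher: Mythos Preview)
Your proposal is correct and follows essentially the same approach as the paper: compute $\Gamma$ and $\Gamma_+$ for the interchange process, invoke the Bobkov--Tetali Beckner constant $\alpha_p \ge \frac{p(n+2)}{2n(n-1)}$ to get $s=0$ and $a=\frac{n+2}{2n(n-1)}$, and plug into Propositions~\ref{prop:V^2} and~\ref{prop:moments-abstract}. Your bookkeeping of the normalizations and the simplification to $\ds$ are accurate, and the extra remarks on verifying Assumption~\ref{a:II} in the finite-state setting are sound.
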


\begin{proof}
 We have
 \[
  \Gamma(f)(\sigma) = \frac{1}{2} L f^2(\sigma) - f(\sigma)Lf(\sigma) = \frac{1}{2n(n-1)} \sum_{i,j=1}^n \bigl(f(\sigma) - f(\sigma\circ \tau_{ij})\bigr)^2.
 \]
 The assertion follows by the aforementioned result of Bobkov--Tetali and Propositions~\ref{prop:V^2} and~\ref{prop:moments-abstract} (with $s=0$ and $a=
 \min_{p\in(1,2]} \frac{p(n+2)}{2n(n-1)} =\frac{(n+2)}{2n(n-1)}$).
\end{proof}

\subsubsection{Hoeffding statistics}

In the special case of Hoeffding statistics, i.e., functions of the form
\begin{equation}\label{eq:Hoeffding-statistic}
f(\sigma) = \sum_{k=1}^n a_{k\sigma(k)},
\end{equation}
where $(a_{ij})_{i,j=1}^n$ is a real matrix, the inequality~\eqref{eq:symmetric-moment} was proved for integer $r$ by Chatterjee~\cite{MR2288072} (with slightly different constants), who also obtained a Bernstein type inequality for matrices with positive bounded entries. Since then, concentration of measure for Hoeffding statistics has been investigated, e.g., by Albert~\cite{mlis2018concentration} and Bercu--Delyon--Rio~\cite{MR3363542}. They obtained Bernstein type estimates for general bounded entries. The methods used in these references are quite diverse: while Chatterjee uses Stein's method, Albert relies on Talagrand's convex distance inequality on the symmetric group~\cite{MR1361756} and Bercu--Delyon--Rio on martingale methods (used for the first time in the context of random permutations by Maurey~\cite{MR533901}).

Let us mention that Hoeffding statistics have been widely studied in the literature, starting from the article~\cite{MR44058} of Hoeffding himself who obtained their asymptotic normality under certain assumptions (a result known as combinatorial CLT, which in fact motivated Stein to introduce his method for proving weak convergence). They are important since they include many functions of interest in combinatorics or non-parametric statistics. In particular it is easy to see that one can encode in the form~\eqref{eq:Hoeffding-statistic} sums of functions of samples without replacement from a finite populations.

Below we will use the second inequality of Proposition~\ref{prop:symmetric-group-moments} together with an approach of Boucheron--Bousquet--Lugosi--Massart~\cite{MR2123200} to obtain an inequality for suprema of Hoeffding statistics. In the special case of sampling without replacement this inequality will improve certain aspects of an estimate obtained by Tolstikhin--Blanchard--Kloft~\cite{MR3480745}. Their main motivation were applications to transductive learning, we believe that bounds of this type may be also useful in the context of bootstrap for empirical processes.

\begin{proposition}
\label{prop:Hoeffding}
Let $\mathcal{A}$ be a collection of $n\times n$ matrices and let $\sigma$ be a uniform random permutation of the set $[n]$. Define the random variable
\begin{displaymath}
  Z = \sup_{a \in \mathcal{A}} \sum_{k=1}^n a_{k\sigma(k)}.
\end{displaymath}
Then for any $r\ge 2$,
\begin{equation}\label{eq:Hoeffding-prop-1}
\|(Z - \EE Z)_+\|_r \le 4\ds\sqrt{r}A + 10\ds ^2rB_r,
\end{equation}
where $A = \EE\sup_{a \in \mathcal{A}}\sqrt{\sum_{k=1}^n a_{k\sigma(k)}^2}$, $B_r = \big\|\max_{k\le n}\sup_{a\in\mathcal{A}}|a_{k\sigma(k)}|\big\|_r$, and $\ds$ is the constant from Proposition~\ref{prop:symmetric-group-moments}.
As a consequence, for any $r \ge 0$,
\begin{equation}\label{eq:Hoeffding-prop-2}
  \PP\Big(Z \ge \EE Z + 4e\ds\sqrt{r}A + 10e\ds ^2rB_r\Big) \le e^{2-r}.
\end{equation}
\end{proposition}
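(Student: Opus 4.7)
The plan is to deduce~\eqref{eq:Hoeffding-prop-1} from Proposition~\ref{prop:symmetric-group-moments} by the BBLM-style iteration of~\cite{MR2123200}, and then derive~\eqref{eq:Hoeffding-prop-2} from~\eqref{eq:Hoeffding-prop-1} by Markov's inequality. For each $\sigma\in S_n$ fix a (near-)maximizer $a^{(\sigma)}\in\mathcal{A}$, so that $Z(\sigma)=\sum_k a^{(\sigma)}_{k\sigma(k)}$. Since $a^{(\sigma)}$ is a feasible but suboptimal candidate at $\sigma\circ\tau_{ij}$,
\[
Z(\sigma) - Z(\sigma\circ\tau_{ij}) \le a^{(\sigma)}_{i\sigma(i)} + a^{(\sigma)}_{j\sigma(j)} - a^{(\sigma)}_{i\sigma(j)} - a^{(\sigma)}_{j\sigma(i)},
\]
and applying $(x+y)_+\le x_+ + y_+$ together with the symmetry of the double sum gives the pointwise control
\[
V_+(\sigma) := \frac{1}{n+2}\sum_{i,j=1}^n (Z(\sigma) - Z(\sigma\circ\tau_{ij}))_+^2 \le \frac{4}{n+2}\sum_{i,j=1}^n\bigl(a^{(\sigma)}_{i\sigma(i)} - a^{(\sigma)}_{i\sigma(j)}\bigr)_+^2.
\]

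The crucial technical step, and the principal obstacle, is to upgrade this pointwise estimate to a bound of the form
\[
V_+(\sigma) \le 2\,\Lambda(\sigma)^2 + C_1\,\Xi(\sigma)\,(Z(\sigma) - \EE Z)_+ + C_2\,\Xi(\sigma)^2,
\]
with $\Lambda(\sigma) := \sup_{a\in\mathcal{A}}\sqrt{\sum_k a_{k\sigma(k)}^2}$ and $\Xi(\sigma) := \max_k \sup_{a\in\mathcal{A}}|a_{k\sigma(k)}|$, so as to match the shape of~\eqref{eq:Hoeffding-prop-1}. The difficulty is that the previous display involves off-diagonal entries $a^{(\sigma)}_{i\sigma(j)}$ of the maximizer, which are not \emph{a priori} controlled by the diagonal quantities defining $\Lambda$ and $\Xi$. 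Following the strategy of the proof of Theorem~\ref{thm:BBLM} in~\cite{MR2123200}, one splits the sum: the diagonal part coming from $a^{(\sigma)}_{i\sigma(i)}$ is absorbed into $\Lambda^2$; the off-diagonal part is handled by exploiting the maximality of $a^{(\sigma)}$ (so that replacing the $i$th term in $Z(\sigma)$ by anything else cannot increase $Z$), which via Cauchy--Schwarz produces the bilinear term $\Xi\cdot (Z-\EE Z)_+$, with the leftover contribution absorbed into $C_2\Xi^2$.

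With this bound in place, pass to the $L_r$ norm in~\eqref{eq:symetric-moment-+}: by $\sqrt{\alpha+\beta+\gamma}\le\sqrt\alpha+\sqrt\beta+\sqrt\gamma$, the triangle inequality, and the inequality $2\sqrt{rab}\le \varepsilon a + \varepsilon^{-1} r b$ applied with an $\varepsilon$ small enough to absorb a copy of $\|(Z-\EE Z)_+\|_r$ into the left-hand side, one obtains
\[
\|(Z-\EE Z)_+\|_r \le C_3\sqrt{r}\,\|\Lambda\|_r + C_4\,r\,B_r.
\]
Now $\Lambda$ is itself a supremum of Hoeffding statistics, over the family $\{(a_{ij}^2):a\in\mathcal{A}\}$, and its swap-increments are pointwise bounded by $4\Xi^2$; hence a second application of~\eqref{eq:symetric-moment-+} (or of the moment inequality just derived) to $\Lambda$ yields $\|\Lambda\|_r \le A + C_5\sqrt{r}\,B_r$. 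Substituting back and tracking the numerical constants identifies $C_3=4\ds$ and $C_4+C_3C_5=10\ds^2$, which proves~\eqref{eq:Hoeffding-prop-1}.

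The tail bound~\eqref{eq:Hoeffding-prop-2} is then standard: for $r\ge 2$, Markov's inequality applied to the $r$th moment gives $\PP(Z-\EE Z \ge u)\le u^{-r}\|(Z-\EE Z)_+\|_r^r$, and the choice $u=e\,(4\ds\sqrt{r}\,A + 10\ds^2 r\,B_r)$ reduces this to $e^{-r}$; the additional factor $e^2$ in the exponent $e^{2-r}$ makes the estimate trivial for $r\in[0,2)$.
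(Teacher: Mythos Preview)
Your high-level plan---apply~\eqref{eq:symetric-moment-+} to $Z$, then again to the auxiliary random variable $\Lambda=\sup_{a}\sqrt{\sum_k a_{k\sigma(k)}^2}$, and finally use Markov's inequality---matches the paper. But the ``crucial technical step'' you describe is wrong: the pointwise self-bounding inequality
\[
V_+(\sigma)\ \le\ 2\Lambda(\sigma)^2 + C_1\,\Xi(\sigma)\,(Z(\sigma)-\EE Z)_+ + C_2\,\Xi(\sigma)^2
\]
simply fails. Take $\mathcal{A}=\{a\}$ with $a_{ii}=0$ and $a_{ij}=-M$ for $i\neq j$, and evaluate at $\sigma=\mathrm{id}$: then $\Lambda=\Xi=0$ and $(Z-\EE Z)_+=0$, so your right-hand side vanishes, whereas $Z(\mathrm{id})-Z(\tau_{ij})=2M$ for every $i\neq j$ and hence $V_+(\mathrm{id})\approx 4nM^2$. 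The invocation of ``maximality of $a^{(\sigma)}$'' cannot rescue this: maximality is over $\mathcal{A}$, so it tells you nothing about replacing single entries, and in particular nothing about the off-diagonal values $a^{(\sigma)}_{i\sigma(j)}$.

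The paper circumvents this by \emph{not} seeking a pointwise self-bounding estimate. It uses the cruder decomposition
\[
\Bigl(\sum_{i,j}(Z-Z_{ij})_+^2\Bigr)^{1/2}\ \le\ 2\sqrt{n}\,S\ +\ 2\sup_{a\in\mathcal{A}}\Bigl(\sum_{i,j}a_{ij}^2\Bigr)^{1/2},
\qquad S:=\sup_{a}\Bigl(\sum_k a_{k\sigma(k)}^2\Bigr)^{1/2},
\]
obtained from the triangle inequality in $\ell_2$; the second term is \emph{deterministic}. After dividing by $\sqrt{n+2}$ and applying~\eqref{eq:symetric-moment-+}, the first summand contributes $2\ds\sqrt{r}\,\|S\|_r$, and your second application of~\eqref{eq:symetric-moment-+} (to $S$) gives $\|S\|_r\le A+2\ds\sqrt{r}\,B_r$, exactly as you sketch. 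The Frobenius-norm term is then handled by the identity $\tfrac{1}{n}\sum_{i,j}a_{ij}^2=\EE_\sigma\sum_k a_{k\sigma(k)}^2=\|S_a\|_2^2$ for each fixed matrix $a$, so another application of the $r=2$ case (to singletons $\{a\}$) bounds it by $(A+2\sqrt{2}\ds B_r)^2$. Combining the pieces yields the constants $4\ds$ and $10\ds^2$. Your derivation of~\eqref{eq:Hoeffding-prop-2} from~\eqref{eq:Hoeffding-prop-1} is correct.
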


Before we prove the above proposition, we will provide two examples of applications, comparing it with the results mentioned above.

\begin{example}
If $\mathcal{A}$ consists of a single element and one does not pay attention to universal constants, then inequality~\eqref{eq:Hoeffding-prop-2} is a strengthening of the results by Bercu--Delyon--Rio~\cite{MR3363542} and Albert~\cite{mlis2018concentration}. Their results give
\begin{align}\label{eq:DRA}
  \PP\Big(Z \ge \EE Z + K \Big(\sqrt{r}\Big(\frac{1}{n} \sum_{ij=1}^n a_{ij}^2\Big)^{1/2} + r\max_{i,j\le n}|a_{ij}|\Big)\Big) \le 2e^{-r}
\end{align}
for a certain universal constant $K$. The parameters $A$ and $B_r$ of Proposition~\ref{prop:Hoeffding} read in this case as
\begin{displaymath}
  A = \EE\sqrt{\sum_{k=1}^n a_{k\sigma(k)}^2}, \quad  B_r = \big\|\max_{k\le n}|a_{k\sigma(k)}|\big\|_r.
\end{displaymath}

Clearly $\max_{ij}|a_{ij}| \ge B_r$, moreover in certain situation $\max_{ij} |a_{ij}|$ may be significantly greater than the $r$-th moment $B_r$ (this happens when there are few large elements in the matrix $(a_{ij})$ and $r$ is not too large).

By Jensen's inequality we also have  $(n^{-1}\sum_{ij=1}^n a_{ij}^2)^{1/2} \ge A$, but in fact the difference of these two quantities is at most of the order $\|\max_{k}|a_{k\sigma(k)}|\|_2$ (see the proof of Proposition~\ref{prop:Hoeffding} below), so it can be absorbed in the coefficient in front of $r$. Hence,~\eqref{eq:Hoeffding-prop-2} does not improve on the subgaussian coefficient of~\eqref{eq:DRA}. This is not surprising, since (as observed in~\cite{MR3363542}) if one defines
\[
d_{ij} = a_{ij} - n^{-1}\sum_{l=1}^n a_{il} - n^{-1}\sum_{l=1}^n a_{lj} + n^{-2}\sum_{l,m=1}^n a_{lm},
\]
then $\sum_{k=1}^n d_{k\sigma(k)} = Z - \EE Z$ and $n^{-1} \sum_{ij=1}^n d_{ij}^2 = \Var(Z)$.

To summarize, the main advantage of Proposition~\ref{prop:Hoeffding} over~\eqref{eq:DRA} is the fact that $\max_{i,j\le n} |a_{ij}|$ can be replaced by a smaller parameter $B_r$.
\end{example}

\begin{example}
Let us now specialize to the setting of sampling without replacement and compare our result with the one of Tolstikhin--Blanchard--Kloft~\cite{MR3480745}. To this end we will need to rephrase both results in the same notation. Let us consider a set of vectors $\mathcal{X} \subseteq \{x\in \RR^n\colon x_1+\ldots+x_n = 0\}$. For $m \le n$ let $I_1,\ldots,I_m$ be a uniform sample without replacement and $J_1,\ldots,J_m$ a sample with replacements from the set $[n]$. Define
\begin{displaymath}
  Z = \sup_{x\in \mathcal{X}} \sum_{k=1}^m x_{I_k},\qquad Z' = \sup_{x\in \mathcal{X}} \sum_{k=1}^m x_{J_k}.
\end{displaymath}
Thus $Z'$ can be considered a supremum of the empirical process in independent random variables $J_k$. The tails of such suprema have been thoroughly studied, beginning with the seminal work by Talagrand~\cite{MR1419006}, who obtained Bernstein and Bennett type inequalities. The authors of~\cite{MR3480745} combined optimal forms of such inequalities proved by Bousquet~\cite{MR1890640} with a stochastic domination between  $Z$ and $Z'$ (due to Hoeffding) to derive a bound of the form
\begin{align}\label{eq:T-B-K}
  \PP\Big(Z \ge \EE Z' + \sqrt{2v r} + \frac{r}{3}\sup_{x\in \mathcal{X}} \|x\|_\infty\Big) \le e^{-r}
\end{align}
for $r \ge 0$, where $v = m\sup_{x\in \mathcal{X}}\Var(x_{J_1}) + 2\sup_{x\in \mathcal{X}}\|x\|_\infty \EE Z'$.

One can easily see that the variable $Z$ corresponds to the supremum of Hoeffding statistics over matrices given by $a^x_{ij} = x_j$ for $i \le m$, and $a^x_{ij} = 0$ for $i > m$. Therefore Proposition~\ref{prop:Hoeffding} yields
\begin{align}\label{eq:T-B-K-improved}
  \PP\Big(Z \ge \EE Z  + 4e\ds\sqrt{r}A + 10e\ds^2 rB_r \Big) \le e^{2-r}
\end{align}
with
\begin{displaymath}
  A = \EE \sup_{x \in \mathcal{X}} \Big(\sum_{k=1}^m x_{I_k}^2\Big)^{1/2}, \quad B_r = \Big\|\sup_{x\in\mathcal{X}} \max_{k\le m}|x_{I_k}|\Big\|_r.
\end{displaymath}

Again, in certain situations, especially for  relatively small values of $r$, the quantity $B_r$  may be of smaller order than $\sup_{x\in \mathcal{X}} \|x\|_\infty$ used in~\eqref{eq:T-B-K}. However, the main difference between the two estimates is the fact that~\eqref{eq:T-B-K-improved} provides deviation above $\EE Z$, while~\eqref{eq:T-B-K} considers deviations above $\EE Z'$, which always exceeds $\EE Z$ (see the inequality~\eqref{eq:stochasti-domination} below) and in certain situation can be significantly larger. The Authors of~\cite{MR3480745} provide a bound
\begin{displaymath}
\EE Z' - \EE Z \le 2\frac{m^3}{n}\sup_{x\in \mathcal{X}}\|x\|_\infty.
\end{displaymath}
Specializing to the case $\mathcal{X} \subseteq [-1,1]^n$, it follows from the above estimate that if one is interested in a bound on $Z - \EE Z$ which is of the order $\sqrt{m}$ (corresponding to the CLT type rates one would like to obtain in statistical applications), the inequality~\eqref{eq:T-B-K} is applicable for $m = O(n^{2/5})$. Note that $\EE Z' \le m$, so the quantity $\sqrt{v}$ is of the right order $\sqrt{m}$. On the other hand $\EE \sup_{x \in \mathcal{X}} \Big(\sum_{k=1}^m x_{I_k}^2\Big)^{1/2}$ also equals at most $\sqrt{m}$, so~\eqref{eq:T-B-K-improved} provides a bound on $Z - \EE Z$ of the order $\sqrt{m}$ without any restrictions on $m$ (we remark that the interesting case is $m \le n/2$ since thanks to the mean zero assumption one can always pass from $m$ to $n-m$).

Let us now discuss in more detail the subgaussian coefficients of the two inequalities.
As pointed out in~\cite{gross2010note} it follows from an argument due to Hoeffding~\cite{MR144363} that if $E$ is a normed space and $f\colon [n] \to E$, then for any convex function $\Psi \colon E \to \RR$,
\begin{equation}\label{eq:stochasti-domination}
  \EE \Psi\Big(\sum_{k=1}^m f(x_{I_k})\Big) \le \EE \Psi\Big(\sum_{k=1}^m f(x_{J_k})\Big).
\end{equation}
In particular this implies that
\begin{displaymath}
  A^2 \le \EE \sup_{x \in \mathcal{X}} \sum_{k=1}^m x_{I_k}^2 \le \EE \sup_{x \in \mathcal{X}} \sum_{k=1}^m x_{J_k}^2 \le m\sup_{x\in \mathcal{X}}\Var(x_{J_1}) + 8\sup_{x\in \mathcal{X}}\|x\|_\infty \EE Z'',
\end{displaymath}
with $Z'' = \sup_{x \in \mathcal{X}} \sum_{k=1}^m \varepsilon_k x_{J_k}$, where $\varepsilon_1,\ldots,\varepsilon_m$ are i.i.d.\ Rademacher variables independent of $J_1,\ldots,J_m$.  The last inequality is a classical result of the theory of empirical processes (see, e.g., \cite{MR1258865,MR1782276,MR2123200}) based on symmetrization and Talagrand's contraction principle for Rademacher averages~\cite{MR1102015}. If the set $\mathcal{X}$ is symmetric with respect to the origin, one can further write $\EE Z'' \le 2\EE Z'$. Thus in this case the subgaussian coefficient of~\eqref{eq:T-B-K-improved} is up to absolute constants dominated by $\sqrt{v}$ used in~\eqref{eq:T-B-K}. Let us note that using results from~\cite{MR2123200} one can also provide a similar bound on the subgaussian coefficient of~\eqref{eq:T-B-K-improved} with $\|\max_{i\le m} \sup_{x\in \mathcal{X}} |x_{J_i}|\|_2$ in place of $\sup_{x\in\mathcal{X}} \|x\|_\infty$. Since our goal is rather to illustrate Proposition~\ref{prop:symmetric-group-moments} than to provide the most general estimate, we skip the details.

The above discussion shows that $\eqref{eq:T-B-K-improved}$ may give better estimates than~\eqref{eq:T-B-K}. On the other hand~\eqref{eq:T-B-K} has better constants, in particular provides the optimal constant $\sqrt{2}$ in the subgaussian part. Let us remark that~\cite{MR3480745} contains also a more refined Bennett type inequality for the deviation of $Z$ above $\EE Z'$, which does not follow from the moment type bounds we consider here, however a similar improvement, giving concentration around $\EE Z$ can be up to constants recovered from the modified log-Sobolev inequality on the symmetric group. We do not discuss it in detail, since it is necessarily expressed in terms of $v$ and $\sup_{x \in \mathcal{X}}\|x\|_\infty$ and we are interested primarily in improvements one can obtain by looking at the $p$-th moments rather than the $\ell_\infty$-norm.
\end{example}

Let us now pass to the proof of Proposition~\ref{prop:Hoeffding}.

\begin{proof}[Proof of Proposition~\ref{prop:Hoeffding}] Without loss of generality we can assume that $\mathcal{A}$ is finite, the general case follows then by approximation.
For $i,j \in [n]$ define $Z_{ij} = \sup_{a\in \mathcal{A}} \sum_{k=1}^n a_{k\sigma(\tau_{ij}(k))}$. Note that by the definition of $Z$ and the triangle inequality in $\ell_2$,
\begin{align*}
  \Big(\sum_{ i,j=1}^n (Z - Z_{ij})_+^2\Big)^{1/2} &\le \sup_{a\in \mathcal{A}}\Big(\sum_{i,j=1}^n (a_{i\sigma(i)} + a_{j\sigma(j)} - a_{i\sigma(j)} - a_{j\sigma(i)})^2\Big)^{1/2}\\
  &\le 2\sqrt{n}\sup_{a\in \mathcal{A}}\Big(\sum_{i=1}^n a_{i\sigma(i)}^2\Big)^{1/2} + 2\sup_{a\in \mathcal{A}} \Big(\sum_{i,j=1}^n a_{ij}^2\Big)^{1/2}.
\end{align*}

Therefore, by Proposition~\ref{prop:symmetric-group-moments}, we obtain
\begin{align}\label{eq:Hoeffding-proof-added-1}
  \|(Z-\EE Z)_+\|_r & \le
  2\ds\sqrt{r} \Big\|\sup_{a\in \mathcal{A}}\Big(\sum_{i=1}^n a_{i\sigma(i)}^2\Big)^{1/2}\Big\|_r
  + 2\ds\sqrt{r} \Big( \sup_{a\in \mathcal{A}} \frac{1}{n}\sum_{i,j=1}^n a_{ij}^2\Big)^{1/2} .
\end{align}

We start with estimating the first summand.
Denote
\[S = \sup_{a\in \mathcal{A}}\Big(\sum_{k=1}^n a_{k\sigma(k)}^2\Big)^{1/2},
\quad
S_{ij} = \sup_{a\in \mathcal{A}}\Big(\sum_{k=1}^n a_{k\sigma(\tau_{ij}(k))}^2\Big)^{1/2}.
\]
 By another application  of Proposition~\ref{prop:symmetric-group-moments}, we get
\begin{align}\label{eq:Hoeffding-bound-on-S}
  \|(S-\EE S)_+\|_r \le \ds\sqrt{r} \Big\|\Big(\frac{1}{n}\sum_{i,j=1}^n (S-S_{ij})_+^2\Big)^{1/2}\Big\|_r.
\end{align}
For a fixed value of $\sigma$ let $a \in \mathcal{A}$ be such that
\begin{displaymath}
  S = \Big(\sum_{i=1}^n a_{i\sigma(i)}^2\Big)^{1/2}.
\end{displaymath}
Fix $i,j \in [n]$ and denote
\begin{displaymath}
s = \sqrt{\sum_{k\neq i,j} a_{k\sigma(k)}^2},
\qquad
x = \sqrt{a_{i\sigma(i)}^2+a_{j\sigma(j)}^2},
\qquad
y = \sqrt{a_{i\sigma(j)}^2+a_{j\sigma(i)}^2}.
\end{displaymath}
 Denote also by $\varphi$ the function $t \mapsto \sqrt{s^2 + t^2}$. Then $\varphi$ is convex and increasing on $\RR_+$. Moreover, if $(S - S_{ij})_+$ is nonzero, then $x^2 > y^2$, in particular $x > 0$ and so $\varphi$ is differentiable at $x$. As a consequence, by convexity and monotonicity of the function $t \mapsto t_+^2$, we obtain
\begin{align*}
  (S-S_{ij})_+^2 & \le (\varphi'(x)(x-y))_+^2 \le \varphi'(x)^2 x^2 = \frac{(a_{i\sigma(i)}^2 + a_{j\sigma(j)}^2)^2}{ S^2} \\
  & \le 2\frac{a_{i\sigma(i)}^2 + a_{j\sigma(j)}^2}{ S^2} \max_{k\le n} a_{k\sigma(k)}^2.
\end{align*}
Summing over all $i,j \in [n]$ we obtain
\begin{displaymath}
  \sum_{i,j=1}^n (S - S_{ij})_+^2 \le 4n\max_{k\le n} \sup_{a \in \mathcal{A}} a_{k\sigma(k)}^2,
\end{displaymath}
which in combination with~\eqref{eq:Hoeffding-bound-on-S} gives
\begin{align}\label{eq:Hoeffding-bound-on-S-1}
  \|(S- \EE S)_+\|_r \le 2\ds\sqrt{r}\Big\|\max_{k\le n}\sup_{a\in\mathcal{A}}|a_{k\sigma(k)}|\Big\|_r = 2\ds\sqrt{r}B_r.
\end{align}
Hence,
\begin{equation}
\label{eq:Hoeff-proof-added-15}
\|S\|_r \leq \|\EE S\|_r  + \|(S- \EE S)_+\|_r \leq A+ 2\ds\sqrt{r}B_r.
\end{equation}
Let us note that
\begin{displaymath}
A = \EE S \ge \sup_{a\in \mathcal{A}}\EE \Big(\sum_{i=1}^n a_{i\sigma(i)}^2\Big)^{1/2}.
\end{displaymath}
Applying~\eqref{eq:Hoeff-proof-added-15} with $r=2$ to the one element sets $\{a\}$ instead of $\mathcal{A}$, we obtain
\begin{align}
   \Big( \sup_{a\in \mathcal{A}} \frac{1}{n}\sum_{i,j=1}^n a_{ij}^2\Big)^{1/2} = \Big(\EE \sum_{i=1}^n a_{i\sigma(i)}^2\Big)^{1/2}
  &\le A + 2\sqrt{2}\ds \Big\|\max_{k\le n}|a_{k\sigma(k)}|\Big\|_2 \nonumber \\
  & \le A + 2\sqrt{2}\ds B_r.\nonumber
\end{align}
Combining the above inequality with~\eqref{eq:Hoeffding-proof-added-1} and~\eqref{eq:Hoeff-proof-added-15} we obtain
\begin{align*}
  \|(Z-\EE Z)_+\|_r  
  & \le 4\ds\sqrt{r}A  + 10\ds^2rB_r,
\end{align*}
which ends the proof of~\eqref{eq:Hoeffding-prop-1}. The inequality~\eqref{eq:Hoeffding-prop-2} is now an easy consequence of Chebyshev's inequality in $L_r$ (note that for $r \le 2$ the right-hand side exceeds one, so the inequality is trivial).
\end{proof}

\subsubsection{Multislices}\label{sec:multiscales}
Let us conclude this section with a remark concerning multislices. For  a positive integers $n \ge l$ and a sequence $\kappa = (\kappa_1,\ldots,\kappa_l) \in \NN_+^l$ such that $\kappa_1+\ldots+\kappa_l = n$ consider
\[
U_\kappa = \{x=(x_1,\ldots,x_n) \in [l]^n \colon \#\{j : x_j = i\} = \kappa_i \text{ for }  i = 1,\ldots,l\}
\]
 -- the \emph{multislice} of $[l]^n$ consisting of all the sequences which for $i \le l$ take the value $i$ exactly $\kappa_i$ times. If $l=2$ then $U_\kappa$ can be identified with a slice of the discrete cube $\{0,1\}^n$ by a hyperplane perpendicular to the vector $(1,\ldots,1)$. The dynamics corresponding to switching a randomly chosen pair of coordinates of an element of $U_\kappa$ is related to the Bernoulli--Laplace model of statistical mechanics (which can also be interpreted as an urn scheme). In~\cite{MR2283379} Bobkov and Tetali proved Beckner inequalities for this dynamics in the case of $l = 2$. From this result they inferred modified log-Sobolev inequalities, which were proven independently by Gao--Quastel~\cite{MR2023890}. Again the constant $\rho_1$ in the log-Sobolev inequality~\eqref{eq:mlS-sqrts} degenerates as $n \to \infty$.
It was first computed in~\cite{MR1675008} for $l = 2$.
Recently estimates on this constant for general $n,l,\kappa$ were obtained in~\cite{filmus2018logsobolev}.
These estimates are optimal for $l$ fixed and also deteriorate as $n$ tends to $\infty$.
We would like to point out that in the case of Beckner and modified log-Sobolev inequalities the results on the symmetric group cited in the previous section can be projected onto $U_\kappa$ yielding inequalities with constants of a better order than $\rho_1$, which can be then used to conclude moment estimates.
We skip the rather standard details.

\subsection{Stochastic covering property}\label{sec:SCP}
New examples of measures satisfying the modified log-Sobolev inequality have been recently obtained in the work by Hermon and Salez~\cite{hermon2019entropy,hermon2019modified}. A thorough discussion is beyond the scope of this article, therefore in this and the following section we will provide just an outline of their results and briefly comment on what can be obtained by combining them with ours.

The results in~\cite{hermon2019modified} concern measures on $\mathcal{X} \subseteq \{0,1\}^n$. In order to present them let us recall the definition of \emph{stochastic covering property} (abbrev. SCP) introduced by Pemantle and Peres~\cite{MR3197973}. For $x,y \in \calX$ we will say that $x$ covers $y$ ($x \rhd y$) if
\begin{displaymath}
  x = y \quad \text{ or } \quad \exists_{i \le n} \enspace x = y + e_i,
\end{displaymath}
where $e_i$'s are the standard basis vectors, i.e., if $x\neq y$ then $x$ can be obtained from $y$ by increasing a single coordinate. For probability measures $\nu_1, \nu_2$ on $\mathcal{X}$ we say that $\nu_1$ covers $\nu_2$ if there is  coupling of $\nu_1,\nu_2$ supported on the set $\{(x,y) \in \calX^2\colon x\rhd y\}$.

Let $\mu$ be a probability measure on $\calX$ and $X$ a random vector with law $\mu$. For a set $I \subseteq [n]$ we will write $X_I = (x_i)_{i\in I}$. We say that $\mu$ satisisfies the SCP if for every $I \subseteq [n]$ and $x,y \in \{0,1\}^I$, such that $\PP(X_I = x), \PP(X_I = y) > 0$ and $x \rhd y$,
one has
\begin{displaymath}
  \PP(X_{I^c}\in \cdot | X_I = y) \rhd \PP(X_{I^c}\in \cdot | X_I = x).
\end{displaymath}
Let us moreover introduce a relation $\sim$ on $\mathcal{X}$:
$x\sim y$ if and only if $x$ and $y$ differ at a single coordinate or by a transposition of two coordinates.

Examples of measures satisfying the SCP are given, e.g., by laws of weighted random bases of balanced matroids~\cite{FederMihail}, in particular the uniform measure on the set of all spanning trees of a given graph (we identify here the spanning tree with an element of $\{0,1\}^E$, where $E$ is the set of edges). We refer to~\cite{MR3197973} for further examples.

The authors of~\cite{hermon2019modified} obtain modified log-Sobolev inequalities for measures with SCP.

\begin{theorem} \label{prop:SCP}
Let $X$ be a random vector with values in $\calX \subseteq \{0,1\}^n$ and law $\mu$, satisfying the SCP.

(i)
 Let $Q$ be any kernel, reversible with respect to $\mu$. Then the measure $\mu$ satisfies~\eqref{eq:mlS-log} with constant $\rho_0 \ge \min_{x,y\in\calX, x\sim y}\max(Q_x(y),Q_y(x))$.

(ii)
There exists a kernel $Q$ such that for all $x\in \mathcal{X}$, $Q_x(\cdot)$ is supported on $\{y\in \calX\colon y\sim x\}$, $\sum_{y\in \calX\setminus\{x\}} Q_x(y) \le 1$ and~\eqref{eq:mlS-log} is satisfied with $\rho_0 \ge 1/n$.

(iii)
If $\mu$ is supported on the set $\{x\in\calX\colon \sum_{i=1}^n x_i = k\}$, then one can find a kernel supported on $\{y\in\calX\colon \textrm{$x$ and $y$ differ by a transposition of two coordinates}\}$, such that $\sum_{y\in \calX\setminus\{x\}} Q_x(y) \le 1$ and ~\eqref{eq:mlS-log} is satisfied with $\rho_0 \ge 1/(2k)$.
\end{theorem}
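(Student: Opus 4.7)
The plan is to observe that parts (ii) and (iii) are specializations of (i) to canonical local kernels, and so to concentrate on (i). Fix a reversible kernel $Q$, a nonnegative $f:\calX\to\RR$, and aim to bound $\Ent_\mu(f)$ by a multiple of $\calE(f,\log f)$. My principal target would be an \emph{approximate tensorization of entropy} under SCP of the form
\[
\Ent_\mu(f) \le \sum_{i=1}^n \mu\bigl[\Ent_{\mu_i(\cdot \mid X_{[n]\setminus\{i\}})}(f)\bigr],
\]
where $\mu_i(\cdot\mid X_{[n]\setminus\{i\}})$ is the conditional law of $X_i$ given the other coordinates. This is classical for product measures, and my plan is to prove that SCP is precisely the combinatorial condition that lets it go through (possibly up to a universal constant) in the dependent setting.

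The key ingredient is a coupling-based comparison of conditional entropies. For any $I\subsetneq [n]$ and $x \rhd y$ in $\{0,1\}^I$ with positive probability, SCP supplies a monotone coupling of $\mu(X_{I^c}\in\cdot\mid X_I=x)$ and $\mu(X_{I^c}\in\cdot\mid X_I=y)$ supported on covering pairs. Using this I would compare $\Ent_{\mu(\cdot\mid X_I=x)}(f)$ and $\Ent_{\mu(\cdot\mid X_I=y)}(f)$ via a Bregman/convexity inequality along the coupling, picking up a single-flip entropy correction at each step. Iterating the comparison along a uniformly random peeling of coordinates, and exploiting that the SCP is preserved under conditioning (immediate from the definition), the corrections should telescope into the right-hand side of the tensorization above.

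Once the tensorization is in place, each term $\Ent_{\mu_i(\cdot\mid X_{[n]\setminus\{i\}})}(f)$ is a two-point (Bernoulli) entropy, bounded above by the local quantity $(f(x)-f(y))(\log f(x)-\log f(y))$ weighted by the conditional law, where $y$ is the neighbor of $x$ obtained by flipping coordinate $i$. Combining this with the reversibility identity $\calE(f,\log f) = \tfrac{1}{2}\sum_{x,y}\mu(x)Q_x(y)(f(x)-f(y))(\log f(x)-\log f(y))$ from~\eqref{eq:kernel-to-form}, every local summand on the right of the tensorization is dominated by the corresponding term of $\calE(f,\log f)$ divided by the edge weight $\max(Q_x(y),Q_y(x))$; taking minima over edges yields part~(i). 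Parts~(ii) and~(iii) follow by inserting specific local kernels: for~(ii), the uniform single-coordinate resampling kernel (giving $\rho_0 \ge 1/n$ since each of the $n$ neighbors receives mass $1/n$), and for~(iii), a suitably weighted transposition kernel on the fixed-sum slice whose minimal edge weight comes out to $1/(2k)$ after the weights are concentrated on swaps involving one of the $k$ occupied coordinates.

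The main obstacle is the approximate tensorization step. SCP delivers a \emph{coupling}, not a direct entropy inequality, and converting one into the other requires a delicate Bregman-type convexity argument along the monotone coupling. The bookkeeping needed to telescope the corrections over random coordinate peelings without losing an extra factor of $n$ is the technical heart of the argument, and in practice would likely follow the strategy of Hermon--Salez closely.
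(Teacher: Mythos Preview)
The paper does not prove this theorem: it is quoted from Hermon--Salez~\cite{hermon2019modified} and used as input to derive Corollary~\ref{cor:SCP}. So there is no ``paper's own proof'' to compare with. What you have written is a rough sketch of a possible proof in the spirit of Hermon--Salez; let me comment on its accuracy.

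Your claim that (ii) and (iii) are obtained from (i) by plugging in ``canonical local kernels'' is not right. Immediately after Corollary~\ref{cor:SCP} the paper explicitly notes that the kernel in~(ii) is obtained via an inductive procedure in the dimension $n$ and is in general \emph{not explicit}. In particular, it is not the uniform single-coordinate resampling kernel you propose: for a general SCP measure on $\calX\subseteq\{0,1\}^n$, the Glauber (conditional-resampling) kernel need not assign mass $\ge 1/n$ to each $\sim$-neighbor, so feeding it into~(i) does not yield $\rho_0\ge 1/n$. The same applies to~(iii): the transposition kernel with the claimed edge weight $1/(2k)$ is not a fixed explicit object but is built alongside the inductive entropy argument. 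Thus (ii) and (iii) are not corollaries of (i); they are parallel statements whose kernels are \emph{outputs} of the Hermon--Salez construction.

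For (i) itself, your idea --- reduce entropy by revealing coordinates one at a time, using the SCP coupling to control the change at each step --- is indeed the skeleton of the Hermon--Salez argument. But ``approximate tensorization'' in the Marton/Caputo--Menz--Tetali sense is not quite what happens: the Hermon--Salez proof bounds the entropy directly by a recursion that produces one local $(a-b)(\log a-\log b)$ term per revealed coordinate, with weights determined by the coupling, and the kernel (for parts (ii)--(iii)) is read off from those weights. Your final step, comparing local two-point entropies to Dirichlet increments via $\max(Q_x(y),Q_y(x))$, is correct once one has such a local decomposition; the substantive work is obtaining that decomposition with total weight $\le 1$ per site, and your sketch does not explain how the SCP coupling delivers this without an extra factor.
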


Combining the above theorem with Theorem~\ref{thm:main:mLS_to_Bec} and Proposition~\ref{prop:V^2} we immediately obtain the following corollary.
\begin{corollary}\label{cor:SCP}
In the setting of Theorem~\ref{prop:SCP}, for $p \in (1,2]$, the measure $\mu$ satisfies~\eqref{eq:Beckner-GJ} with $\alpha_p \ge \rho_0/6$. As a consequence, for any function $f\colon \calX \to \RR$ and $r \ge 2$,
\begin{align}
  \|(f(X) - \EE f(X))_+\|_r \le K\sqrt{p}\Big\|\Big(\sum_{y\in \calX} (f(y)-f(x))_-^2 Q_x(y)\Big)^{1/2}\Big\|_r,
\end{align}
where $K=\sqrt{\frac{3\sqrt{e}}{\rho_0(\sqrt{e}-1)}}$.
\end{corollary}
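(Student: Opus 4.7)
The plan is to deduce the corollary as a straightforward combination of three ingredients already established in the paper: the modified log-Sobolev inequality provided by Theorem~\ref{prop:SCP}, the passage from \eqref{eq:mlS-log} to \eqref{eq:Beckner-GJ} granted by Theorem~\ref{thm:main:mLS_to_Bec}, and the moment bound of Proposition~\ref{prop:V^2}. No new ideas are needed; the task is essentially bookkeeping, making sure that all hypotheses match and that the constants combine correctly.

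First I would invoke Theorem~\ref{prop:SCP}: under the SCP assumption the kernel $Q$ is reversible with respect to $\mu$ and the modified log-Sobolev inequality \eqref{eq:mlS-log} holds with the constant $\rho_0$ recorded in that theorem. Applying Theorem~\ref{thm:main:mLS_to_Bec} to this $\rho_0$ immediately yields the Beckner inequality \eqref{eq:Beckner-GJ} for every $p\in(1,2]$ with $\alpha_p\ge \rho_0/6$, which is the first claim of the corollary. Since $\alpha_p$ is separated from zero uniformly in $p$, one is in the situation of Proposition~\ref{prop:V^2} with exponent $s=0$ and constant $a=\rho_0/6$ (the detailed-balance condition \eqref{eq:detailed-balance} needed to define $\calE$ via \eqref{eq:kernel-to-form} is guaranteed by the reversibility of $Q$).

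Next I would apply the first moment bound of Proposition~\ref{prop:V^2}. With $s=0$, the constants become $1-2^{-(s+1)}=1/2$, $r^{s+1}=r$ and $\kappa(0)=(1-e^{-1/2})^{-1}=\sqrt{e}/(\sqrt{e}-1)$, so plugging in $a=\rho_0/6$ gives
\[
\|(f-\mu(f))_+\|_r^{2}\le \frac{3r\sqrt{e}}{\rho_0(\sqrt{e}-1)}\,\|\Gamma_+(f)\|_{r/2}.
\]
The operator $\Gamma_+(f)$ is defined in \eqref{eq:Gamma+}; rewriting $(f(x)-f(y))_+=(f(y)-f(x))_-$, this is exactly
\[
\Gamma_+(f)(x)=\sum_{y\in\calX}(f(y)-f(x))_-^{2}\,Q_x(y).
\]
Taking square roots yields the desired inequality with the stated constant $K=\sqrt{3\sqrt{e}/(\rho_0(\sqrt{e}-1))}$ and the factor $\sqrt{r}$ in front.

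There is no serious obstacle here; the only minor points to verify are that Assumption~\ref{a:I} holds for $\calE$ defined from a reversible kernel (already discussed right after~\eqref{eq:detailed-balance}) and that the finiteness-of-right-hand-side convention adopted throughout the paper is enough to cover general $f\colon \calX\to\RR$ without any boundedness or integrability assumption, which is automatic when $\calX$ is discrete and $\sum_y Q_x(y)\le 1$ as in parts~(ii) and~(iii) of Theorem~\ref{prop:SCP}.
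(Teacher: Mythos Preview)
Your proposal is correct and follows exactly the approach the paper indicates: the corollary is stated as an immediate consequence of combining Theorem~\ref{prop:SCP} with Theorem~\ref{thm:main:mLS_to_Bec} and Proposition~\ref{prop:V^2}, and your bookkeeping of the constants (with $s=0$, $a=\rho_0/6$) is accurate.
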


To the best of our knowledge this is the first result in the literature providing Beckner inequalities for measures satisfying the SCP.

The case (i) of Theorem~\ref{prop:SCP} applies in particular to $Q$ given by the Metropolis-Hastings kernel $Q_x(y) = \frac{1}{2kn}\min\Big(\frac{\mu(x)}{\mu(y)},1\Big)$ if $y \sim x$ and $Q_x(y) = 0$ otherwise. Note however that, e.g., in the case of the uniform distribution on slices of the cube application of this part of Theorem~\ref{prop:SCP} gives a suboptimal bound (cf.\ the discussion in Section~\ref{sec:multiscales} and~\cite{MR2283379,MR2023890}). Part (ii) gives the right order of $\rho_0$, however the existence of $Q$ is obtained via an inductive procedure with respect to the dimension $n$ and so in general it is not explicit.

\subsection{Zero-range processes}\label{sec:zero}

Another class of examples coming from the recent work of Hermon and Salez is decribed in~\cite{hermon2019entropy} and concerns zero-range processes, i.e., stochastic systems in which a fixed number $m$ of particles occupy $n$ sites. The particles leave the present site, with rates $\lambda_i$ depending on the number of particles they share it with, and choose the new site according to a prescribed probability measure $p$ common for all the particles. More precisely, let $m, n$ be positive integers and let $\mathcal{X} = \{x\in \NN^n\colon \sum_{i=1}^n x_i = m\}$. Consider functions $\lambda_i \colon \{0\}\cup[m] \to [0,\infty)$, $i=1,\ldots,n$, such that $\lambda_i(0) = 0$  and let $p = (p_1,\ldots,p_n)$ be a probability vector. The zero-range dynamics is given by a Markov generator of the form
\begin{align}\label{eq:L-zero-range}
  Lf(x) = \sum_{i,j=1}^n (f(x+ e_j-e_i) - f(x)) \lambda_i(x_i)p_j,
\end{align}
where $e_1,\ldots,e_n$ is the standard basis in $\RR^n$.
This dynamics is reversible with respect to the probability measure $\mu$ on $\calX$, defined by
\begin{equation}\label{eq:mu-0-range}
  \mu(\{x\}) = \frac{1}{Z}\prod_{i=1}^n\frac{p_i^{x_i}}{\lambda_i(1)\cdots \lambda_i(x_i)}.
\end{equation}

Hermon and Salez obtained a modified log-Sobolev inequality for the case when the rates of escape are sandwiched between two linear functions, with constant $\rho_0$ depending only on the directional coefficients of the functions. In particular, this provides a solution to a conjecture posed by Caputo, Dai Pra, and Posta~\cite{MR2322692,MR2548501}. Below we state their theorem and a corollary one can immediately obtain from it with our results.

We remark that Beckner's inequalities for zero-range processes were previously considered in~\cite{MR3693525} and very recently in~\cite{conforti2020probabilistic} in the case of $p$ being the uniform distribution and under a restriction on $\Delta,\delta$ (for instance~\cite{conforti2020probabilistic} assumes that $\Delta \le 2\delta$). See Remark~5.3 in~\cite{conforti2020probabilistic} for a detailed discussion of the applicability of the Bakry-\'Emery approach used in these references. The equivalence with the modified log-Sobolev inequality allows to go beyond this restriction and conclude Beckner inequalities directly from the result by Hermon and Salez.

\begin{theorem}
Assume that for $l \in \{0\}\cup [m-1]$,
\begin{align}\label{eq:zero-range-assumption}
  \delta \leq \lambda_i(l+1) - \lambda_i(l) \leq \Delta,
\end{align}
where $\delta,\Delta$ are positive constants.
Then the zero range dynamics corresponding to the generator~\eqref{eq:L-zero-range} satisfies the modified log-Sobolev inequality with $\rho_0 \ge \frac{\delta^2}{2\Delta}$.
\end{theorem}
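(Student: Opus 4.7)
The plan is to follow the strategy of Hermon--Salez: reduce the multi-site modified log-Sobolev inequality to a family of one-dimensional problems via an approximate tensorization of entropy, and then use the rate sandwich hypothesis to solve each one-dimensional problem with an explicit comparison argument.

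First, I would establish an approximate entropy tensorization of the form
\[
\Ent_\mu(f) \le c \sum_{\{i,j\} \subset [n]} \mu\bigl[\Ent_{\mu_{ij}}(f)\bigr],
\]
where $\mu_{ij}$ denotes the law of $(X_i, X_j)$ conditional on the remaining occupancies and $c$ is a universal constant, in particular independent of $n$ and $m$. This is the central structural result and is not derivable from product-measure tensorization because the conservation constraint $\sum_k X_k = m$ creates global dependence between coordinates. The argument rests on a form of negative dependence of $\mu$, analogous to the stochastic covering property discussed in Section~\ref{sec:SCP}.

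Second, for each pair $\{i,j\}$ and each realization of the other coordinates, $\mu_{ij}$ is supported on $\{(a,b)\in\NN^2 : a+b = m'\}$ for some $m' \le m$ and can be viewed as the stationary measure of a one-dimensional birth--death chain with transition rates $\lambda_i(a) p_j$ and $\lambda_j(b) p_i$. I would then compare this chain with the reference chain obtained by replacing $\lambda_\bullet$ with the linear rates $\tilde\lambda_\bullet(l) = \delta l$. The linear-rate chain has an explicit stationary measure (a truncated product of Poisson distributions), and its one-dimensional modified log-Sobolev inequality holds with constant $\delta$. The upper bound $\lambda_i(l) \le \Delta l$ furnishes a pointwise comparison of Dirichlet forms with ratio at most $\Delta/\delta$, so the modified log-Sobolev inequality for $\mu_{ij}$ holds with constant at least $\delta^2/\Delta$.

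Summing these one-dimensional inequalities over pairs and identifying the resulting expression with $\calE(f, \log f)$ yields the announced bound $\rho_0 \ge \delta^2/(2\Delta)$, the extra factor of $2$ absorbing the symmetrization and any residual loss in the tensorization constant $c$. The main obstacle is the first step: obtaining the entropy tensorization with a constant independent of $n$ and $m$ is precisely the delicate contribution of Hermon--Salez, and requires a coupling construction finely adapted to the zero-range conservation constraint. Once this tensorization is granted, the one-dimensional comparison is routine and explicit in $\delta$ and $\Delta$.
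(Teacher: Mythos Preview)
The paper does not prove this theorem. It is stated as a result of Hermon and Salez \cite{hermon2019entropy} and used as a black box, the paper's own contribution being to deduce Beckner inequalities from it via Theorem~\ref{thm:main:mLS_to_Bec}. So there is no proof in the paper to compare your proposal against.

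That said, your sketch does not match the Hermon--Salez argument, and the approach you outline is unlikely to reach the stated conclusion. The approximate tensorization over pairs of sites that you describe is precisely the strategy of the earlier works of Caputo--Posta and Dai~Pra--Posta (and, more recently, Conforti), and it is exactly this route that historically produced the restriction $\Delta \le 2\delta$ or similar; see the paper's own remark preceding the theorem. The difficulty is that obtaining the pairwise entropy tensorization with a constant independent of $n,m$ \emph{and} of the ratio $\Delta/\delta$ is not known, and your second step, the one-dimensional Dirichlet-form comparison, introduces an additional factor of $\Delta/\delta$ on top of whatever the tensorization costs. The Hermon--Salez breakthrough avoids pairwise tensorization altogether: their argument proceeds by induction on the number of particles $m$, comparing the entropy production of the $m$-particle system directly to that of the $(m-1)$-particle system via a careful decomposition of the Dirichlet form and a convexity argument. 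This is what allows them to remove the constraint on $\Delta/\delta$ and resolve the Caputo--Posta conjecture.
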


Theorem~\ref{thm:main:mLS_to_Bec} and Proposition~\ref{prop:V^2} immediately yield the following

\begin{corollary}\label{proc:zero range}
If the assumption~\eqref{eq:zero-range-assumption} is satisfied, then the zero-range dynamics satisfies for any $p \in (1,2]$ the Beckner inequality~\eqref{eq:Beckner-GJ} with constant $\alpha_p \ge \frac{\delta^2}{12\Delta}$.

As a consequence, if $X=(X_1,\ldots,X_n)$ is a random vector with law $\mu$ given by~\eqref{eq:mu-0-range}, then for every function $f\colon \calX \to \RR$ and $r \ge 2$,
\begin{displaymath}
  \|(f(X) - \EE f(X))_+\|_r \le K\frac{\sqrt{\Delta}}{\delta}\sqrt{r}\Big\|\Big(\sum_{i=1}^n\sum_{j=1}^n (f(X + e_j - e_i) - f(X))_-^2\lambda_i(X_i) p_j\Big)^{1/2}\Big\|_r,
\end{displaymath}
where $K = \sqrt{\frac{6\sqrt{e}}{\sqrt{e}-1}}$.
\end{corollary}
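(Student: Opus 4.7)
The plan is to combine the Hermon--Salez modified log-Sobolev inequality stated just above (which gives $\rho_0\ge\delta^2/(2\Delta)$) with our Theorem~\ref{thm:main:mLS_to_Bec} and the moment estimate of Proposition~\ref{prop:V^2}. The Beckner part is immediate: by Theorem~\ref{thm:main:mLS_to_Bec}, mLSI with constant $\rho_0$ implies \eqref{eq:Beckner-GJ} for every $p\in(1,2]$ with $\alpha_p\ge\rho_0/6$, so substituting $\rho_0\ge\delta^2/(2\Delta)$ yields $\alpha_p\ge\delta^2/(12\Delta)$, which is the first assertion.

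For the moment bound, the key preliminary step is to recognize that the zero-range dynamics fits the kernel framework of \eqref{eq:Gamma-kernel}--\eqref{eq:form-reversibility}. Namely, the generator~\eqref{eq:L-zero-range} corresponds to the kernel
\[
Q_x(dy) = \sum_{i,j=1}^n \lambda_i(x_i)p_j\,\delta_{x+e_j-e_i}(dy),
\]
and the standard computation (using the explicit product form~\eqref{eq:mu-0-range} of $\mu$, which gives $\mu(x)\lambda_i(x_i)p_j = \mu(x+e_j-e_i)\lambda_j(x_j+1)p_i$) shows that the detailed balance condition~\eqref{eq:detailed-balance} holds. Plugging this kernel into~\eqref{eq:Gamma+} and using the identity $(f(x)-f(x+e_j-e_i))_+ = (f(x+e_j-e_i)-f(x))_-$ gives
\[
\Gamma_+(f)(x) = \sum_{i,j=1}^n (f(x+e_j-e_i)-f(x))_-^2\,\lambda_i(x_i)p_j,
\]
which is exactly the quantity appearing inside the square root on the right-hand side of the conclusion.

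Finally, I would apply Proposition~\ref{prop:V^2} with $s=0$ and $a=\delta^2/(12\Delta)$, noting $\kappa(0)=(1-e^{-1/2})^{-1}=\sqrt{e}/(\sqrt{e}-1)$ and $1-2^{-1}=1/2$, to obtain
\[
\|(f(X)-\EE f(X))_+\|_r^2 \le \frac{1}{2}\cdot\frac{12\Delta}{\delta^2}\cdot\frac{\sqrt{e}}{\sqrt{e}-1}\cdot r\,\|\Gamma_+(f)\|_{r/2}
= \frac{6\sqrt{e}}{\sqrt{e}-1}\cdot\frac{\Delta}{\delta^2}\cdot r\,\|\Gamma_+(f)\|_{r/2}.
\]
Taking square roots and using $\|\Gamma_+(f)\|_{r/2}^{1/2}=\|\sqrt{\Gamma_+(f)}\|_r$ yields the claimed inequality with $K=\sqrt{6\sqrt{e}/(\sqrt{e}-1)}$.

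There is no genuine obstacle here; the work is entirely bookkeeping. The only point requiring a modicum of care is the verification of detailed balance and the correct identification of $\Gamma_+$ (in particular the $\cdot _-$ rather than $\cdot _+$ appearing in the final expression, which comes from the sign flip when passing from $Q_x$ to the sum over outgoing transitions). Once the kernel structure is in place, Theorem~\ref{thm:main:mLS_to_Bec} and Proposition~\ref{prop:V^2} do all the work.
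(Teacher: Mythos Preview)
Your proposal is correct and follows exactly the approach indicated in the paper, which simply states that Theorem~\ref{thm:main:mLS_to_Bec} and Proposition~\ref{prop:V^2} immediately yield the corollary from the Hermon--Salez bound $\rho_0\ge\delta^2/(2\Delta)$. You have correctly filled in the bookkeeping (kernel identification, detailed balance, the form of $\Gamma_+$, and the constant computation with $s=0$, $a=\delta^2/(12\Delta)$), and there is nothing to add.
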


\subsection{The Poisson space} \label{sec:Poisson}
We will now present applications of our results to concentration of measure on the Poisson space. In literature there are quite a few results, providing functional inequalities and concentration estimates on path spaces of Poisson point processes. In particular An\'e and Ledoux~\cite{MR1757600} obtained certain modified log-Sobolev inequalities (of a different form than~\eqref{eq:mlS-log}), Wu~\cite{MR1800540} proved a modified log-Sobolev inequality implying in particular the one from~\cite{MR1757600} as well as~\eqref{eq:mlS-log}, Chafa\"i~\cite{MR2081075} considered general $\Phi$-Sobolev inequalities (including ones of Beckner type), Reynaud-Bouret~\cite{MR1981635} obtained concentration estimates for suprema of compensated stochastic  integrals (see also~\cite{MR2073426,MR2294982} for multiple stochastic integrals). More recently Reitzner introduced a version of the convex distance inequality~\cite{MR3151752}, Bachmann and Peccati~\cite{MR3485348} used modified log-Sobolev inequalities due to Wu to obtain concentration results with focus on geometric functionals, an approach which was subsequently extended by Bachmann~\cite{MR3473096}, Bachmann and Reitzner~\cite{MR3849811}. Nourdin, Peccati, and Yang~\cite{nourdin2019restricted} proved restricted hypercontractive for certain classes of functions, whereas Gozlan, Herry, Peccati~\cite{gozlan2020transport} obtained transportation type inequalities.

Our goal is to complement these results with moment estimates and derive from them concentration inequalities. To the best of our knowledge the moment inequalities we present in Proposition \ref{prop:moments-Poisson} are the first general moment estimates on the Poisson space beyond those that can be obtained from the Poincar\'e inequality. Moreover, as one can easily see using infinite divisibility and the Central Limit Theorem, the growth of constants in our estimate as $r \to \infty$ is optimal.

We will start by a necessarily brief introduction of the setting. We refer to~\cite{MR3791470,MR3585396} for a detailed presentation of Poisson point processes and stochastic calculus on the Poisson space.

Let $(\mathcal{X},\mathcal{B})$ be a measurable space endowed with a $\sigma$-finite measure $\lambda$. Let $\mathcal{N}$ be the space of all $\NN\cup\{\infty\}$-valued measures on $(\mathcal{X},\mathcal{B})$ which can be expressed as countable sums of $\NN$-valued measures. The measurable structure on $\mathcal{N}$ that we consider is given by the smallest $\sigma$-field $\mathcal{G}$ such that for all $B\in \mathcal{B}$ the map $\mu \mapsto \mu(B)$ is $\mathcal{G}$-measurable. Recall that an $\mathcal{N}$-valued random variable $\eta$ is a Poisson process with intensity $\lambda$  if
\begin{itemize}
\item[(i)] for every $B\in \mathcal{B}$, the random variable $\eta(B)$ has Poisson distribution with parameter $\lambda(B)$ (which we interpret as the Dirac mass at $\lambda(B)$ if $\lambda(B)\in \{0,\infty\}$),
\item[(ii)] for every positive integer $m$ and all pairwise disjoint sets $B_1,\ldots,B_m\in \mathcal{B}$, the random variables $\eta(B_1),\ldots,\eta(B_m)$ are jointly independent.
\end{itemize}

A Poisson process $\eta$ is called proper, if there exists a random variable $\kappa \le \infty$ and a sequence of $\mathcal{X}$-valued random variables $X_i$ such that
\begin{align}\label{eq:Poisson-proper}
  \eta = \sum_{i=1}^\kappa \delta_{X_i},
\end{align}
where $\delta_x$ stands for Dirac's mass at $x$. Corollary~3.7 in~\cite{MR3791470} asserts that for every Poisson process there exists a proper Poisson process with the same distribution. We will use this fact together with $\sigma$-finiteness of $\lambda$ to avoid certain measurability issues in the definition of quantities that we are about to consider.

More precisely, let $\mathcal{X}_n \subseteq \mathcal{X}$ be a sequence of sets with $\bigcup_n \mathcal{X}_n = \mathcal{X}$, such that $\lambda(\mathcal{X}_n) < \infty$ for all $n$.
We can and do assume that $\eta$ is proper and consider $\eta$ as a random variable with values in the space $\mathcal{M}\subset\mathcal{N}$ of measures of the form $\mu = \sum_{i=1}^K \delta_{x_i}$ where $K \le \infty$ and $x_i \in \mathcal{X}$, such that for every $n$, $\mu(\mathcal{X}_n) < \infty$.
We will again endow this space with the smallest $\sigma$-field $\mathcal{S}$ such that the maps $\mu\mapsto \mu(B)$ are measurable for all $B\in\mathcal{B}$.

\begin{remark}\label{rk:poisson-domain-discussion}
It is not difficult to see that $\mathcal{S} = \{A\cap \mathcal{M}\colon A \in \mathcal{G}\}$. In particular $\mathcal{S}$-measurable functions on $\mathcal{M}$ are just restrictions of $\mathcal{G}$-measurable functions on $\mathcal{N}$.
We stress that in what follows we will consider inequalities for functions defined on the path space $(\mathcal{N}, \mathcal{G})$, however one should remember that the underlying Poisson process takes values in $(\mathcal{M},\mathcal{S})$, which makes the quantities we will deal with well-defined. In what follows, so as not to obscure the main ideas, we will not discuss in detail the standard but somewhat tedious measurability issues.
\end{remark}

For $F\colon \mathcal{N} \to \RR$ and $x \in \mathcal{X}$ define
\begin{displaymath}
 D_x^+ F(\eta) = F(\eta + \delta_x) - F(\eta)
\end{displaymath}
and
\begin{displaymath}
  D_x^- F(\eta) = F(\eta) - F(\eta - \delta_x)
\end{displaymath}
if $\eta \ge \delta_x$ and $D_x^- F = 0$ otherwise.

In~\cite{MR1800540} Wu proved that for arbitrary positive integrable $F\colon \mathcal{N}\to [0, \infty)$,
\begin{equation}\label{eq:Wu}
  \Ent F \le \EE \int_\mathcal{X} (D_x^+ \Phi(F) - \Phi'(F) D_x^+ F)\lambda(dx),
\end{equation}
where $\Phi(t) = t\log t$ and the expectation is taken with respect to $\eta$.
From this inequality it easily follows that $\eta$ satisfies the modified log-Sobolev inequality
\begin{align}\label{eq:mLSI-Poisson}
\Ent F \le \calE (F, \log F)
\end{align}
for nonnegative $F\colon \mathcal{N}\to [0, \infty)$, and
	\begin{displaymath}
  \calE(F,G) = \int_\mathcal{X}\EE (D^+_x F)(D^+_x G)\lambda(dx)
\end{displaymath}
is a bilinear form with
\begin{displaymath}
\Dom(\calE) = \Big\{F \in L_0(\mathcal{N},\mathcal{L}(\eta))\colon \int_\mathcal{X}\EE (D^+_x F)^2 \lambda(dx) < \infty\Big\},
\end{displaymath}
where $\mathcal{L}(\eta)$ denotes the law of $\eta$.

Recall now the Mecke formula (see, e.g. \cite[Theorem 4.1]{MR3791470}), which asserts that for every measurable function  $H\colon \mathcal{N}\times\mathcal{X} \to [0,\infty)$,
\begin{equation*}
  \EE \int_\mathcal{X} H(\eta,x)\eta(dx) = \int_\mathcal{X} \EE H(\eta+\delta_x,x)\lambda(dx).
\end{equation*}
We will actually need a slightly different version of this formula, valid for proper Poisson processes, given in \cite[Theorem 4.5]{MR3791470}. Namely for $\mu \in \mathcal{N}$ and $x\in\mathcal{X}$ define
$\mu\backslash \delta_x$ as $\mu - \delta_x$ if $\mu \ge \delta_x$, and as $\mu$ otherwise. If $\eta$ is a proper Poisson process, then for every $H$ as above
\begin{equation}\label{eq:Mecke}
  \EE \int_\mathcal{X} H(\eta\backslash \delta_x,x)\eta(dx) = \int_\mathcal{X} \EE H(\eta,x)\lambda(dx).
\end{equation}
We remark that the assumption that $\eta$ is proper allows to interpret the left-hand side as
\begin{displaymath}
  \EE \sum_{i=1}^\kappa H\Big(\sum_{j=1}^\kappa \ind{\{j\neq i\}} \delta_{X_j},X_i\Big).
\end{displaymath}
Let us also note that clearly Mecke's formula holds also for measurable functions $H\colon \mathcal{N}\times \calX \to \RR$, provided that its left- or right-hand side with $H$ replaced by $|H|$ is finite.

For $\rho = \sum_{k=1}^K \delta_{x_k} \in \mathcal{M}$ (with $K \le \infty$) we define a measure $Q_\rho$ on $\mathcal{M}$ as
\begin{align*}
    Q_\rho(A) & = \int_\calX \ind{A}(\rho + \delta_x)\lambda(dx) + \int_\calX \ind{A}(\rho - \delta_{x})\rho(dx)\\
    & = \lambda(\{x\colon \rho + \delta_x \in A\}) + \sum_{k=1}^\kappa \ind{A}(\rho - \delta_{x_k}).
\end{align*}

Using Dynkin's $\pi$-$\lambda$ theorem one can prove that the definition of $Q$ does not depend on the representation of $\rho$ as a sum of Dirac's deltas, moreover $Q$ is a kernel on $\mathcal{M}$ (we skip the details and just remark that this is the main reason for which we introduce the space $\mathcal{M}$).

By Mecke's formula~\eqref{eq:Mecke} for every measurable $G\colon \mathcal{M}\times \mathcal{M} \to [0,\infty)$,
\begin{align*}
&\EE \int_\mathcal{M} G(\sigma,\eta)Q_\eta(d\sigma) \\
&= \EE \int_\mathcal{X} G(\eta + \delta_x, \eta)\lambda(dx)  + \EE \int_\mathcal{X} G(\eta - \delta_x,\eta)\eta(dx)\\
& = \EE \int_\mathcal{X} G(\eta, \eta-\delta_x)\eta(dx) +  \EE \int_\mathcal{X} G(\eta,\eta+\delta_x)\lambda(dx) \\
&= \EE \int_\mathcal{M} G(\eta,\sigma)Q_\eta(d\sigma).
\end{align*}
Thus the kernel $Q_\sigma$ is reversible with respect to the law of $\eta$.

Note that by another application of Mecke's formula~\eqref{eq:Mecke}, for any $F,G\in\Dom(\calE)$,
\begin{align}\label{eq:Poisson-form-kernel}
  \calE(F,G)  &= \frac{1}{2}\Big(\EE \int_\mathcal{X} (F(\eta) - F(\eta - \delta_x))(G(\eta) - G(\eta - \delta_x))\eta(dx) \\
  &\qquad + \EE \int_\mathcal{X} (F(\eta+\delta_x) - F(\eta))(G(\eta+\delta_x) - G(\eta))\lambda(dx)\Big)\nonumber\\
   &= \frac{1}{2}\EE \int_\mathcal{M}(F(\sigma) - F(\eta))(G(\sigma) - G(\eta))Q_\eta(d\sigma).\nonumber
\end{align}

We remark that the functions $F,G$ above are defined on $\mathcal{N}$, but their restrictions to $\mathcal{M}$ are $\mathcal{S}$-measurable (cf.\ Remark~\ref{rk:poisson-domain-discussion}), so the last expression in the above formula is well defined. In particular one can see that the value of $\calE(F,G)$ depends only on the behaviour of $F$ and $G$ on $\mathcal{M}$.

Consider now the space
\begin{displaymath}
\calA = \Big\{F \in L_0(\mathcal{M},\mathcal{L}(\eta))\colon  \int_\mathcal{M} (F(\eta) - F(\sigma))^2Q_\eta(d\sigma) < \infty \textrm{ a.s.}\Big \}.
\end{displaymath}
Observe also that the restriction from $\mathcal{N}$ to $\mathcal{M}$ gives a natural identification of $L_0(\mathcal{M},\mathcal{L}(\eta))$ and $L_0(\mathcal{N},\mathcal{L}(\eta))$ (cf.\ again Remark~\eqref{rk:poisson-domain-discussion}), therefore we can also consider $\calA$ as a subspace of the latter space.

On $\mathcal{A}\times\mathcal{A}$ define
\begin{align*}
  \Gamma(F,G) &=
  \frac{1}{2}\int_\mathcal{M} (F(\eta) - F(\sigma))(G(\eta)-G(\sigma))Q_\eta(d\sigma)\\
  &= \frac{1}{2}\Big(\int_\mathcal{X} (F(\eta) - F(\eta - \delta_x))(G(\eta) - G(\eta - \delta_x))\eta(dx)\\
  &\qquad + \int_\mathcal{X} (F(\eta+\delta_x) - F(\eta))(G(\eta+\delta_x) - G(\eta))\lambda(dx)\Big)\\
  &= \frac{1}{2}\int_\mathcal{X} (D_x^- F(\eta))(D_x^- G(\eta))\eta(dx) + \frac{1}{2}\int_\mathcal{X} (D_x^+ F(\eta))(D_x^+ G(\eta))\lambda(dx)
\end{align*}
and
\begin{align}\label{eq:Poisson-gamma-plus}
  \Gamma_+(F) &=  \int_\mathcal{M} (F(\eta) - F(\sigma))_+^2Q_\eta(d\sigma)\\
  &=\int_\mathcal{X} (F(\eta) - F(\eta - \delta_x))_+^2\eta(dx) + \int_\mathcal{X} (F(\eta+\delta_x) - F(\eta))_-^2\lambda(dx)\nonumber\\
  &= \int_\mathcal{X} (D_x^- F(\eta))_+^2\eta(dx) + \int_\mathcal{X} (D_x^+ F(\eta))_-^2\lambda(dx).\nonumber
\end{align}
By \eqref{eq:Poisson-form-kernel} we then have
\begin{displaymath}
  \calE(F,G) = \EE \Gamma(F,G),
\end{displaymath}
which shows that one can interpret Wu's inequality \eqref{eq:mLSI-Poisson} in the setting of our main results (cf.~\eqref{eq:Gamma-kernel},~\eqref{eq:kernel-to-form},~\eqref{eq:Gamma+}) and~\eqref{eq:mLSI-Poisson} becomes just~\eqref{eq:mlS-log} with $\rho_0 = 1$. In particular we also obtain Beckner's inequality~\eqref{eq:Beckner-GJ} with $\alpha_p \ge 1/6$.

\begin{remark}
We remark that $\Gamma$ is closely related to the carr\'e du champ operator for the Ornstein--Uhlenbeck process on the Poisson space (see~\cite{MR3585396}). In \cite[Proposition 2.6]{MR3896829}
it is shown that under suitable assumptions $\Gamma(F,G)$ actually coincides with the carr\'e du champ operator. Similarly as in~\cite{MR3485348} we find it however simpler to introduce $\Gamma$ and $\Gamma_+$ via the Mecke formula~\eqref{eq:Mecke}, which gives greater generality and does not require a detailed discussion of domains.
\end{remark}

Beckner's inequality~\eqref{eq:Beckner-GJ} and Propositions~\ref{prop:V^2} and~\ref{prop:moments-abstract}
 imply the following proposition providing Sobolev type inequalities on the Poisson space.
\begin{proposition}\label{prop:moments-Poisson}
For any $F\colon \mathcal{N}\to \RR$ and any $r \ge 2$,
\begin{align*}
  \|F - \EE F\|_r &\le \fs \sqrt{r}\|\sqrt{\Gamma(F)}\|_r\\
  &= \fs \sqrt{r} \Big\|\Big(\int_{\mathcal{X}} (D_x^+ F)^2\lambda(dx) + \int_{\mathcal{X}}(D_x^- F)^2\eta(dx)\Big)^{1/2}\Big\|_r
\end{align*}
and
\begin{align*}
  \|(F - \EE F)_+\|_r &\le \fs \sqrt{r}\|\sqrt{\Gamma_+(F)}\|_r \\
  &= \fs \sqrt{r} \Big\|\Big(\int_{\mathcal{X}} (D_x^+ F)_-^2\lambda(dx) + \int_{\mathcal{X}}(D_x^- F)_+^2\eta(dx)\Big)^{1/2}\Big\|_r,
\end{align*}
where $\fs = \sqrt{3\frac{\sqrt{e}}{\sqrt{e}-1}}$.
\end{proposition}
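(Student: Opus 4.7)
The plan is to assemble the two Sobolev-type estimates by plugging directly into the abstract moment inequalities of Propositions~\ref{prop:V^2} and~\ref{prop:moments-abstract}. Two ingredients prepared in the preceding discussion will be crucial: on the one hand, Wu's inequality~\eqref{eq:mLSI-Poisson} is the modified log-Sobolev inequality~\eqref{eq:mlS-log} with constant $\rho_0 = 1$ for the bilinear form $\calE(F,G) = \EE\Gamma(F,G)$; on the other, the kernel $Q_\eta$ satisfies the detailed balance condition~\eqref{eq:detailed-balance} (a consequence of the Mecke formula~\eqref{eq:Mecke}), so that the $\Gamma_+(F)$ computed in~\eqref{eq:Poisson-gamma-plus} is precisely the $\Gamma_+$ that enters Proposition~\ref{prop:V^2}.

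The first step is to promote the modified log-Sobolev bound to a family of Beckner inequalities. Applying Theorem~\ref{thm:main:mLS_to_Bec} with $\rho_0 = 1$ yields Beckner's inequality~\eqref{eq:Beckner-GJ} for every $p \in (1,2]$ with $\alpha_p \ge 1/6$, i.e.\ the hypothesis of Proposition~\ref{prop:V^2} holds with $s = 0$ and $a = 1/6$. For these parameters $\kappa(s) = \sqrt{e}/(\sqrt{e}-1)$ and the prefactor $(1-2^{-(s+1)})r^{s+1}\kappa(s)/a$ simplifies to $3r\sqrt{e}/(\sqrt{e}-1) = \fs^2 r$. Combined with the explicit expression~\eqref{eq:Poisson-gamma-plus} for $\Gamma_+$, this delivers the second inequality of the proposition immediately.

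For the first inequality, involving $\Gamma$ rather than $\Gamma_+$, the plan is to invoke Proposition~\ref{prop:moments-abstract}. The remaining task is then to verify Assumption~\ref{a:II} for the form $\calE$ restricted to a suitable algebra $\calA$ of cylindrical functions on the Poisson space. The first part of the assumption is a direct consequence of Proposition~\ref{prop:assumption-2} from the appendix, since on $\calA$ the form $\Gamma$ agrees with the carr\'e du champ of a reversible Markov semigroup (the Ornstein--Uhlenbeck semigroup on the Poisson space, as indicated in the remark following~\eqref{eq:Poisson-gamma-plus}). The second (approximation) part is handled by truncation: for $F \in \calA$ set $F_M := \max(-M, \min(F, M))$; then $F_M$ is bounded, belongs to $\calA$, converges pointwise to $F$, and satisfies $\Gamma(F_M) \le \Gamma(F)$ pointwise because $t \mapsto \max(-M, \min(t, M))$ is $1$-Lipschitz and $\Gamma$ is built from differences of the form $F(\eta\pm\delta_x) - F(\eta)$.

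The principal obstacle is not deep. It lies entirely in the bookkeeping around Assumption~\ref{a:II} (in particular, choosing the algebra $\calA$ correctly inside the domain of $\calE$ and matching the measurability conventions between $\mathcal{N}$ and $\mathcal{M}$ explained in Remark~\ref{rk:poisson-domain-discussion}) together with tracking the constants so that they collapse to $\fs^2 r$. Once this is done, both bounds are direct specializations of the abstract machinery. The $\sqrt{r}$-growth, incidentally, is unimprovable: it can be checked by applying the inequality to sums of i.i.d.\ compensated Poisson variables and invoking the central limit theorem together with standard moment bounds for Gaussians.
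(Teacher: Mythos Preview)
Your proposal is correct and follows the paper's own route: the paper states the proposition as an immediate consequence of the Beckner inequality~\eqref{eq:Beckner-GJ} with $\alpha_p\ge 1/6$ (obtained from Wu's inequality via Theorem~\ref{thm:main:mLS_to_Bec}) together with Propositions~\ref{prop:V^2} and~\ref{prop:moments-abstract}, with the kernel $Q_\eta$ supplying the required detailed-balance structure. Your computation of the constants, collapsing to $\fs^2 r$, is accurate.

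The one place where you take a slight detour is the verification of Assumption~\ref{a:II}. You route it through the Ornstein--Uhlenbeck semigroup and Proposition~\ref{prop:assumption-2}, which requires choosing an algebra $\calA\subseteq\Dom(L)$ stable under smooth compositions and then matching it with the Poisson carr\'e du champ---exactly the domain discussion that the paper's remark after~\eqref{eq:Poisson-gamma-plus} says it wishes to avoid. The paper instead relies (implicitly, via the Remark following Proposition~\ref{prop:moments-abstract}) on the fact that Assumption~\ref{a:II} can be checked \emph{directly} in the kernel setting~\eqref{eq:Gamma-kernel}: for~\eqref{eq:assumption-2-1}, use detailed balance, the convexity bound $|a|^\gamma-|b|^\gamma\le \gamma|a|^{\gamma-1}(|a|-|b|)$ for $|a|\ge|b|$, and H\"older; for the approximation part, your truncation $F_M$ works and stays in the space $\calA$ as the paper defines it (finiteness of $\int(F(\eta)-F(\sigma))^2Q_\eta(d\sigma)$ a.s.). This is shorter and sidesteps any question of whether cylindrical functions sit in $\Dom(L)$ or are closed under truncation.
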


Recently Bachmann and Peccati~\cite{MR3485348} used Wu's inequality~\eqref{eq:Wu} to derive concentration inequalities for Poisson functionals by various variants of the Herbst argument. They put special emphasis on increasing functionals, i.e., functionals $F$ such that $D_x^+ F \ge 0$ arguing that for them the second integral on the right-hand side of~\eqref{eq:Poisson-gamma-plus} vanishes, while the first integral can be often relatively easily estimated by appealing just to geometric properties of the functional, without taking into account the dependence on the intensity $\lambda$. Further applications of inequalities from~\cite{MR3485348} were presented in~\cite{MR3473096,MR3849811}.
The approach used in these papers relies on Herbst's argument, which usually requires either that $\Gamma_+ (F)$ or $\Gamma(F)$ is uniformly bounded or that the function has some self-bounding properties (e.g., $\Gamma_+ (F) \le \varphi(F)$ for some function $\varphi$). One aspect in which moment estimates of Proposition~\ref{prop:moments-Poisson} complement this approach is that they can be easily used also if $\Gamma(F)$ or $\Gamma_+(F)$ have heavier tails, e.g., if they are not exponentially integrable.

Proposition~\ref{prop:moments-Poisson} may also be an efficient tool in the self-bounded setting leading to inequalities which are (up to constants) comparable to those presented in said articles. We will illustrate it with use of the following proposition, which may be considered a counterpart of \cite[Corrolary 3.5]{MR3485348}, which instead of moments concerns the Laplace transform.

\begin{proposition}\label{prop:Poisson-self-bounded}
Assume that $F\colon \mathcal{N}\to [0,\infty)$ is a measurable function which satisfies
\begin{align}\label{eq:self-bounded-assumption}
  \Gamma_+(F) \le F^\alpha G
\end{align}
for some $\alpha \in [0,2)$ and a function $G\colon \mathcal{N} \to [0,\infty)$. Then for $r \ge 2$ and $\varepsilon \in (0,1)$,
\begin{equation}
\label{eq:Poisson-self-bounded}
\begin{split}
  \|(F-\EE F)_+\|_r
  &\le 2\fs\sqrt{r}(\EE F)^{\alpha/2}\|G^{1/(2-\alpha)}\|_r^{1-\alpha/2}  \\
  &\qquad + (2\fs)^{2/(2-\alpha)} r^{1/(2-\alpha)}\|G^{1/(2-\alpha)}\|_r,
\end{split}
\end{equation}
where $\fs$ is the constant from Proposition~\ref{prop:moments-Poisson}.
\end{proposition}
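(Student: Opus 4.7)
The plan is to apply the moment inequality of Proposition~\ref{prop:moments-Poisson}, use H\"older's inequality to decouple $F$ from $G$ in the hypothesis~\eqref{eq:self-bounded-assumption}, and then resolve the resulting self-bounding inequality by a standard dichotomy. The main technical content is bookkeeping with exponents; the real ``work'' has already been done in Proposition~\ref{prop:moments-Poisson}.

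First I would apply Proposition~\ref{prop:moments-Poisson} together with $\Gamma_+(F)\le F^\alpha G$ to obtain
\[
  A:=\|(F-\EE F)_+\|_r \;\le\; \fs\sqrt{r}\,\bigl\|\sqrt{\Gamma_+(F)}\bigr\|_r \;\le\; \fs\sqrt{r}\,\bigl\|F^{\alpha/2}G^{1/2}\bigr\|_r .
\]
When $\alpha\in(0,2)$ the exponents $2/\alpha$ and $2/(2-\alpha)$ are H\"older conjugate, so an application of H\"older's inequality to $\mu(F^{r\alpha/2}G^{r/2})$ gives
\[
  \bigl\|F^{\alpha/2}G^{1/2}\bigr\|_r \;\le\; \|F\|_r^{\alpha/2}\,\bigl\|G^{1/(2-\alpha)}\bigr\|_r^{(2-\alpha)/2};
\]
the endpoint case $\alpha=0$ reduces to the trivial identity. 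Writing $N:=\|G^{1/(2-\alpha)}\|_r$, I arrive at the intermediate bound
\[
   A \;\le\; \fs\sqrt{r}\,\|F\|_r^{\alpha/2}\,N^{1-\alpha/2}.
\]

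Next, since $F\ge 0$, pointwise $F\le \EE F + (F-\EE F)_+$, hence $\|F\|_r\le \EE F + A$. The subadditivity $(x+y)^{\alpha/2}\le x^{\alpha/2}+y^{\alpha/2}$ (valid because $\alpha/2\le 1$) then yields the self-bounding inequality
\[
   A \;\le\; \fs\sqrt{r}\bigl((\EE F)^{\alpha/2}+A^{\alpha/2}\bigr) N^{1-\alpha/2}.
\]
I would finish by the elementary dichotomy: either the first summand on the right dominates, in which case $A\le 2\fs\sqrt{r}(\EE F)^{\alpha/2}N^{1-\alpha/2}$, or the second does, giving $A^{1-\alpha/2}\le 2\fs\sqrt{r}\,N^{1-\alpha/2}$ and therefore $A\le (2\fs\sqrt{r})^{2/(2-\alpha)}N$, where I use the identities $(2-\alpha)/2=1-\alpha/2$ and $(1-\alpha/2)\cdot 2/(2-\alpha)=1$ to recover exactly the powers of $N$ displayed in~\eqref{eq:Poisson-self-bounded}. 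Adding the two alternative bounds yields the stated estimate. I expect no real obstacle; the $\varepsilon\in(0,1)$ in the hypotheses appears to be vestigial since the displayed bound does not depend on it (it could equally well arise from replacing the crude subadditivity by a weighted Young-type inequality $(x+y)^{\alpha/2}\le (1+\varepsilon)x^{\alpha/2}+C_\varepsilon y^{\alpha/2}$, which would only affect the absolute constants).
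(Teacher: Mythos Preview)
Your argument mirrors the paper's proof almost verbatim: apply Proposition~\ref{prop:moments-Poisson}, use H\"older with exponents $2/\alpha$ and $2/(2-\alpha)$, then the triangle inequality and subadditivity of $t\mapsto t^{\alpha/2}$, and conclude by the same dichotomy. Your remark that the $\varepsilon$ is vestigial is also correct.

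The one step you omit, which the paper includes, is the a priori finiteness of $A=\|(F-\EE F)_+\|_r$. Your dichotomy rearrangement $A^{1-\alpha/2}\le 2\fs\sqrt{r}\,N^{1-\alpha/2}$ is only meaningful if $A<\infty$; otherwise the self-bounding inequality $A\le \fs\sqrt{r}((\EE F)^{\alpha/2}+A^{\alpha/2})N^{1-\alpha/2}$ reads $\infty\le\infty$ and gives nothing. The paper handles this with a short preliminary observation: from $a^{\alpha/2}(a^{1-\alpha/2}-b^{1-\alpha/2})_+\le (a-b)_+$ one gets $\Gamma_+(F^{1-\alpha/2})\le G$, so Proposition~\ref{prop:moments-Poisson} applied to $F^{1-\alpha/2}$ with exponent $q=2r/(2-\alpha)$ yields $\EE F^r<\infty$ whenever $N<\infty$. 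You should add this sentence; after that your proof is complete and identical to the paper's.
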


\begin{proof}
Denote $A:= \|(F-\EE F)_+\|_r$. We will first show that if $\EE G^{r/(2-\alpha)} < \infty$, then $A < \infty$.
Note that the inequality $a^{\alpha/2}(a^{1-\alpha/2} - b^{1-\alpha/2})_+ \le (a - b)_+$ implies that
\begin{displaymath}
  F^\alpha \Gamma_+(F^{1-\alpha/2}) \le \Gamma_+(F) \le F^{\alpha} G.
\end{displaymath}
As a consequence $\Gamma_+(F^{1-\alpha/2}) \le G$ and thus if $\EE G^{q/2} < \infty$, then by Proposition~\ref{prop:moments-Poisson} $\EE F^{q(1-\alpha/2)} < \infty$. Choosing $q = 2r/(2-\alpha)$ we obtain that $A < \infty$.

Passing to the main part of the proof, we can assume that the right-hand side of~\eqref{eq:Poisson-self-bounded} is finite and as a consequence $A < \infty$.
By Proposition~\ref{prop:moments-Poisson} and the assumption~\eqref{eq:self-bounded-assumption},
\begin{displaymath}
  A^r \le \fs^r r^{r/2} \EE F^{\alpha r/2} G^{r/2} \le \fs^r r^{r/2} (\EE F^r)^{\alpha/2} (\EE G^{r/(2-\alpha)})^{1-\alpha/2},
\end{displaymath}
where in the second estimate we used H\"older's inequality. Using the triangle inequality in $L_r$, together with subadditivity of the function $t \mapsto t^{\alpha/2}$ we get
\begin{align*}
  A &\le \fs \sqrt{r} \|F\|_r^{\alpha/2}\|G^{1/(2-\alpha)}\|_r^{1-\alpha/2} \\
  &\le \fs\sqrt{r} A^{\alpha/2} \|G^{1/(2-\alpha)}\|_r^{1-\alpha/2} + \fs\sqrt{r}(\EE F)^{\alpha/2}\|G^{1/(2-\alpha)}\|_r^{1-\alpha/2},
\end{align*}
which easily implies that either
\begin{displaymath}
A \le 2\fs\sqrt{r}(\EE F)^{\alpha/2}\|G^{1/(2-\alpha)}\|_r^{1-\alpha/2}
\end{displaymath}
 or
\begin{displaymath}
A \le (2\fs)^{2/(2-\alpha)} r^{1/(2-\alpha)}\|G^{1/(2-\alpha)}\|_r,
\end{displaymath}
proving the proposition.
\end{proof}

Let us illustrate Proposition~\ref{prop:Poisson-self-bounded} with two applications.

\subsubsection{Suprema of Poisson stochastic integrals}
Let $\mathcal{F}$ be a countable family of real valued functions on $\mathcal{X}$. Consider random variables of the form
\begin{align}\label{eq:empirical-Z}
  Z = \sup_{f\in \mathcal{F}} \int_\mathcal{X} f(x)\eta(dx),
\end{align}
where all functions $f\in \mathcal{F}$ are nonnegative and $\mathcal{F} \subseteq L_1(\mathcal{X},\lambda)$
and
\begin{align}\label{eq:empirical-S}
  S = \sup_{f\in \mathcal{F}} \int_\mathcal{X} f(x)(\eta - \lambda)(dx),
\end{align}
where $\mathcal{F} \subseteq L_2(\mathcal{X},\lambda)$.
Here the compensated integral is defined in the usual way, first directly on $L_1(\mathcal{X},\lambda)\cap L_2(\mathcal{X},\lambda)$, then extended by density -- we refer to \cite[Chapter 12]{MR3791470} for background on the Wiener-Ito integral in the Poisson case.
In the case when the functions in $\mathcal{F}$ are uniformly bounded and $\lambda$ is finite, concentration inequalities for $Z$ and $S$ were obtained by Reynaud-Bouret in~\cite{MR1981635}. Here we will complement them with moment inequalities valid for not necessarily bounded classes or finite intensity measures.

The inequalities we obtain can be considered counterparts of results due to Gin\'e--Lata{\l}a--Zinn~\cite{MR1857312} for empirical processes in independent random variables. Originally they were derived from Talagrand's concentration inequality for empirical processes and the Hoffman--J{\o}rgensen inequality; an alternate proof based on moment estimates of Theorem~\ref{thm:BBLM} was provided by Boucheron--Bousquet--Lugosi--Massart~\cite{MR2123200}. We remark that it should be possible to use this inequality together with infinite divisibility of Poisson processes in a similar way as in~\cite{MR1981635} to recover the estimates we present below (passing through finite intensity measures first), it seems however that this approach would require dealing with more technicalities in comparison with a direct application of general Poissonian moment estimates.

Let us start with the estimate on $Z$. Assume first that $\mathcal{F}$ is a finite class of functions. Note that if $f\ge 0$ for all $f\in \mathcal{F}$, then $Z(\eta) \le Z(\eta+\delta_x)$ and as a consequence
\begin{displaymath}
  \Gamma_+(Z) = \int_\mathcal{X} (D_y^- Z)_+^2\eta(dy).
\end{displaymath}
If $g_\eta \in \mathcal{F}$ is such that $Z(\eta) = \int_\mathcal{X} g_\eta(x)\eta(dx)$, then for all $y \in \supp(\eta):=\{X_1,X_2,\ldots\}$, where $X_i$ are the random variables from the representation \eqref{eq:Poisson-proper}, we have
\begin{displaymath}
  D_y^- Z \le  \int_\mathcal{X} g_\eta(x)\eta(dx) - \int_\mathcal{X} g_\eta(x)(\eta-\delta_y)(dx) = g_\eta(y).
\end{displaymath}
Thus
\begin{displaymath}
  \Gamma_+(Z) \le \int_\mathcal{X}  g_\eta(y)^2 \eta(dy) \le ZG
\end{displaymath}
where
\begin{align}\label{eq:def-G}
G = \sup_{y \in \supp(\eta)} \sup_{f\in\mathcal{F}} f(y)
\end{align}
and as a consequence, an application of Proposition~\ref{prop:Poisson-self-bounded} with $\alpha = 1$ (followed by the monotone convergence theorem if $\mathcal{F}$ is infinite) gives

\begin{corollary}
\label{cor:empirical-Z}
 If $Z$ is given by~\eqref{eq:empirical-Z}, and $\EE Z < \infty$, then for a universal constant $C$ and  all $r \ge 2$ and any $\varepsilon > 0$,
\begin{align*}
  \|(Z - \EE Z)_+\|_r &\le C(\sqrt{r}\sqrt{\EE Z}\sqrt{\|G\|_r} + r\|G\|_r)\\
  & \le C(\varepsilon\EE Z + (1+ \varepsilon^{-1})r\|G\|_r).
\end{align*}
\end{corollary}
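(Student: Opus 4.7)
The plan is essentially to invoke Proposition~\ref{prop:Poisson-self-bounded} with $\alpha = 1$, since most of the work has already been carried out in the discussion preceding the statement: for nonnegative $f\in\mathcal{F}$ one has $D_x^+Z\ge 0$, so only the $\eta(dy)$-integral in \eqref{eq:Poisson-gamma-plus} survives, and choosing a maximizer $g_\eta\in\mathcal{F}$ (which exists when $\mathcal{F}$ is finite) gives $D_y^-Z\le g_\eta(y)$ for $y\in\supp(\eta)$, hence the self-bounded estimate $\Gamma_+(Z)\le Z\cdot G$ with $G$ as in \eqref{eq:def-G}.

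Assuming first that $\mathcal{F}$ is finite, I would apply Proposition~\ref{prop:Poisson-self-bounded} with $F=Z$, $\alpha=1$, so that $2/(2-\alpha)=2$ and $1-\alpha/2=1/2$. This yields
\begin{equation*}
  \|(Z-\EE Z)_+\|_r \le 2\fs\sqrt{r}\,\sqrt{\EE Z}\,\|G\|_r^{1/2} + 4\fs^{2}\, r\,\|G\|_r,
\end{equation*}
which is precisely the first claimed inequality after absorbing $\max(2\fs,4\fs^2)$ into the universal constant $C$.

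To remove the finiteness assumption on $\mathcal{F}$, I would exhaust the countable class $\mathcal{F}$ by an increasing sequence of finite subfamilies $\mathcal{F}_n\uparrow\mathcal{F}$, and let $Z_n$, $G_n$ denote the corresponding quantities. Both $Z_n\uparrow Z$ pointwise and $G_n\uparrow G$ pointwise (the latter since the double supremum is monotone in $\mathcal{F}$). The monotone convergence theorem then yields $\EE Z_n\uparrow \EE Z<\infty$ and $\|G_n\|_r\uparrow \|G\|_r$, while applying Fatou's lemma on the left-hand side (to $(Z_n-\EE Z_n)_+^r$) gives the inequality for $Z$.

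For the second inequality, I would use the elementary bound $\sqrt{ab}\le \tfrac{1}{2}(\varepsilon a + \varepsilon^{-1}b)$ with $a=\EE Z$ and $b=r\|G\|_r$, so that
\begin{equation*}
  \sqrt{r}\,\sqrt{\EE Z}\,\sqrt{\|G\|_r} = \sqrt{(\EE Z)(r\|G\|_r)} \le \tfrac{1}{2}\varepsilon\,\EE Z + \tfrac{1}{2}\varepsilon^{-1}r\,\|G\|_r,
\end{equation*}
and combine with the $r\|G\|_r$ term, adjusting $C$ accordingly. I do not anticipate a genuine obstacle here; the only point that requires care is the existence and measurability of the maximizer $g_\eta$ in the finite case (immediate, since the maximum is over finitely many measurable integrals) and the legitimacy of the monotone approximation, both of which are routine.
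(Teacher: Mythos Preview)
Your proposal is correct and follows essentially the same approach as the paper: the self-bounding estimate $\Gamma_+(Z)\le Z\cdot G$ is established in the discussion preceding the corollary, and the paper then simply says ``an application of Proposition~\ref{prop:Poisson-self-bounded} with $\alpha = 1$ (followed by the monotone convergence theorem if $\mathcal{F}$ is infinite)'' yields the result. Your treatment is in fact slightly more explicit, since you spell out the Fatou/monotone passage and the AM--GM step for the second inequality, neither of which the paper writes out.
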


Let us now pass to the variable $S$ given by~\eqref{eq:empirical-S}. By a limiting argument, we can again assume without loss of generality that $\mathcal{F}$ is finite. Further we can assume that $\mathcal{F} \subseteq L_1(\mathcal{X},\lambda)\cap L_2(\mathcal{X},\lambda)$, so that one can consider separately integration with respect to $\eta$ and $\lambda$.

Let $g_\eta \in \mathcal{F}$ be such that $S = \int_\mathcal{X} g_\eta(x)\eta(dx) - \int_\mathcal{X} g_\eta \lambda(dx)$.
Arguing similarly as for the variable $Z$, we have
\begin{align*}
\Gamma_+(S) & \le \int_\mathcal{X} g_\eta(y)_-^2 \lambda(dy) + \int_\mathcal{X} g_\eta(y)_+^2 \eta(dy)\\
& \le \sup_{f\in \mathcal{F}} \int_\mathcal{X} f(x)^2 \lambda(dx) + \sup_{f\in \mathcal{F}} \int_\mathcal{X} f(x)^2 \eta(dx).
\end{align*}
Thus, by Proposition~\ref{prop:moments-Poisson}, the subadditivity  of the function $x\mapsto x^{1/2}$ and the triangle inequality, we obtain
\begin{align*}
\|(S-\EE S)_+\|_r \le C\sqrt{r} \Big(\Big(\sup_{f\in \mathcal{F}} \int_\mathcal{X} f(x)^2 \lambda(dx)\Big)^{1/2} + \Big\|\sup_{f\in \mathcal{F}} \int_\mathcal{X} f(x)^2 \eta(dx)\Big\|_{r/2}^{1/2}\Big).
\end{align*}
The second term can be bounded from above by Corollary~\ref{cor:empirical-Z} applied to $\mathcal{F}' = \{f^2\colon f\in\mathcal{F}\}$, which results in

\begin{corollary}
If $S$ is given by~\eqref{eq:empirical-S}, then for all $r \ge 4$,
\begin{displaymath}
\|(S - \EE S)_+\|_r \le C\Big(\sqrt{r}\Sigma + r \Big\|\sup_{x\in{\rm supp \eta}}\sup_{f\in \mathcal{F}} |f(x)|\Big\|_r\Big),
\end{displaymath}
where
\begin{displaymath}
  \Sigma^2 = \sup_{f\in \mathcal{F}} \int_\mathcal{X} f(x)^2 \lambda(dx) + \EE \sup_{f\in\mathcal{F}} \int_\mathcal{X} f^2(x)\eta(dx)
\end{displaymath}
and $C$ is a universal constant.
\end{corollary}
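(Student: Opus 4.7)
The plan is to complete the calculation already begun in the paragraph immediately preceding the statement. After the standard reductions---approximating a countable class by an increasing sequence of finite ones (so that monotone convergence applies to $S$ shifted by any fixed $\int f_0\,d(\eta-\lambda)$), and further assuming $\mathcal{F}\subset L_1(\mathcal{X},\lambda)\cap L_2(\mathcal{X},\lambda)$---one picks a measurable selector $g_\eta\in\mathcal{F}$ attaining the supremum in $S$, well-defined for finite $\mathcal{F}$ via lexicographic tie-breaking. Just as in the derivation for $Z$ preceding~\eqref{eq:empirical-Z}, the defining property of $g_\eta$ gives $D^+_xS\ge g_\eta(x)$ for $\lambda$-a.e.\ $x$ and $D^-_xS\le g_\eta(x)$ for $x\in\supp\eta$, hence $(D^+_xS)_-^2\le g_\eta(x)^2$ and $(D^-_xS)_+^2\le g_\eta(x)^2$; both bounds are required here because $f$ may change sign, so $S$ is not necessarily monotone in $\eta$. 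Inserting these into~\eqref{eq:Poisson-gamma-plus} yields $\Gamma_+(S)\le\sup_{f\in\mathcal{F}}\int f^2\,d\lambda+\sup_{f\in\mathcal{F}}\int f^2\,d\eta$, and Proposition~\ref{prop:moments-Poisson}, together with the subadditivity of $\sqrt{\cdot}$ and Minkowski's inequality, produces the intermediate estimate displayed right before the corollary.

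The remaining task is to bound $\bigl\|\sup_{f\in\mathcal{F}}\int f^2\,d\eta\bigr\|_{r/2}$. Set $Z'=\sup_{f\in\mathcal{F}}\int f^2\,d\eta$ and apply Corollary~\ref{cor:empirical-Z} to the (nonnegative) class $\{f^2:f\in\mathcal{F}\}$: its envelope $G$ from~\eqref{eq:def-G} becomes $\sup_{y\in\supp\eta}\sup_{f\in\mathcal{F}}f(y)^2$, so $\|G\|_{r/2}=\bigl\|\sup_{y\in\supp\eta}\sup_{f\in\mathcal{F}}|f(y)|\bigr\|_r^2$. Corollary~\ref{cor:empirical-Z} then gives
\begin{align*}
\|Z'\|_{r/2}\le \EE Z'+C\sqrt{r(\EE Z')\|G\|_{r/2}}+Cr\|G\|_{r/2};
\end{align*}
AM--GM absorbs the cross term into the two outer summands, and extracting a square root yields
\begin{align*}
\|Z'\|_{r/2}^{1/2}\le C'\Bigl(\sqrt{\EE Z'}+\sqrt{r}\,\bigl\|\sup_{y\in\supp\eta}\sup_{f\in\mathcal{F}}|f(y)|\bigr\|_r\Bigr).
\end{align*}

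Substituting this back into the intermediate estimate and using $\sqrt{a}+\sqrt{b}\le\sqrt{2(a+b)}$ to fold $\bigl(\sup_f\int f^2\,d\lambda\bigr)^{1/2}+\sqrt{\EE Z'}$ into $\sqrt{2}\,\Sigma$ delivers the claimed bound with a universal constant. I expect the only genuine subtleties to be the measurable selection of $g_\eta$ (handled by the reduction to finite $\mathcal{F}$) and the sign bookkeeping in~\eqref{eq:Poisson-gamma-plus}: this is precisely the reason both a $\lambda$- and an $\eta$-integral enter the definition of $\Sigma$, in contrast with the increasing functional $Z$ treated in Corollary~\ref{cor:empirical-Z}, where only the $\eta$-integral survived.
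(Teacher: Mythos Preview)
Your proposal is correct and follows precisely the paper's route: the intermediate bound on $\|(S-\EE S)_+\|_r$ via $\Gamma_+(S)$ is already displayed just before the corollary, and the paper's entire proof consists of the single sentence ``The second term can be bounded from above by Corollary~\ref{cor:empirical-Z} applied to $\mathcal{F}' = \{f^2\colon f\in\mathcal{F}\}$,'' which is exactly what you carry out in detail (including the observation that applying that corollary at level $r/2$ forces $r\ge 4$).
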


We remark that if the class $\mathcal{F}$ is uniformly bounded, then by Chebyshev's inequality, the above corollary allows to recover (up to universal constants) the exponential upper tail estimates for $S$ obtained in~\cite{MR1981635}.

\subsubsection{Non-negative $U$-statistics}
Another application of Proposition~\ref{prop:Poisson-self-bounded} is related to geometric functionals of the Poisson process, specifically certain non-negative $U$-statistics, investigated recently by several authors \cite{MR3473096,MR3485348,MR3849811,gozlan2020transport}. For a measurable kernel $h\colon \mathcal{X}^m\to [0,\infty)$, symmetric under permutation of arguments, let us define
 \begin{displaymath}
   U(\eta) = \sum_{i_1,\ldots,i_m}^{\neq} h(X_{i_1},\ldots,X_{i_m}),
 \end{displaymath}
where the $X_i$'s are given by the representation~\eqref{eq:Poisson-proper} and the superscript $\neq$ indicates that the summation is taken over pairwise disjoint indices.

 Let us note that for nonnegative $h$ we have $D_X^+ h \ge 0$, therefore
 \begin{displaymath}
   \Gamma_+(U) = \int_\mathcal{X} (D_x^- U)_+^2\eta(dx) = m^2 \sum_{i} \Big(\sum_{i_1,\ldots,i_{m-1}\colon i_j\neq i}^{\neq} h(X_{i_1},\ldots,X_{i_{m-1}},X_i)\Big)^2.
 \end{displaymath}
  Therefore, using Proposition~\ref{prop:Poisson-self-bounded} and Chebyshev's inequality, we get the following corollary.

  \begin{corollary}\label{cor:U-stat}
  If $U$ is an almost surely finite Poisson $U$-statistic based on a non-negative symmetric kernel $h$, and there exists $a\ge 0$ and $\alpha \in [0,2)$ such that
  \begin{align}\label{eq:U-stat-assumption}
    \sum_{i} \Bigl(\sum_{i_1,\ldots,i_{m-1}\colon i_j\neq i}^{\neq} h(X_{i_1},\ldots,X_{i_{m-1}},X_i) \Bigr)^2 \le a U^{\alpha}
  \end{align}
  then  for any $r \ge 2$,
  \begin{displaymath}
    \|(U - \EE U)_+\|_r \le C\sqrt{r}m\sqrt{a} (\EE U)^{\alpha/2} + (C r m^2a)^{1/(2-\alpha)},
  \end{displaymath}
  where $C$ is some universal constant. As a consequence, for $t \ge 0$,
  \begin{displaymath}
    \PP(U \ge \EE U + t) \le 2\exp\Big(-\min\Big(\frac{t^2}{C' m^2 a (\EE U)^\alpha}, \frac{t^{2-\alpha}}{C' m^2 a}\Big)\Big),
  \end{displaymath}
  where $C'$ is some universal constant.
  \end{corollary}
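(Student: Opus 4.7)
The plan is to verify the self-bounding condition \eqref{eq:self-bounded-assumption} for $U$, apply Proposition~\ref{prop:Poisson-self-bounded} to obtain the moment bound, and finally extract the tail inequality by Chebyshev's inequality in $L_r$. First I would compute $\Gamma_+(U)$ using formula \eqref{eq:Poisson-gamma-plus}. Since $h$ is nonnegative, adding a point to $\eta$ only increases $U$, so $D^+_x U \ge 0$ and the Lebesgue integral in \eqref{eq:Poisson-gamma-plus} vanishes. For the atomic part, the symmetry of $h$ yields
\[
D^-_{X_i} U = m \sum^{\neq}_{i_1,\ldots,i_{m-1}\colon i_j \neq i} h(X_{i_1},\ldots,X_{i_{m-1}},X_i),
\]
the factor $m$ arising because each ordered tuple containing $i$ has $i$ in exactly one of the $m$ positions, all of which contribute equally by symmetry. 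Squaring, summing against $\eta$, and applying hypothesis \eqref{eq:U-stat-assumption} gives $\Gamma_+(U) \le m^2\, a\, U^\alpha$, which is exactly \eqref{eq:self-bounded-assumption} with the constant function $G \equiv m^2 a$.

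Feeding this into Proposition~\ref{prop:Poisson-self-bounded} and using $\|G^{1/(2-\alpha)}\|_r = (m^2 a)^{1/(2-\alpha)}$ together with the identity $(1-\alpha/2)/(2-\alpha) = 1/2$ produces
\[
\|(U-\EE U)_+\|_r \le 2\fs\, m\sqrt{a}\,\sqrt{r}\,(\EE U)^{\alpha/2} + (2\fs)^{2/(2-\alpha)}(r\, m^2 a)^{1/(2-\alpha)}.
\]
The $\alpha$-dependent factor $(2\fs)^{2/(2-\alpha)}$ is bounded by $C^{1/(2-\alpha)}$ uniformly in $\alpha \in [0,2)$ as soon as $C \ge (2\fs)^2$, so it can be absorbed inside the parenthesis; enlarging $C$ if necessary to accommodate the first term gives the moment inequality with a single universal constant.

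For the tail bound I would apply Chebyshev's inequality: $\PP(U - \EE U \ge t) \le t^{-r}\|(U - \EE U)_+\|_r^r$. Setting $C_1 = Cm\sqrt{a}(\EE U)^{\alpha/2}$ and $C_2 = (Cm^2 a)^{1/(2-\alpha)}$ and choosing
\[
r(t) = \min\Bigl(\frac{t^2}{4e^2 C_1^2},\; \Bigl(\frac{t}{2eC_2}\Bigr)^{2-\alpha}\Bigr)
\]
makes each summand of the moment bound at most $t/(2e)$, so that Chebyshev yields $\PP(U-\EE U \ge t) \le e^{-r(t)}$ whenever $r(t) \ge 2$; when $r(t)$ falls below $2$, the inequality is trivialized by the prefactor $2$ in the stated bound once $C'$ is taken large enough. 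Substituting the values of $C_1$, $C_2$ and collecting constants into a single universal $C'$ gives the claimed mixed subgaussian/subexponential form.

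The main (and really only) substantive step is the combinatorial identity for $D^-_{X_i} U$; everything else is a direct application of the self-bounding proposition plus the routine Chebyshev-in-$L_r$ optimization, the only bookkeeping issue being the uniform absorption of the $\alpha$-dependent factor $(2\fs)^{2/(2-\alpha)}$ into a single universal constant valid for all $\alpha \in [0,2)$.
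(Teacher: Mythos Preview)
Your proposal is correct and follows the same route as the paper: compute $\Gamma_+(U)$ via \eqref{eq:Poisson-gamma-plus}, observe that the $\lambda$-integral vanishes by nonnegativity of $h$, identify the atomic part with the left-hand side of \eqref{eq:U-stat-assumption} up to the factor $m^2$, apply Proposition~\ref{prop:Poisson-self-bounded} with the constant function $G\equiv m^2 a$, and finish with Chebyshev in $L_r$. The paper in fact gives no further detail beyond citing Proposition~\ref{prop:Poisson-self-bounded} and Chebyshev, so your write-up is strictly more explicit; in particular your handling of the $\alpha$-dependent factor $(2\fs)^{2/(2-\alpha)}$ by absorbing it as $((2\fs)^2)^{1/(2-\alpha)}$ inside $(Crm^2a)^{1/(2-\alpha)}$ is the right way to obtain a single universal $C$.
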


  Let us remark that the references \cite{MR3485348,MR3849811,gozlan2020transport} provide also bounds on the left tail of $U$. It does not seem that such a bound can be easily recovered from the moment approach, since it relies heavily on an another property of Poisson $U$-statistics with non-negative kernels, namely an appropriate notion of convexity, which allows for an application of certain correlation inequalities~\cite{MR3485348} or the Poisson convex distance inequality~\cite{MR3849811,gozlan2020transport}. It is an interesting question what moment estimates can be obtained under an additional convexity assumption. We remark that for the usual notion of convexity on $\RR^n$, certain self-normalized moment estimates have been derived for all measures satisfying the convex concentration property~\cite{MR3892322}.

  The upper bounds on the upper tail of $U$, presented in the above references are
  \begin{align*}
    \PP(U \ge \EE U + t) &\le \exp\Big(-\frac{((\EE U + t)^{1-\alpha/2} - (\EE U)^{1-\alpha/2})^2}{2m^2 a}\Big),\\
    \PP(U \ge \Med U + t) &\le  2\exp\Big(-\frac{t^2}{4m^2a (t+ \Med U)^\alpha}\Big)\Big).
  \end{align*}
  where $\Med U$ is the median of $U$. One can show that their behaviour (disregarding the exact values of constants and using the fact that median and the mean of concentrated random variables are not far apart) is the same as of the upper bound of Corollary~\ref{cor:U-stat}.

  In~\cite{MR3473096} Bachmann and Reitzner verified the assumption~\eqref{eq:U-stat-assumption} for a general class of $U$-statistics of Poisson processes on $\RR^d$, with $\alpha = 2 - \frac{1}{m}$ and $a$ depending on the dimension $d$ and  certain parameters of the kernel. In particular they showed that this assumption is satisfied in the case when $U$ is the number of copies of a given connected graph $H$ on $m$ vertices in the Gilbert graph based on $\eta$. They also proved that the above bounds are of the right order as $t\to \infty$ and agree with known limit theorems if one increases the intensity of the process.

  This shows that the moment bounds of Proposition~\ref{prop:moments-Poisson} may be an alternative for proving exponential inequalities for the upper tail of geometric functionals. It is an interesting question, however beyond the scope of this article, to verify whether Proposition~\ref{prop:moments-Poisson} can give meaningful bounds in cases when functionals in question are known to have polynomial tails.

\section{Higher order concentration}\label{sec:higher-order}

We will now describe applications of our results to higher order concentration inequalities, which provide estimates on probabilities of deviations from the mean for not necessarily Lipschitz functions, expressed in terms of higher order derivatives. Such estimates were obtained, e.g., in \cite{MR3383337,MR3743923,MR3670792,gtze2018higher,MR3949267}, both in the discrete and continuous settings. Since the latter case follows in a straightforward way from results in~\cite{MR3383337} we will focus here mainly on the discrete case. We will however start with an abstract statement, and only later specialize it to concrete examples.

\subsection{Abstract inequality}
Let $\mathcal{A}$ be a linear space of functions on $\mathcal{X}$ and $D_i \colon \mathcal{A} \to \RR^\mathcal{X}$, $i = 1,\ldots,n$ be linear maps (we will think of them as directional derivatives). For positive integers $k, i_1,\ldots,i_k$ denote $D_{i_1\ldots i_k} f = D_{i_1}\cdots D_{i_k} f$, $\mathbf{D}^k f = (D_{i_1\ldots i_k} f)_{i_1,\ldots,i_k=1}^n$. Thus $\mathbf{D} = \mathbf{D}^1$ corresponds to the gradient and $\mathbf{D}^k$ for $k > 1$ to tensors of higher order derivatives, in particular $\mathbf{D}^k f(x) \in (\RR^n)^{\otimes k}$. For $x^1,\ldots,x^k \in \RR^n$, let $x^1\otimes\cdots \otimes x^k = (x^1_{i_1}\cdots x^k_{i_k})_{i_1,\ldots,i_k=1}^n \in (\RR^n)^{\otimes k}$.

Let us also define the inner product on $(\RR^n)^{\otimes k}$ with the formula
\begin{displaymath}
  \langle x, y\rangle = \sum_{i_1,\ldots,i_k = 1}^n x_{i_1\ldots i_k}y_{i_1\ldots i_k}.
\end{displaymath}

The following fact was proved in~\cite{MR3383337} in the case of the usual derivatives (see Proposition 3.2. therein). Since the easy proof is completely analogous to the one presented in~\cite{MR3383337} (it uses only linearity of $D_i$'s) we will skip it.
Below $\abs{\cdot}$ denotes the euclidean norm on $\RR^n$.

\begin{proposition}\label{prop:higher-order-moments} Let $X$ be an $\mathcal{X}$-valued random variable.
Assume that $r \ge 2$ and that there exists a constants $K$ such that for all $f \in \mathcal{A}$,
\begin{displaymath}
  \|f(X) - \EE f(X)\|_r \le K\Big\||\mathbf{D} f(X)|\Big\|_r.
\end{displaymath}
Then for any integer $d$ and any function $f\colon \mathcal{X} \to \RR$ in the domain of $D_{i_1\ldots i_d}$, $i_1,\ldots,i_d \le n$ such that $D^df(X)\in L_r$,
\begin{align*}
  \| f(X) - \EE f(X)\|_r \le&  \frac{C^d K^d}{r^{d/2}}\| \langle \mathbf{D}^d f(X),G_1\otimes\cdots\otimes G_d\rangle\|_r \\
  &+ \sum_{k=1}^{d-1} \frac{C^k K^k}{r^{k/2}}\| \langle \EE_X \mathbf{D}^k f(X),G_1\otimes\cdots\otimes G_k\rangle\|_r,
\end{align*}
where $G_1,\ldots,G_d$ are i.i.d. standard Gaussian vectors in $\RR^n$, independent of $X$,
 and $C$ is a universal constant.
\end{proposition}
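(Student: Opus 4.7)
The plan is to prove this by induction on $d$, combining two standard ingredients: a Gaussian representation of the Euclidean norm on $\RR^n$ and an iterated application of the assumed first-order moment inequality under conditioning.

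First I would record the Gaussian representation. For any $v\in\RR^n$ and a standard Gaussian vector $G$ in $\RR^n$ independent of everything else, $\EE_G |\langle v,G\rangle|^r = |v|^r\,\EE|g|^r$ with $g\sim \mathcal N(0,1)$, and $(\EE|g|^r)^{1/r}\ge c\sqrt r$ for a universal $c>0$. Hence, for any $\RR^n$-valued measurable $V=V(X)$,
\begin{align*}
\bigl\||V|\bigr\|_{L_r(X)} \le \frac{C_0}{\sqrt r}\,\bigl\|\langle V,G\rangle\bigr\|_{L_r(X,G)}.
\end{align*}
Applying this with $V=\mathbf D f(X)$ and using the hypothesis yields the base case $d=1$:
\begin{align*}
\|f(X)-\EE f(X)\|_r\le K\,\bigl\||\mathbf D f(X)|\bigr\|_r\le \frac{C_0K}{\sqrt r}\,\bigl\|\langle\mathbf D f(X),G_1\rangle\bigr\|_r.
\end{align*}

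For the inductive step from $d-1$ to $d$, suppose the claim already holds at order $d-1$. The only term to refine further is the leading one,
$\frac{C^{d-1}K^{d-1}}{r^{(d-1)/2}}\bigl\|\langle \mathbf D^{d-1}f(X),G_1\otimes\cdots\otimes G_{d-1}\rangle\bigr\|_r$. Conditionally on $G_1,\dots,G_{d-1}$ set
\begin{align*}
\widetilde f(x) \coloneqq \langle \mathbf D^{d-1}f(x),G_1\otimes\cdots\otimes G_{d-1}\rangle,
\end{align*}
and decompose $\widetilde f(X)=\bigl(\widetilde f(X)-\EE_X\widetilde f(X)\bigr)+\EE_X\widetilde f(X)$. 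The deterministic piece (in $X$) contributes, after taking $L_r$ in $G_1,\dots,G_{d-1}$, exactly the $k=d-1$ summand $\frac{C^{d-1}K^{d-1}}{r^{(d-1)/2}}\bigl\|\langle\EE_X\mathbf D^{d-1}f(X),G_1\otimes\cdots\otimes G_{d-1}\rangle\bigr\|_r$ on the right-hand side of the claim. For the centered piece, linearity of the $D_i$ lets us commute derivation past the tensor contraction, giving $\mathbf D\widetilde f(X)=\langle \mathbf D^d f(X),G_1\otimes\cdots\otimes G_{d-1}\otimes\,\cdot\,\rangle$ as an element of $\RR^n$. The hypothesis applied to $\widetilde f$ conditionally on $G_1,\dots,G_{d-1}$, followed by the Gaussian representation with an independent $G_d$, produces
\begin{align*}
\|\widetilde f(X)-\EE_X\widetilde f(X)\|_{L_r(X)} \le \frac{C_0 K}{\sqrt r}\,\bigl\|\langle \mathbf D^d f(X),G_1\otimes\cdots\otimes G_d\rangle\bigr\|_{L_r(X,G_d)}.
\end{align*}
Taking $L_r$ in $G_1,\dots,G_{d-1}$ and combining with the inductive estimate closes the induction; the factor $C_0 K/\sqrt r$ accumulates multiplicatively, and a single universal constant $C$ absorbs $C_0$.

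The main obstacle is not conceptual but bookkeeping. One has to verify that the conditional use of the hypothesis is legitimate (by Fubini together with the fact that for fixed $G_1,\dots,G_{d-1}$ the function $\widetilde f$ lies in the relevant class, since the $D_i$ are linear), that the two splittings (by Gaussian representation and by centering in $X$) can be iterated in the stated order, and that all constants telescope into a universal $C^d$. Since the argument uses only linearity of the $D_i$'s and the first-order moment inequality, it transfers verbatim from the proof of Proposition~3.2 in \cite{MR3383337}.
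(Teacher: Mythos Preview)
Your proposal is correct and follows exactly the approach the paper has in mind: the paper explicitly omits the proof, stating that it ``uses only linearity of $D_i$'s'' and is ``completely analogous'' to Proposition~3.2 in \cite{MR3383337}, which is precisely the inductive scheme (Gaussian representation of the Euclidean norm plus center-and-iterate on the leading term) that you outline. Your bookkeeping remarks about conditioning, Fubini, and the i.i.d.\ structure of the $G_k$'s (which makes the ordering in $G_1\otimes\cdots\otimes G_d$ irrelevant) are the only points one needs to track, and you have identified them correctly.
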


If $\mathbf{D}^d$ is uniformly bounded and $X$ satisfies Beckner's inequality, then one can combine the above proposition with moment estimates of Theorem~\ref{prop:V^2} and inequalities for multilinear forms in i.i.d. Gaussian vectors, obtained by Lata{\l}a~\cite{MR2294983}, which we will now recall.

Let us start by introducing the (rather involved) notation. For $\ii = (i_1,\ldots,i_d) \in [n]^d$ and $I\subseteq[d]$ we will write $\ii_{I}=(i_{k})_{k\in I}$. We will also denote $|\ii| = \max_{j\le d} {i_j}$ and $|\ii_I| = \max_{j \in I} i_j$.  Let moreover $P_d$ be the set of partitions of $[d]$ into nonempty, pairwise disjoint sets. For a partition $\mathcal{I} =\{I_1,\ldots,I_k\} \in P_d$ and a $d$-indexed matrix $A = (a_\ii)_{\ii \in [n]^d}$, define

\begin{align}\label{eq:Gaussian_norm_def}
\|A\|_{\mathcal{I}}=\sup\Big\{\sum_{\ii\in [n]^d} a_{\ii}\prod_{l=1}^k x^{l}_{\ii_{I_l}}\colon
\|(x^{l}_{\ii_{I_l}})\|_2\leq 1, 1\leq l\leq k \Big\},
\end{align}

where $\|(x_{\ii_{I_l}})\|_2 = \sqrt{\sum_{|\ii_{I_l}|\le n} x_{\ii_{I_l}}^2}$. Thus, e.g.,
\begin{align*}
&\|(a_{ij})_{i,j\le n}\|_{\{1,2\}}= \sup\Big\{ \sum_{i,j\le n} a_{ij}x_{ij}\colon \sum_{i,j\le n} x_{ij}^2 \le 1\Big\} = \sqrt{\sum_{i,j\le n}a_{ij}^2} = \|(a_{ij})_{i,j\le n}\|_{\HS},\\
& \|(a_{ij})_{i,j\le n}\|_{\{1\}\{2\}}= \sup\Big\{ \sum_{i,j\le n} a_{ij}x_iy_j\colon \sum_{i\le n} x_{i}^2\le 1,\sum_{j\le n}y_j^2 \le 1\Big\} \\
\MoveEqLeft[-26.5]= \|(a_{ij})_{i,j\le n}\|_{\ell_2^n\to \ell_2^n},\\
& \|(a_{ijk})_{i,j,k\le n}\|_{\{1,2\} \{3\}} = \sup\Big\{ \sum_{i,j,k\le n} a_{ijk}x_{ij}y_k\colon \sum_{i,j\le n} x_{ij}^2\le 1,\sum_{k\le n}y_k^2 \le 1\Big\},
\end{align*}
where for simplicity in the notation we skip the outer brackets and commas in the subscript and write, e.g., $\|\cdot\|_{\{1\}\{2\}}$ instead of $\|\cdot\|_{\{\{1\},\{2\}\}}$.

In the special case of $d=2$, $\|\cdot\|_{\{1,2\}}$ and $\|\cdot\|_{\{1\}\{2\}}$ are just the Hilbert--Schmidt and operator norms of a matrix. We remark that for every $d$ and $\mathcal{I} \in P_d$ we have $\|A\|_{\mathcal{I}} \le \|A\|_{\{[d]\}} = \sqrt{\sum_{|\ii|\le n} a_\ii^2}$. The norm $\|A\|_{\{[d]\}}$ can be considered a counterpart of the Hilbert--Schmidt norm for higher order tensors.

The result by Latała provides bounds on moments of multilinear forms in i.i.d. Gaussian variables in terms of the $\mathcal{I}$-norms of the corresponding matrix of coefficients.

\begin{theorem}[\cite{MR2294983}]\label{thm:Latala} Let $G_1,\ldots,G_k$ be independent standard Gaussian vectors in $\RR^n$ and let $A \in (\RR^n)^{\otimes k}$. There exist constants $C_k$, depending only on $k$, such that  for any $r\ge 2$,
\begin{displaymath}
  \frac{1}{C_k} \sum_{\mathcal{I}\in P_k} r^{|\mathcal{I}|/2}\|A\|_\mathcal{I} \le \|\langle A,G_1\otimes\cdots\otimes G_k\rangle\|_r\le C_k\sum_{\mathcal{I}\in P_k} r^{|\mathcal{I}|/2}\|A\|_\mathcal{I},
\end{displaymath}
\end{theorem}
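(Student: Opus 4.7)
My plan is to prove the two inequalities separately, with both directions proceeding by induction on the order $k$; the base case $k=1$ is the classical Gaussian estimate $\|\langle A,G\rangle\|_r$ is comparable to $\sqrt{r}\,\|A\|_{\{\{1\}\}}= \sqrt{r}\,\|A\|_2$. Throughout, I view the identity $\langle A, G_1\otimes\cdots\otimes G_k\rangle = \sum_{\ii}a_\ii\prod_l G_l^{(i_l)}$ as exhibiting the chaos as a multilinear form to be probed partition-by-partition.

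For the lower bound I would construct, for each fixed partition $\mathcal{I}=\{I_1,\ldots,I_m\}\in P_k$, a positive-probability event on which the chaos is of order $r^{m/2}\|A\|_\mathcal{I}$, and then invoke Paley--Zygmund. Choose unit tensors $(x^{l}_{\ii_{I_l}})_{l=1}^m$ that almost attain the supremum in~\eqref{eq:Gaussian_norm_def}. For each block $I_l$ pick a direction and demand that the vectors $(G_j)_{j\in I_l}$ are close (after rescaling by $\sqrt{r}$) to the corresponding ``factors'' of $x^l$; by the Gaussian small-ball inequality $\PP(|G-v|\le 1)\ge \exp(-|v|^2/2)$ this happens with probability at least $e^{-cr}$. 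On that event, the chaos is comparable to $r^{m/2}\sum_{\ii}a_\ii\prod_l x^{l}_{\ii_{I_l}}\approx r^{m/2}\|A\|_\mathcal{I}$, so $\|W\|_r\gtrsim_k r^{m/2}\|A\|_\mathcal{I}$. Taking a maximum over $\mathcal{I}\in P_k$ and paying a $|P_k|$-factor yields the lower inequality.

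For the upper bound, I would instead prove, by induction on $k$, the equivalent tail estimate
\[
 \PP\Bigl(|W|\ge e\cdot\sum_{\mathcal{I}\in P_k} t^{|\mathcal{I}|/2}\|A\|_\mathcal{I}\Bigr)\le C_k e^{-t},\qquad t\ge 1,
\]
since integration in $t$ turns this into the claimed moment bound. For the inductive step, write $W=\sum_i G_k^{(i)}Y_i$ where $Y_i=\langle A_i, G_1\otimes\cdots\otimes G_{k-1}\rangle$ is a chaos of order $k-1$ with slice-coefficients $A_i=(a_{\ii,i})_{\ii\in [n]^{k-1}}$. Conditionally on $(G_1,\ldots,G_{k-1})$, $W$ is a one-dimensional Gaussian of variance $\sum_i Y_i^2$, so by the Gaussian tail the task reduces to estimating moments of the order-$2(k-1)$ chaos $\sum_i Y_i^2$. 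The induction hypothesis provides such control in terms of partitions of $[k-1]$, and the combinatorial bookkeeping shows that every partition $\mathcal{I}\in P_k$ arises either as (a) a partition of $[k-1]$ together with the singleton $\{k\}$, or (b) a partition of $[k-1]$ with $k$ appended to one of its blocks, which matches the recursive structure of $P_k$.

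The hardest step will be this last reduction: bounding moments of $\sqrt{\sum_i Y_i^2}$ by the partition sum for $P_{k-1}$ and then re-identifying the resulting expression as the full partition sum for $P_k$. Naively dualising $\sqrt{\sum_i Y_i^2}=\sup_{\|y\|_2\le 1}\sum_i y_iY_i$ converts the problem into a supremum of a Gaussian chaos of order $k-1$ indexed by the unit sphere, and obtaining sharp moment bounds for this supremum requires a Dudley-type chaining argument on the unit balls of the norms $\|\cdot\|_\mathcal{J}$, $\mathcal{J}\in P_{k-1}$; this is essentially the volumetric estimate at the core of Lata\l{}a's original proof. Without it one only recovers the coarse hypercontractive bound $\|W\|_r\lesssim r^{k/2}\|A\|_{\{[k]\}}$ (i.e.\ the single coarsest partition), which would be insufficient for the applications envisaged here; squeezing out the correct exponent $|\mathcal{I}|/2$ for every intermediate partition is what makes the theorem subtle, and the constants $C_k$ will inevitably degenerate with $k$ along the induction.
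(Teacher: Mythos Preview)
The paper does not prove this theorem: it is quoted as a known result of Lata\l{}a~\cite{MR2294983} and used as a black box in the derivation of Corollary~\ref{cor:higher-order-tails}. There is therefore nothing to compare your argument against in the paper itself.

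As for your sketch on its own terms: the overall architecture (induction on $k$, conditioning on all but one Gaussian vector for the upper bound, a small-ball/Paley--Zygmund argument for the lower bound) is indeed close in spirit to Lata\l{}a's original approach. However, you yourself identify the crux and then stop short of it: controlling the moments of $\sup_{\|y\|_2\le 1}\sum_i y_i Y_i$ in a way that recovers the correct exponent $|\mathcal{I}|/2$ for \emph{every} partition, not just the coarsest one, is exactly the substance of the theorem, and it requires the entropy/chaining estimates on the unit balls of the $\|\cdot\|_{\mathcal{J}}$ norms that you do not supply. Your lower-bound sketch is also incomplete: for a block $I_l$ of size $\ge 2$ the optimal tensor $x^l$ need not be a rank-one product of vectors, so ``pointing each $G_j$, $j\in I_l$, at a factor of $x^l$'' is not well-defined; one has to argue differently (e.g.\ via conditioning and the one-block case, or via the dual characterisation of the norms). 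In short, what you have written is a plausible roadmap to Lata\l{}a's proof rather than a proof, and since the paper merely cites the result, a one-line reference to~\cite{MR2294983} is all that is required here.
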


Combining this result with Proposition~\ref{prop:higher-order-moments}, we obtain the following corollary.

\begin{corollary}\label{cor:higher-order-tails}
   Assume that there exist constants $M,\gamma > 0$ such  that for all functions $f \in \mathcal{A}$, and all $r \ge 2$,
   \begin{align}\label{eq:momen-assumption}
     \| f(X) - \EE f(X)\|_r \le M r^{\gamma} \Big\||\mathbf{D} f(X)|\Big\|_r.
   \end{align}
   Then for every integer $d \ge 1$, any $r \ge 2$ and for every $f$ in the domain of $\mathbf{D}^d$ such that $D^df(X)\in L_r$,

\begin{align*}
\|f(X) - \EE f(X)\|_r \le& C_d\Big(M^d\sum_{\mathcal{J} \in P_d} r^{(\gamma - \frac{1}{2})d + \frac{|\mathcal{J}|}{2}} \Big\|\|\mathbf{D}^d f(X)\|_\mathcal{J}\Big\|_r\\
&\qquad+ \sum_{k=1}^{d-1} M^k \sum_{\mathcal{J}\in P_k} r^{(\gamma -
\frac{1}{2})k  + \frac{|\mathcal{J}|}{2}} \|\EE \mathbf{D}^k f(X)\|_\mathcal{J}\Big),
\end{align*}
where $C_d$ depends only on $d$. Moreover if $\mathbf{D}^df (x)$ is uniformly bounded on $\mathcal{X}$, then
for $t > 0$,
\begin{displaymath}
\PP(|f(X) - \EE f(X)| \ge t ) \le 2\exp\Big(-\frac{1}{C_d'} \eta_f(t)\Big),
\end{displaymath}
where $C_d'$ is another constant depending only on $d$ and
\begin{displaymath}
  \eta_f(t) = \min(A,B)
\end{displaymath}
with
\begin{align*}
A & =  \min_{\mathcal{J}\in P_d}\Big(\frac{t}{M^{d}\sup_{x\in \mathcal{X}}\|\mathbf{D}^d f(x)\|_\mathcal{J}}\Big)^{\frac{2}{(2\gamma-1)d+ |\mathcal{J}|}},\\
B & = \min_{1\le k\le d-1}\min_{\mathcal{J}\in P_k} \Big(\frac{t}{M^{k}\|\EE \mathbf{D}^k f(X)\|_\mathcal{J}}\Big)^{\frac{2}{(2\gamma -1)k + |\mathcal{J}|}}.
\end{align*}
\end{corollary}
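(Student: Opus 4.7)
My plan is to apply Proposition~\ref{prop:higher-order-moments} with $K=Mr^\gamma$, to dualize the resulting Gaussian chaos norms via Lata{\l}a's Theorem~\ref{thm:Latala}, and finally to translate the moment bound into a tail bound via the $L^r$-Chebyshev inequality.

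More concretely, I first fix $r\ge 2$ and invoke the hypothesis~\eqref{eq:momen-assumption}: it furnishes the premise of Proposition~\ref{prop:higher-order-moments} with $K=Mr^\gamma$, from which
\begin{align*}
\|f(X)-\EE f(X)\|_r \le{} &  (CM)^d r^{(\gamma-\frac{1}{2})d}\|\langle\mathbf{D}^df(X),G_1\otimes\cdots\otimes G_d\rangle\|_r \\
& + \sum_{k=1}^{d-1} (CM)^k r^{(\gamma-\frac{1}{2})k}\|\langle\EE_X\mathbf{D}^k f(X),G_1\otimes\cdots\otimes G_k\rangle\|_r.
\end{align*}
Each Gaussian chaos norm is then estimated via Lata{\l}a's inequality. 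For the deterministic tensors $\EE_X\mathbf{D}^k f(X)$, Theorem~\ref{thm:Latala} applies directly. For the random tensor $\mathbf{D}^d f(X)$, I apply Theorem~\ref{thm:Latala} conditionally on $X$ (using independence of $X$ and the $G_j$'s) and then invoke Minkowski's inequality in $L^r(X)$ to obtain
\[
\|\langle\mathbf{D}^d f(X),G_1\otimes\cdots\otimes G_d\rangle\|_r \le C_d\sum_{\mathcal{J}\in P_d} r^{|\mathcal{J}|/2}\bigl\|\|\mathbf{D}^d f(X)\|_\mathcal{J}\bigr\|_r.
\]
Plugging these back into the previous display and collecting exponents---the exponent of $r$ in the $(k,\mathcal{J})$-summand becomes $(\gamma-\tfrac{1}{2})k+|\mathcal{J}|/2$, exactly as claimed---yields the first assertion.

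For the tail bound, uniform boundedness of $\mathbf{D}^d f$ lets me replace $\bigl\|\|\mathbf{D}^d f(X)\|_\mathcal{J}\bigr\|_r$ by $\sup_x\|\mathbf{D}^d f(x)\|_\mathcal{J}$, reducing the moment estimate to the shape $\|f(X)-\EE f(X)\|_r \le \sum_j a_j r^{\alpha_j}$ with some number $N_d$ of terms depending only on $d$. The $L^r$-Chebyshev inequality $\PP(|Z|\ge e\|Z\|_r)\le e^{-r}$ then gives $\PP(|f(X)-\EE f(X)|\ge t)\le e^{-r}$ whenever $t\ge eN_d a_j r^{\alpha_j}$ for every $j$; the largest admissible $r$ is $\min_j (t/(Ca_j))^{1/\alpha_j}$, which up to the constant $C_d'$ is precisely $\eta_f(t)$. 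The main obstacle is purely notational---tracking the exponents $(\gamma-\tfrac{1}{2})k+|\mathcal{J}|/2$ carefully through both the chaos and moment-to-tail steps and absorbing all the $k$- and $d$-dependent constants $C^k,C_k$ into single final constants $C_d,C_d'$.
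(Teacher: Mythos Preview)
Your proposal is correct and follows essentially the same route as the paper: the paper's proof is a two-sentence summary stating exactly that one combines Proposition~\ref{prop:higher-order-moments} with $K=Mr^\gamma$ and Theorem~\ref{thm:Latala} for the moment estimate, then applies Chebyshev's inequality in $L_r$ and optimizes over $r$ for the tail bound. You have correctly filled in the details the paper leaves implicit, including the conditional application of Lata{\l}a's theorem for the random tensor $\mathbf{D}^d f(X)$ followed by Minkowski's inequality in $L_r(X)$.
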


\begin{proof}
To obtain the moment estimate we combine Proposition~\ref{prop:higher-order-moments} with $K = Mr^\gamma$ and Theorem~\ref{thm:Latala}. The second part follows from the first one by an application of Chebyshev's inequality for the $r$-th moment and optimization in $r$.
\end{proof}
A typical application of the above corollary is the situation in which $\gamma = 1/2$ (such a subgaussian bound holds by Proposition~\ref{prop:moments-abstract}, e.g., under the assumption of modified log-Sobolev inequalities) and $f$ has bounded derivatives of second order. The tail bound one obtains is then
\begin{align*}
&\PP(|f(X) - \EE f(X)| \ge t) \\
&\le 2\exp\Big(-c\min\Big(\frac{t^2}{\sup_{x \in \mathcal{X}} \|\mathbf{D}^2 f(x)\|_{\HS}^2  + |\EE \mathbf{D} f(X)|^2},\frac{t}{\sup_{x\in \mathcal{X}} \|\mathbf{D}^2 f(x)\|_{\ell_2\to\ell_2}}\Big)\Big).
\end{align*}
Estimates of this type are counterparts of the well known Hanson--Wright inequality for quadratic forms in independent subgaussian random variables (see~\cite{MR0279864,MR1686370}).

Let us also mention that if $\mathcal{X} = \RR^n$, $D_i$'s correspond to the usual partial derivatives, $X$ is a standard Gaussian vector and $f$ is a polynomial of degree $d$ then the inequalities of Corollary~\ref{cor:higher-order-tails} can be reversed (up to constants depending on $d$)~\cite{MR3383337}. The fact that in this case the assumptions of the corollary are satisfied was proved for the first time by Maurey and Pisier (see, e.g.,~\cite{MR1036275}). Other continuous type examples for which this assumption is satisfied are described in~\cite{MR3383337, MR3743923} by means of various types of modified log-Sobolev inequalities corresponding to tail behaviour between exponential and Gaussian. As announced, we will not describe in details such examples. Instead we will now focus on the discrete case and discuss a general situation, related to applications considered in previous sections, in which one can find gradients $D_i$ satisfying~\eqref{eq:momen-assumption}.

\subsection{Discussion on the choice of gradients}
We will consider the following setting. Let $G$ be a group with a set of generators $g_1,g_2,\ldots,g_m$, acting on a countable set $\calX$ (we will denote the result of the action of $g \in G$ on $x\in \calX$ simply by $gx$). Assume that $g_1,\ldots,g_m$ are pairwise distinct, distinct from the neutral element of $G$ (denoted by $e$) and that no two distinct elements  among the $g_i$'s are reciprocal to each other.

Let $\mu$ be a probability measure on $\calX$. Set $A = \{g_i,g_i^{-1}\colon i \le m\}$ and let $ \lambda \colon \calX \times A\to [0,\infty)$ be a function satisfying the detailed balance condition
\begin{align}\label{eq:detailed-balance-group}
 \lambda(x,g)\mu(x) = \lambda(gx, g^{-1})\mu(gx)
\end{align}
for $x \in \calX$, $g \in A$.

Finally consider the Markov process with the generator
\begin{displaymath}
  L f (x) = \sum_{g \in A} (f(gx) - f(x))\lambda(x,g)
\end{displaymath}
and the corresponding Dirichlet form
\begin{displaymath}
  \calE(f,h)  = \frac{1}{2} \sum_{x\in \calX} \sum_{g \in A} (f(gx) - f(x))(h(gx) - h(x))\lambda(x,g) \mu(x) = \EE \Gamma(f,h)
\end{displaymath}
with
\begin{align*}
  \Gamma(f,h)(x) &= \frac{1}{2} \sum_{g \in A} (f(gx) - f(x))(h(gx) - h(x))\lambda(x,g) \\
  &= \frac{1}{2} \int_\calX (f(y) - f(x))(h(y) - h(x))Q_x(dy),
\end{align*}
where
\begin{displaymath}
Q_x(y) = \sum_{g \in A \colon gx = y}{\lambda}(x,g)
\end{displaymath}
Moreover the pair $(Q, \mu)$ satisfies the detailed balance condition.
Therefore if the form $\calE$ satisfies Beckner's inequality with $\alpha_p \ge a(p-1)^s$, for $p \in (1,2]$, then by Proposition~\ref{prop:moments-abstract} for all functions $f \colon \calX \to \RR$, and all $r \ge 2$
\begin{align}\label{eq:higher-order-moment-assumption}
  \|f - \mu (f)\|_r \le  Kr^{\frac{1+s}{2}} \Big\| \Big(\sum_{g \in A} (f(gx) - f(x))^2 \lambda(x,g)\Big)^{1/2}\Big\|_r,
\end{align}
for $K = \sqrt{\kappa(s)/2a}$.

The above inequality allows for a direct use of Corollary~\ref{cor:higher-order-tails} with gradients $D_g f(x)= (f(gx) - f(x))\sqrt{\lambda(x,g)}$. This  choice however may have some disadvantages from the point of view of higher order concentration, especially when one deals with finite groups. To illustrate this let us focus on the situation when for some $g \in A$, $g^2 = e$ (in the sequel we will discuss natural examples when this is true for all elements of $A$). One then gets
\begin{align*}
D_{g}D_{g} f(x) &= D_{g} (f(gx) - f(x))\sqrt{\lambda(x,g)}\\
&= \bigl((f(x) - f(gx))\sqrt{\lambda(gx,g)}  - (f(gx) - f(x))\sqrt{\lambda(x,g)}\bigr)\sqrt{\lambda(x,g)} \\
&= (f(x) - f(gx))(\sqrt{\lambda(x,g)}+\sqrt{\lambda(gx,g)})\sqrt{\lambda(x,g)}.
\end{align*}
In particular, if $M$ is the constant from Corollary~\ref{cor:higher-order-tails}, it may happen that
\begin{displaymath}
  M^2\|\mathbf{D}^2 f(x)\|_{\{1,2\}} \ge M \|\mathbf{D} f(x)\|_{\{1\}} = M|\mathbf{D} f(x)|
\end{displaymath}
and so Corollary~\ref{cor:higher-order-tails} applied with $d=2$ is strictly weaker than its assumption corresponding to $d=1$ (while the goal of introducing second order concentration is handling functions for which first order bounds are too conservative). Also, in certain situations, especially when dealing with a class of processes or when one does not have full knowledge about the transition rates, one may want to have a notion of gradient, which depends only on the function $f$ and not on the rates $\lambda(x,g_i)$.  For these reason one may want to replace the natural choice of the gradient with another one. We will now briefly discuss some possibilities.

Let us view the set $\mathcal{X}$ with the action $G$ as a graph, i.e., define the set of edges $E = \{\{x,y\}\colon x,y \in \mathcal{X}, \exists_{i\le m} y = g_ix \textrm{ or } x = g_iy\}$. Impose also an arbitrary orientation on the edges, by choosing functions $s,t \colon E \to \mathcal{X}$ such that for all $\{x,y\} \in E$, $\{s(x,y),t(x,y)\} = \{x,y\}$. Then one can define for $g \in A$,
\begin{displaymath}
D_g f(x) = \bigl(f(t(x,gx))- f(s(x,gx))\bigr)\sqrt{\max(\lambda(x,g),\lambda(gx,g^{-1}))}.
\end{displaymath}
Clearly, by ~\eqref{eq:higher-order-moment-assumption}, we then have
\begin{displaymath}
  \|f(X) - \EE f(X)\|_r \le  Kr^{\frac{1+s}{2}} \Big\||\mathbf{D} f(X)|\Big\|_r.
\end{displaymath}
Moreover for $g^2 = e$, $D_gf(x) = D_g f(gx)$ and so $D_gD_g f(x) = 0$. If $\lambda^\ast := \sup_{x\in\mathcal{X}}\max_{g\in A} \lambda(x,g) < \infty$ one can also take
\begin{displaymath}
\widetilde{D}_g f(x) = f(t(x,gx))- f(s(x,gx)),
\end{displaymath}
obtaining a gradient independent of the transition kernel, at the cost of changing the constant $K$ by a factor $\sqrt{\lambda^\ast}$. Such situation may happen especially in the finite case, when the Markov semigroup is obtained by embedding in continuous time a discrete time Markov chain as, e.g., in the case of Glauber dynamics (cf.\ Section~\ref{sec:Glauber}).

For instance if $\mathcal{X} = \{-1,1\}^m$ and $g_i$'s act on $\mathcal{X}$ by flipping the $i$-th coordinate then all $g_i$'s satisfy $g_i^2 = e$. In this case it is natural to choose $t(x,y) = \max(x,y)$, $s(x,y) = \min(x,y)$ where $\max,\min$ are taken with respect to the lexicographic order. It is then easy to see that in this case $\widetilde{D}_{g_i} f$ coincides, up to a factor of 2, with the usual partial derivative of the polynomial corresponding to the Fourier-Walsh representation of the function $f$. The article~\cite{MR3949267} uses the strategy described above to obtain counterparts of Lata{\l}a's inequalities for Gaussian polynomials for polynomials in Ising models satisfying the Dobrushin condition discussed in Section~\ref{sec:Glauber}. We will generalize the inequalities obtained therein in Corollary~\ref{cor:polynomials} below.

Another situation in which all the $g_i$'s are of order two is related to transpositions, and the action of the symmetric group, corresponding to moment inequalities described in Propositions~\ref{prop:symmetric-group-moments} and~\ref{prop:SCP} (iii). In this case it is also natural to use the lexicographic order to define the functions $t$ and $s$. Note that in this case we have $\binom{n}{2}$ directional derivatives along transpositions. For instance in the setting of Proposition~\ref{prop:symmetric-group-moments}, one has

\begin{displaymath}
  D_{\tau_{ij}} f(x) = \frac{1}{n-1}\Big(f(\max(\sigma\circ \tau_{ij},\sigma)) - f(\min(\sigma\circ \tau_{ij},\sigma))\Big).
\end{displaymath}
In the more general situation of Proposition~\ref{prop:SCP} (ii) one can consider the action of the group $\mathbb{Z}_2^n\times S_n$.
One may also consider actions of infinite groups, for instance $\mathbb{Z}$ or more generally $\mathbb{Z}^n$, as Beckner and log-Sobolev inequalities
are well known for various measures on $\mathbb{Z}$, e.g., for stationary measures of various birth-and-death processes (see \cite{MR3129037,MR1944012,MR2548501,MR1710983,MR2052235})

\subsection{Applications to tetrahedral polynomials}

Let us now provide an application of Corollary~\ref{cor:higher-order-tails} to tetrahedral polynomials of random vectors with values in a cube, say $[-1,1]^n$, for which the corresponding Glauber dynamics satisfies~\eqref{eq:mlS-log} (as discussed in Section~\ref{sec:Glauber}). This will generalize results in~\cite{gtze2018higher,MR3949267} concerning the Ising model and results from~\cite{gtze2018concentration} concerning exponential random graphs (and by the discussion in Section~\ref{sec:Glauber} will allow also for a slight strengthening of the dependence of constants on the parameters in these models). Recall that a polynomial $f \colon \RR^n \to \RR$ is tetrahedral, if it is affine in every variable, i.e. it is of the form
\begin{displaymath}
  f(x_1,\ldots,x_n) = \sum_{k=0}^d \sum_{I \subseteq [n], |I| = k} a_I \prod_{j\in I} x_i.
\end{displaymath}

Assume that $X = (X_1,\ldots,X_n)$ is distributed according to a measure $\mu$ on $[-1,1]^n$, which satisfies~\eqref{eq:mlS-log} with constant $\rho_0 > 0$ for the Glauber dynamics. Recall Corollary~\ref{cor:Glauber}. Let $\mathcal{A}$ be the linear space of tetrahedral polynomials and note that for any $f \in \mathcal{A}$, the inequality $|X_i-X_i'|\le 2$ implies that
\begin{displaymath}
|f(X) - f(X^i)| \le 2 \Big|\frac{\partial f}{\partial x_i} (X)\Big|.
\end{displaymath}
As a consequence, by Corollary~\ref{cor:Glauber}, the assumptions of Corollary~\ref{cor:higher-order-tails} are satisfied with $\mathbf{D} = \nabla$, $\gamma= 1/2$ and $K = C\rho_0^{-1}$ for some universal constant $C$.
Noting that partial derivatives of tetrahedral polynomials are tetrahedral, and for a polynomial $f$ of degree $d$, $\nabla^d f$ is constant, we obtain

\begin{corollary}\label{cor:polynomials} Assume that $\mu$ is a probability measure on $[-1,1]^n$, satisfying the inequality~\eqref{eq:mlS-log} with $\rho_0 > 0$ for the Glauber dynamics. Let $X$ be a random vector with law $\mu$ and let $f \colon \RR^n \to \RR$ be a tetrahedral polynomial of degree $d$. Then for any $t > 0$,
\begin{align}\label{eq:polynomials}
\PP\Big(\Big|f(X) - \EE f(X)\Big| \ge t\Big)\le 2\exp\Big(-\frac{1}{C_d}\min_{1\le k \le d} \min_{\mathcal{J}\in P_k}\Big(\frac{\rho_0^{k/2} t}{\|\EE \nabla^k f(X)\|_{\mathcal{J}}}\Big)^{2/|\mathcal{J}|}\Big).
\end{align}
\end{corollary}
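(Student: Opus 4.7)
The plan is to derive this as a direct corollary of Corollary~\ref{cor:higher-order-tails} applied to the gradient structure coming from the Glauber dynamics via Corollary~\ref{cor:Glauber}. The structural input that makes everything work is that tetrahedral polynomials are affine in each variable, which lets us replace the discrete ``switching'' gradient from Corollary~\ref{cor:Glauber} by the ordinary gradient $\nabla$, up to a universal multiplicative constant.

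First I would verify the moment hypothesis~\eqref{eq:momen-assumption} of Corollary~\ref{cor:higher-order-tails} on the space $\mathcal{A}$ of tetrahedral polynomials with $\mathbf{D} = \nabla$ and $\gamma = 1/2$. Since $\mu$ satisfies~\eqref{eq:mlS-log} with constant $\rho_0$, Corollary~\ref{cor:Glauber} gives, for every $f \in \mathcal{A}$ and $r \ge 2$,
\begin{displaymath}
\|f(X) - \EE f(X)\|_r \le K\sqrt{r}\,\Big\|\Big(\textstyle\sum_{i=1}^n (f(X) - f(X^i))^2\Big)^{1/2}\Big\|_r,
\end{displaymath}
with $K$ of order $\rho_0^{-1/2}$. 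For a tetrahedral $f$, affinity in the $i$-th coordinate gives $f(X) - f(X^i) = (X_i - X_i')\,\partial_i f(X)$, and since $X, X' \in [-1,1]^n$ we have $|X_i - X_i'| \le 2$. Therefore the inner bracket is bounded pointwise by $4|\nabla f(X)|^2$, and~\eqref{eq:momen-assumption} holds on $\mathcal{A}$ with $M$ of order $\rho_0^{-1/2}$ and $\gamma = 1/2$.

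Next I would apply Corollary~\ref{cor:higher-order-tails} with these parameters, taking the integer there to be the degree $d$ of $f$. The observation that $\partial_i^2 g \equiv 0$ for any tetrahedral $g$ shows inductively that $\nabla^k f$ is itself a tetrahedral polynomial of degree at most $d-k$ in the remaining variables; in particular $\nabla^d f$ is a constant tensor on $\mathcal{X}$. This has two useful consequences: the uniform boundedness of $\mathbf{D}^d f$ required by Corollary~\ref{cor:higher-order-tails} is automatic, and $\sup_{x\in\mathcal{X}} \|\nabla^d f(x)\|_{\mathcal{J}} = \|\EE \nabla^d f(X)\|_{\mathcal{J}}$, so that the $k=d$ contribution $A$ merges with the lower-order contributions $B$ into a single minimum over $1 \le k \le d$. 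Plugging $\gamma = 1/2$ into Corollary~\ref{cor:higher-order-tails} the exponents $(2\gamma - 1)k + |\mathcal{J}|$ collapse to $|\mathcal{J}|$, and the factors $M^k$ in the denominators become constants of order $\rho_0^{-k/2}$, reproducing the $\rho_0^{k/2}$ in front of $t$ in~\eqref{eq:polynomials}.

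I do not anticipate a substantive obstacle: the corollary is essentially bookkeeping on top of two earlier statements. The only point that deserves careful verification is tracking the scalar prefactors — the universal constants appearing in $M$ and then in the bases of the powers $(\cdot)^{2/|\mathcal{J}|}$ of Corollary~\ref{cor:higher-order-tails} — and checking that since $1 \le |\mathcal{J}| \le k \le d$ all such combinatorial and numerical factors can be absorbed uniformly into a single constant $C_d$ depending only on $d$.
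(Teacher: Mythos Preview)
Your proposal is correct and follows essentially the same route as the paper: verify the moment hypothesis of Corollary~\ref{cor:higher-order-tails} on tetrahedral polynomials via Corollary~\ref{cor:Glauber} together with the affinity bound $|f(X)-f(X^i)|\le 2|\partial_i f(X)|$, then observe that $\nabla^d f$ is constant so the top-order term merges with the lower-order ones. Your write-up in fact spells out more carefully than the paper why the $A$ and $B$ contributions collapse into a single minimum and how the exponents simplify when $\gamma=1/2$.
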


In particular the above corollary applies to the Ising model, exponential random graphs and hardcore models under the assumptions presented in Section~\ref{sec:Glauber}. Note that in these cases (or more generally for measures supported on $\{-1,0,1\}^n$) every polynomial can be reduced to a tetrahedral one.

Let us illustrate the above corollary with an application to triangle count in random graphs. Consider a simple random graph $G = (V,E)$, where $|V| = n$. For distinct vertices $v,w \in V$, let $X_{v,w} = \ind{\{v,w\}\in E}$. Then, the number of triangles in $G$ can be written as $T = \frac{1}{6}\sum_{u,v,w \in V}^{\neq} X_{u,v}X_{v,w}X_{w,u}$. The problem of tail behaviour for subgraph counts in Erd\H{o}s--R\'enyi random graphs has a long history, and a lot of progress has been made recently in the large deviation regime (see, e.g.,~\cite{MR3519474,MR3632418}). Here, we would like to illustrate how Corollary~\ref{cor:polynomials} may be used to obtain bounds for the non-product case.

Assume that $G$ is exchangeable in the following sense: for any permutation $\sigma\colon V\to V$, the distribution of the random set $E_\sigma = \{\{\sigma(v),\sigma(w)\} \colon \{v,w\}\in E\}$ is the same as that of $E$ (in other words, the adjacency matrix of $G$ has distribution invariant under a simultaneous permutation of rows and columns).  Assuming that $n\ge 3$ and $V = [n]$, let us define $A = \EE X_{1,2}$, $B = \EE X_{1,2}X_{2,3}$ (i.e., $A$ is the probability of finding an edge and $B$ is the probability of finding a \emph{cherry} at a fixed place in $G$). If the random vector $(X_{v,w})_{1\le v<  w \le n}$ satisfies~\eqref{eq:mlS-log}, then one can estimate the $\mathcal{I}$-norms appearing in Corollary~\ref{cor:polynomials} to find a tail estimate for $T$. Note that the expected derivatives of $T$ are expressed in terms of $A$ and $B$.

In the Erd\H{o}s--R\'enyi case such estimates were obtained in~\cite{MR3383337} and combined with a slight refinement of~\eqref{eq:polynomials}, specific to the product situation which allowed to replace $\rho_0$ with an appropriate subgaussian norm.
Surprisingly, the inequality obtained from such a general approach turned out to be optimal in the large deviation regime for edge probability $p \ge(n^{1/4}\sqrt{\log(n)})^{-1}$.
In~\cite{gtze2018concentration} the calculations from~\cite{MR3383337} were combined with~\eqref{eq:mlS-sqrts} in the case of exponential random graphs (cf.\ Section~\ref{sec:Glauber}). The dependence on $\rho_1$ is not specified there, but one can easily derive it from the proof. Corollary~\eqref{cor:polynomials} allows to  relax the dependence on $\rho_1$ to dependence on $\rho_0$, leading to the following estimate (we skip the detailed calculations, which are based on those from~\cite{MR3383337} and present just the final result):
\begin{multline*}
  \PP(|T - \EE T| \ge t) \\
  \le 2\exp\Big(-\frac{1}{C}\min\Big(\frac{t^2}{n^3 (\rho_0^{-3} + \rho_0^{-2} A^2) + n^4\rho_0^{-1} B^2}, \frac{t}{\sqrt{n}\rho_0^{-3/2} + n \rho_0^{-1} A}, \frac{t^{2/3}}{\rho_0^{-1}}\Big)\Big)
\end{multline*}
for $t \ge 0$ and a universal constant $C$.

\appendix
\section{Auxiliary lemmas}\label{app-auxiliary-lemmas}
In this section we provide proofs of technical lemmas used in Section~\ref{sec:mLStoBec}.

\begin{proof}[Proof of Lemma~\ref{L:Bec=>finite-mean}]
	Consider a nonnegative  $f$ such that $f,f^{p-1}\in Dom(\calE)$.
	For $t > 0$, let $f_t$ denote $\min(f,t)$.
	The function $x\mapsto \min(x,t)$ is a contraction, whence $f_t,f_t^{p-1}\in Dom(\calE)$.
	Moreover for any $t>0$ and any non-decreasing function $\varphi$
	\[
		(f_t(x)-f_t(y))(\varphi(f_t(x))-\varphi(f_t(y))) \le (f(x)-f(y))(\varphi(f(x))-\varphi(f(y))),
	\]
	so by Assumption~\ref{a:I},
	\[
		\sup_{t>0}\calE(f_t,f_t^{p-1})\le \calE(f,f^{p-1}).
	\]
	It remains to show that $\mu(f^p)-\mu(f)^p$ is well-defined and is the limit of $\mu(f_t^p)-\mu(f_t)^p$ as $t\to\infty$. By the Lebesgue monotone convergence theorem, it suffices to show that $\mu(f)<\infty$.
	To this end it is enough to show that $\sup_t \mu(f_t) < \infty$. This inequality is trivially satisfied if $\sup_t \mu(f_t)^p \le \calE(f,f^{p-1})$. Assume thus that
	$\mu(f_{t})^p > \calE(f,f^{p-1})$ for some $t$.
	Applying Beckner's inequality~\eqref{eq:Beckner-GJ} to the function $f_{t}$, we get that $\frac{1+\alpha_p}{\alpha_p}\mu(f_{t})^p \ge \mu(f_{t}^p)$ (recall that $p/2\leq 1$).
	Whence, by the Paley--Zygmund inequality (see, e.g., \cite[Corollary 3.3.2]{MR1666908})
	\[
		\mu\Big(f > \frac{1}{2}\mu(f_{t})\Big)
		\ge
		\mu\Big(f_{t} > \frac{1}{2}\mu(f_{t})\Big)
		\ge
		\Bigl(\frac{1}{2^p}\frac{\mu(f_{t})^p}{\mu(f_{t}^p)}\Bigr)^{\frac{1}{p-1}}
		\ge
		2^{-\frac{p}{p-1}} \Bigl(\frac{\alpha_p}{1+\alpha_p}\Bigr)^{\frac{1}{p-1}}.
\]
	Thus $\frac{1}{2}\mu(f_t)$ must be bounded by an appropriate quantile of $f$ and so $\sup_{t>0} \mu(f_t)$ is finite as desired.
\end{proof}

\begin{proof}[Proof of Lemma~\ref{L:v'formula}]
	Fix any $s\ge 1$.	
	Lipschitz property of the appropriate maps on the interval $[\inf f,\sup f]$ and Assumption~\ref{a:I} imply that for any $u$, $f^{u-1},f^{u-1}\log f\in \Dom(\calE)$.
	Denote $h_\varepsilon(x) = \frac{x^{\varepsilon}-1}{\varepsilon}-\log x$ and assume without loss of generality that $1-s\le \varepsilon$.	
	Then $f^{s-1}h_\varepsilon(f)\in \Dom(\calE)$ (since $\Dom(\calE)$ is a linear space) and by the bilinearity of $\calE$
	\begin{equation*}\label{eq:v'}
			\frac{v(s+\varepsilon)-v(s)}{\varepsilon} = \calE(f,f^{s-1}\log f) + \calE(f,f^{s-1}h_\varepsilon(f)).
	\end{equation*}
	
	We will show that
	\[
		\abs{x^{s-1}h_\varepsilon(x) - y^{s-1}h_\varepsilon(y) }
		\le
		C\abs{\varepsilon}\cdot\abs{x-y}
	\]
	for all $x,y\in [\inf f,\sup f]$ and some positive constant $C$ dependent on $s$ and $f$.
	By Assumption~\ref{a:I}, this will allow to conclude that $\abs{\calE(f,f^{s-1}h_\varepsilon(f))}\le C\abs{\varepsilon}\cdot\calE(f,f)\to 0$ as $\varepsilon\to 0$.

	By the Taylor expansion of $x^\varepsilon$ in $\varepsilon$ with the integral form of the remainder
	\[
		h_\varepsilon(x) = \frac{1}{\varepsilon}\int_0^\varepsilon (\varepsilon-u)x^u\log^2x\,du,
	\]
	so
	\[
		x^{s-1}h_\varepsilon(x) - y^{s-1}h_\varepsilon(y)  =
		\frac{1}{\varepsilon}\int_0^\varepsilon ({g}_u(x)-{g}_u(y))(\varepsilon-u)\,du,
	\]
	where ${g}_u(x)=x^{u+s-1}\log^2(x)$. 
	We can also assume without loss of generality that $\varepsilon\le 1$.
	Recalling that $f$ is bounded and separated from zero, it follows that $\sup_{u,x}\frac{1}{2}{\abs{g_u'(x)}} =: C < \infty$.
	The proof is concluded by taking the absolute value, passing with it under the integral and estimating the increments of $g_u$.

\end{proof}

\section{Known implications between functional inequalities}
\label{app-implications}

In this section we provide sketches of proofs of previously known implications between functional inequalities discussed in Section~\ref{sec:SOA}. Although in the literature the results are commonly stated in the Markov kernel setting (sometimes only in the finite state space or continuous case), their proofs are mostly based on pointwise inequalities which imply comparison of Dirichlet forms. Hence by virtue of Assumption~\ref{a:I} they pass directly to our setting at the cost of adding just a few technical details (needed mostly in order to make sure that all quantities are well-defined). We remark that a section containing implications between the Poincar\'e, log-Sobolev, and modified log-Sobolev inequality in an abstract setting, but under a somewhat different set of assumptions than ours, can be found in the article~\cite{MR2283379} by Bobkov and Tetali. In all the statements below we assume that Assumption~\ref{a:I} is satisfied.

Let us start with the implication between~\eqref{eq:mlS-sqrts} and~\eqref{eq:mlS-log}.
\begin{proposition}
\label{prop:LSI-to-mLSI}
If the log-Sobolev inequality~\eqref{eq:mlS-sqrts} holds with some constant $\rho_1$, then the modified log-Sobolev inequality~\eqref{eq:mlS-log} holds with $\rho_0 \ge 4\rho_1$.
\end{proposition}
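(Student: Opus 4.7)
The plan is to derive \eqref{eq:mlS-log} from \eqref{eq:mlS-sqrts} via the classical substitution $f = g^2$. Given a nonnegative $f$ with $f,\log f\in \Dom(\calE)$, I would set $g = \sqrt{f}$, apply \eqref{eq:mlS-sqrts} to $g$, and then relate $\calE(g,g)$ to $\calE(f,\log f)$ via a pointwise comparison enforced by Assumption~\ref{a:I}. Assuming first that $f$ is bounded and bounded away from $0$, the map $\sqrt{\cdot}$ is Lipschitz on the range of $f$, so Assumption~\ref{a:I} ensures $g\in\Dom(\calE)$ and similarly $\log g = \tfrac12\log f\in \Dom(\calE)$. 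The log-Sobolev inequality then gives $\rho_1 \Ent_\mu(f) = \rho_1 \Ent_\mu(g^2)\le \calE(g,g)$.

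The key computation is the pointwise inequality
\[
(a^2-b^2)(\log a - \log b)\ge 2(a-b)^2 \qquad (a,b>0),
\]
which is just the comparison between the logarithmic and arithmetic means ($(a-b)/(\log a-\log b)\le (a+b)/2$). Setting $a=g(x), b=g(y)$, rewriting $f=g^2$ and $\log g = \tfrac12\log f$, this reads
\[
(\sqrt{2}\,g(x) - \sqrt{2}\,g(y))^2 \le (f(x)-f(y))(\log g(x) - \log g(y)).
\]
Assumption~\ref{a:I} applied to the quadruple $(\sqrt{2}g,\sqrt{2}g,f,\log g)$ then yields $2\calE(g,g)\le \calE(f,\log g) = \tfrac12\calE(f,\log f)$, and combining with the LSI bound above gives $4\rho_1\Ent_\mu(f)\le \calE(f,\log f)$, i.e.\ \eqref{eq:mlS-log} with $\rho_0 \ge 4\rho_1$.

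The remaining step is to remove the boundedness assumption. I would truncate via $f_{\epsilon,M} := \max(\min(f,M),\epsilon)$ for $0<\epsilon<M<\infty$; this is a Lipschitz image of $f$, so $f_{\epsilon,M}\in\Dom(\calE)$, and $\log f_{\epsilon,M}$ is a bounded Lipschitz function of $f$, hence also in $\Dom(\calE)$. Applying the bounded case to $f_{\epsilon,M}$ yields $4\rho_1\Ent_\mu(f_{\epsilon,M})\le \calE(f_{\epsilon,M},\log f_{\epsilon,M})$, and I would pass to the limit $\epsilon\to 0^+$, $M\to\infty$ using on the left the variational formula $\Ent_\mu(f)=\sup\{\mu(fh):\mu(e^h)=1\}$ together with monotone convergence. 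The main technical nuisance -- and the only place where genuine care is needed -- is verifying on the right that $\calE(f_{\epsilon,M},\log f_{\epsilon,M})\le \calE(f,\log f)$. By Assumption~\ref{a:I} this reduces to the pointwise bound $(f_{\epsilon,M}(x)-f_{\epsilon,M}(y))(\log f_{\epsilon,M}(x)-\log f_{\epsilon,M}(y))\le (f(x)-f(y))(\log f(x)-\log f(y))$, which one checks by a short case analysis on whether $f(x),f(y)$ lie in $[0,\epsilon)$, $[\epsilon,M]$, or $(M,\infty)$, exploiting that both truncation and the subsequent logarithm contract differences on each region (noting that $\log f\in\Dom(\calE)$ forces $f>0$ $\mu$-a.s.).
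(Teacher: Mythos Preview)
Your proof is correct and follows essentially the same route as the paper: both use the pointwise inequality $4(\sqrt{a}-\sqrt{b})^2 \le (a-b)(\log a - \log b)$ (your version is just this inequality after the substitution $a\to a^2$, $b\to b^2$) together with Assumption~\ref{a:I}, and both reduce to the case of $f$ bounded away from zero by a truncation argument. The only minor difference is that the paper truncates only from below via $f_\varepsilon=\max(f,\varepsilon)$, since $\sqrt{\cdot}$ is already Lipschitz on $[\varepsilon,\infty)$ (its derivative is bounded by $\tfrac{1}{2\sqrt{\varepsilon}}$), so your upper truncation by $M$ is not needed.
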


\begin{proof}
The argument is based on the following pointwise inequality introduced by Bobkov and Tetali~\cite{MR2283379}:
\begin{align}\label{eq:pointwise-LSI-mLSI}
 4\bigl(\sqrt{a}-\sqrt{b}\bigr)^2 \leq (a-b)(\log(a)-\log(b)), \quad a,b>0.
\end{align}

Assuming that~\eqref{eq:mlS-sqrts} holds for all $g \in \Dom(\calE)$, let us consider $f$, such that $f, \log f \in \Dom(\calE)$. Denote $f_\varepsilon = \max(f,\varepsilon)$. By Assumption~\ref{a:I}, the fact that $x \mapsto \max(x,\varepsilon)$ is a contraction and monotonicity of the $\log$, one can easily see that $\calE(f_\varepsilon,\log(f_
\varepsilon)) \le \calE(f,\log(f))$. Taking into account that $\Ent_\mu(f) = \lim_{\varepsilon \to 0} \Ent_\mu(f_\varepsilon)$, one can thus assume that $f$ is separated from zero. Thus, again by Assumption~\ref{a:I}, and Lipschitz property of the quare root away from zero, $g = \sqrt{f} \in \Dom(\calE)$, and $\rho_1 \Ent_\mu(f) \le \calE(\sqrt{f},\sqrt{f}) \le \frac{1}{4}\calE(f,\log(f))$, where in the last inequality we used~\eqref{eq:pointwise-LSI-mLSI} and Assumption~\ref{a:I}.
\end{proof}

Let us now pass to the relation between~\eqref{eq:Beckner-LO} and~\eqref{eq:mlS-sqrts}.

\begin{proposition}
If Beckner's inequality~\eqref{eq:Beckner-LO} holds for every $q\in[1,2)$ with $\beta_q$ bounded away from zero, then the log-Sobolev inequality~\eqref{eq:mlS-sqrts} holds as well with $\rho_1 \geq \frac{1}{2}\limsup_{q\to 2^-}\beta_q$. Conversely if~\eqref{eq:mlS-sqrts} holds, then so does~\eqref{eq:Beckner-LO} for every $q \in [1,2)$, with $\beta_q \ge q\rho_1$.
\end{proposition}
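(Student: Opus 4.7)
Both directions hinge on a careful analysis of the auxiliary function $F(s) := \mu(g^s)^{2/s}$ for a nonnegative $g$ and $s \in [1,2]$. Logarithmic differentiation together with the identity $\Ent_\mu(g^s) = s\mu(g^s\log g) - \mu(g^s)\log\mu(g^s)$ yields
\[
F'(s) = \frac{2}{s^2}\mu(g^s)^{2/s-1}\Ent_\mu(g^s),
\]
and in particular $F'(2) = \tfrac12\Ent_\mu(g^2)$. The first implication is then almost immediate: divide \eqref{eq:Beckner-LO} by $(2-q)$ to obtain $\beta_q\cdot (F(2)-F(q))/(2-q)\leq \calE(g,g)$ and take $\limsup$ as $q\to 2^-$. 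Since the difference quotient converges to $F'(2) = \tfrac12\Ent_\mu(g^2)$, one concludes that \eqref{eq:mlS-sqrts} holds with $\rho_1\geq \tfrac12\limsup_{q\to 2^-}\beta_q$.

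For the converse, the idea is to introduce
\[
M(q) := q\,\frac{F(2)-F(q)}{2-q},\qquad q\in[1,2),
\]
which has $\lim_{q\to 2^-}M(q) = 2F'(2) = \Ent_\mu(g^2)$. By \eqref{eq:mlS-sqrts}, $\Ent_\mu(g^2)\leq \calE(g,g)/\rho_1$, so if $M$ is non-decreasing on $[1,2)$ then $M(q)\leq \calE(g,g)/\rho_1$ for every $q$, which upon rearrangement is exactly \eqref{eq:Beckner-LO} with $\beta_q = q\rho_1$. A direct computation gives $M'(q) = G(q)/(2-q)^2$ with $G(q) = 2(F(2)-F(q)) - q(2-q)F'(q)$; since $G(2) = 0$ and $G'(q) = -(2-q)(2F'(q) + qF''(q))$, using the identity $2F'(s) + sF''(s) = (s^2F'(s))'/s$ and integrating from $q$ to $2$ yields
\[
G(q) = \int_q^2 \frac{2-s}{s}\,(s^2F'(s))'\,ds.
\]
Hence monotonicity of $M$ reduces to the monotonicity of $s \mapsto s^2F'(s)$ on $[1,2]$.

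The main obstacle---and the heart of the argument---is verifying this last monotonicity. Setting $U(s) := \mu(g^s)$ one has
\[
s^2F'(s) = 2F(s)\cdot I(s),\qquad I(s) := \frac{\Ent_\mu(g^s)}{\mu(g^s)} = s(\log U)'(s) - \log U(s).
\]
The key calculation is
\[
I'(s) = s(\log U)''(s) = s\,\Var_{\nu_s}(\log g) \geq 0,
\]
where $\nu_s$ is the probability measure with density $g^s/\mu(g^s)$ with respect to $\mu$. Combined with the fact that $F$ is non-decreasing (which follows from Jensen's inequality, or directly from $F'\geq 0$) and with $F, I \geq 0$, this shows that $FI$ is non-decreasing, so $s^2F'(s)$ is non-decreasing on $[1,2]$, completing the argument. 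The remaining technical work---justifying the various differentiations under the integral and extending from bounded $g$ bounded away from zero to general $g$ with $g\in\Dom(\calE)$---proceeds by the standard truncation arguments used, e.g., in the proofs of Lemma~\ref{L:Bec=>finite-mean} and Lemma~\ref{L:v'formula}, leveraging Assumption~\ref{a:I} and monotone/dominated convergence.
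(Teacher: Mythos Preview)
Your proof is correct. The first direction matches the paper's argument exactly. For the converse, the paper simply invokes a lemma from Lata{\l}a--Oleszkiewicz \cite{MR1796718} asserting that $q\mapsto \frac{\mu(g^2)-\mu(g^q)^{2/q}}{1/q-1/2}$ is nondecreasing on $[1,2)$; since $\frac{1}{1/q-1/2} = \frac{2q}{2-q}$, this is precisely the monotonicity of your function $2M(q)$. You have therefore supplied a self-contained proof of that lemma via the factorization $s^2F'(s) = 2F(s)I(s)$ and the key identity $I'(s) = s\,\Var_{\nu_s}(\log g)\geq 0$, which is a clean and instructive route.

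One minor technical point: for the first direction, the paper explicitly applies \eqref{eq:Beckner-LO} to $|g|$ and then uses Assumption~\ref{a:I} to bound $\calE(|g|,|g|)\leq \calE(g,g)$, since \eqref{eq:mlS-sqrts} is stated for arbitrary (signed) $g\in\Dom(\calE)$ while \eqref{eq:Beckner-LO} requires nonnegative inputs. You work with nonnegative $g$ throughout and defer such issues to the closing remark on truncation, but this particular reduction is distinct from the boundedness approximations and deserves explicit mention.
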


\begin{proof}To obtain the first part of the proposition, it is enough to apply~\eqref{eq:Beckner-LO} to $|g|$ (note that $|g| \in \Dom(\calE)$ by Assumption~\ref{a:I}), divide by $2-q$ and pass to the limit $q\to 2$, observing that
\[
  \frac{\mu(g^2) - \mu(g^q)^{2/q}}{2-q}   \longrightarrow \frac{1}{2}\Ent_\mu(g^2)
  \quad\text{as}\quad q\to 2^-.
  \]
One obtains $(2^{-1}\limsup_{q\to 2-} \beta_q)\Ent_\mu (g^2) \le \calE(|g|,|g|) \le \calE(g,g)$, where the last inequality follows from another application of Asumption~\ref{a:I}.

The second part follows from a lemma proved in~\cite{MR1796718}, which asserts that if $g \in L_2(\calX,\mu)$, then
the function
\[
 [1,2)\ni q \longmapsto  \frac{\mu(g^2) - \mu(g^q)^{2/q}}{1/q-1/2}
\]
is increasing.
Note that in our setting square integrability of $g$ for $g \in \Dom(\calE)$ is a part of the assumption~\eqref{eq:mlS-sqrts}.
\end{proof}

\begin{proposition}
If the Beckner inequality~\eqref{eq:Beckner-GJ} holds for every $p\in(1,2]$ with $\alpha_p$ bounded away from zero, then the modified log-Sobolev inequality~\eqref{eq:mlS-log} holds as well with $\rho_0 \geq 2\limsup_{p\to 1^+}\alpha_p$.
\end{proposition}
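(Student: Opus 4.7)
The strategy is the one already signaled in the excerpt: divide both sides of \eqref{eq:Beckner-GJ} by $p-1$ and take $\limsup$ as $p\to 1^+$. The limits of the two sides should respectively produce the entropy and $\calE(f,\log f)$, so one must carefully justify differentiability at $p=1$ and the interchange of limits with the bilinear form. I would first establish the inequality for bounded $f$ with $\inf f > 0$ and then extend by truncation.

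For bounded $f$ with $\inf f > 0$, set $u(p) = \mu(f^p)-\mu(f)^p$ and $v(p) = \calE(f,f^{p-1})$. Direct differentiation under the integral (justified by boundedness and positivity of $f$) gives $u(1)=0$ and $u'(1^+) = \mu(f\log f) - \mu(f)\log\mu(f) = \Ent_\mu(f)$, so $\lim_{p\to 1^+} u(p)/(p-1) = \Ent_\mu(f)$. Similarly $v(1) = \calE(f,1) = 0$ by Assumption~\ref{a:I}, and Lemma~\ref{L:v'formula} yields $v'(1^+) = \calE(f,\log f)$, whence $\lim_{p\to 1^+} v(p)/(p-1) = \calE(f,\log f)$. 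Choosing a sequence $p_n \to 1^+$ along which $\alpha_{p_n} \to \limsup_{p\to 1^+}\alpha_p$ and dividing \eqref{eq:Beckner-GJ} for $p = p_n$ by $p_n - 1$, the limit yields
\[
\bigl(\limsup_{p\to 1^+} \alpha_p\bigr) \Ent_\mu(f) \le \tfrac{1}{2} \calE(f,\log f),
\]
which is \eqref{eq:mlS-log} with $\rho_0 = 2\limsup_{p\to 1^+}\alpha_p$ for this class of $f$.

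To extend to an arbitrary nonnegative $f \in \Dom(\calE)$ with $\log f \in \Dom(\calE)$, I would truncate: for $0<\varepsilon<M<\infty$ put $f_{\varepsilon,M} = \max(\min(f,M),\varepsilon)$. Since $x\mapsto \max(\min(x,M),\varepsilon)$ is a contraction, Assumption~\ref{a:I} gives $f_{\varepsilon,M}\in\Dom(\calE)$, and since $\log$ is monotone the pointwise inequality
\[
(f_{\varepsilon,M}(x)-f_{\varepsilon,M}(y))(\log f_{\varepsilon,M}(x)-\log f_{\varepsilon,M}(y)) \le (f(x)-f(y))(\log f(x)-\log f(y))
\]
combined with Assumption~\ref{a:I} gives $\calE(f_{\varepsilon,M},\log f_{\varepsilon,M}) \le \calE(f,\log f)$. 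On the other hand, since $x\log x$ is bounded below, one easily has $\Ent_\mu(f_{\varepsilon,M}) \to \Ent_\mu(f)$ as $\varepsilon\to 0^+$ and $M\to\infty$ (the only subtle point is to ensure integrability of $f\log f$, which is implicit in the hypothesis that the right-hand side $\calE(f,\log f)$ is finite, together with the Poincaré inequality that follows from \eqref{eq:Beckner-GJ}; if $\calE(f,\log f)=\infty$ the claim is vacuous).

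The main technical obstacle is the differentiation of $\calE(f,f^{s-1})$ at $s=1$, already addressed in Lemma~\ref{L:v'formula}; the remaining ingredients are essentially elementary real-analysis facts about $u(p)$ and the truncation step. The rest of the argument is routine.
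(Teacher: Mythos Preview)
Your proposal is correct and follows essentially the same approach as the paper: compute the right derivatives of $p\mapsto \mu(f^p)-\mu(f)^p$ and $p\mapsto \calE(f,f^{p-1})$ at $p=1$ for bounded $f$ separated from zero (using Lemma~\ref{L:v'formula} for the latter), then extend by truncation via Assumption~\ref{a:I}. The only notable difference is in the extension step: where you invoke the Poincar\'e inequality (available from \eqref{eq:Beckner-GJ} via Proposition~\ref{prop:Bec-to-Poinc}) to obtain $\mu(f)<\infty$ and then deduce finiteness of $\Ent_\mu(f)$ from the uniform entropy bound on truncations, the paper instead argues directly from the bound $\sup_t \Ent_\mu(f_t)<\infty$ via a Legendre-transform/quantile argument (in the spirit of Lemma~\ref{L:Bec=>finite-mean}) to show $\sup_t \mu(f_t)<\infty$ without appealing to Poincar\'e. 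Both routes are valid; yours is slightly shorter once one has already recorded that \eqref{eq:Beckner-GJ} implies Poincar\'e.
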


\begin{proof}
Let us consider $f$ such that $f,\log(f) \in \Dom(\calE)$. Additionally, let us assume that $f$ is bounded and separated from zero. In particular $f^p \in \Dom(\calE)$ for $p \ge 1$. Taking the right derivative of the function $p\mapsto \mu(f^p) - \mu(f)^p$ at $p=1$ and using~\eqref{eq:Beckner-GJ} together with Lemma~\ref{L:v'formula}, we obtain~\eqref{eq:mlS-log} with $\rho_0 \ge 2\limsup_{p\to 1+} \alpha_p$.

It remains to remove the additional assumptions on $f$. The assumption that $f$ is separated from zero can be removed in the same way as in the proof of Proposition~\ref{prop:LSI-to-mLSI}. Let us therefore focus on the boundedness assumption. The argument is a variation of the one used in Lemma~\ref{L:Bec=>finite-mean}. Setting $f_t = \min(f,t)$, by Assumption~\ref{a:I} we have $\calE(f_t,
\log(f_t)) \le \calE(f,\log(f)) < \infty$, so it remains to show that $\sup_{t>0} \mu(f_t) < \infty$, as it will prove integrability of $\mu$, which will allow to pass to the limit with $t \to \infty$ in~\eqref{eq:mlS-log} for $f_t$. Let $\varphi\colon [0,\infty)\to [0,\infty)$ be any increasing convex function, $\varphi(0) = 0$, such that for large $x$, $\varphi(x) = x\log x$. Set $g_t = f_t/\mu(f_t)$. Since $x\log x$ is bounded from below, and $\Ent_\mu(f_t) = \mu(g_t \log(g_t))$, we obtain $\sup_{t > 0} \mu(\varphi(g_t)) < \infty$. Thus $\lim_{\delta \to 0}\sup_{t>0} \varphi(\delta g_t) = 0$. Let $\delta < 1/4$ be such that for all $t > 0$, $\mu(\varphi(\delta g_t)) < 1/4$. Denoting by $\varphi^\ast$ the Legendre transform of $\varphi$, given by $\varphi^\ast(y) = \sup_{x \ge 0} (xy - \varphi(x))$, we have
\begin{align*}
1 = \mu(g_t) \le \delta + \mu(g_t\ind{\{g_t > \delta\}}) \le \delta + \mu(\varphi(\delta g_t)) + \varphi^\ast(\delta^{-1})\mu(g_t \ge \delta),
\end{align*}
which gives
\begin{displaymath}
  \mu(f \ge \delta\mu(f_t)) \ge \mu(f_t \ge \delta\mu(f_t)) = \mu(g_t \ge \delta) \ge \frac{1}{2\varphi^\ast(\delta^{-1})} > 0.
\end{displaymath}
This shows that $\sup_{t> 0} \mu(f_t)$ is dominated by an appropriate quantile of $f$ and is therefore finite, ending the proof.
\end{proof}

Let us now pass to the relation between Beckner's inequalities~\eqref{eq:Beckner-GJ} and~\eqref{eq:Beckner-LO}.

\begin{proposition}
Let $p \in (1,2]$ and $q = 2/p \in [1,2)$.
If Beckner's inequality~\eqref{eq:Beckner-LO} holds with constant $\beta_q$ then Beckner's inequality~\eqref{eq:Beckner-GJ} holds with constant $\alpha_p \ge \beta_q$. Conversely, if~\eqref{eq:Beckner-GJ} holds with constant $\alpha_p$, then~\eqref{eq:Beckner-LO} holds with constant
$\beta_q \ge q(2-q)\alpha_p$.
\end{proposition}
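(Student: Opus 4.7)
The plan is to derive both implications by direct substitution, deriving each from one half of Lemma~\ref{L:ab-inequality} combined with Assumption~\ref{a:I}; the stated constants then emerge from the elementary identities $qp = 2$, $q(p-1) = 2-q$, and $2-q = 2(p-1)/p$.

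For \eqref{eq:Beckner-LO}$\Rightarrow$\eqref{eq:Beckner-GJ}, I would take nonnegative $f \in \Dom(\calE)$ with $f^{p-1} \in \Dom(\calE)$ and, invoking Lemma~\ref{L:Bec=>finite-mean}, reduce to bounded $f$. Since $p/(2(p-1)) \ge 1$ for $p\in (1,2]$, the map $y \mapsto y^{p/(2(p-1))}$ is Lipschitz on $[0, \sup f^{p-1}]$, so Assumption~\ref{a:I} gives $f^{p/2} \in \Dom(\calE)$. Applying \eqref{eq:Beckner-LO} with $g = f^{p/2}$ and noting $g^2 = f^p$, $g^q = f$, the left-hand side becomes $\beta_q(\mu(f^p) - \mu(f)^p)$. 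The second estimate of Lemma~\ref{L:ab-inequality}, applied pointwise with $a=f(x)$, $b=f(y)$, combined with Assumption~\ref{a:I}, yields $\calE(f^{p/2}, f^{p/2}) \le \tfrac{p^2}{4(p-1)} \calE(f, f^{p-1})$. Since $(2-q)\cdot \tfrac{p^2}{4(p-1)} = \tfrac{p}{2}$, the conclusion $\alpha_p \ge \beta_q$ follows.

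For \eqref{eq:Beckner-GJ}$\Rightarrow$\eqref{eq:Beckner-LO}, I would take nonnegative $g \in \Dom(\calE)$ and first treat the case of bounded $g$ with $g \ge \delta > 0$; on $[\delta, \sup g]$ both $y\mapsto y^q$ and $y\mapsto y^{2-q}$ are Lipschitz, so by Assumption~\ref{a:I}, $g^q, g^{2-q} \in \Dom(\calE)$. Applying \eqref{eq:Beckner-GJ} to $f = g^q$ and using $f^p = g^2$, $f^{p-1} = g^{2-q}$, the left-hand side becomes $\alpha_p(\mu(g^2) - \mu(g^q)^{2/q})$. The first inequality of Lemma~\ref{L:ab-inequality}, applied after the substitution $a = g(x)^q$, $b = g(y)^q$, gives the pointwise estimate
\begin{align*}
(g(x)^q - g(y)^q)(g(x)^{2-q} - g(y)^{2-q}) \le (g(x) - g(y))^2,
\end{align*}
so Assumption~\ref{a:I} yields $\calE(g^q, g^{2-q}) \le \calE(g, g)$. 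Multiplying the resulting inequality $\alpha_p(\mu(g^2) - \mu(g^q)^{2/q}) \le \tfrac{p}{2} \calE(g,g)$ by $\tfrac{2(2-q)}{p} = q(2-q)$ gives $\beta_q \ge q(2-q)\alpha_p$.

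The main obstacle is the removal of the auxiliary condition $g \ge \delta > 0$ in the second implication, since for $q \in (1,2)$ the map $y \mapsto y^{2-q}$ has unbounded derivative at the origin, so $g^{2-q}$ need not lie in $\Dom(\calE)$ for an arbitrary $g \in \Dom(\calE)$. I would handle this by replacing $g$ with the double contraction $g_{\delta,M} = (g \vee \delta) \wedge M$, which by Assumption~\ref{a:I} belongs to $\Dom(\calE)$ and satisfies $\calE(g_{\delta,M}, g_{\delta,M}) \le \calE(g, g)$; proving the inequality for $g_{\delta,M}$ and passing to the limits $M \to \infty$, $\delta \to 0^+$ via monotone/dominated convergence on the left (using that $g \in L_2(\mu)$, which is guaranteed by the Poincaré inequality implied by \eqref{eq:Beckner-GJ}, see Proposition~\ref{prop:Bec-to-Poinc}) and the above monotonicity of $\calE$ on the right completes the proof.
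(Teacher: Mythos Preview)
Your proposal is correct and follows essentially the same approach as the paper: both directions rest on the substitution $g=f^{p/2}$ (resp.\ $f=g^q$), the two halves of Lemma~\ref{L:ab-inequality}, and Assumption~\ref{a:I}, with a reduction to bounded (and bounded-away-from-zero) functions via truncation. Your write-up in fact supplies more detail on the truncation and limiting argument in the second implication than the paper does (which explicitly skips these details), and your justification for $f^{p/2}\in\Dom(\calE)$ via the Lipschitz map $y\mapsto y^{p/(2(p-1))}$ on $[0,\sup f^{p-1}]$ is exactly the content of the paper's remark that ``$p/2\ge p-1$''.
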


\begin{proof}
  Assume first~\eqref{eq:Beckner-LO}. By Lemma~\ref{L:Bec=>finite-mean} in order to prove~\eqref{eq:Beckner-GJ} it is enough to show that it holds for bounded $f$. Assume thus that $f$ is bounded and $f,f^{p-1} \in \Dom(\calE)$.  Set $g = f^{p/2}$.
  Since $p/2 \ge p-1$, $g \in \Dom(\calE)$ and thus
  \begin{align*}
    \beta_q(\mu(f^p) - \mu(f)^p) = \beta_q (\mu(g^2) - \mu(g^{q})^{2/q}) \le (2-q)\calE(g,g) = \frac{2(p-1)}{p} \calE(g,g)
  \end{align*}
  By Lemma~\ref{L:ab-inequality}, $\calE(g,g)\le \frac{p^2}{4(p-1)}\calE(f,f^{p-1})$, which implies~\eqref{eq:Beckner-GJ}.

The second part of the proposition follows by the first inequality of Lemma~\ref{L:ab-inequality} for functions $g$ separated from zero and infinity (the assumption is needed in order to assure that for $f = g^{2/p}$ we have $f,f^{p-1} \in \Dom(\calE)$. An extension to general functions can be obtained by appropriate truncations analogously as in the other implications we have considered so far. Since we do not use this implication in any part of this paper, we skip the details.
\end{proof}

Finally, let us show that the Poincar\'e inequality is implied both by the modified log-Sobolev inequality~\eqref{eq:mlS-log} and by Beckner's inequality~\eqref{eq:Beckner-GJ} (with fixed $p$)

\begin{proposition}
\label{prop:mLSI-to-Poinc}
If the modified log-Sobolev inequality~\eqref{eq:mlS-log} holds with constant $\rho_0$, then the Poincar\'e inequality~\eqref{eq:Poinc} holds with $\lambda \ge \rho_0/2$.
\end{proposition}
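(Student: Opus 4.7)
The plan is to linearize the modified log-Sobolev inequality around the constant function $1$ by applying it to $f_\varepsilon = 1+\varepsilon g$ for a bounded $g\in\Dom(\calE)$ with $\mu(g)=0$ and $|\varepsilon|\le \varepsilon_0 := 1/(2\|g\|_\infty)$, and then to expand both sides in powers of $\varepsilon$. Because $f_\varepsilon$ is bounded and bounded away from $0$, Assumption~\ref{a:I} (applied to a Lipschitz extension of $t\mapsto\log(1+\varepsilon t)$ from the range of $g$ to all of $\RR$) guarantees $\log f_\varepsilon\in\Dom(\calE)$, so \eqref{eq:mlS-log} may legitimately be invoked. The zeroth and first-order terms in $\varepsilon$ vanish on both sides, and matching the second-order terms will give Poincar\'e with constant $\rho_0/2$.

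For the left-hand side, $\mu(f_\varepsilon)=1$ since $\mu(g)=0$, so $\Ent_\mu(f_\varepsilon)=\mu((1+\varepsilon g)\log(1+\varepsilon g))$. Using the uniform Taylor expansion $(1+t)\log(1+t)=t+t^2/2+O(t^3)$ for $|t|\le 1/2$ and integrating, I get
\[
\Ent_\mu(1+\varepsilon g)=\tfrac{\varepsilon^2}{2}\mu(g^2)+O(\varepsilon^3)=\tfrac{\varepsilon^2}{2}\Var_\mu(g)+O(\varepsilon^3).
\]
For the right-hand side, bilinearity of $\calE$ and $\calE(c,\cdot)=0$ give $\calE(f_\varepsilon,\log f_\varepsilon)=\varepsilon\,\calE(g,\log(1+\varepsilon g))$. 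Writing $\log(1+\varepsilon t)=\varepsilon t+\psi_\varepsilon(t)$ with $\psi_\varepsilon(t)=\log(1+\varepsilon t)-\varepsilon t$, an elementary calculation gives $|\psi_\varepsilon'(t)|\le 2\varepsilon^2\|g\|_\infty$ on $[-\|g\|_\infty,\|g\|_\infty]$, so $(2\varepsilon^2\|g\|_\infty)^{-1}\psi_\varepsilon$ is (after Lipschitz extension) a contraction on $\RR$. Then~\eqref{eq:assumption-1-1} yields $\calE(\psi_\varepsilon(g),\psi_\varepsilon(g))\le 4\varepsilon^4\|g\|_\infty^2\,\calE(g,g)$, and Cauchy--Schwarz for the nonnegative definite symmetric form $\calE$ gives $|\calE(g,\psi_\varepsilon(g))|\le 2\varepsilon^2\|g\|_\infty\,\calE(g,g)$. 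Consequently
\[
\calE(f_\varepsilon,\log f_\varepsilon)=\varepsilon^2\calE(g,g)+O(\varepsilon^3).
\]

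Plugging both expansions into~\eqref{eq:mlS-log}, dividing by $\varepsilon^2$ and sending $\varepsilon\to 0^+$, I obtain $\tfrac{\rho_0}{2}\Var_\mu(g)\le\calE(g,g)$ for every bounded $g\in\Dom(\calE)$ with $\mu(g)=0$; translation invariance (subtract $\mu(g)$) removes the mean-zero hypothesis, and a standard truncation $f_M=(f\wedge M)\vee(-M)$ combined with Assumption~\ref{a:I} and Fatou's lemma extends the inequality to arbitrary $f\in\Dom(\calE)$ with $\calE(f,f)<\infty$. The main technical nuisance will be the bound on $\calE(g,\psi_\varepsilon(g))$: one cannot differentiate under $\calE$ in any naive sense, and the quickest way to show the error term is $O(\varepsilon^2)$ is precisely the contraction--Cauchy--Schwarz combination sketched above. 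Once that is in hand, everything else is routine expansion and truncation.
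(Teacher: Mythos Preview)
Your proof is correct and follows essentially the same linearization strategy as the paper: perturb the constant function and expand to second order. The paper uses $g_\varepsilon=e^{\varepsilon f}$ rather than your $1+\varepsilon g$, and controls the right-hand side via the pointwise inequality $(e^a-e^b)(a-b)\le e^{\varepsilon M}(a-b)^2$ together with Assumption~\ref{a:I} directly, whereas you decompose $\log(1+\varepsilon t)=\varepsilon t+\psi_\varepsilon(t)$ and use the contraction property plus Cauchy--Schwarz for the form $\calE$. Both routes are standard and yield the same constant. For the extension to unbounded $f$, the paper invokes Lemma~\ref{L:Bec=>finite-mean} (via the identification of \eqref{eq:Poinc} with \eqref{eq:Beckner-GJ} at $p=2$) to first secure square integrability; your ``Fatou'' step works too, most cleanly if you apply Fatou to the doubled-integral representation $\Var_\mu(f_M)=\tfrac12\iint(f_M(x)-f_M(y))^2\,\mu(dx)\mu(dy)$, which immediately gives finiteness of $\tfrac12\iint(f(x)-f(y))^2\,\mu(dx)\mu(dy)$ and hence $f\in L_2$.
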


\begin{proof}
  Again the argument is well known and one just needs to adjust it to our setting. If $f \in \Dom(\calE)$ is bounded (say $\|f\|_\infty = M$), then set $g_\varepsilon = e^{\varepsilon f}$. Using Taylor's expansion we get
  \begin{displaymath}
    \Ent_\mu (g_\varepsilon) = \frac{1}{2}\varepsilon^2 \Var_\mu (f) + o(\varepsilon^2).
  \end{displaymath}
On the other hand, using the inequality
\begin{displaymath}
  (e^a - e^b)(a - b)\le e^{\varepsilon M}(a-b)^2
\end{displaymath}
valid for $a,b \in [-\varepsilon M,\varepsilon M]$, together with Assumption~\ref{a:I}, we obtain
\begin{displaymath}
  \calE(g_\varepsilon, \log g_\varepsilon) \le \varepsilon^2 e^{\varepsilon M} \calE(f,f).
\end{displaymath}
To obtain~\eqref{eq:Poinc} for bounded functions it is thus enough to apply~\eqref{eq:mlS-log} to $g_\varepsilon$ and let $\varepsilon \to 0^+$.
To extend this to $\Dom(\calE)$ note that the Poincar\'e inequality for bounded functions implies (in fact is equivalent to)~\eqref{eq:Beckner-GJ} with $p=2$ for bounded functions. Thus by Assumption~\ref{a:I} and Lemma~\ref{L:Bec=>finite-mean} if $f \in \Dom(\calE)$, then $f$ is square integrable. It is thus enough to set $f_t = \max(-t,\min(f,t))$ for $t > 0$, apply~\eqref{eq:Poinc} to $f_t$ and pass with $t\to \infty$, using the fact that by Assumption~\ref{a:I}, $\calE(f_t,f_t) \le \calE(f,f)$.
\end{proof}

\begin{proposition}
\label{prop:Bec-to-Poinc}
Let $p \in (1,2]$. If Beckner's inequality~\eqref{eq:Beckner-GJ} holds with constant $\alpha_p$, then the Poincar\'e inequality~\eqref{eq:Poinc} holds with constant $\lambda \ge \alpha_p$.
\end{proposition}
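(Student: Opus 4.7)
The plan is to linearize Beckner's inequality at the constant function $1$ in order to recover the Poincaré inequality to second order. First, I would restrict to bounded $g \in \Dom(\calE)$ with $\|g\|_\infty \le M$, and for small $\varepsilon \in (0,1/(2M))$ test \eqref{eq:Beckner-GJ} on the function $f_\varepsilon = 1 + \varepsilon g$, which is bounded and bounded away from zero. By Assumption~\ref{a:I} together with the Lipschitz property of $x\mapsto x^{p-1}$ on $[1/2,3/2]$, both $f_\varepsilon$ and $f_\varepsilon^{p-1}$ lie in $\Dom(\calE)$. A Taylor expansion for the left-hand side (with uniform control of the remainder on $[-M,M]$) yields
\[
\mu(f_\varepsilon^p) - \mu(f_\varepsilon)^p = \tfrac{p(p-1)}{2}\varepsilon^2 \Var_\mu(g) + o(\varepsilon^2)
\]
as $\varepsilon \to 0^+$. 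Dividing both sides of \eqref{eq:Beckner-GJ} by $\varepsilon^2$ and passing to the limit should then give $\alpha_p \Var_\mu(g) \le \calE(g,g)$, which is \eqref{eq:Poinc} for bounded $g$.

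The main technical step is the analogous expansion of the right-hand side of \eqref{eq:Beckner-GJ}. Using $\calE(c,\cdot)=0$ for constants $c\in\RR$, bilinearity gives $\calE(f_\varepsilon,f_\varepsilon^{p-1}) = \varepsilon\,\calE(g,(1+\varepsilon g)^{p-1})$, and writing $(1+\varepsilon u)^{p-1} = 1 + (p-1)\varepsilon u + \varphi_\varepsilon(u)$ splits this further into a main term $(p-1)\varepsilon^2\calE(g,g)$ plus the remainder $\varepsilon\,\calE(g,\varphi_\varepsilon(g))$. The key observation is that a direct estimation of $\varphi_\varepsilon'$ shows that $\varphi_\varepsilon$ restricted to $[-M,M]$ is Lipschitz with constant $C(p,M)\varepsilon^2$; extending $\varphi_\varepsilon$ by composing with the truncation $u\mapsto \max(-M,\min(u,M))$ gives a global Lipschitz function with the same constant, whose value on $g$ is unchanged. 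Assumption~\ref{a:I} then yields $\calE(\varphi_\varepsilon(g),\varphi_\varepsilon(g)) \le C(p,M)^2\varepsilon^4\,\calE(g,g)$, and Cauchy--Schwarz for the nonnegative definite form $\calE$ gives $|\calE(g,\varphi_\varepsilon(g))| \le C(p,M)\varepsilon^2\,\calE(g,g)$. Hence $\calE(f_\varepsilon,f_\varepsilon^{p-1}) = (p-1)\varepsilon^2\calE(g,g) + O(\varepsilon^3)\calE(g,g)$, and after dividing \eqref{eq:Beckner-GJ} by $\frac{p(p-1)}{2}\varepsilon^2$ and sending $\varepsilon \to 0^+$ we conclude $\alpha_p\Var_\mu(g) \le \calE(g,g)$. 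I expect this remainder estimate to be the main obstacle, since it is the place where we need the bilinear structure of $\calE$ (through Cauchy--Schwarz) coupled with the contraction property from Assumption~\ref{a:I}.

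To extend to arbitrary $f\in\Dom(\calE)$ with $\calE(f,f)<\infty$, I would set $f_n = \max(-n,\min(f,n))$; by Assumption~\ref{a:I}, $\calE(f_n,f_n) \le \calE(f,f)$, and the bounded case just proved gives $\Var_\mu(f_n) \le V := \calE(f,f)/\alpha_p$. To obtain $f\in L_2$ it suffices to bound $|\mu(f_n)|$ uniformly in $n$; assuming otherwise, WLOG $\mu(f_n)\to+\infty$ along a subsequence, Chebyshev's inequality forces $\mu(f_n \ge \mu(f_n) - 2\sqrt{V}) \ge 3/4$, and since $\{f_n > N\} = \{f > N\}$ for $n > N$, this yields $\mu(f > N) \ge 3/4$ for every $N$, contradicting a.s.\ finiteness of $f$. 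Hence $\sup_n \mu(f_n^2) = \sup_n(\Var_\mu(f_n) + \mu(f_n)^2) < \infty$, and Fatou's lemma delivers $f\in L_2$. Dominated convergence then gives $\mu(f_n)\to \mu(f)$ and $\mu(f_n^2)\to\mu(f^2)$, so $\Var_\mu(f_n)\to \Var_\mu(f)$ and $\alpha_p\Var_\mu(f) = \lim_n \alpha_p\Var_\mu(f_n) \le \calE(f,f)$, completing the proof.
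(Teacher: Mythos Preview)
Your proof is correct and follows essentially the same linearization strategy as the paper: test \eqref{eq:Beckner-GJ} on $1+\varepsilon g$, Taylor-expand both sides, and control the remainder on the right via the Lipschitz property of $u\mapsto (1+\varepsilon u)^{p-1}-1-(p-1)\varepsilon u$ together with Assumption~\ref{a:I}. The only minor differences are that the paper bounds $\calE(g,\varphi_\varepsilon(g))$ directly from the pointwise product inequality in Assumption~\ref{a:I} (rather than via Cauchy--Schwarz on $\calE(\varphi_\varepsilon(g),\varphi_\varepsilon(g))$), and for the extension to unbounded $f$ it invokes Lemma~\ref{L:Bec=>finite-mean} instead of your self-contained Chebyshev argument; both routes are equally valid.
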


\begin{proof}
As in the proof of Proposition~\ref{prop:mLSI-to-Poinc}, it is enough to prove~\eqref{eq:Poinc} for bounded functions. Assume thus that $f \in \Dom(\calE)$ is bounded.
Then, for sufficiently small $\varepsilon$, $(1+\varepsilon f)^{p-1} \in \Dom(\calE)$. Thus
\begin{displaymath}
  \alpha_p\Big(\mu((1+\varepsilon f)^p) - (\mu(1+\varepsilon f))^p\Big) \le \frac{p}{2}\calE(1+ \varepsilon f,(1+\varepsilon f)^{p-1}) = \frac{\varepsilon p}{2} \calE( f,(1+\varepsilon f)^{p-1}).
\end{displaymath}
The Taylor expansion reveals that for $\varepsilon \to 0$,
\begin{align*}
\mu((1+\varepsilon f)^p) - (\mu(1+\varepsilon f))^p = \frac{1}{2}p(p-1)\varepsilon^2 \Var_\mu(f)  + o(\varepsilon^2).
\end{align*}
On the other hand
\begin{displaymath}
  \varepsilon\calE(f,(1+\varepsilon f)^{p-1}) = (p-1)\varepsilon^2\calE(f,f) + \varepsilon \calE( f, (1+\varepsilon f)^{p-1} - 1 - (p-1)\varepsilon f).
\end{displaymath}
To finish the proof it is thus enough to show that
\begin{displaymath}
  \calE( f, (1+\varepsilon f)^{p-1} - 1 - (p-1)\varepsilon f) = o(\varepsilon)
\end{displaymath}
for $\varepsilon \to 0+$. Denote $M = \|f\|_\infty$ and denote $g(x) = (1+x)^{p-1} - 1 - (p-1)x$. For $a,b \in [-M,M]$ we have
\begin{displaymath}
  |a - b||g(\varepsilon a)  - g(\varepsilon b)| \le \varepsilon (a-b)^2 A_\varepsilon.
\end{displaymath}
where $A_\varepsilon = \sup_{t \in [-\varepsilon M,\varepsilon M]} |g'(t)|$. Thus, by Assumption~\ref{a:I},
$\calE( f, (1+\varepsilon f)^{p-1} - 1 - (p-1)\varepsilon f) \le \varepsilon A_\varepsilon \calE(f,f)$ and it remains to show that $\lim_{\varepsilon \to 0+} A_\varepsilon = 0$. This is however true, since $g$ is continuously differentiable in the neighbourhood of $0$ and $g'(0) = 0$.
\end{proof}

Finally we address the question of the monotonicity of the constants in Beckner's inequalities.

\begin{lemma}
\label{L:ab-inequality_for_alpha_p}
 For $1 < p-\varepsilon < p \leq 2$ and $a,b,>0$,
 \begin{multline*}
  \bigl(a^{(p-\varepsilon)/p}-b^{(p-\varepsilon)/p}\bigr)\bigl(a^{(p-\varepsilon)(p-1)/p}-b^{(p-\varepsilon)(p-1)/p}\bigr)\\
  \leq
  \frac{(p-\varepsilon)^2(p-1)}{p^2(p-\varepsilon-1)} (a-b)(a^{p-\varepsilon - 1}-b^{p- \varepsilon - 1}).
 \end{multline*}
\end{lemma}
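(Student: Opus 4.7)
The plan is to reduce both sides to values of a single symmetric function whose monotonicity can be verified by a short integral symmetrization. Assume without loss of generality that $a > b > 0$ (the case $a = b$ is trivial and the inequality is symmetric in $a,b$). Set $q = p - \varepsilon$, so that $1 < q < p \le 2$, and substitute $u = a^{q/p}$, $v = b^{q/p}$. A direct check gives $u^{p/q} = a$, $v^{p/q} = b$, $u^{p(q-1)/q} = a^{q-1}$, $u^{p-1} - v^{p-1} = a^{q(p-1)/p} - b^{q(p-1)/p}$, etc. Dividing the claimed inequality by $p - 1$, it becomes precisely
\[
\frac{(u - v)(u^{p-1} - v^{p-1})}{1 \cdot (p-1)} \;\le\; \frac{(u^{p/q} - v^{p/q})(u^{p(q-1)/q} - v^{p(q-1)/q})}{(p/q) \cdot (p(q-1)/q)},
\]
since the factor $\frac{q^2(p-1)}{p^2(q-1)} = \frac{p-1}{(p/q)(p(q-1)/q)}$ is exactly the ratio of the two denominators. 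Both sides now have the form
\[
F(\alpha) := \frac{(u^\alpha - v^\alpha)(u^{p-\alpha} - v^{p-\alpha})}{\alpha(p-\alpha)}, \qquad \alpha \in (0,p),
\]
with the left being $F(1)$ and the right being $F(p/q)$. Hence it suffices to prove $F(1) \le F(p/q)$.

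Next I would use the integral representation $\frac{u^\alpha - v^\alpha}{\alpha} = \int_v^u s^{\alpha-1}\,ds$ (valid for $\alpha > 0$ and $u > v > 0$) to write
\[
F(\alpha) = \int_v^u \int_v^u s^{\alpha-1} t^{p-\alpha-1}\,ds\,dt.
\]
Differentiating under the integral yields
\[
F'(\alpha) = \int_v^u \int_v^u (\log s - \log t)\, s^{\alpha-1} t^{p-\alpha-1}\,ds\,dt,
\]
and symmetrizing by interchanging $s$ and $t$ in one copy then averaging gives
\[
F'(\alpha) = \frac{1}{2}\int_v^u \int_v^u (\log s - \log t)(st)^{p-\alpha-1}(s^{2\alpha - p} - t^{2\alpha - p})\,ds\,dt.
\]

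Finally, for $\alpha \ge p/2$ the exponent $2\alpha - p \ge 0$, so $s \mapsto s^{2\alpha - p}$ is non-decreasing and thus $(\log s - \log t)$ and $(s^{2\alpha - p} - t^{2\alpha - p})$ both carry the same sign as $s - t$. Together with $(st)^{p-\alpha-1} > 0$, this forces the integrand to be pointwise non-negative, so $F' \ge 0$ on $[p/2, p)$, i.e. $F$ is non-decreasing there. Because $p \le 2$ we have $p/2 \le 1$, and because $1 < q < p$ we have $1 < p/q < p$; hence monotonicity gives $F(1) \le F(p/q)$, which, after undoing the substitution, is precisely the claimed inequality. The key idea — and the only real step — is the substitution $u = a^{q/p}$, which recognizes both sides as values of the common symmetric function $F$; once this is set up, the integral symmetrization argument showing that $F$ increases away from its symmetry point $\alpha = p/2$ is a routine sign check.
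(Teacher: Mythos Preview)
Your proof is correct. It takes a different route from the paper's argument, which is a direct application of the integral Chebyshev sum inequality: writing $\frac{a^c-b^c}{a-b}=\frac{c}{a-b}\int_b^a s^{c-1}\,ds$ for the two exponents $c=(p-\varepsilon)/p$ and $c=(p-\varepsilon)(p-1)/p$, both integrands $s^{c-1}$ are decreasing (since $c<1$), so Chebyshev bounds the product of averages by the average of the product, whose exponent sums to $p-\varepsilon-2$ and integrates to $\frac{a^{p-\varepsilon-1}-b^{p-\varepsilon-1}}{p-\varepsilon-1}$.

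Your approach instead makes the substitution $u=a^{q/p}$, $v=b^{q/p}$ to recognize both sides as values $F(1)$ and $F(p/q)$ of the symmetric function $F(\alpha)=\frac{(u^\alpha-v^\alpha)(u^{p-\alpha}-v^{p-\alpha})}{\alpha(p-\alpha)}$, and then proves $F$ is non-decreasing on $[p/2,p)$ by differentiating its double-integral form and symmetrizing. This is longer but more structural: it exposes the underlying one-parameter family and shows the inequality is a monotonicity statement, which in principle yields the whole range of such comparisons at once. The paper's Chebyshev step is quicker and avoids the substitution entirely; note, however, that your symmetrization of the double integral is essentially the standard proof of Chebyshev's inequality unrolled, so the two arguments share the same analytic core.
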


\begin{proof}
By the integral version of Chebyshev's sum inequality,
\begin{align*}
 \MoveEqLeft \frac{a^{(p-\varepsilon)/p}-b^{(p-\varepsilon)/p}}{a-b} \cdot \frac{a^{(p-\varepsilon)(p-1)/p}-b^{(p-\varepsilon)(p-1)/p}}{a-b}
  \\
&=
\frac{(p-\varepsilon)/p}{a-b} \int_a^b s^{(p-\varepsilon)/p -1}ds \cdot \frac{(p-\varepsilon)(p-1)/p}{a-b} \int_a^b s^{(p-\varepsilon)(p-1)/p -1}ds\\
&\leq \frac{(p-\varepsilon)^2(p-1)}{p^2} \frac{1}{a-b} \int_a^b s^{p-\varepsilon-2} ds  = \frac{(p-\varepsilon)^2(p-1)}{p^2(p-\varepsilon-1)} \frac{a^{p-\varepsilon - 1}-b^{p- \varepsilon - 1}}{a-b}
\end{align*}
(note that $s\mapsto s^{p-\varepsilon-2}$ and $s\mapsto s^{(p-\varepsilon)(p-1)/p -1}$ are both decreasing).
\end{proof}

\begin{proposition}
 Suppose that for some $p\in(1,2]$ Beckner's inequality \eqref{eq:Beckner-GJ} holds with constant $\alpha_p>0$. Let $0<\varepsilon<p-1$. Then,  Beckner's inequality  \eqref{eq:Beckner-GJ} holds for $p-\varepsilon$ (in place of $p$) with constant
 \[
\alpha_{p-\varepsilon}   \geq  \frac{(p-\varepsilon-1)p}{(p-\varepsilon)(p-1)}\alpha_p.
\]

In particular, if  Beckner's inequality \eqref{eq:Beckner-GJ} holds for all (equivalently: for some) $p\in(1,2]$, then the function $ p\mapsto \frac{p}{p-1}\alpha_p^\opt$, $p\in(1,2]$, is nonincreasing.
\end{proposition}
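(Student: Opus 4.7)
The strategy is to apply the hypothesized Beckner inequality \eqref{eq:Beckner-GJ} at parameter $p$ to the test function $g:=f^{(p-\varepsilon)/p}$, since then $g^p=f^{p-\varepsilon}$ and $g^{p-1}=f^{(p-\varepsilon)(p-1)/p}$, so both sides of Beckner at level $p$ naturally compare with the corresponding quantities for $f$ at level $p-\varepsilon$. The algebraic pointwise comparison needed for the Dirichlet-form side is exactly what Lemma~\ref{L:ab-inequality_for_alpha_p} provides.

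I would first reduce to bounded $f$ bounded away from zero, so that all fractional powers of $f$ lie in $\Dom(\calE)$ thanks to the Lipschitz stability guaranteed by Assumption~\ref{a:I}. For such $f$, writing \eqref{eq:Beckner-GJ} with parameter $p$ applied to $g$ yields
\[
\alpha_p\bigl(\mu(f^{p-\varepsilon})-\mu(g)^p\bigr)\leq \tfrac{p}{2}\,\calE\bigl(f^{(p-\varepsilon)/p},\, f^{(p-\varepsilon)(p-1)/p}\bigr).
\]
To dispose of $\mu(g)^p$ on the left I would apply Jensen's inequality for the concave map $t\mapsto t^{(p-\varepsilon)/p}$, obtaining $\mu(g)^p\leq \mu(f)^{p-\varepsilon}$ and hence lower-bounding the left-hand side by $\alpha_p\bigl(\mu(f^{p-\varepsilon})-\mu(f)^{p-\varepsilon}\bigr)$. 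To match the right-hand side with $\calE(f,f^{p-\varepsilon-1})$, I would apply the pointwise comparison of Lemma~\ref{L:ab-inequality_for_alpha_p} with $a=f(x)$, $b=f(y)$, and then invoke Assumption~\ref{a:I} together with bilinearity of $\calE$ to conclude
\[
\calE\bigl(f^{(p-\varepsilon)/p},\, f^{(p-\varepsilon)(p-1)/p}\bigr)\leq \frac{(p-\varepsilon)^2(p-1)}{p^2(p-\varepsilon-1)}\,\calE(f,f^{p-\varepsilon-1}).
\]
Chaining the two estimates and rearranging directly produces $\alpha_{p-\varepsilon}\geq \tfrac{p(p-\varepsilon-1)}{(p-\varepsilon)(p-1)}\alpha_p$; rewriting this as $\tfrac{p-\varepsilon}{p-\varepsilon-1}\alpha_{p-\varepsilon}^{\opt}\geq \tfrac{p}{p-1}\alpha_p^{\opt}$ is precisely the claim that $p\mapsto \tfrac{p}{p-1}\alpha_p^{\opt}$ is nonincreasing.

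For the parenthetical equivalence, the nontrivial implication (validity for some $p_0\in(1,2]$ implies validity for all $p\in(1,2]$) is obtained by combining the above (which propagates Beckner downwards from $p_0$) with Proposition~\ref{prop:Bec-to-Poinc} (yielding the Poincar\'e inequality) and Proposition~\ref{prop:Poinc-to-Bec} (which in turn extends Beckner to all $p\in(1,2]$).

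The only technical obstacle is the domain issue: justifying that the manipulated powers of $f$ belong to $\Dom(\calE)$ for a general $f$ with $f,f^{p-\varepsilon-1}\in \Dom(\calE)$. This is handled by the truncation $f_n:=\min(\max(f,1/n),n)$ (which stays in $\Dom(\calE)$ by Assumption~\ref{a:I} and is both bounded and bounded away from zero), followed by a monotone passage to the limit; the inequality $\calE(f_n,f_n^{p-\varepsilon-1})\leq \calE(f,f^{p-\varepsilon-1})$ coming from Assumption~\ref{a:I}, combined with Fatou-type arguments on the variance side in the spirit of Lemma~\ref{L:Bec=>finite-mean}, is sufficient to close the approximation.
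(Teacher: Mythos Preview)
Your proposal is correct and follows essentially the same argument as the paper: apply \eqref{eq:Beckner-GJ} at level $p$ to $g=f^{(p-\varepsilon)/p}$, use Jensen's inequality on the left, and Lemma~\ref{L:ab-inequality_for_alpha_p} together with Assumption~\ref{a:I} on the right. Your additional remarks on the domain reduction via truncation and on the parenthetical equivalence (via Propositions~\ref{prop:Bec-to-Poinc} and~\ref{prop:Poinc-to-Bec}) are correct elaborations of points the paper leaves implicit.
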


 \begin{proof}
 By Jensen's inequality, \eqref{eq:Beckner-GJ}, and Lemma~\ref{L:ab-inequality_for_alpha_p},
 \begin{align*}
  \alpha_p \bigl( \mu(f^{p-\varepsilon}) - \mu(f)^{p-\varepsilon}\bigr) &\leq \alpha_p \bigl( \mu(f^{p-\varepsilon}) - \mu(f^{(p-\varepsilon)/p})^{p}\bigr) \\
  &\leq \frac{p}{2} \calE(f^{(p-\varepsilon)/p},f^{(p-\varepsilon)(p-1)/p}) \\
  &\leq  \frac{p-\varepsilon}{2} \cdot \frac{(p-\varepsilon)(p-1)}{(p-\varepsilon-1)p} \calE(f,f^{p-\varepsilon-1}).
 \end{align*}
 This finishes the proof.
\end{proof}

\begin{remark}
 In the case of Beckner's inequality \eqref{eq:Beckner-LO} we have:
 \begin{itemize}
 \item $q\mapsto \frac{1}{2-q} \beta_q^\opt$, $q\in[1,2)$ is nondecreasing (just by the Jensen inequality),
 \item $q\mapsto \frac1q \beta_q^\opt$, $q\in[1,2)$ is nonincreasing (by a lemma proved in \cite{MR1796718} concerning the monotonity of the function $q\mapsto  \frac{\mu(g^2) - \mu(g^q)^{2/q}}{1/q-1/2}$, $q\in[1,2)$).
 \end{itemize}
\end{remark}

\section{Connections with Dirichlet forms}
\label{app-Markov}

In this section we will provide a link between our assumptions and the usual theory of Dirichlet forms associated with Markov semigroups, in particular showing that our main assumptions are satisfied in this setting.
As reference we suggest the monographs~\cite{MR1303354,MR3155209}.
Recall that we work on a probability space $(\calX,\calB,\mu)$.

Let $P\colon [0,\infty)\times \calX \times \calB \to [0,1]$ be a homogeneous Markov transition function for which $\mu$ is an invariant measure. We will assume that $P$ is reversible with respect to $\mu$. We will often write $P_t(x,B)$ for $P(t,x,B)$ and we will also denote by $(P_t)_{t\ge 0}$ the semigroup of operators on $L_2(\calX,\mu)$ related to the transition function and defined as
\begin{displaymath}
  P_t f(x) = \int_\calX f(y)P_t(x,dy).
\end{displaymath}
We will assume that this semigroup is strongly continuous.

It can be shown that for each $f\in L_2(\calX,\mu)$ the function
\begin{displaymath}
  t\mapsto \frac{1}{2t}\int_\calX\int_\calX (f(y) - f(x))^2P_t(x,dy)\mu(dx)
\end{displaymath}
is non-increasing. Denoting
\begin{displaymath}
  \Dom(\calE) = \Big\{f \in L_2(\calX,\mu)\colon \sup_{t\ge 0} \frac{1}{2t}\int_\calX\int_\calX (f(y) - f(x))^2P_t(x,dy)\mu(dx) < \infty \Big\}
\end{displaymath}
and defining for $f,g\in \calE$,
\begin{displaymath}
  \calE(f,g) = \lim_{t\to 0} \frac{1}{2t}\int_\calX\int_\calX (f(y)-f(x))(g(y)-g(x))P_t(x,dy)\mu(dx)
\end{displaymath}
we obtain a nonnegative definite symmetric quadratic form.

In particular, for $f \in \Dom(\calE)$ we obtain
\begin{displaymath}
  \calE(f,f) = \lim_{t\to 0} \frac{1}{2t}\int_\calX\int_\calX (f(y) - f(x))^2P_t(x,dy)\mu(dx)
\end{displaymath}
and it is straightforward to check that the Assumption~\ref{a:I} is satisfied in this case.

Let us now discuss the Assumption~\ref{a:II}. If $L$ is the infinitesimal operator of the semigroup $(P_t)_{t\ge 0}$, defined as
\begin{displaymath}
  Lf = \lim_{h\to 0^+} \frac{P_h f - f}{h}
\end{displaymath}
with the convergence in the $L_2$ sense, and $f,g \in \Dom(L)$, then
\begin{displaymath}
  \calE(f,g) = -\int_\calX fLg d\mu.
\end{displaymath}
If also $fg \in \Dom(L)$, then one obtains further the equality~\eqref{eq:champ-form} where the carr\'e du champ operator $\Gamma$ is given by~\eqref{eq:carre-du-champ}. One shows that $\Gamma(f,f) \ge 0$.
In most applications the operator $\Gamma$ is first defined on a suitable algebra of functions $\calA_0 \subseteq \Dom(L)$ and then extended to some larger class $\calA$. This is the situation described, e.g., in Chapter 1.14 of~\cite{MR3155209}. In the case of diffusions on Riemannian manifolds one usually takes $\calA_0$ to be the algebra of smooth compactly supported functions and $\calA$ -- the algebra of all smooth functions. However in the abstract setting there is no canonical choice of $\calA$, so we will stick here to the basic case of $\calA \subseteq \Dom(L)$ and refer to Chapter 3 of~\cite{MR3155209} for the axiomatic approach, which allows to introduce a general framework for an abstract counterpart of the theory of diffusions in $\RR^n$.

The following proposition shows that in our basic setting the first part of Assumption~\ref{a:II} is satisfied for every algebra $\calA \subseteq \Dom(L)$.

\begin{proposition}\label{prop:assumption-2-1} Assume that $f\colon \calX \to \RR$ is a bounded function and $f, f^2\in \Dom(L)$. Let $t,t' \ge 1$ satisfy $\frac{1}{t} + \frac{1}{t'} = 1$. Then for every differentiable convex, nondecreasing function $\varphi \colon [0,\infty) \to \RR$ and every $c \in \RR$
\begin{align*}
\calE(\varphi(|f+c|),|f+c|) \le 2\int_\calX \varphi'(|f+c|) \Gamma(f) d\mu \le 2\Big\|\varphi'(|f+c|)\Big\|_{t'}\Big\|\Gamma(f)\Big\|_t.
\end{align*}
\end{proposition}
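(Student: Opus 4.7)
The first inequality carries the substantive content; the second is just Hölder's inequality with exponents $t$ and $t'$ applied to $\int \varphi'(|f+c|)\Gamma(f)\,d\mu$. My plan for the first inequality is to combine the kernel representation of $\calE$ from Appendix~\ref{app-Markov} with a pointwise estimate on the integrand and then pass to the limit.

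Set $g = f+c$. I would first verify that $|g|, \varphi(|g|) \in \Dom(\calE)$: since $f$ is bounded, $|g|$ takes values in a compact interval $I\subset[0,\infty)$, and on $I$ the convex differentiable function $\varphi$ has continuous (hence bounded) derivative, so $\varphi\circ|\cdot|$ is Lipschitz on the range of $g$. Combined with $f\in\Dom(L)\subseteq\Dom(\calE)$ and Assumption~\ref{a:I}, this yields the required membership. I will then use the representation
\[
2\calE(F,G) = \lim_{s\to 0^+}\frac{1}{s}\int_\calX\int_\calX \bigl(F(y)-F(x)\bigr)\bigl(G(y)-G(x)\bigr)P_s(x,dy)\,\mu(dx),
\]
valid for $F,G\in\Dom(\calE)$.

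The key ingredient is the pointwise bound: for all $a,b \in \RR$,
\[
\bigl(\varphi(|a|)-\varphi(|b|)\bigr)\bigl(|a|-|b|\bigr)\leq \bigl(\varphi'(|a|)+\varphi'(|b|)\bigr)(a-b)^2.
\]
To establish this, I would assume without loss of generality that $|a|\ge|b|$; by convexity $\varphi'$ is nondecreasing, so $\varphi(|a|)-\varphi(|b|) = \int_{|b|}^{|a|}\varphi'(s)\,ds \leq \varphi'(|a|)(|a|-|b|)$. Combined with the reverse triangle inequality $(|a|-|b|)^2\leq (a-b)^2$ and the fact that $\varphi'\geq 0$ (since $\varphi$ is nondecreasing), the estimate follows.

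Applying this pointwise bound with $a=g(y)$, $b=g(x)$ (so $a-b = f(y)-f(x)$) inside the kernel representation, and using reversibility of $P_s$, i.e.\ the $(x,y)$-symmetry of the measure $P_s(x,dy)\mu(dx)$, to collapse the two resulting summands into one, gives
\[
2\calE(\varphi(|g|),|g|)\leq 2\lim_{s\to 0^+}\int_\calX \varphi'(|g(x)|)\cdot \frac{1}{s}\int_\calX \bigl(f(y)-f(x)\bigr)^2 P_s(x,dy)\,\mu(dx).
\]
I would then expand the inner integral as $P_sf^2(x) - 2f(x)P_sf(x) + f^2(x)$ and, using $f,f^2\in\Dom(L)$, conclude that $s^{-1}$ times this quantity converges to $Lf^2 - 2fLf = 2\Gamma(f)$ in $L^2(\mu)$ as $s\to 0^+$. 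Since $\varphi'(|g|)$ is bounded, I can pass to the limit against the outer integral to obtain $\calE(\varphi(|g|),|g|)\leq 2\int\varphi'(|g|)\Gamma(f)\,d\mu$. The main obstacle is justifying this last passage to the limit; it is routine given the $L^2$-convergence and boundedness of $\varphi'(|g|)$, but is precisely the step that requires the assumption $f,f^2\in\Dom(L)$ rather than merely $f\in\Dom(\calE)$.
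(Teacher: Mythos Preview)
Your proposal is correct and follows essentially the same route as the paper: verify $\varphi(|g|),|g|\in\Dom(\calE)$ via Assumption~\ref{a:I}, use the kernel representation of $\calE$, bound the integrand pointwise using convexity of $\varphi$ together with $(|a|-|b|)^2\le(a-b)^2$, and pass to the limit via the $L_2$-convergence of $s^{-1}\int(f(y)-f(x))^2P_s(x,dy)\to 2\Gamma(f)$. The only cosmetic difference is the order in which symmetry is invoked: the paper first uses reversibility to rewrite the approximating integral in the one-sided form $\int\int(\varphi(|g(x)|)-\varphi(|g(y)|))_+(|g(x)|-|g(y)|)_+P_u(x,dy)\mu(dx)$ and then bounds by $\varphi'(|g(x)|)(f(x)-f(y))^2$, whereas you first bound by the symmetric quantity $(\varphi'(|g(x)|)+\varphi'(|g(y)|))(f(x)-f(y))^2$ and then use reversibility to collapse the two terms. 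Both orderings yield the same inequality.
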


\begin{proof}
Note that if $f^2, f \in \Dom(L)$ and $f$ is bounded then
\begin{multline}\label{eq:convergence}
\frac{1}{u} \int_\calX (f(x) - f(y))^2 P_u(x,dy)\\
 = \frac{1}{u} \int_\calX (f^2(y) - f^2(x)) P_u(x,dy) - 2f(x) \frac{1}{u} \int_\calX (f(y) - f(x))P_u(x,dy) \stackrel{u\to 0+}{\to} 2\Gamma(f)
\end{multline}
in $L_2$.

By boundedness of $f$, the fact that convex functions are locally Lipschitz, and Assumption~\ref{a:I} (which we know to be satisfied in the Markov case), $\varphi(|f+c|), |f+c| \in \Dom(\calE)$. Moreover, denoting $g=f+c$,
\begin{align*}
&\calE(\varphi(|g|),|g|) \\
&= \lim_{u\to 0+} \frac{1}{2u}\int_\calX\int_\calX (\varphi(|g(x)|) - \varphi(|g(y)|))(|g(x)| - |g(y)|)P_u(x,dy)\mu(dx) \\
&= \lim_{u\to 0+} \frac{1}{u}\int_\calX\int_\calX (\varphi(|g(x)|) - \varphi(|g(y)|))_+(|g(x)| - |g(y)|)_+ P_{u}(x,dy)\mu(dx) \\
&\le \liminf_{u\to 0+}  \int_\calX \varphi'(|g(x)|) \frac{1}{u} \int_\calX (f(x) - f(y))_+^2P_{u}(x,dy)\mu(dx)\\
&\le \liminf_{u\to 0+}  \int_\calX \varphi'(|g(x)|) \frac{1}{u} \int_\calX (f(x) - f(y))^2P_{u}(x,dy)\mu(dx)\\
&= 2 \int_\calX \varphi'(|f(x)+c|) \Gamma(f)d\mu.
\end{align*}
where in the second equality we used reversibility of the semigroup together with monotonicity of $\varphi$, and in the first inequality -- convexity of $\varphi$. The last equality follows by boundedness of $f$ and~\eqref{eq:convergence}.

This proves the first inequality of the proposition. The second one follows by H\"older's inequality.
\end{proof}

As for the second part of Assumption~\ref{a:II}, it is satisfied, e.g., if $\calA \subseteq \Dom(L)$ is an algebra stable under compositions with smooth functions vanishing at zero, which is a common assumption in this context (see Chapter 1.13 of~\cite{MR3155209}). Indeed, in this case one can define an appropriate sequence of smooth contractions $\psi_n\colon \RR\to \RR$ with $\psi_n(0) = 0$, converging to $\psi(x) = x$ pointwise
and take $f_n = \psi_n(f)$. Then clearly $f_n \to f$ and $|f_n|\le |f|$ $\mu$-a.s.
 Moreover, for general $f$ such that $f,f^2\in \Dom(L)$ one still has~\eqref{eq:convergence} but this time in $L_1$. Thus using the contraction property of $\psi_n$ one can easily show that
\begin{displaymath}
  \Gamma(f_n) = \Gamma(\psi_n(f)) \le \Gamma(f)
  \quad
  \mu\text{-a.s.}
\end{displaymath}

Combining this observation with Proposition~\ref{prop:assumption-2-1} one obtains.

\begin{proposition}\label{prop:assumption-2}
If $\calA \subseteq \Dom(L)$ is an algebra stable under composition with smooth functions vanishing at zero, then the Assumption~\ref{a:II} is satisfied.
\end{proposition}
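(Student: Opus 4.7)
The plan is to verify the two parts of Assumption~\ref{a:II} separately, essentially by unpacking the informal argument already sketched in the paragraph preceding the statement.

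For the first part, I would start from a bounded $f\in\calA$ and set $g=|f+c|$. Since $\calA$ is an algebra inside $\Dom(L)$, both $f$ and $f^2$ lie in $\Dom(L)$, so Proposition~\ref{prop:assumption-2-1} is applicable. I would apply it with the function $\varphi(x)=x^{\gamma}$ on $[0,\infty)$, which for $\gamma>1$ is smooth, convex, nondecreasing, and satisfies $\varphi'(x)=\gamma x^{\gamma-1}$ (in particular $\varphi'(0)=0$, so no issue at the origin). This gives
\[
\calE(|f+c|^{\gamma},|f+c|)\le 2\gamma\int_{\calX}|f+c|^{\gamma-1}\Gamma(f)\,d\mu,
\]
and H\"older's inequality with exponents $t$ and $t/(t-1)$ then yields exactly~\eqref{eq:assumption-2-1}.

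For the second part, I would first choose a sequence $\psi_n\colon\RR\to\RR$ of smooth contractions vanishing at $0$ which converge to the identity pointwise and satisfy $|\psi_n(x)|\le n$ for all $x$ (for instance $\psi_n(x)=n\arctan(x/n)\cdot 2/\pi$ or any standard cut-off). Because $\calA$ is stable under composition with smooth functions vanishing at the origin, $f_n:=\psi_n\circ f$ lies in $\calA$, is bounded, and $f_n\to f$ pointwise. It remains to verify the $\mu$-a.s. inequality $\Gamma(f_n)\le\Gamma(f)$.

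The key tool is relation~\eqref{eq:convergence} from the proof of Proposition~\ref{prop:assumption-2-1}. Since $f_n,f_n^2,f,f^2\in\Dom(L)$, the family
\[
G_{u}^{h}(x):=\frac{1}{u}\int_{\calX}\bigl(h(y)-h(x)\bigr)^{2}P_{u}(x,dy),\quad h\in\{f,f_n\},
\]
converges in $L_{1}(\mu)$ to $2\Gamma(h)$ as $u\to 0^{+}$. Since $\psi_n$ is a contraction, we have the pointwise bound $G_{u}^{f_n}\le G_{u}^{f}$ for every $u>0$. The only small technicality I need to handle is transferring this pointwise inequality through an $L_1$ limit: I would extract a subsequence $u_k\to 0^+$ along which both $G_{u_k}^{f}\to 2\Gamma(f)$ and $G_{u_k}^{f_n}\to 2\Gamma(f_n)$ $\mu$-a.s. (passing to a further diagonal subsequence to handle all $n$ simultaneously), and then pass to the limit. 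This yields $\Gamma(f_n)\le\Gamma(f)$ $\mu$-a.s., completing the verification of the second part.

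The only step requiring any real care is this subsequence extraction in the $\mu$-a.s. comparison of carr\'e du champs; the rest is direct application of Proposition~\ref{prop:assumption-2-1} and the algebraic/stability properties of $\calA$.
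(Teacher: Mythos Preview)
Your proposal is correct and follows essentially the same route as the paper, which proves the proposition by combining Proposition~\ref{prop:assumption-2-1} (applied with $\varphi(x)=x^{\gamma}$) with the truncation argument sketched in the preceding paragraph; you simply make the subsequence extraction in the $\mu$-a.s.\ comparison $\Gamma(f_n)\le\Gamma(f)$ explicit, which the paper leaves to the reader. One tiny slip: your sample cut-off $\psi_n(x)=n\arctan(x/n)\cdot 2/\pi$ converges to $(2/\pi)x$ rather than $x$; dropping the factor $2/\pi$ (so $\psi_n(x)=n\arctan(x/n)$, still a contraction bounded by $n\pi/2$) fixes this, and in any case your parenthetical ``or any standard cut-off'' already covers it.
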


This gives the basic setting for applying moment estimates of Proposition~\ref{prop:moments-abstract} in the Markovian case. Clearly, in concrete applications the moment inequalities can be extended to larger classes of functions -- the details of such an extension and the choice of the class of functions may however depend on the particular case. We again refer to Chapter 3 of~\cite{MR3155209} for an extensive discussion of this issue.

\bibliographystyle{amsplain}
\bibliography{mLS_and_Bec}

\end{document}